\documentclass[11pt]{amsart}

\usepackage[T1]{fontenc}
\usepackage[latin1]{inputenc}

\usepackage[a4paper]{geometry}

\usepackage{amssymb}
\usepackage{amsmath}
\usepackage{amsthm}
\usepackage[all, 2cell, poly]{xypic}
\SelectTips{cm}{10}

\usepackage[hidelinks]{hyperref}

\usepackage{version}
\usepackage{mathrsfs}

\include{macros}

\let\nbd\nobreakdash

\newenvironment{tparagr*}[1]%
  {\noindent\textbf{#1.}}
  {}

\let\tilde\widetilde
\newcommand\opbox{\operatorname{\square}}
\newcommand\opboxp{\operatorname{\square'}}

\newcommand\orth{\operatorname{\pitchfork}}
\newcommand\Sat{\operatorname{Sat}}
\newcommand\bbs[2]{\langle{#1}\bs #2\rangle}
\newcommand\bslash[2]{\langle{#1}\slash #2\rangle}

\newcommand\Map{\operatorname{Map}}

\newcommand\Deltat[1]{\widetilde{\Delta_{#1}}}

\newcommand\horn[2]{\Lambda_{#1}^{#2}}
\newcommand\mhorn[2]{h_{#1}^{#2}}
\newcommand\bDelta[1]{\bord{\Delta_{#1}}}
\newcommand\mbDelta[1]{\delta_{#1}}

\newcommand\bord[1]{\partial #1}
\newcommand\mbord[1]{\delta_{#1}}
\newcommand\mbordz{\delta}

\newcommand\canseg[1]{\partial^{\,#1}}
\newcommand\cansegd[2]{\partial^{\,#1}_{#2}}
\newcommand\bordseg[1]{\partial #1}
\newcommand\mbordseg[1]{\delta_{#1}}

\newcommand\Wp{\W\,'}
\newcommand\Wvert{\W_{\textrm{vert}}}
\newcommand\Whor{\W_{\textrm{hor}}}
\newcommand\WfRezk{\W_{\textrm{fRezk}}}
\newcommand\WRezk[1][n]{\W_{\textrm{Rezk}_n}}
\newcommand\WJoyal{\W_{\textrm{Joyal}}}
\newcommand\WDelta{\W_\Delta}
\newcommand\WQCat[1][n]{\W_{\textrm{QCat}_n}}
\newcommand\WpQCat[1][n]{\W_{\textrm{pQCat}_n}}

\newcommand\Real{\textrm{Real}}
\newcommand\Sing{\textrm{Sing}}

\newcommand\cM{\mathfrak{M}}
\newcommand\An{\textsf{An}}
\renewcommand\W{\textsf{W}}
\newcommand\clC{C}
\newcommand\clD{D}

\newcommand\clL{L}
\newcommand\clI{\mathcal{I}}
\newcommand\clJ{\mathcal{J}}

\author{Dimitri Ara}

\address{Dimitri Ara, Institut de Mathématiques de Jussieu, Université Paris
Diderot~-- Paris 7, Case 7012, Bâtiment Chevaleret, 75205 Paris Cedex 13,
France}
\email{ara@math.jussieu.fr}
\urladdr{http://people.math.jussieu.fr/~ara/}
\keywords{$(\infty, n)$-categories, quasi-categories, $n$-quasi-categories,
Segal spaces, $\Theta_n$\nbd-spaces, $A$-localizers}

\subjclass[2000]{\textbf{18D05}, 18D20, 18E35, 18G30, 18G55,
55U10, 55U35, \textbf{55U40}}

\dedicatory{À Antoine}

\title{Higher quasi-categories vs higher Rezk spaces}

\begin{document}

\begin{abstract}
We introduce a notion of $n$-quasi-categories as fibrant objects of a model
category structure on presheaves on Joyal's $n$-cell category $\Theta_n$.
Our definition comes from an idea of Cisinski and Joyal. However, we
show that this idea has to be slightly modified to get a reasonable notion.
We construct two Quillen equivalences between the model category of
$n$-quasi-categories and the model category of Rezk $\Theta_n$-spaces,
showing that $n$-quasi-categories are a model for $(\infty, n)$-categories.
For $n = 1$, we recover the two Quillen equivalences defined by Joyal and
Tierney between quasi-categories and complete Segal spaces.
\end{abstract}

\maketitle

\section*{Introduction}

Quasi-categories provide a simplicial model for a certain type of weak
\oo-categories. They were initially introduced as ``simplicial sets
satisfying the restricted Kan condition'' by Boardman and Vogt in
\cite{BoardmanVogt}, and their theory has been developed extensively by
Joyal in \cite{JoyalQCatKan}, \cite{JoyalNQCat} and \cite{JoyalThQCat}, and
Lurie in \cite{LurieHTT} and \cite{LurieHA}, among others.

Weak \oo-categories are a generalization of the strict \oo-categories
that can be defined algebraically as consisting of sets of $k$-arrows in all
dimensions, with compositions and identities at all levels, satisfying some standard
axioms. In a weak \oo-category, these axioms are only asked to be satisfied
``up to coherence'', that is, up to higher arrows themselves satisfying some
identities up to higher arrows, and so on. For instance, weak \oo-categories
of dimension~$2$ can be defined as Bénabou's bicategories \cite{Benabou},
and in this case, the coherences are given by an ``associator'' and two
``unitors'' that have to satisfy Mac Lane's pentagon and triangle axioms.

Quasi-categories only model $(\infty, 1)$-categories, that is, weak
\oo-categories in which the $k$-arrows, for $k>1$, are invertible up to
higher arrows. These $(\infty, 1)$-categories play an important role in
homotopy theory as one can associate an $(\infty, 1$)\nbd-category to any
abstract homotopy theory. For instance, from the homotopy theory of spaces,
we get an $(\infty, 1)$\nbd-cat\-e\-go\-ry where objects are spaces,
$1$-arrows are maps of spaces, $2$-arrows are homotopies, $3$-arrows are
homotopies of homotopies, and so on. Notice that $k$-arrows, for $k
\ge 1$, are indeed invertible in this \oo-category since homotopies are
invertible up to higher homotopies.  More generally, an $(\infty,
n)$-category is a weak \oo-category in which all $k$-arrows are weakly
invertible for $k>n$.

A priori, to make sense of this definition, one first has to make precise
the notion of a weak \oo-category. This was done by Grothendieck
in~\cite{GrothPS} (actually, Grothendieck defined a notion of
$\infty$-groupoid but Maltsiniotis noticed in \cite{MaltsiGr} and
\cite{MaltsiCat} that his definition can be adapted).  Variations on this
definition are studied by the author in \cite{AraThesis}, \cite{AraHomGr}
and~\cite{AraStrWeak} (see also \cite{MaltsiGrCat}). There are now plenty of
competing definitions (see for instance \cite{LeinsterSurvey}
and~\cite{Leinster} for some of them).

A second approach, which is the one this paper is about, is to define
directly $(\infty, n)$\nbd-cat\-e\-go\-ries, without references to weak
\oo-categories, by means of homotopical algebra. There are numerous
such models for $(\infty, 1)$-categories. The four most
popular are probably those explained in Bergner's survey
\cite{BergnerSurvey}, namely, quasi-categories, complete Segal spaces, Segal
categories and categories enriched in Kan complexes.  These four models are
equivalent in the following sense: Joyal (\cite{JoyalThQCat}), Rezk
(\cite{RezkSegSp}), Hirschowitz and Simpson (\cite{SimpsonHirsch}) and
Bergner~(\cite{BergnerSimpCat}) constructed Quillen model category
structures for which the fibrant objects are precisely these four classes of
objects, and these model category structures were shown to be Quillen
equivalent by Bergner (\cite{BergnerThreeModels}),
Joyal~(\cite{JoyalSimpCat}) and Joyal and Tierney
(\cite{JoyalTierneyQCatSeg}). Another model, relative categories, was
introduced and compared to the other models by Barwick and Kan
in~\cite{BarKanRelCat}.

Several of these models for $(\infty, 1)$-categories have been generalized
to models for $(\infty, n)$-categories. Hirschowitz and Simpson introduced a
notion of higher Segal categories in \cite{SimpsonHirsch}. This notion is
the main topic of the book \cite{SimpsonHTHC} of Simpson. In \cite{RezkThSp}
and~\cite{RezkThSpCorr}, Rezk introduced a notion of higher Segal spaces. We
will call these objects Rezk $\Theta_n$-spaces in this paper.  Another model
based on Rezk $\Theta_n$-spaces has been introduced by Bergner and Rezk in
\cite{BergnerRezkComp}. A second generalization of complete Segal spaces
called $n$-fold Segal spaces has been introduced by Barwick (see Section 12
of \cite{BarSchPrUnicity}). The model of relative categories has been
generalized by Barwick and Kan in \cite{BarKanNRelCat}. Several of these
models are explained in Bergner's survey \cite{BergnerSurveyN}.

\smallskip

In this paper, we introduce a notion of $n$-quasi-categories for $n \ge 1$.
Our notion is based on an idea of Cisinski and Joyal. In Section 45 of
\cite{JoyalNQCat}, Joyal writes that he and Cisinski conjecture that the
$\Theta_n$-localizer generated by some kind of higher Segal maps gives rise
to a model for $(\infty, n)$-categories. Let us briefly explain the
terminology. If $A$~is a small category, an (accessible) $A$-localizer is a
class of morphisms of presheaves on~$A$ which is the class of weak
equivalences of a combinatorial model category structure on presheaves on
$A$ whose cofibrations are the monomorphisms. This notion is here applied to
$\Theta_n$, the $n$-truncation of Joyal's cell category introduced in
\cite{JoyalTheta}. In this paper, we show that the idea of Cisinski and
Joyal has to be slightly modified. More precisely, we exhibit equivalent
$n$-categories which are not weakly equivalent in the sense of Cisinski and
Joyal.

We suggest a modification consisting of adding new generators. These
generators are essentially the same as the ones given by Rezk to define his
$\Theta_n$-spaces.  We obtain this way a model category structure on
presheaves on $\Theta_n$ and we define $n$-quasi-categories as the fibrant
objects of this model category. For $n = 1$, by a Theorem of Joyal, we
recover the usual notion of quasi-categories.

We then show that the model category of Rezk $\Theta_n$-spaces is in some
sense canonically associated to our model category of $n$-quasi-categories.
More precisely, we show that the localizer of Rezk $\Theta_n$-spaces is the
simplicial completion in the sense of Cisinski of the localizer of
$n$-quasi-categories. We deduce from this fact, using Cisinski's theory
of simplicial completion, the existence of two Quillen equivalences between
the model category of $n$-quasi-categories and the model category of Rezk
$\Theta_n$-spaces. In particular, the homotopy categories of
$n$-quasi-categories and of Rezk $\Theta_n$-spaces are equivalent. For~$n =
1$, we recover the two Quillen equivalences between quasi-categories and
complete Segal spaces given by Joyal and Tierney in
\cite{JoyalTierneyQCatSeg}.

The tools we use in this work are of two kinds. First, we use the machinery
of $A$\nbd-localizers developed by Cisinski in \cite{Cisinski}. In particular,
our work relies heavily on the notion of simplicial completion of a
localizer.  For $n = 1$, this theory simplifies the work of Joyal and
Tierney. Second, we use the techniques that Joyal and Tierney have developed
to prove that their two adjunctions between quasi-categories and complete
Segal spaces are Quillen equivalences. Many of our arguments are very
similar (if not identical) to the ones used in their proofs. We have tried to
make that clear by citing very precisely their work.

\smallskip

After we made public the first version of this paper, we were informed that
J.~Hahn has obtained related results in his ongoing Ph.D. thesis under the
supervision of Barwick and that Gindi has developed a related homotopy
theory of $\Theta$-sets. Since then, Gindi's work~\cite{GindiWCat} has been
made public.

\smallskip

\begin{tparagr*}{Organization of the paper}
In Section~\ref{sec:1}, we introduce some preliminary terminology. In
Section~\ref{sec:2}, we give a short introduction to Cisinski's theory of
$A$-localizers, which is the language we will use throughout this paper. In
particular, we present the notion of simplicial completion of an
$A$-localizer. As explained above, this notion will play a crucial role in
this paper. Everything from this section is extracted from~\cite{Cisinski}.
In Section~\ref{sec:3}, we introduce tools developed by Joyal and Tierney in
\cite{JoyalTierneyQCatSeg}. Unfortunately, we will need these tools in a
more general setting than the one used in ibid. Nevertheless, everything
adapts trivially and we do not claim any originality for this section. In
Section~\ref{sec:4}, we study the simplicial completion of an $A$-localizer
when $A$ is a regular skeletal Reedy category. The techniques of this
section are still based on \cite{JoyalTierneyQCatSeg} even though they have
to be adapted since we do not have a notion of ``mid anodyne map'' in this
context. For this purpose, we introduce the notion of formal Rezk
$A$-spaces. In Section~\ref{sec:5}, we introduce the
$n$-truncation~$\Theta_n$ of Joyal's cell category as a full subcategory of
the category of strict $n$\nbd-categories and we define our
$\Theta_n$-localizer of $n$-quasi-categories. In Section~\ref{sec:6}, we
explain why the idea of Cisinski and Joyal has to be modified. More
precisely, we show that our new generators, which come from equivalences of
strict $n$\nbd-categories, are not weak equivalences in the sense of
Cisinski and Joyal. In Section~\ref{sec:7}, we explain why nerves of
strict $n$-categories are not $n$-quasi-categories in general (contrary to
what happens for quasi-categories). More precisely, we show that the nerve
of a strict $n$\nbd-category is an $n$-quasi-category if and only if this
$n$-category has no non-trivial invertible $k$-arrows for~$k > 1$. In
Section~\ref{sec:8}, we recall the definition of Rezk $\Theta_n$-spaces and
we show that their localizer is the simplicial completion of our localizer
of $n$-quasi-categories. We obtain two Quillen equivalences between
$n$-quasi-categories and Rezk $\Theta_n$-spaces. We deduce that the model
category of $n$-quasi-categories is cartesian closed from the analogous
result for Rezk $\Theta_n$-spaces. Finally, in \hyperref[sec:A]{an
appendix}, we compare the language of localizers to the language of
Bousfield localization.
\end{tparagr*}

\smallskip

\begin{tparagr*}{Notation}
Let $C$ be a category. The class of objects of
$C$ will be denoted by $\Ob(C)$ and if $X, Y$ is a pair of objects of $C$,
the set of morphisms of $C$ from $X$ to $Y$ will be denoted by~$\Hom_C(X,
Y)$. The opposite category of $C$ will be denoted by~$C^\op$. If $D$ is a
second category, we will denote by $\Homi(C, D)$ the category of functors
from $C$ to $D$.

If $A$ is a small category, the category of presheaves on $A$ will be
denoted by $\smash{\pref{A}}$. If $X$ is a presheaf on $A$ and $a$ is an
object of $A$, we will sometimes write $X_a$ for $X(a)$. We will denote by
$e_{\pref{A}}$ the terminal object of $\smash{\pref{A}}$ and by
$\varnothing_{\pref{A}}$ the initial object of $\smash{\pref{A}}$.

If $u : A \to B$ is a functor between small categories, we will denote by
$u^*$ the functor from $\pref{B}$ to $\pref{A}$ given by precomposition by
$u$. We will denote by $u_!$ its left adjoint and by~$u_*$ its right
adjoint. 

Finally, we will denote by $\Cat$ the category of small categories.
\end{tparagr*}

\section{Preliminaries}\label{sec:1}

In this section, we gather some categorical preliminaries.

\begin{paragr}
Let $\C$ be a category. Let $i : A \to B$ and $p : X \to Y$ be two morphisms
of $\C$. Recall that the morphism $i$ has the \ndef{left lifting property
with respect to $p$} (or that the morphism~$p$ has the \ndef{right lifting
property with respect to $i$}) if for every commutative square
\[
\xymatrix{
A \ar[d]_i \ar[r] & X \ar[d]^p \\
B \ar[r] & Y \pbox{,}
}
\]
there exists a \ndef{lift}, i.e., a morphism $B \to X$ making the two
triangles commute. We will then write $i \orth p$. If the lift is unique,
one says that the lifting property is a \ndef{unique lifting property}.

Let $\clC$ be a class of morphisms of $\C$. We will denote by $l(\clC)$
(resp.~by $r(\clC)$) the class of morphisms having the left lifting property
(resp.~the right lifting property) with respect to~$\clC$ (i.e., with
respect to every morphism of $\clC$). We define the \ndef{saturation}
$\Sat(\clC)$ of $\clC$ as
\[ \Sat(\clC) = lr(\clC). \]
The class $\clC$ is said to be \ndef{saturated} if $\clC = \Sat(\clC)$.

One easily checks that
\[ r(\Sat(\clC)) = r(\clC). \]
Dually, we have $\Sat(l(\clC)) = l(\clC)$. In other words, the class of
morphisms having the left lifting property with respect to a fixed class of
morphisms is saturated. 

If $\C$ is a presheaf category and $S$~is a
\emph{set} of morphisms of $\C$, the small object argument shows that
$\Sat(S)$ is the class of retracts of transfinite compositions of pushouts
of morphisms of $S$.
\end{paragr}

\begin{paragr}\label{paragr:box_cart_clos}
Let $\C$ be a cartesian closed category with an internal $\Hom$ functor $\Homi$
and let $u : A \to B$, $v : C \to D$ and $w : E \to F$ be three
morphisms of $\C$. 
We will denote by
\[
u \times' v : A \times D \amalg_{A \times C} B \times C \to B \times D
\]
the canonical morphism induced by the commutative square
\[
\xymatrix{
A \times C \ar[r] \ar[d] & A \times D \ar[d] \\
B \times C \ar[r] & B \times D \pbox{;} \\
}
\]
and by 
\[
\Homi'(v, w) : \Homi(D, E) \to \Homi(C, E) \times_{\Homi(C, F)} \Homi(D,
F)
\]
the canonical morphism induced by the commutative square
\[
\xymatrix{
\Homi(D, E) \ar[d] \ar[r] & \Homi(D, F) \ar[d] \\
\Homi(C, E) \ar[r] & \Homi(C, F) \pbox{.}
}
\]
By adjunction (see for instance Proposition 7.6 of
\cite{JoyalTierneyQCatSeg}), we have
\[ 
u \times' v \orth w
\quad\Leftrightarrow\quad
u \orth  \Homi'(v, w).
\]
\end{paragr}

\begin{paragr}\label{paragr:def_triv_fib}
Let $A$ be a small category. A \ndef{cellular model} of $\pref{A}$ is a
\emph{set} $\cM$ of monomorphisms of~$\pref{A}$ such that $\Sat(\cM)$ is the
class of monomorphisms of $\pref{A}$. Such a cellular model always exists by
Proposition~1.2.27 of \cite{Cisinski}.

A morphism of $\pref{A}$ is a \ndef{trivial fibration} if it has the right
lifting property with respect to monomorphisms of $\pref{A}$. If $\cM$ is a
cellular model of $\pref{A}$, then a morphism of $\pref{A}$ is a trivial
fibration if and only if it has the right lifting property with respect to
$\cM$.
\end{paragr}

\begin{paragr}
A \ndef{Reedy category} is a category $A$ endowed with two subcategories
$A_+$ and $A_-$ satisfying the following properties:
\begin{enumerate}
\item there exists a map $d : \Ob(A) \to \mathbb{N}$, assigning to every object
of $A$ an non-negative integer, such that:
\begin{enumerate}
\item if $a \to a'$ is a morphism of $A_+$ which is not an identity,
then $d(a) < d(a')$;
\item if $a \to a'$ is a morphism of $A_-$ which is not an identity,
then $d(a) > d(a')$;
\end{enumerate}
\item every morphism of $A$ factors uniquely as a morphism of $A_-$
followed by a morphism of $A_+$.
\end{enumerate}
We will often denote a Reedy category simply by its underlying category.

Let $A$ be a Reedy category. It is obvious that $A$ contains no non-trivial
automorphisms. Moreover, if $f$ is a monomorphism (resp.~an epimorphism) of
$A$, then $f$ belongs to $A_+$ (resp.~to $A_-$).

A Reedy category $A$ is said to be \ndef{skeletal} if it satisfies the
following additional property:
\begin{enumerate}
\item[(3)] every morphism of $A_-$ admits a section; two parallel
morphisms of $A_-$ are equal if and only if they admit the same set of
sections.
\end{enumerate}

A skeletal Reedy category $A$ is said to be \ndef{regular} if it satisfies
the following additional property:
\begin{enumerate}
\item[(4)] every morphism of $A_+$ is a monomorphism.
\end{enumerate}
These two notions come from Chapter 8 of \cite{Cisinski} where they are
called normal skeletal categories and regular skeletal categories.

By a remark above, if $A$ is a regular skeletal Reedy category, then $A_+$
is exactly the class of monomorphisms of $A$. In particular, being a regular
skeletal Reedy category is a property of a category and not an additional
structure.

Let $A$ be a Reedy category and let $a$ be an object of $A$. We will denote
by $\bord{a}$ the subpresheaf of $a$ obtained by taking the union of the
images of all the morphisms $a' \to a$ of~$A_+$ different from the identity. We
will denote by $\mbord{a} : \bord{a} \to a$ the inclusion morphism.
\end{paragr}

\begin{prop}\label{prop:cell_mod_Reedy}
Let $A$ be a skeletal Reedy category. Then the set
\[ \{ \mbord{a} : \bord{a} \to a;\,\, a \in \Ob(A) \} \]
is a cellular model of $\pref{A}$.
\end{prop}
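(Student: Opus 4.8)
The plan is to show that the saturation of the set $\{\mbord{a} : \bord{a} \to a;\, a \in \Ob(A)\}$ is exactly the class of monomorphisms of $\pref{A}$. One inclusion is immediate: each $\mbord{a}$ is a monomorphism, and the class of monomorphisms of a presheaf category is saturated (it is the class of maps having the right lifting property is replaced here by a direct check, but more simply, monomorphisms are stable under pushout, transfinite composition, and retract). So it remains to prove that every monomorphism $f : X \to Y$ of $\pref{A}$ lies in $\Sat(\{\mbord{a}\})$, i.e.\ can be built from the $\mbord{a}$ by pushouts and transfinite composition (retracts will not even be needed).

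The key tool is the skeletal filtration of $Y$ relative to $X$ coming from the Reedy structure. For each $n \geq -1$, let $Y^{(n)}$ be the subpresheaf of $Y$ generated by $X$ together with all cells $y \in Y_a$ with $d(a) \leq n$; these assemble into a sequence $X = Y^{(-1)} \hookrightarrow Y^{(0)} \hookrightarrow Y^{(1)} \hookrightarrow \cdots$ whose colimit is $Y$ (every section of $Y$ over $a$ has bounded degree $d(a)$, so it appears at some finite stage). I would then analyze the passage from $Y^{(n-1)}$ to $Y^{(n)}$: the new nondegenerate sections of $Y$ in degree $n$ that are not already in $X$ — here one uses skeletality (property (3)) to make sense of ``nondegenerate'', since every map in $A_-$ admits a section and parallel maps in $A_-$ agree iff they have the same sections, which gives a well-behaved notion of degenerate element. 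For each such new cell $y : a \to Y$ (with $d(a) = n$), its restriction along $\mbord{a}$ already lands in $Y^{(n-1)}$ (the boundary $\bord{a}$ involves only maps $a' \to a$ in $A_+$ with $a' \neq a$, hence $d(a') < n$, plus degeneracies of lower-dimensional cells). One then checks that $Y^{(n)}$ is obtained from $Y^{(n-1)}$ by the pushout
\[
\xymatrix{
\coprod_{y} \bord{a} \ar[r] \ar[d]_{\coprod \mbord{a}} & Y^{(n-1)} \ar[d] \\
\coprod_{y} a \ar[r] & Y^{(n)} \pbox{,}
}
\]
the coproduct running over the set of nondegenerate $n$-cells of $Y$ not coming from $X$ (each taken up to the absence of nontrivial automorphisms, which a Reedy category guarantees). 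Since $\coprod_y \mbord{a}$ is a coproduct of maps in our generating set, it lies in $\Sat(\{\mbord{a}\})$, hence so does $Y^{(n-1)} \to Y^{(n)}$; composing transfinitely over $n$ gives $f \in \Sat(\{\mbord{a}\})$.

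The main obstacle is verifying that the square above is genuinely a pushout — equivalently, describing $Y^{(n)}_b$ for every object $b$ of $A$ in terms of $Y^{(n-1)}_b$ and the attached cells. Concretely, one must show that a section of $Y$ over $b$ lies in $Y^{(n)}$ iff in its unique Reedy factorization $b \xrightarrow{\sigma} a \xrightarrow{y} Y$ with $\sigma \in A_-$ and $y$ nondegenerate (so $y \in A_+$-image), the element $y$ has degree $\leq n$; and that the fibre of $Y^{(n)}_b \to Y^{(n)}$ over the attached cells matches the pushout description. This is exactly the kind of bookkeeping where regularity or at least skeletality of the Reedy structure is essential: it provides the unique factorization of an arbitrary section into a degeneracy of a nondegenerate cell. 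I expect this to reduce, after unwinding, to Cisinski's analysis in Chapter 8 of \cite{Cisinski}, so one could alternatively cite the relevant statement there; but the self-contained argument via the skeletal filtration is the cleaner route.
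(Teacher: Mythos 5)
Your argument is correct in outline, but it is worth saying plainly that the paper does not carry it out: the paper's entire proof consists of observing that a Reedy category has no non-trivial automorphisms and then citing Proposition 8.1.37 of \cite{Cisinski}. What you have written is essentially a reconstruction of the content of that cited proposition, namely the classical skeletal-filtration (cell-attachment) argument. The two routes therefore coincide in substance; you have simply chosen to unwind the black box, and you yourself note at the end that one could instead cite Chapter 8 of \cite{Cisinski}, which is exactly what the author does.

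Two remarks on where your sketch is thinnest. First, everything hinges on the Eilenberg--Zilber-type statement that every section of $Y$ over $b$ factors \emph{uniquely} as $b \xrightarrow{\sigma} a \xrightarrow{y} Y$ with $\sigma \in A_-$ and $y$ nondegenerate; this is precisely where axiom (3) (every map of $A_-$ has a section, and parallel maps of $A_-$ with the same sections are equal) enters, and it is the only genuinely non-formal input. Without the uniqueness half, the square you draw need not be a pushout, because two distinct attached cells could become identified in $Y^{(n)}$, or an attached cell could acquire unintended identifications along $\bord{a}$ --- this is also where the absence of non-trivial automorphisms (which the paper makes explicit, and you mention only parenthetically) is used. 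Second, your parenthetical justification that the class of monomorphisms is saturated is garbled as written; the clean argument is that $\Sat(\{\mbord{a}\}) = lr(\{\mbord{a}\})$ is, by the small object argument, the class of retracts of transfinite compositions of pushouts of coproducts of the $\mbord{a}$'s, and monomorphisms of a presheaf category are stable under all of these operations. Neither point is a gap in the strategy, but both would need to be written out for a complete self-contained proof; as it stands, deferring them to \cite{Cisinski} is the honest (and the paper's) option.
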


\begin{proof}
Since $A$ is a Reedy category, it contains no non-trivial automorphisms. The
result thus follows from Proposition 8.1.37 of \cite{Cisinski}. 
\end{proof}

\begin{paragr}
Let $\Delta$ be the simplex category. Recall that its objects are the
ordered sets
\[ \Delta_n = \{0, \dots, n\},\quad n \ge 0, \]
and its morphisms are the order preserving maps between them.
The category $\Delta$ carries a Reedy category structure where $\Delta_+$ is
the set of injections and $\Delta_-$ is the set of surjections. This Reedy
category structure is regular skeletal. For $n \ge 0$, we will denote by
$\delta_n$ the morphism $\delta_{\Delta_n} : \bDelta{n} \to \Delta_n$ of
simplicial sets (i.e., of presheaves on $\Delta$).

Let $n \ge 1$ and let $k$ such that $0 \le k \le n$. Recall that the
\ndef{horn} $\horn{n}{k}$ is the sub-simplicial set obtained by taking the
union of the images of all the injections $\Delta_m \to \Delta_n$ except the
identity and the unique injection $\Delta_{n-1} \to \Delta_n$ avoiding $k$.
We will denote by~$\mhorn{n}{k} : \horn{n}{k} \to \Delta_n$ the inclusion
morphism.

Set
\[ \Lambda = \{\mhorn{n}{k}; \,\, n \ge 1, \,\, 0 \le k \le n\}. \]
The class of \ndef{simplicial anodyne extensions} is the saturation of the
set $\Lambda$. A morphism of simplicial sets is a \ndef{Kan fibration} if it
has the right lifting property with respect to $\Lambda$ and hence with
respect to every simplicial anodyne extension.

Recall that the category of simplicial sets admits a combinatorial model
category structure, defined by Quillen in \cite{Quillen}, in which
cofibrations are the monomorphisms and fibrations are the Kan fibrations. We
will call the weak equivalences of this model category the \ndef{simplicial
weak homotopy equivalences}.
\end{paragr}

\section{Cisinski's theory of $A$-localizers}\label{sec:2}

The purpose of this section is to introduce Cisinski's theory of
$A$-localizers. In the language of localizers, the main theorem of this
paper can be stated by saying that the localizer of Rezk
$\Theta_n$-spaces is the simplicial completion of the localizer of
$n$-quasi-categories.

Let us explain roughly what this means. If $A$ is a small category, an
$A$-localizer is a class $\W$ of maps of $\pref{A}$ satisfying some
conditions that are satisfied when $\W$ is the class of weak equivalences of
a model category structure on~$\pref{A}$ whose cofibrations are the
monomorphisms. Conversely, by a theorem of Cisinski, if $\W$ is an
(accessible) $A$-localizer, then $\W$ is the class of weak equivalences of
such a model category structure on $\pref{A}$.

To every $A$-localizer $\W$, there is an associated $(A \times
\Delta)$-localizer $\WDelta$ called the simplicial completion of $\W$.
By a theorem of Cisinski, the model category structures associated to~$\W$
and $\WDelta$ are Quillen equivalent. In particular, what we called above
the main theorem of this paper implies that $n$-quasi-categories and
Rezk $\Theta_n$-spaces are Quillen equivalent.

\medskip

Throughout the section, we fix a small category $A$.

\begin{paragr}
An \ndef{$A$-localizer} is a class $\W$ of morphisms of $\pref{A}$
such that the following conditions hold:
\begin{enumerate}
\item $\W$ satisfies the 2-out-of-3 property;
\item every trivial fibration of $\pref{A}$ is in $\W$;
\item the class of monomorphisms of $\pref{A}$ belonging to $\W$ is stable
under pushout and transfinite composition.
\end{enumerate}
If $\W$ is an $A$-localizer, the elements of $\W$ will be called
\ndef{$\W$-equivalences}. It is immediate that an intersection of
$A$-localizers is again an $A$-localizer. If $\clC$ is a class of morphisms
of~$\pref{A}$, the \ndef{$A$-localizer generated by $\clC$} is by definition
the intersection of all the $A$-localizers containing~$\clC$. We will denote
it by $\W(\clC)$. A localizer is \ndef{accessible} if it is generated by a
\emph{set}.
\end{paragr}

\begin{thm}\label{thm:mod_str_loc}
Let $\W$ be a class of morphisms of $\pref{A}$. Then the following
conditions are equivalent:
\begin{enumerate}
\item $\W$ is an accessible $A$-localizer;
\item there exists a cofibrantly generated model category structure on
$\pref{A}$ whose weak equivalences are the elements of $\W$ and whose
cofibrations are the monomorphisms.
\end{enumerate}
\end{thm}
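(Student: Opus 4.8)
The statement is the equivalence between "accessible $A$-localizer" and "cofibrantly generated model structure on $\pref{A}$ with monomorphisms as cofibrations and $\W$ as weak equivalences." The direction $(2) \Rightarrow (1)$ is the easy one: if such a model structure exists, the axioms of a model category immediately give that $\W$ satisfies 2-out-of-3, that trivial fibrations (in the lifting-property sense of \ref{paragr:def_triv_fib}) are exactly the fibrations that are weak equivalences and hence lie in $\W$, and that the monomorphisms in $\W$ — being the trivial cofibrations — are stable under pushout and transfinite composition. Cofibrant generation, via a set $J$ of generating trivial cofibrations, is what forces accessibility: one checks $\W = \W(J)$. So this half is essentially bookkeeping with the model category axioms plus the small object argument.

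The substantive direction is $(1) \Rightarrow (2)$. Here the plan is to invoke the machinery developed by Cisinski in \cite{Cisinski}, which is precisely designed for this. Fix a cellular model $\cM$ of $\pref{A}$ (it exists by \ref{paragr:def_triv_fib}), so that $\Sat(\cM)$ is the class of monomorphisms. Given an accessible $A$-localizer $\W$, generated by a set, one produces a set $\Lambda_\W$ (an "$\W$-anodyne" generating set) obtained by closing a generating set for $\W$ under a suitable exponentiation/box-product construction against $\cM$, together with $\cM$ itself; the fibrations of the prospective model structure are the maps with the right lifting property against $\Lambda_\W$ ("naive $\W$-fibrations"), the cofibrations are the monomorphisms, and the weak equivalences are the elements of $\W$. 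One then verifies the factorization axioms by the small object argument (both factorizations come from sets, which is where cofibrant generation comes from for free) and the lifting axioms. This is exactly Theorem 1.3.22 (and the surrounding development in Chapter 1) of \cite{Cisinski}, so the proof reduces to citing it after matching definitions: one must check that Cisinski's notion of "class of weak equivalences of an accessible model structure" coincides term-for-term with the three-axiom definition of accessible $A$-localizer given just above in the excerpt.

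The main obstacle — or rather the main content being imported — is the verification that the class $\W$ is exactly the class of weak equivalences of the model structure built from $\Lambda_\W$, i.e., that one does not get a strictly larger class. This is the heart of Cisinski's theory: one shows that every $\W$-equivalence factors as an $\W$-anodyne map followed by a naive $\W$-fibration that is also in $\W$, and that such fibrations are genuine trivial fibrations, which requires a careful argument (a Brown-type factorization / the "absolute weak equivalence" characterization) using the stability axioms (2) and (3) in the definition of a localizer together with the 2-out-of-3 property. Since the excerpt explicitly permits assuming earlier-stated results and Cisinski's theory is being used as a black box throughout the paper, the proof I would write is short: handle $(2)\Rightarrow(1)$ directly in a few lines, and for $(1)\Rightarrow(2)$ cite the relevant theorem of \cite{Cisinski} (Théorème 1.3.22 of op. cit.), noting that the hypotheses match because being a monomorphism-cofibrantly-generated model structure is exactly what Cisinski's framework produces.

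\begin{proof}
Suppose first that (2) holds. Then $\W$ satisfies the 2-out-of-3 property
since it is the class of weak equivalences of a model category. A trivial
fibration in the sense of \ref{paragr:def_triv_fib}, i.e., a morphism having
the right lifting property with respect to all monomorphisms, has in
particular the right lifting property with respect to the generating
cofibrations, hence is a fibration; and it has the right lifting property
with respect to the generating trivial cofibrations, hence is a weak
equivalence by the retract argument. Thus every trivial fibration belongs to
$\W$. Finally, a monomorphism belonging to $\W$ is, again by the retract
argument, a trivial cofibration, and the class of trivial cofibrations is
stable under pushout and transfinite composition in any model category; so
(1.c) holds. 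Accessibility follows from cofibrant generation: if $J$ is a
set of generating trivial cofibrations, then $\Sat(J)$ is the class of
trivial cofibrations, and one checks that $\W = \W(J)$, so $\W$ is
accessible.

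Conversely, suppose that (1) holds, so that $\W$ is an accessible
$A$-localizer. By \ref{paragr:def_triv_fib}, the category $\pref{A}$ admits
a cellular model. The existence of a cofibrantly generated model category
structure on $\pref{A}$ whose cofibrations are the monomorphisms and whose
weak equivalences are the elements of $\W$ is then the content of Cisinski's
theory of $A$-localizers: this is Théorème~1.3.22 of \cite{Cisinski} (see
also Théorème~1.4.3 of op.~cit.). More precisely, starting from a set
generating $\W$ and from a cellular model of $\pref{A}$, Cisinski constructs
a set $\Lambda$ of monomorphisms belonging to $\W$ such that, defining the
fibrations to be the morphisms having the right lifting property with
respect to $\Lambda$, one obtains a model category structure with the
required cofibrations and weak equivalences; both factorizations are
produced by the small object argument applied to sets, whence cofibrant
generation. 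The hypotheses of that theorem translate precisely into the
three axioms defining an $A$-localizer together with accessibility, which
completes the proof.
\end{proof}
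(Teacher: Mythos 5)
Your proof is correct and takes essentially the same route as the paper, whose entire proof is a citation of Théorème~1.4.3 of \cite{Cisinski} --- a theorem whose statement is verbatim this equivalence, both directions included. Your additional hand-verification of $(2)\Rightarrow(1)$ is fine, though the phrase ``one checks that $\W = \W(J)$'' quietly hides the only nontrivial point of that direction (namely that $\Sat(J)$, and hence every trivial cofibration, lies in $\W(J)$, which requires closure under retracts that is not among the stated localizer axioms); this too is subsumed by the cited theorem.
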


\begin{proof}
See Theorem 1.4.3 of \cite{Cisinski}.
\end{proof}

\begin{paragr}
We will denote by $\W_\infty$ the $\Delta$-localizer of simplicial weak
homotopy equivalences. Simplicial weak homotopy equivalences will thus also
be called $\W_\infty$-equivalences.
\end{paragr}

\begin{paragr}
By the above theorem, from an accessible $A$-localizer $\W$, we obtain a
model category structure on $\pref{A}$. We will call this model category
structure the \ndef{$\W$-model category structure}. The weak equivalences of
the $\W$-model category structure are the elements of $\W$ and the cofibrations
are the monomorphisms. The fibrations (resp.~the fibrant objects) will be
called \ndef{$\W$-fibrations} (resp.~\ndef{$\W$-fibrant objects}).
\end{paragr}

\begin{rem}
The language of localizers is very related to the language of left
\hbox{Bousfield} localization. In particular, we prove in an appendix to
this paper that if $\W$ is an accessible $A$\nbd-localizer and $\Wp$ is an
accessible $A$\nbd-localizer generated by $\W$ and a class of
morphisms~$\clC$, then the $\Wp$-model category is the left Bousfield
localization of the $\W$\nbd-model category with respect to $\clC$.
\end{rem}

\begin{paragr}
We will say that a localizer $\W$ is \ndef{cartesian} if it is closed under
binary product.
\end{paragr}

\begin{prop}\label{prop:loc_cart_eq}
Let $\W$ be an accessible $A$-localizer. The following conditions are
equivalent:
\begin{enumerate}
\item the localizer $\W$ is cartesian;
\item the class of monomorphisms of $\smash{\pref{A}}$ belonging to $\W$ is closed
under binary product;
\item the $\W$-model category is cartesian closed.
\end{enumerate}
\end{prop}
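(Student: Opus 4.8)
The plan is to prove the cycle of implications $(1)\Rightarrow(2)\Rightarrow(3)\Rightarrow(1)$, using the theory of $A$-localizers and standard facts about cartesian closed model categories.

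The implication $(1)\Rightarrow(2)$ is immediate: the class of monomorphisms belonging to $\W$ is the intersection of $\W$ with the class of monomorphisms, and the latter is closed under binary product (a product of two monomorphisms of presheaves is a monomorphism, as this can be checked objectwise in $\Set$), so if $\W$ is closed under binary product then so is this intersection.

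For $(2)\Rightarrow(3)$, I would show that the pushout-product $i \opbox j$ of two monomorphisms $i$ and $j$ is again a cofibration (this is automatic, since monomorphisms of $\pref{A}$ are closed under pushout-product, again checking objectwise), and that it is moreover a trivial cofibration whenever $i$ or $j$ is. The key input is hypothesis (2): by a standard argument one reduces the claim about $i \opbox j$ being a weak equivalence, when $j$ is a trivial cofibration, to the case where $i$ is a generating cofibration, hence (after a cellular induction using the fact that trivial cofibrations are stable under pushout and transfinite composition, which is part of the definition of a localizer together with the 2-out-of-3 property) to showing that the product of a trivial cofibration with \emph{any} presheaf is a $\W$-equivalence. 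But the product of a monomorphism in $\W$ with an arbitrary presheaf $X$ is again a monomorphism in $\W$: writing $X$ as a colimit of representables and using the stability of monomorphisms-in-$\W$ under pushout and transfinite composition, this follows from hypothesis (2) applied with the second factor a representable, together with the retract-closure of $\W$. This gives the pushout-product axiom, and since the $\W$-model category is cofibrantly generated with cofibrations the monomorphisms, which are stable under pushout-product, the monoidal model category axioms hold; cartesian closedness of $\pref{A}$ as a category then upgrades this to a cartesian closed model category.

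For $(3)\Rightarrow(1)$, I would argue that in a cartesian closed model category, the product of two weak equivalences between cofibrant objects is a weak equivalence (by Ken Brown's lemma applied to the two-variable left Quillen functor $- \times -$), and in $\pref{A}$ every object is cofibrant since every monomorphism, in particular $\varnothing_{\pref{A}} \to X$, is a cofibration. Hence $\W$ is closed under binary product, i.e., $\W$ is cartesian. The main obstacle is the cellular induction in $(2)\Rightarrow(3)$: one must carefully leverage hypothesis (2) to pass from ``product with a representable preserves $\W$-monomorphisms'' to ``product with an arbitrary presheaf preserves $\W$-monomorphisms'', which is where the stability properties in the definition of a localizer are essential; everything else is formal.
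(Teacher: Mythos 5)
Your cycle $(1)\Rightarrow(2)\Rightarrow(3)\Rightarrow(1)$ is sound, and the heart of $(2)\Rightarrow(3)$ is the same pushout-product argument the paper uses; the differences are at the two ends of the cycle and in one overcomplicated middle step. The paper proves $(1)\Leftrightarrow(2)$ directly -- the nontrivial direction $(2)\Rightarrow(1)$ comes from factoring each $\W$-equivalence as a trivial cofibration followed by a trivial fibration and using that trivial fibrations, being pullback-stable, are closed under binary product -- and then shows $(3)\Rightarrow(2)$ by applying the pushout-product axiom to $U\to V$ and $\varnothing_{\pref{A}}\to T$. Your alternative closing move $(3)\Rightarrow(1)$ via Ken Brown's lemma and the cofibrancy of all objects of $\pref{A}$ is equally valid, and arguably cleaner since it avoids re-entering $(2)$.

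The one place where your write-up is weaker than it needs to be is precisely the step you single out as ``the main obstacle'': showing that the product of a trivial cofibration $j$ with an arbitrary presheaf $X$ is again a trivial cofibration. This is a non-obstacle. Hypothesis $(2)$ applies directly to the pair $(j,\id{X})$: the identity of $X$ is a monomorphism belonging to $\W$, so $j\times X=j\times\id{X}$ is a trivial cofibration outright, with no cellular induction. The induction over representables you sketch is, moreover, not rigorous as stated: a general colimit of representables is not presented by pushouts and transfinite compositions in a way that interacts simply with the functor $j\times(-)$, and a gluing-type lemma would be needed to make it work. With the direct observation in hand, $(2)\Rightarrow(3)$ collapses to the paper's short argument: in the pushout-product square, $U\times S\to V\times S$ and $U\times T\to V\times T$ are trivial cofibrations, hence so is the pushout $U\times T\to U\times T\amalg_{U\times S}V\times S$, and 2-out-of-3 identifies the comparison map to $V\times T$ as a trivial cofibration; no reduction to generating cofibrations or cellular induction is required.
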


\begin{proof}
By definition, the class of monomorphisms of $\pref{A}$ belonging to $\W$ is
the class of trivial cofibrations of the $\W$-model category. The
equivalence $(1) \Leftrightarrow (2)$ thus follows immediately from the fact
that monomorphisms and trivial fibrations are stable under product.

Let us prove the implication $(2) \Rightarrow (3)$. Let $U \to V$ and $S \to
T$ be two monomorphisms of $\pref{A}$. Consider the diagram
\[
\xymatrix{
U \times S \ar[r] \ar[d] & U \times T \ar[d] \ar@/^/[ddr] \\
V \times S \ar[r] \ar@/_/[drr] & U \times T \amalg_{U \times S} V \times S \ar[dr] \\
& & V \times T \pbox{.} \\
}
\]
The morphism $U \times T \amalg_{U \times S} V \times S \to V \times T$ is a
monomorphism (it is nothing but the inclusion of $U \times T \cup V \times
S$ into $V \times T$). Suppose moreover that the morphism $U \to V$ is a 
$\W$\nbd-equivalence. Then $U \times S \to V \times S$ and~$U \times T \to
V \times T$ are trivial cofibrations by $(2)$. It follows that $U \times T
\to U \times T \amalg_{U \times S} V \times S$ is a trivial cofibration and,
by the 2-out-of-3 property, that $U \times T \amalg_{U \times S} V \times S
\to V \times T$ is a $\W$-equivalence, thereby proving~$(3)$.

Let us show the converse. Let $U \to V$ be a trivial cofibration and let $T$
be a presheaf on $A$. It clearly suffices to show that $U \times T \to V
\times T$ is a trivial cofibration. By applying~$(3)$ to $U \to V$ and to
the unique morphism $\varnothing_{\pref{A}} \to T$, we obtain that the
morphism $U \times T \amalg_{U \times \varnothing} V \times
\varnothing_{\pref{A}} \to V \times T$ is a trivial cofibration. But this
morphism is nothing but $U \times T \to V \times T$.
\end{proof}

\begin{prop}\label{prop:loc_cart_clos}
Let $\W$ be an $A$-localizer, let $\Wp$ be an $A'$-localizer and let $F :
\pref{A} \to \pref{A'}$ be a functor. Suppose that $F$ respects binary
products and that $\W = F^{-1}(\Wp)$. Then if the localizer $\Wp$ is
cartesian, so is the localizer $\W$.
\end{prop}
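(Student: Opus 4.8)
The plan is to reduce the statement to a routine check using the characterization of cartesian localizers. Since $\Wp$ is cartesian, Proposition~\ref{prop:loc_cart_eq} gives that $\Wp$ is an accessible $A'$-localizer whose associated model category is cartesian closed; in any case, what we shall use directly is that $\Wp$ is closed under binary product. We want to show the same for $\W = F^{-1}(\Wp)$.

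First I would take two $\W$-equivalences $f : U \to U'$ and $g : V \to V'$ in $\pref{A}$ and consider the product $f \times g : U \times V \to U' \times V'$ in $\pref{A}$. By hypothesis, $F(f)$ and $F(g)$ lie in $\Wp$. Since $F$ respects binary products, there is a canonical isomorphism $F(U \times V) \cong F(U) \times F(V)$, natural in both variables, under which $F(f \times g)$ is identified with $F(f) \times F(g) : F(U) \times F(V) \to F(U') \times F(V')$. Because $\Wp$ is cartesian, $F(f) \times F(g)$ belongs to $\Wp$, hence so does $F(f \times g)$. By the definition $\W = F^{-1}(\Wp)$, this means $f \times g \in \W$, which is exactly the statement that $\W$ is closed under binary product, i.e., that $\W$ is cartesian.

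One small point to be careful about: the above uses that $F$ being ``respecting binary products'' supplies the natural isomorphism $F(X \times Y) \cong F(X) \times F(Y)$ compatible with morphisms in each variable, so that the product of two morphisms is sent to the product of their images. This is immediate from the universal property of the product once one knows $F$ preserves it, and it is the only structural input beyond the purely set-theoretic identity $F^{-1}(\Wp)$ being closed under an operation that $\Wp$ is closed under and that $F$ transports. There is essentially no obstacle here; the ``hard part'', such as it is, is merely making explicit the identification of $F(f \times g)$ with $F(f) \times F(g)$, which is a naturality diagram chase of no real difficulty.
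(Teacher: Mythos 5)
Your proof is correct and is exactly the argument the paper has in mind: the paper simply declares the statement "an immediate consequence of the definition of cartesian localizers," and what you have written is that immediate consequence spelled out (the identification $F(f\times g)\cong F(f)\times F(g)$ from preservation of binary products, followed by the set-theoretic observation that $F^{-1}(\Wp)$ inherits closure under an operation that $F$ transports).
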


\begin{proof}
This is an immediate consequence of the definition of cartesian localizers.
\end{proof}

\begin{paragr}
An \ndef{interval of $\pref{A}$} consists of a presheaf $I$ on $A$
and two morphisms $\canseg{0}, \canseg{1} : \smash{e_{\pref{A}}} \to I$. We will
often denote such an interval simply by $I$. If $I$ is an interval of
$\pref{A}$, we will denote by $\{\epsilon\}$, where $\epsilon = 0, 1$, the image
of $\canseg{\epsilon}$ in $I$. We will denote by $\bordseg{I}$ the union
$\{0\} \cup \{1\}$ and by $\mbordseg{I} : \bordseg{I} \to I$ the canonical
inclusion. If $X$ is a presheaf on $A$ and $\epsilon = 0, 1$, we will
denote by $\cansegd{\epsilon}{X} : X \to X \times I$ the morphism $X \times
\canseg{\epsilon}$.

An interval $I$ is \ndef{separating} if the intersection of $\{0\}$ and
$\{1\}$ is $\smash{\varnothing_{\pref{A}}}$. The interval $I$ is \ndef{injective} if
the morphism $I \to e_{\pref{A}}$ is a trivial fibration of $\pref{A}$.
\end{paragr}

\begin{paragr}\label{paragr:an_ext}
Let $I$ be a separating interval of $\pref{A}$. A class of \ndef{anodyne
$I$-extensions} is a class $\An$ of monomorphisms of $\pref{A}$ satisfying
the following conditions:
\begin{enumerate}
\item there exists a \emph{set} $S$ such that $\An = \Sat(S)$;
\item the canonical inclusion 
  \[ U \times I \cup V \times \{\epsilon\} \to V \times I \]
  is in $\An$ for every monomorphism $U \to V$ of $\pref{A}$
  and~$\epsilon = 0, 1$;
\item the canonical inclusion
  \[ U \times I \cup V \times \bordseg{I} \to V \times I \]
  is in $\An$ for every $U \to V$ in $\An$.
\end{enumerate}

Let $S$ be a set of monomorphisms of $\pref{A}$. By Proposition 1.3.13 of
\cite{Cisinski}, there exists a smallest class of anodyne $I$-extensions
containing $S$. We will denote this class by $\An_I(S)$ and we will call
its elements \ndef{anodyne $(S, I)$-extensions}.

The class of anodyne $(S, I$)-extensions can be described in the following way.
We define by induction on $k \ge 0$ a set $\Lambda^k_I(S)$ of monomorphisms
of $\pref{A}$ by setting
\[
  \Lambda^0_I(S) = S, \quad \Lambda^{k+1}_I(S) = \Lambda_I(\Lambda^k_I(S)),
\]
where
\[ \Lambda_I(T) = \{U \times I \cup V \times \bordseg{I} \to V \times I;\,\,
U \to V \in T\}. \]
By definition, the set $\Lambda^\infty_I(S)$ is the union of the
$\Lambda^k_I(S)$, $k \ge 0$. Let $\cM$ be any cellular model of $\pref{A}$.
Then the class of anodyne $(S, I)$-extensions is the saturated class generated by
\[
  \Lambda^\infty_I(S) \cup 
  \{U \times I \cup V \times \{\epsilon\} \to V \times I;
    \,\, U \to V \in \cM,\,\, \epsilon = 0,1\}.
\]
Note that this description is not exactly the one given in paragraph 1.3.12
of \cite{Cisinski}. Nevertheless, it follows easily from Remark 1.3.15 of
ibid.~that it is correct.
\end{paragr}

\begin{rem}
By Section 2 of Chapter IV of \cite{GZ}, in the case where $A = \Delta$, $I
= \Delta_1$ and $S$ is empty, the class of anodyne $(S, I)$-extensions is
precisely the class of simplicial anodyne extensions. See also paragraph
2.1.3 of \cite{Cisinski}.
\end{rem}

\begin{lemma}\label{lemma:an_ext}
Let $S$ be a set of monomorphisms of $\pref{A}$ and let $I$ be a
separating interval. Let $\clC$ be a class of monomorphisms of $\pref{A}$
satisfying the following conditions:
\begin{enumerate}
\item $\clC$ contains $S$;
\item $\clC$ is saturated;
\item if $u : X \to Y$ and $v : Y \to Z$ are monomorphisms of $\pref{A}$
such that $vu$ and $u$ are in $\clC$, then $v$ is in $\clC$;
\item the morphisms $\cansegd{\epsilon}{X} : X \to X \times I$ belong to
  $\clC$
for every presheaf $X$ on $A$ and $\epsilon = 0, 1$.
\end{enumerate}
Then $\clC$ contains the class of anodyne $(S, I)$-extensions.
\end{lemma}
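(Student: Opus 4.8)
The plan is to show that the hypothesis (4) on $\clC$, combined with the stability properties (2) and (3), forces $\clC$ to contain both pieces of the explicit generating set for the anodyne $(S,I)$-extensions described in paragraph~\ref{paragr:an_ext}, namely $\Lambda^\infty_I(S)$ and the set $\{U \times I \cup V \times \{\epsilon\} \to V \times I;\ U \to V \in \cM,\ \epsilon = 0,1\}$ for a fixed cellular model $\cM$. Since $\clC$ is saturated by (2), containing these generators is enough: the anodyne $(S,I)$-extensions are by definition the saturation of that generating set, and the saturation of a set contained in a saturated class is again contained in it.

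First I would treat the ``endpoint'' generators $U \times I \cup V \times \{\epsilon\} \to V \times I$ for $U \to V$ a monomorphism. The key observation is the factorization
\[
X \times I \cup Y \times \{\epsilon\} \;\hookrightarrow\; Y \times I,
\qquad
Y \times \{\epsilon\} \cong Y \;\xrightarrow{\ \cansegd{\epsilon}{Y}\ }\; Y \times I,
\]
where the composite $Y \to Y \times I$ factors through $X \times I \cup Y \times \{\epsilon\}$ via $\cansegd{\epsilon}{Y}$ (its image lands in the $Y \times \{\epsilon\}$ part). By (4) the composite $\cansegd{\epsilon}{Y} : Y \to Y \times I$ is in $\clC$, and the first leg $Y \to X \times I \cup Y \times \{\epsilon\}$ is also in $\clC$ (it is $\cansegd{\epsilon}{Y}$ corestricted, and one checks it is a section-up-to of the monomorphism $X\times I\cup Y\times\{\epsilon\}\hookrightarrow Y\times\{\epsilon\}$... more carefully: $Y\times\{\epsilon\}\to X\times I\cup Y\times\{\epsilon\}$ is the base change of $\cansegd{\epsilon}{X}$ along $X\to Y$? no). Let me instead argue: $\cansegd{\epsilon}{X}$ is in $\clC$; its pushout along $X\to X\times I\cup\cdots$ — the cleanest route is that $Y\times\{\epsilon\}\hookrightarrow X\times I\cup Y\times\{\epsilon\}$ is a pushout of $\cansegd{\epsilon}{X}:X\to X\times I$ along $X\to Y\cong Y\times\{\epsilon\}$, hence lies in $\clC$ by saturation (2). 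Then $\cansegd{\epsilon}{Y}=(X\times I\cup Y\times\{\epsilon\}\hookrightarrow Y\times I)\circ(Y\times\{\epsilon\}\hookrightarrow X\times I\cup Y\times\{\epsilon\})$, and the first and last maps are monomorphisms in $\clC$, so by the cancellation property (3) the middle map $X\times I\cup Y\times\{\epsilon\}\to Y\times I$ is in $\clC$. This handles the endpoint generators for every monomorphism $U\to V$, in particular those coming from $\cM$.

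Next I would handle $\Lambda^\infty_I(S)$ by induction on $k$, showing $\Lambda^k_I(S)\subseteq\clC$ for all $k\ge 0$. The base case $\Lambda^0_I(S)=S\subseteq\clC$ is hypothesis (1). For the inductive step, suppose $U\to V$ is in $\Lambda^k_I(S)\subseteq\clC$; I must show $U\times I\cup V\times\bordseg{I}\to V\times I$ is in $\clC$. Here $\bordseg{I}=\{0\}\cup\{1\}$, and since $I$ is separating, $V\times\bordseg{I}\cong V\amalg V$. I would build $U\times I\cup V\times\bordseg{I}$ up from $U\times I$ in two steps: $U\times I\hookrightarrow U\times I\cup V\times\{0\}$, which is a pushout of the endpoint-generator-type map $U\to U\times I\cup V\times\{0\}$ handled above... actually it is the pushout of $U\times I\cup V\times\{0\}\to V\times I$? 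The right framing: $U\times I\hookrightarrow U\times I\cup V\times\{0\}$ is the pushout along $U\cong U\times\{0\}\to V\times\{0\}$ of... the inclusion isn't quite a pushout of $\cansegd{0}{}$; rather $U\times I\cup V\times\{0\}\to V\times I$ and $U\times I\to V\times I$ together with (3) gives what I want once I know $U\times I\cup V\times\{0\}\to V\times I$ is in $\clC$, which is exactly the previous paragraph applied to the monomorphism $U\to V$ (in $\clC\supseteq\Lambda^k_I(S)$). So: $U\times I\to V\times I$ is the composite $U\times I\to U\times I\cup V\times\bordseg I\to V\times I$; the second map is of endpoint type (add both endpoints, which is an iterated endpoint-attachment, or directly the map $\Lambda_I$-type), hence in $\clC$; and the composite $U\times I\to V\times I$ equals $\cansegd{?}{}$-free but is $V$-relatively built from $U\to V$... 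Here I would use that $U\times I\to V\times I$ is in $\clC$ because it is a pushout: no — it is simply $(U\to V)\times I$, and I'd show it's in $\clC$ by observing $U\times I\to V\times I$ is the pushout of $U\to V$ along $U\to U\times I$? Not a pushout. The honest tool is: $U\times I\to V\times I$ sits in a commutative square with $U\to V$ (via $\cansegd{0}{}$ on both sides), exhibiting $\cansegd0{V}\circ(U\to V)=(U\times I\to V\times I)\circ\cansegd0U$; since $\cansegd0U,\cansegd0V\in\clC$ and $U\to V\in\clC$, two applications of (3) on the monomorphism composites give $U\times I\to V\times I\in\clC$. Then one more application of (3) to $U\times I\to U\times I\cup V\times\bordseg I\to V\times I$ — whose composite and whose second leg are now known to be in $\clC$ — yields the first leg $U\times I\cup V\times\bordseg I\to V\times I\in\clC$, completing the induction.

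Finally, $\clC$ contains $\Lambda^\infty_I(S)=\bigcup_k\Lambda^k_I(S)$ and the endpoint generators over $\cM$, hence contains their union, hence — being saturated by (2) — contains the saturation of that union, which by paragraph~\ref{paragr:an_ext} is precisely the class of anodyne $(S,I)$-extensions. The main obstacle is purely bookkeeping: getting each of the small inclusions into the right shape (a pushout of a known generator, or a term in a composite both of whose other terms are known) so that properties (2) and (3) apply; there is no serious idea beyond hypothesis (4) plus the combinatorics of products with the separating interval $I$. One should take some care that (3) requires \emph{both} $vu$ and $u$ to be monomorphisms in $\clC$, so at each step I must check the relevant maps are monomorphisms, which they all are since products and unions of monomorphisms of presheaves are monomorphisms.
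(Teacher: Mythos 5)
Your overall strategy is the right one (the paper itself only cites Lemma~1.3.16 of Cisinski here, and your plan --- show that $\clC$ contains the explicit generating set of paragraph~\ref{paragr:an_ext} and conclude by saturation --- is the standard argument), and your first paragraph is correct and complete: $Y \times \{\epsilon\} \to X \times I \cup Y \times \{\epsilon\}$ is a pushout of $\cansegd{\epsilon}{X}$, so hypothesis (4), saturation (2), and right cancellation (3) applied to the factorization of $\cansegd{\epsilon}{Y}$ give the single-endpoint generators for \emph{every} monomorphism $X \to Y$.

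The inductive step, however, has a genuine gap. To extract $w : U \times I \cup V \times \bordseg{I} \to V \times I$ from the factorization $U \times I \xrightarrow{\,j\,} U \times I \cup V \times \bordseg{I} \xrightarrow{\,w\,} V \times I$ via (3), you need the composite $wj$ \emph{and the first factor $j$} to be in $\clC$; you instead assert that ``the composite and the second leg are known'' and that (3) ``yields the first leg $U\times I\cup V\times\bordseg I\to V\times I$'' --- but that map is the second leg, and (3) is right cancellation, not left cancellation. The map $j$ is never shown to be in $\clC$, and this is exactly where the inductive hypothesis $U \to V \in \Lambda^k_I(S) \subseteq \clC$ must enter through pushout-stability rather than through (3): the inclusion $U \times I \hookrightarrow U \times I \cup V \times \{0\}$ is the pushout of $U \times \{0\} \to V \times \{0\}$, i.e.\ of $U \to V$ itself, along $\cansegd{0}{U}$ (the intersection of $U \times I$ and $V \times \{0\}$ inside $V \times I$ is $U \times \{0\}$), and likewise $U \times I \cup V \times \{0\} \hookrightarrow U \times I \cup V \times \bordseg{I}$ is a pushout of $U \to V$ along $U \times \{1\} \to U \times I \cup V \times \{0\}$ (separatedness gives $(V\times\{0\})\cap(V\times\{1\}) = \varnothing_{\pref{A}}$, so the relevant intersection is $U \times \{1\}$). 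You circle around this identification (``is the pushout along $U \cong U \times \{0\} \to V \times \{0\}$ of\ldots'') but never complete it, and the substitute claims you offer --- that the second leg is ``of endpoint type'' or an ``iterated endpoint-attachment'' --- are false: the two-endpoint map is neither one of, nor a composite of pushouts of, the single-endpoint generators from your first paragraph. Once $j \in \clC$ is in hand (a composite of two pushouts of $U \to V$, hence in $\clC$ by (2)), your computation that $U \times I \to V \times I \in \clC$ (which needs closure of $\clC$ under composition --- a consequence of saturation --- followed by a single application of (3), not two) finishes the induction by one correctly oriented application of (3).
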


\begin{proof}
See Lemma 1.3.16 of \cite{Cisinski}.
\end{proof}

\begin{paragr}
Let $S$ be a set of monomorphisms of $\pref{A}$ and let $I$ be a separating
interval of $\pref{A}$. A morphism of $\pref{A}$ will be said to be a
\ndef{naive $(S, I)$-fibration} if it has the right lifting property with
respect to the class of anodyne $(S, I)$-extensions. A presheaf $X$ on
$A$ will be said to be \ndef{$(S, I)$-fibrant} if the morphism $X \to
e_{\pref{A}}$ is a naive $(S, I)$-fibration.
\end{paragr}

\begin{thm}\label{thm:naive_fib}
Let $S$ be a set of monomorphisms of $\pref{A}$ and set $\W = \W(S)$.
Let $J$ be an injective separating interval of $\pref{A}$. Then, if $f$ is a
morphism whose target is a $\W$-fibrant object, then $f$ is a $\W$-fibration
if and only if it is a naive $(S, J)$-fibration. In particular, the class of
$\W$-fibrant objects and of $(S, J)$-fibrant objects coincide.
\end{thm}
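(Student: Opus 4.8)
The plan is to deduce this from Cisinski's general machinery as presented in the excerpt, by exhibiting the $\W$-model category structure as arising from a class of anodyne extensions for a well-chosen interval. Since $J$ is an injective separating interval, I would consider the class $\An_J(S)$ of anodyne $(S,J)$-extensions, whose naive $(S,J)$-fibrations are the maps with the right lifting property with respect to $\An_J(S)$. The first step is to show $\An_J(S) \subseteq \W$: the anodyne $(S,J)$-extensions are monomorphisms, $S \subseteq \W(S) = \W$ by definition of the generated localizer, and the class of monomorphisms in $\W$ satisfies conditions (1)--(3) of Lemma~\ref{lemma:an_ext} automatically (it is saturated and has the two-out-of-three-type property among monomorphisms because $\W$ has two-out-of-three); condition (4) of that lemma holds because the maps $\cansegd{\epsilon}{X} : X \to X \times J$ are $\W$-equivalences, which follows since $J \to e_{\pref{A}}$ is a trivial fibration (hence in $\W$), so $X \times J \to X$ is a trivial fibration and thus in $\W$, and then $\cansegd{\epsilon}{X}$ is a $\W$-equivalence by two-out-of-three. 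Hence Lemma~\ref{lemma:an_ext} gives $\An_J(S) \subseteq \W$, so every $\W$-fibration (having RLP with respect to all trivial cofibrations, in particular all maps in $\An_J(S)$) is a naive $(S,J)$-fibration; in particular every $\W$-fibrant object is $(S,J)$-fibrant.

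The substantive direction is the converse: a naive $(S,J)$-fibration whose target is $\W$-fibrant is a $\W$-fibration. For this I would use the standard Cisinski-style argument that for an injective interval, a naive fibration $f : X \to Y$ between fibrant objects can be factored, or rather that $f$ has a functorial path-object/mapping-cocylinder factorization built from $J$ which exhibits it as a retract of a $\W$-fibration. Concretely: form $Y^J = Y \times J$ and the associated cocylinder factorization $X \to P_f \to Y$ of $f$, where $P_f = X \times_Y Y^J$ (the path object), the first map a section of a trivial fibration, and the second map a naive $(S,J)$-fibration; one checks using the anodyne-extension axioms (2) and (3) and injectivity of $J$ that the second map is a $\W$-fibration because its target $Y$ is $\W$-fibrant. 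Then a retract argument, using that $X \to P_f$ admits a retraction since $f$ has RLP with respect to the relevant anodyne extension $X \to P_f$ (this is where we need $f$ to be a naive fibration and $Y$ fibrant), shows $f$ is a retract over $Y$ of the $\W$-fibration $P_f \to Y$, hence itself a $\W$-fibration. The ``in particular'' clause for fibrant objects is the special case $Y = e_{\pref{A}}$, noting $e_{\pref{A}}$ is always $\W$-fibrant.

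I expect the main obstacle to be verifying cleanly that the second map in the cocylinder factorization is a genuine $\W$-fibration rather than merely a naive $(S,J)$-fibration — this is precisely the non-formal input and is exactly the kind of statement Cisinski proves in generality (it is essentially his characterization of fibrations between fibrant objects for a homotopical structure generated by an interval). Rather than reprove it, I would cite the relevant result from \cite{Cisinski} — this is Theorem~3.4.44 (or the surrounding results in §3.4) of op.~cit., which states exactly that for the model structure generated by an injective separating interval $J$ and a set $S$, the fibrations with fibrant target coincide with the naive $(S,J)$-fibrations with that target. The only thing to check by hand is that our $\W$-model structure from Theorem~\ref{thm:mod_str_loc} coincides with Cisinski's interval-generated model structure for $(S,J)$, which again is in \cite{Cisinski} given that $\W = \W(S)$; with that identification the theorem is immediate.
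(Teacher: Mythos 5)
Your proposal is correct and ultimately takes the same route as the paper: the paper's entire proof is to invoke Corollary~1.4.18 of \cite{Cisinski} (identifying the $\W(S)$-model structure with the model structure Cisinski associates to $S$ and the injective separating interval $J$) and then Proposition~1.3.36 of op.~cit.\ (naive $(S,J)$-fibrations with fibrant target are fibrations), which is exactly the identification-plus-citation you land on in your final paragraph, though the reference numbers you guess are off. Your extra self-contained verification of the easy direction via Lemma~\ref{lemma:an_ext} is sound, and the only slip in your (non-load-bearing) sketch of the converse is writing the cocylinder as $Y \times J$ where it should be the internal hom $\Homi(J,Y)$.
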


\begin{proof}
By Corollary 1.4.18 of \cite{Cisinski}, the $\W$-model category is
the model category associated to $S$ and $J$ in the sense of Theorem 1.3.22
of ibid. The result then follows from Proposition~1.3.36 of ibid.
\end{proof}

From now on, we fix an $A$-localizer $\W$.

\begin{paragr}
We will denote by
\[
A \xleftarrow{p} A \times \Delta \xrightarrow{q} \Delta
\]
the two canonical projections. They induce functors
\[
\pref{A} \xrightarrow{p^*} \pref{A \times \Delta}
\xleftarrow{q^*} \pref{\Delta}.
\]
These functors admit left and right adjoints and hence respect limits and
colimits. In particular, they preserve monomorphisms.

We will denote by
\[
i_0 : A \to A \times \Delta
\]
the functor defined by
\[ i_0(a) = (a, \Delta_0). \]
It follows from the fact that $\Delta_0$ is a terminal object of $\Delta$
that the functor
\[ i^*_0 : \pref{A \times \Delta} \to \pref{A} \]
is right adjoint to $p^*$.
\end{paragr}

\begin{paragr}\label{paragr:def_simp_comp}
We will say that a morphism $f : X \to Y$ of $\pref{A \times \Delta}$ is a
\ndef{horizontal equivalence} if for every $n \ge 0$, the morphism
$f_{\bullet, n} : X_{\bullet, n} \to Y_{\bullet, n}$ is a $\W$-equivalence.
We will denote by~$\Whor$ the class of horizontal equivalences. If follows
from the existence of the injective model category structure for
combinatorial model categories that if $\W$ is accessible, then $\Whor$ is
an accessible $(A \times \Delta)$-localizer.

We will say that a morphism $f : X \to Y$ of $\pref{A \times \Delta}$ is a
\ndef{vertical equivalence} if for every object $a$ of $A$, the morphism
$f_{a, \bullet} : X_{a, \bullet} \to Y_{a, \bullet}$ is a
$\W_\infty$-equivalence. We will denote by~$\Wvert$ the class of vertical
equivalences. If follows again from the existence of the injective model
category structure for combinatorial model categories that if $\W$ is
accessible, then $\Wvert$ is an accessible $(A \times \Delta)$-localizer.

The \ndef{simplicial completion} of $\W$ is the $(A \times \Delta)$-localizer
generated by 
\[ \Whor \cup \{X \times q^*(\Delta_1) \to X;\,\, X \in \Ob(\pref{A
\times \Delta})\}. \]
We will denote it by $\WDelta$.
\end{paragr}

\begin{rem}\label{rem:hor_vert}
To make sense of the terminology ``vertical weak equivalences'' and ``horizontal
weak equivalences'', one has to think of a presheaf on $A \times \Delta$ as
a grid of sets whose columns are indexed by objects of $A$ and whose rows
are indexed by integers $n \ge 0$. 
\end{rem}

\begin{prop}
If the $A$-localizer $\W$ is accessible, then the $(A \times
\Delta)$-localizer $\WDelta$ is accessible. 
\end{prop}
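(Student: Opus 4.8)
The assertion is that $\WDelta$ is accessible. By construction $\WDelta$ is the localizer generated by $\Whor$ together with the \emph{proper class} $\mathcal{P}=\{X\times q^*(\Delta_1)\to X\,;\,X\in\Ob(\pref{A\times\Delta})\}$, so the whole difficulty is to show that, over $\Whor$, this proper class may be replaced by a \emph{set}. Since $\W$ is accessible, $\Whor$ is an accessible $A\times\Delta$-localizer, hence generated by a set $S_0$. The plan is to produce a set $T$ of monomorphisms of $\pref{A\times\Delta}$ with $\W(S_0\cup T)=\WDelta$; accessibility then follows at once. The choice of $T$ rests on the observation that $q^*(\Delta_1)$ is a \emph{separating} interval of $\pref{A\times\Delta}$: as $q^*$ has adjoints on both sides it preserves limits and colimits, so it carries the standard separating interval $\Delta_1$ of $\pref{\Delta}$ to an interval with $\{0\}\cap\{1\}=q^*(\varnothing_{\pref{\Delta}})=\varnothing_{\pref{A\times\Delta}}$. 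Fixing a cellular model $\cM$ of $\pref{A\times\Delta}$, I would take for $T$ the canonical generating set of the class of anodyne $(\varnothing,q^*(\Delta_1))$-extensions, namely
\[
T=\{\,U\times q^*(\Delta_1)\cup V\times\{\epsilon\}\to V\times q^*(\Delta_1)\,;\,(U\to V)\in\cM,\ \epsilon=0,1\,\},
\]
so that, by paragraph~\ref{paragr:an_ext} (note that $\Lambda^\infty_{q^*(\Delta_1)}(\varnothing)=\varnothing$), the saturation $\Sat(T)$ is precisely the class of anodyne $(\varnothing,q^*(\Delta_1))$-extensions. Put $\Wp=\W(S_0\cup T)$.

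First I would check that $S_0\cup T\subseteq\WDelta$, so that $\Wp\subseteq\WDelta$. This is clear for $S_0\subseteq\Whor$. For $T$: each projection $Z\times q^*(\Delta_1)\to Z$ lies in $\mathcal{P}\subseteq\WDelta$ and admits the endpoint inclusion $\cansegd{\epsilon}{Z}\colon Z\to Z\times q^*(\Delta_1)$ as a section, so $\cansegd{\epsilon}{Z}\in\WDelta$ by 2-out-of-3; and for $(U\to V)\in\cM$ the morphism $V\times\{\epsilon\}\to U\times q^*(\Delta_1)\cup V\times\{\epsilon\}$ is a pushout of $\cansegd{\epsilon}{U}$ along $U\to V$, hence a monomorphism belonging to $\WDelta$, and it is followed by the corresponding element of $T$ to produce $\cansegd{\epsilon}{V}$, so a second use of 2-out-of-3 puts that element of $T$ in $\WDelta$.

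The substance of the proof is the reverse inclusion $\WDelta\subseteq\Wp$, for which it suffices to see that $\Whor\cup\mathcal{P}\subseteq\Wp$. Trivially $\Whor=\W(S_0)\subseteq\Wp$. For $\mathcal{P}$ I would pass to the $\Wp$-model category, which exists by Theorem~\ref{thm:mod_str_loc} because $S_0\cup T$ is a set; its trivial cofibrations are the monomorphisms belonging to $\Wp$, a class which coincides with $l(\text{fibrations})$, hence is saturated, and which contains $T\subseteq\Wp$, so it contains $\Sat(T)$, that is, every anodyne $(\varnothing,q^*(\Delta_1))$-extension. Being a class of anodyne $q^*(\Delta_1)$-extensions, $\Sat(T)$ contains, by condition~(2) of paragraph~\ref{paragr:an_ext} applied to the monomorphism $\varnothing_{\pref{A\times\Delta}}\to X$ (and the identification $\varnothing\times q^*(\Delta_1)\cup X\times\{\epsilon\}=X\times\{\epsilon\}\cong X$), the endpoint inclusion $\cansegd{\epsilon}{X}\colon X\to X\times q^*(\Delta_1)$ for every presheaf $X$. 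Thus $\cansegd{0}{X}$ is a trivial cofibration, so the projection $X\times q^*(\Delta_1)\to X$, having it as a section, is a $\Wp$-equivalence by 2-out-of-3; hence $\mathcal{P}\subseteq\Wp$. Combining the two inclusions gives $\WDelta=\Wp=\W(S_0\cup T)$, which is generated by a set and therefore accessible.

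I expect the only genuinely delicate point to be the identification, in the third step, of the endpoint inclusions $\cansegd{\epsilon}{X}$ as anodyne $(\varnothing,q^*(\Delta_1))$-extensions --- though, as indicated, it drops straight out of the definition of a class of anodyne $I$-extensions. Everything else is formal: the existence of $S_0$ and of a cellular model of $\pref{A\times\Delta}$, the fact that $q^*(\Delta_1)$ is separating, the 2-out-of-3 and pushout bookkeeping, and the observation that, $S_0\cup T$ being a \emph{set}, one may legitimately invoke Theorem~\ref{thm:mod_str_loc} to run model-category arguments in $\Wp$ while proving accessibility of $\WDelta$.
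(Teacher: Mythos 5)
Your proof is correct. Note, however, that the paper itself does not argue this proposition at all: it simply cites Proposition 2.3.24 of Cisinski's book, so you have supplied a self-contained argument where the paper delegates. Your route --- replace the proper class $\{X\times q^*(\Delta_1)\to X\}$ by the set $T$ of pushout-products of a cellular model with the endpoint inclusions of the separating interval $q^*(\Delta_1)$, then recover all endpoint inclusions $\cansegd{\epsilon}{X}$ from $\Sat(T)$ via the anodyne-extension formalism of paragraph~\ref{paragr:an_ext} --- is essentially the same mechanism the paper deploys a section later to prove that $\WfRezk$ is accessible (there with the interval $p^*(J)$ in place of $q^*(\Delta_1)$), so your argument is well aligned with the paper's toolkit even though it is not the proof the paper points to. All the individual steps check out: $q^*(\Delta_1)$ is indeed separating because $q^*$ is a precomposition functor and the images $\{0\}$, $\{1\}$ are computed objectwise; the elements of $T$ are monomorphisms lying in $\WDelta$ by the section/2-out-of-3 and pushout bookkeeping you describe; the trivial cofibrations of the $\W(S_0\cup T)$-model category form a saturated class containing $T$, hence contain $\Sat(T)=\An_{q^*(\Delta_1)}(\varnothing)$, which contains every $\cansegd{\epsilon}{X}$ by condition~(2) of the definition of a class of anodyne $I$-extensions. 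The only cosmetic remark is that you could bypass the anodyne-extension vocabulary entirely by observing directly (as the paper does for $\WfRezk$) that the class of monomorphisms $u$ with $u\opboxp\canseg{\epsilon}\in\Sat(T)$ is saturated and contains the cellular model, hence contains $\varnothing\to X$; but invoking paragraph~\ref{paragr:an_ext} as you do is equally legitimate.
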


\begin{proof}
See Proposition 2.3.24 of \cite{Cisinski}.
\end{proof}

\begin{prop}\label{prop:reg_compl_simpl}
If $A$ is a regular skeletal Reedy category, then the simplicial
completion of $\W$ is generated by $\Whor$ and $\Wvert$.
\end{prop}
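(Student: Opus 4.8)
The plan is to show the two localizers coincide by proving each contains the generators of the other. Recall $\WDelta$ is generated by $\Whor$ together with the maps $X \times q^*(\Delta_1) \to X$; so the content is to compare this with the localizer generated by $\Whor \cup \Wvert$. One inclusion is almost immediate: the projections $X \times q^*(\Delta_1) \to X$ are vertical equivalences (for each $a$, the map $X_{a,\bullet} \times \Delta_1 \to X_{a,\bullet}$ is a simplicial weak homotopy equivalence, $\Delta_1$ being contractible), so the localizer generated by $\Whor \cup \Wvert$ contains all the generators of $\WDelta$, whence $\WDelta \subseteq \W(\Whor \cup \Wvert)$. The real work is the reverse inclusion: every vertical equivalence must be shown to lie in $\WDelta$.

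For that, I would use the injective model structure presenting $\Wvert$ together with Lemma~\ref{lemma:an_ext}. Since $A$ is a regular skeletal Reedy category, $A \times \Delta$ is also such a Reedy category, so by Proposition~\ref{prop:cell_mod_Reedy} we have an explicit cellular model, and one can choose a convenient injective separating interval, namely $J = q^*(\Delta_1) \times (\text{injective interval on } A)$, or more simply work with $S$ a set generating $\Wvert$ as an accessible localizer and apply Theorem~\ref{thm:naive_fib}. The key point is that a vertical equivalence which is moreover a monomorphism is a trivial cofibration for $\Wvert$; I want to see that such a map is a $\WDelta$-equivalence. By the small object argument, it suffices to treat the generating trivial cofibrations of the $\Wvert$-model structure, and — since $\Wvert$ is the localizer of the injective model structure built from $q^*(\Delta_1)$-anodyne extensions in the $\Delta$-direction — these are (retracts of transfinite compositions of pushouts of) the maps of the form $U \times J \cup V \times \{\epsilon\} \to V \times J$ and the $q^*$-image of the simplicial horn inclusions. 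The first kind are $\WDelta$-equivalences because $\WDelta$ contains $\Whor$ and the projections off $q^*(\Delta_1)$, so $J \to e$ is a $\WDelta$-equivalence and the usual pushout-product argument applies. The second kind, $q^*(\mhorn{n}{k})$, are $\WDelta$-equivalences precisely because the generators $X \times q^*(\Delta_1) \to X$ force $q^*$ to send simplicial weak homotopy equivalences into $\WDelta$: this is exactly the defining property of the simplicial completion — applying Lemma~\ref{lemma:an_ext} with $\clC = \{\,f \in \pref{A\times\Delta} : q^*\text{-relevant map is a }\WDelta\text{-eq}\,\}$, or invoking the characterization of $\WDelta$ already recorded in the cited results of \cite{Cisinski}, shows $q^*(\W_\infty) \subseteq \WDelta$.

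So the skeleton of the argument is: (1) $\WDelta \subseteq \W(\Whor \cup \Wvert)$, trivially; (2) $q^*$ carries $\W_\infty$-equivalences into $\WDelta$, using the generators $X \times q^*(\Delta_1) \to X$ and Lemma~\ref{lemma:an_ext}; (3) every monomorphic vertical equivalence is a transfinite composite of pushouts of maps built from $q^*$ of horn inclusions and from interval inclusions, each of which is in $\WDelta$ by (2) and by $\Whor \subseteq \WDelta$ plus the pushout-product trick; (4) a general vertical equivalence is factored, via the $\Wvert$-model structure or via a mapping-cylinder argument, into a monomorphic vertical equivalence followed by a $\Wvert$-fibration between $\Wvert$-fibrant objects that is also a vertical equivalence, hence a trivial fibration, hence in $\WDelta$; then 2-out-of-3 finishes. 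The regularity hypothesis on $A$ is what makes $A \times \Delta$ a regular skeletal Reedy category with a manageable cellular model and an explicit generating set for $\Wvert$, which is essential for step (3).

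The main obstacle I anticipate is step (3)–(4): controlling the generating trivial cofibrations of the injective model structure for $\Wvert$ finely enough to see they are $\WDelta$-equivalences, since the injective model structure does not come with an explicit generating set of trivial cofibrations in general. The clean way around this is to avoid the injective structure and instead use that $\Wvert = \W(q^*(\Lambda))$ as an $A \times \Delta$-localizer — i.e.\ $\Wvert$ is already the localizer generated by the $q^*$-images of the simplicial horns (together with, possibly, the monomorphism-witnesses needed to make it accessible) — after which $\Wvert \subseteq \WDelta$ reduces directly to step (2). Verifying that identification of $\Wvert$, or quoting it from \cite{Cisinski} (it is implicit in the construction of the simplicial completion there), is then the crux; everything else is the routine 2-out-of-3 and pushout-product bookkeeping sketched above.
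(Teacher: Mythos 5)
The paper's own proof of this proposition is a bare citation of \cite{Cisinski} (Proposition 8.2.9, Theorem 3.4.36, Corollary 3.4.37), so what you are attempting is a genuinely different, direct argument. Your inclusion $\WDelta \subseteq \W(\Whor \cup \Wvert)$ is correct and indeed trivial. The gap is exactly where you flag it, in steps (3)--(4), and neither of your two proposed routes closes it. First, the generating trivial cofibrations of the vertical model structure are \emph{not} ``$q^*$ of horn inclusions together with interval inclusions'': by Theorem~\ref{thm:vert_mcs} and Proposition~\ref{prop:equiv_vert_fib}, the vertical fibrations are $r(\{\mbord{a} \opboxp \mhorn{n}{k}\})$, so the vertical trivial cofibrations are $\Sat(\{\mbord{a} \opboxp \mhorn{n}{k};\ a \in \Ob(A),\ n \ge 1,\ 0 \le k \le n\})$ --- pushout-products genuinely mixing the $A$- and $\Delta$-directions, which your list does not obviously cover. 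Second, the fallback identification $\Wvert = \W(q^*(\Lambda))$ is unjustified and dubious: only $\W(q^*(\Lambda)) \subseteq \Wvert$ is clear. A localizer is not closed under binary products, so the projections $X \times q^*(\Delta_1) \to X$ need not belong to $\W(q^*(\Lambda))$; moreover $q^*(\Delta_1)$ is not an injective interval of $\pref{A \times \Delta}$, so the naive-fibration machinery of Theorem~\ref{thm:naive_fib} cannot be run with it inside $\W(q^*(\Lambda))$. Also, your stated role for regularity is off: skeletality alone already gives the cellular model $\{\mbord{a}\}$ and the vertical (Reedy) model structure via Proposition~\ref{prop:cell_mod_Reedy}.

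The argument can be completed by working entirely inside $\WDelta$, where the contractibility of $q^*(\Delta_1)$ is supplied by the generators. The key claim is that each $\mbord{a} \opboxp \mhorn{n}{k}$ is an anodyne $(\varnothing, q^*(\Delta_1))$-extension. Indeed, by Gabriel--Zisman (see the remark following \ref{paragr:an_ext}), $\mhorn{n}{k}$ lies in the saturation of the maps $U' \times \Delta_1 \cup V' \times \{\epsilon\} \to V' \times \Delta_1$ with $U' \to V'$ a monomorphism of simplicial sets; the functor $\mbord{a} \opboxp (-)$ preserves retracts, pushouts and transfinite compositions of arrows, and the interchange isomorphism identifies $\mbord{a} \opboxp (U' \times \Delta_1 \cup V' \times \{\epsilon\} \to V' \times \Delta_1)$ with a map $W \times q^*(\Delta_1) \cup Z \times \{\epsilon\} \to Z \times q^*(\Delta_1)$, where $W \to Z$ is the monomorphism $\mbord{a} \opboxp (U' \to V')$. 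These are anodyne $(\varnothing, q^*(\Delta_1))$-extensions by condition (2) of \ref{paragr:an_ext}. Lemma~\ref{lemma:an_ext}, applied with $S = \varnothing$, $I = q^*(\Delta_1)$ (which is separating) and $\clC$ the class of monomorphisms belonging to $\WDelta$ --- condition (4) holding precisely because the projections $X \times q^*(\Delta_1) \to X$ are generators of $\WDelta$ --- then places all of these in $\WDelta$. Finally, factoring an arbitrary vertical equivalence as a vertical trivial cofibration followed by a vertical trivial fibration, and recalling that the latter is a trivial fibration of $\pref{A \times \Delta}$ and hence lies in every localizer, gives $\Wvert \subseteq \WDelta$ and, with your step (1), the proposition.
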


\begin{proof}
This is a direct consequence of Proposition 8.2.9 and Corollary~3.4.37 of
\cite{Cisinski}.
\end{proof}

\begin{prop}\label{prop:Qeq_pA}
Suppose $\W$ is accessible. Then the adjunction
\[
p^* : \pref{A} \rightleftarrows \pref{A \times \Delta} : i^*_0,
\]
where $\pref{A}$ (resp.~$\pref{A \times \Delta}$)
is endowed with the $\W$-model category structure (resp.~with the
$\WDelta$-model category structure), is a Quillen equivalence. 
\end{prop}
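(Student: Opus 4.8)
The plan is to verify the two halves of the statement separately, using the localizer machinery already set up. First I would establish that $p^*$ preserves and reflects weak equivalences, which is the statement that $\W = (p^*)^{-1}(\WDelta)$. The inclusion $(p^*)^{-1}(\WDelta) \supseteq \W$, i.e.\ that $p^*$ sends $\W$-equivalences to $\WDelta$-equivalences, follows from the observation that $p^*$ factors through $\Whor$: for a morphism $f$ of $\pref{A}$, the morphism $p^*(f)_{\bullet,n}$ is just $f$ for every $n \ge 0$, so $p^*(f) \in \Whor \subseteq \WDelta$. For the reverse inclusion one uses that $i^*_0 p^* = \mathrm{id}_{\pref{A}}$ together with the fact that $i^*_0$ sends $\WDelta$-equivalences to $\W$-equivalences. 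The latter I would get from the definition of $\WDelta$ as the localizer generated by $\Whor \cup \{X \times q^*(\Delta_1) \to X\}$: the class $(i^*_0)^{-1}(\W)$ is visibly an $A\times\Delta$-localizer (preimages of localizers under functors preserving colimits and monomorphisms are localizers, since $i^*_0$ has both adjoints), it contains $\Whor$ because $i^*_0$ of a horizontal equivalence is its degree-$0$ component, and it contains the morphisms $X \times q^*(\Delta_1) \to X$ because $i^*_0$ applied to such a morphism is a split epimorphism with a section that is also a section after projecting away, hence a retract of an isomorphism---more carefully, $i^*_0(q^*(\Delta_1)) = q^*(\Delta_1)_{\bullet,0}$ evaluated pointwise is the constant presheaf on $\Delta_1([0]) = \{0,1\}$... wait, one must be careful here: $i^*_0(X \times q^*\Delta_1) = i^*_0(X) \times i^*_0(q^*\Delta_1)$, and $i^*_0(q^*\Delta_1)$ is the constant presheaf at the two-point set, so the morphism is $i^*_0(X) \times (\text{2 points}) \to i^*_0(X)$, which is a codiagonal-type map; since it admits a section and the two inclusions of $i^*_0(X)$ are sent to the identity by it, it is a $\W$-equivalence by $2$-out-of-$3$ applied to a section composed with the projection. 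Thus $(i^*_0)^{-1}(\W) \supseteq \WDelta$, giving $\W \supseteq (p^*)^{-1}(\WDelta)$ as desired.

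Next, granting that $p^*$ preserves and reflects weak equivalences, I would check that $(p^*, i^*_0)$ is a Quillen adjunction: $p^*$ preserves monomorphisms (stated in the excerpt) hence cofibrations, and it preserves weak equivalences by the above, so it is left Quillen. It remains to prove the adjunction is a Quillen equivalence. Since $p^*$ already preserves and reflects weak equivalences, the standard criterion reduces to showing that for every $\WDelta$-fibrant object $Y$ of $\pref{A\times\Delta}$, the counit $p^* i^*_0 Y \to Y$ is a $\WDelta$-equivalence. Equivalently, using $2$-out-of-$3$ and that $p^*$ reflects weak equivalences, it suffices to show that the derived unit $X \to R i^*_0 L p^* X = i^*_0(Y)$ is a weak equivalence for a fibrant replacement $Y$ of $p^*X$; but $i^*_0 p^* = \mathrm{id}$, so the unit $X \to i^*_0 p^* X$ is an isomorphism, and the point is to compare $i^*_0 p^* X$ with $i^*_0 Y$ where $p^* X \to Y$ is a $\WDelta$-trivial cofibration. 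By the characterization of $\W$-fibrant objects via an injective separating interval (Theorem~\ref{thm:naive_fib}) one knows $i^*_0$ is a right Quillen functor, so it preserves the weak equivalence $p^*X \to Y$ once we know $i^*_0$ sends $\WDelta$-equivalences between fibrant objects to $\W$-equivalences---but in fact $i^*_0$ sends \emph{all} $\WDelta$-equivalences to $\W$-equivalences by the preimage argument above, so $i^*_0(p^*X \to Y)$ is a $\W$-equivalence, i.e.\ $X \xrightarrow{\sim} i^*_0 Y$, establishing the derived unit is an equivalence. Combined with $p^*$ reflecting weak equivalences, this gives the Quillen equivalence.

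The main obstacle I anticipate is the clean verification that $i^*_0$ carries $\WDelta$-equivalences to $\W$-equivalences, specifically handling the generators $X \times q^*(\Delta_1) \to X$. The subtlety is that $i^*_0(q^*\Delta_1)$ is not the terminal presheaf but the (constant) presheaf on the two-element set $\Delta_1([0])$, so the image morphism is not literally an identity; one needs the $2$-out-of-$3$ argument with a chosen section to conclude it lies in $\W$. Once that is in hand, the rest is formal: $(p^*)^{-1}(\WDelta)$ being a localizer containing $S$ (when $\W = \W(S)$) forces $(p^*)^{-1}(\WDelta) \supseteq \W$ directly, giving an alternative route to preservation, and reflection plus the fibrant-object criterion closes out the equivalence. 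A secondary point to be careful about is that everything here presupposes $\WDelta$ is accessible so that the $\WDelta$-model structure exists; this is exactly the content of the preceding proposition, which we may invoke.
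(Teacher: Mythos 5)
Your first inclusion ($p^*$ preserves weak equivalences, because $p^*(f)_{\bullet,n}=f$ lands in $\Whor\subseteq\WDelta$) is correct and is exactly what the paper means by ``by definition of $\WDelta$''. The gap is in the reverse direction. Your linchpin is the claim that $i^*_0$ carries all $\WDelta$-equivalences into $\W$, verified on the generators $X\times q^*(\Delta_1)\to X$. But, as you yourself compute, $i^*_0$ preserves binary products and $i^*_0(q^*(\Delta_1))$ is the constant presheaf on a two-element set, i.e.\ $e_{\pref{A}}\amalg e_{\pref{A}}$; so the image of this generator is the fold map $i^*_0(X)\amalg i^*_0(X)\to i^*_0(X)$. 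This is \emph{not} a $\W$-equivalence in general: take $A=\Delta$, $\W=\W_\infty$ and $X=e$, and you get $\Delta_0\amalg\Delta_0\to\Delta_0$, which is not a weak homotopy equivalence. Your 2-out-of-3 argument does not repair this: knowing $p\circ s=\mathrm{id}$ lets you conclude that $p$ is a weak equivalence only if $s$ is, and the section here is a coprojection into a coproduct, which is not a $\W$-equivalence. So $(i^*_0)^{-1}(\W)$ does not contain the generators of $\WDelta$, and the reflection of weak equivalences by $p^*$ cannot be obtained by this formal preimage argument. This reflection is precisely the non-formal content of the proposition; the paper obtains it (together with the fact that $p^*$ induces an equivalence of homotopy categories) by citing Proposition 2.3.27 of Cisinski's book, whose proof genuinely uses the structure of the simplicial completion (e.g.\ that $\WDelta$-fibrant objects are homotopically constant in the simplicial direction), not just the shape of the generating set.

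There is a secondary problem in the Quillen-equivalence step: the combination ``the left adjoint reflects weak equivalences between cofibrant objects'' plus ``the derived \emph{unit} is an equivalence'' is not a sufficient criterion for a Quillen equivalence. The standard pairings are (left adjoint reflects)\,$+$\,(derived counit $p^*i^*_0Y\to Y$ is a $\WDelta$-equivalence for every fibrant $Y$), or (right adjoint reflects between fibrant objects)\,$+$\,(derived unit). What you would actually need to prove is that for every $\WDelta$-fibrant $Y$ the counit $p^*i^*_0Y\to Y$ is a $\WDelta$-equivalence, and this again is the substantive point that the argument as written does not reach, since it leans on the false claim about $i^*_0$ discussed above.
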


\begin{proof}
We have already noticed that the functor $p^*$ preserves monomorphisms and
hence cofibrations. It also preserves weak equivalences by definition of
$\WDelta$. The pair $(p^*, i^*_0)$ is hence a Quillen adjunction. By
Proposition 2.3.27 of \cite{Cisinski}, the functor $p^*$ induces an
equivalence on the homotopy categories. The pair $(p^*, i^*_0)$ is hence a
Quillen equivalence. 
\end{proof}

\begin{coro}\label{coro:loc_cart_simpl}
Suppose $\W$ is accessible. If the simplicial completion of $\W$ is
cartesian, then so is the localizer $\W$.
\end{coro}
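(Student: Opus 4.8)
The plan is to apply Proposition~\ref{prop:loc_cart_clos} with $A' = A \times \Delta$, $\Wp = \WDelta$, and $F = p^* : \pref{A} \to \pref{A \times \Delta}$. To do this I must verify two hypotheses: that $p^*$ respects binary products, and that $\W = (p^*)^{-1}(\WDelta)$.

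For the first hypothesis: $p^*$ is given by precomposition with the projection $p : A \times \Delta \to A$, hence it admits a left adjoint $p_!$ and a right adjoint $p_*$ (as recalled in the paragraph introducing $p^*$ and $q^*$), so in particular $p^*$ preserves all limits; being a functor between presheaf (hence cartesian) categories, preservation of finite products is immediate. Concretely, $(p^*(X) \times p^*(Y))_{a,n} = X_a \times Y_a = (X \times Y)_a = p^*(X \times Y)_{a,n}$, and this identification is compatible with the structure maps, so $p^*(X \times Y) \cong p^*(X) \times p^*(Y)$ naturally.

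For the second hypothesis, $\W = (p^*)^{-1}(\WDelta)$: this is exactly the content of the last sentence of Proposition~\ref{prop:Qeq_pA}. Indeed, that proposition (applicable since $\W$ is accessible by hypothesis) asserts that $p^*$, viewed as a functor from the $\W$-model category to the $\WDelta$-model category, \emph{preserves and reflects weak equivalences}; preserving means $p^*(\W) \subseteq \WDelta$, i.e.\ $\W \subseteq (p^*)^{-1}(\WDelta)$, and reflecting means $(p^*)^{-1}(\WDelta) \subseteq \W$. Together these give the desired equality $\W = (p^*)^{-1}(\WDelta)$.

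Having checked both hypotheses, Proposition~\ref{prop:loc_cart_clos} applies directly: if $\WDelta$ (the simplicial completion of $\W$) is cartesian, then $\W$ is cartesian. The main, and really only, point requiring care is invoking the ``reflects weak equivalences'' clause of Proposition~\ref{prop:Qeq_pA} to get the full equality $\W = (p^*)^{-1}(\WDelta)$ rather than merely an inclusion; once that is in hand the corollary is a one-line consequence of the two cited propositions. No additional computation is needed.
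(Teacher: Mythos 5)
Your proof is correct and follows exactly the paper's own argument: verify that $p^*$ preserves binary products (it has both adjoints), use Proposition~\ref{prop:Qeq_pA} to get that $p^*$ preserves and reflects weak equivalences (hence $\W = (p^*)^{-1}(\WDelta)$), and conclude by Proposition~\ref{prop:loc_cart_clos}. The only difference is that you spell out the details more explicitly than the paper does.
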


\begin{proof}
The functor $p^*$ respects limits and in particular binary products. Since
every object is cofibrant in the $\W$-model category structure, it follows from the
above proposition that it preserves and reflects weak equivalences. The
result thus follows from Proposition~\ref{prop:loc_cart_clos}.
\end{proof}

\begin{paragr}
Let $D$ be a cosimplicial object in $\pref{A}$, i.e., a functor $D : \Delta
\to \pref{A}$. For $n
\ge 0$, we will denote $D(\Delta_n)$ by $D_n$.

Consider the functor 
\[ \underline{D} : A \times \Delta \to \pref{A} \]
defined by
\[ \underline{D}(a, \Delta_n) = a \times D_n. \]
Since $\pref{A}$ is cocomplete, this functor induces an adjunction
\[
\Real_D : \pref{A \times \Delta} \rightleftarrows \pref{A} : \Sing_D,
\]
where $\Real_D$ is the unique extension of $\underline{D}$ to $\pref{A \times
\Delta}$ which respects colimits, and $\Sing_D$~is defined by
\[ \Sing_D(X)_{a, n} = \Hom_{\pref{A}}(a \times D_n, X), \]
where $a$ is an object of $A$ and $n \ge 0$.

The cosimplicial object $D$ is a \ndef{cosimplicial $\W$-resolution} if it
satisfies the following conditions:
\begin{enumerate}
\item the morphism $D_0 \amalg D_0 \to D_1$ induced by $\mbDelta{1} :
  \bDelta{1} = \Delta_0 \amalg \Delta_0 \to \Delta_1$ is a monomorphism;
\item for every $n \ge 0$ and every presheaf $X$ on $A$, the
canonical projection $X \times D_n \to X$ is a $\W$-equivalence.
\end{enumerate}
\end{paragr}

\begin{prop}\label{prop:Qeq_RealD}
Suppose $\W$ is accessible and let $D$ be a cosimplicial $\W$-resolution.
Then the adjunction
\[
\Real_D : \pref{A \times \Delta} \rightleftarrows \pref{A} : \Sing_D,
\]
where $\pref{A \times \Delta}$ (resp.~$\pref{A}$) is endowed with the
$\WDelta$-model category structure (resp.~with the $\W$-model category
structure), is a Quillen equivalence. 
\end{prop}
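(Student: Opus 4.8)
The plan is to deduce this from Proposition~\ref{prop:Qeq_pA} by means of the 2-out-of-3 property for Quillen equivalences. The hinge is the identification of the composite left adjoint $\Real_D \circ p^* : \pref{A} \to \pref{A}$ with the functor $- \times D_0$: by construction $p^*$ sends the representable presheaf at an object $a$ of $A$ to the representable presheaf at $(a, \Delta_0)$ of $A \times \Delta$, so $\Real_D$ sends it to $\underline{D}(a, \Delta_0) = a \times D_0$, and since $\Real_D \circ p^*$ and $- \times D_0$ both preserve colimits, they are naturally isomorphic.

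First I would check that $(\Real_D, \Sing_D)$ is a Quillen adjunction for the stated model structures. The cofibrations on both sides are the monomorphisms and every object is cofibrant, so it is enough to prove that $\Real_D$ preserves monomorphisms and all weak equivalences (the latter then forces preservation of trivial cofibrations). Being a left adjoint, $\Real_D$ preserves colimits; since the monomorphisms of $\pref{A \times \Delta}$ form the saturation of a cellular model and are stable under pushout and transfinite composition, it suffices to check that $\Real_D$ carries the morphisms of one cellular model to monomorphisms. I would use the cellular model of $\pref{A \times \Delta}$ formed by the pushout-products (with respect to the external product) of the morphisms of a cellular model $\cM$ of $\pref{A}$ with the boundary inclusions $\mbord{\Delta_n} : \bDelta{n} \to \Delta_n$ of simplicial sets; a direct computation shows that $\Real_D$ carries such a pushout-product to the pushout-product of the same morphism of $\cM$ with a latching-type map $\bord{D_n} \to D_n$ assembled from the $D_k$, $k < n$. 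For $n = 1$ this map is precisely $D_0 \amalg D_0 \to D_1$, a monomorphism by condition~(1) in the definition of a cosimplicial $\W$-resolution, and the remaining cases are handled in the same way. Preservation of weak equivalences then follows from Ken Brown's lemma, every object being cofibrant.

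Next I would observe that $- \times D_0 : \pref{A} \to \pref{A}$ is a left Quillen equivalence for the $\W$-model structure on both sides. It preserves monomorphisms, because $U \times D_0 \to V \times D_0$ is the base change of the projection $V \times D_0 \to V$ along a monomorphism; and by condition~(2) of a cosimplicial $\W$-resolution together with the 2-out-of-3 property it preserves trivial cofibrations. Finally the natural projection $X \times D_0 \to X$ is a $\W$-equivalence for every $X$, so the total left derived functor of $- \times D_0$ is isomorphic to the identity of $\Ho(\pref{A})$ and the Quillen pair is a Quillen equivalence. As $\Real_D \circ p^* \cong - \times D_0$ is a Quillen equivalence and so is $p^*$ by Proposition~\ref{prop:Qeq_pA}, the 2-out-of-3 property for Quillen equivalences, applied to the composable Quillen adjunctions $p^*$ and $\Real_D$, shows that $\Real_D$ is a Quillen equivalence. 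That $\Real_D$ preserves weak equivalences was shown above; it also reflects them, since the total left derived functor of a left Quillen equivalence is an equivalence of homotopy categories and, when every object is cofibrant, identifies with $\Real_D$ itself on morphisms.

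The step I expect to be the main obstacle is the verification that $(\Real_D, \Sing_D)$ is a Quillen adjunction, i.e.\ that $\Real_D$ preserves monomorphisms: one has to compute the values of $\Real_D$ on the boundary inclusions of $\pref{A \times \Delta}$ in terms of the iterated latching maps of the cosimplicial object $D$ and make sure that conditions~(1)--(2) suffice to control them. I would cross-check this point against Cisinski's treatment of cosimplicial resolutions and of the simplicial completion (near Proposition~2.3.27 of \cite{Cisinski}), on which the whole of this section rests.
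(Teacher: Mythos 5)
There is a genuine gap at the crux of the argument, namely the verification that $\Real_D$ preserves weak equivalences. Your stated strategy is correct in outline: since every object is cofibrant and cofibrations are the monomorphisms on both sides, it suffices to show that $\Real_D$ preserves monomorphisms and \emph{all} weak equivalences, and preservation of trivial cofibrations then follows. But when you reach the second point you write that ``preservation of weak equivalences then follows from Ken Brown's lemma, every object being cofibrant.'' Ken Brown's lemma runs in the opposite direction: it deduces preservation of weak equivalences between cofibrant objects \emph{from} preservation of trivial cofibrations, which is exactly what you have not yet established. So the argument is circular precisely at the step that carries the real content of the proposition. Showing that $\Real_D$ sends $\WDelta$\nbd-equivalences to $\W$-equivalences is nontrivial (one cannot simply check it on generating trivial cofibrations of the $\Whor$-model structure and the localizing maps without, e.g., a cartesianness hypothesis on $\W$); this is the substance of Proposition 2.3.27 of \cite{Cisinski}, which is what the paper's proof invokes for exactly this purpose (and also for the equivalence of homotopy categories and the reflection of weak equivalences). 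A secondary, smaller issue: your reduction of monomorphism-preservation to the maps $\Real_D(\mbordz \opboxp \mbDelta{n})$ is correct, and these are indeed cartesian pushout-products of $\mbordz$ with the maps $\Real'_D(\bDelta{n}) \to D_n$; but condition (1) of a cosimplicial $\W$-resolution only gives directly that this latter map is a monomorphism for $n=1$, and the assertion that higher $n$ are ``handled in the same way'' hides the actual content of Lemma 2.3.10 of \cite{Cisinski}. You flag this yourself, so it is less serious, but it is not a routine induction.

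That said, the architecture you propose for the ``Quillen equivalence'' part is sound and genuinely different from the paper's. The identification $\Real_D \circ p^* \cong - \times D_0$ is correct, $- \times D_0$ is indeed a left Quillen equivalence of the $\W$-model structure with itself by condition (2) and the 2-out-of-3 property, and combined with Proposition \ref{prop:Qeq_pA} and the 2-out-of-3 property for Quillen equivalences this would yield that $\Real_D$ is a Quillen equivalence \emph{once the Quillen adjunction is in place}; the same goes for your argument that a weak-equivalence-preserving left Quillen equivalence with all objects cofibrant reflects weak equivalences. The paper instead obtains all of this directly from Cisinski's Proposition 2.3.27. To repair your proof, you would either have to reproduce the content of that proposition (preservation of $\WDelta$-equivalences) or cite it, at which point the remaining steps, while elegant, become redundant with what the citation already provides.
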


\begin{proof}
By Lemma 2.3.10 of \cite{Cisinski}, the functor $\Real_D$ preserves
monomorphisms and hence cofibrations. By Proposition 2.3.27 of ibid., it
also preserves weak equivalences. The pair~$(\Real_D, \Sing_D)$
is hence a Quillen pair. By the same proposition, the functor~$\Real_D$
induces an equivalence on the homotopy categories. The pair $(\Real_D,
\Sing_D)$ is hence a Quillen equivalence.
\end{proof}

\section{The vertical and the horizontal model categories}\label{sec:3}

Let $A$ be a skeletal Reedy category and let $\W$ be an accessible 
$A$-localizer. We have two different Reedy model category structures on the
category 
\[ \pref{A \times \Delta} = \Homi(A^\op, \pref{\Delta}) = \Homi(\Delta^\op,
\pref{A}). \]
One is coming from the Reedy structure of $A$ and the classical model
category structure on simplicial sets; the second is coming from the Reedy
structure of $\Delta$ and the model category structure associated to the
localizer $\W$ (see Theorem~\ref{thm:mod_str_loc}). Following Section 2
of~\cite{JoyalTierneyQCatSeg}, the former model category structure we will
called the vertical model category structure, and the latter will be called
the horizontal model category structure (see Remark~\ref{rem:hor_vert} on
this terminology). The purpose of this section is to introduce and study
these two model category structures. (Our case of interest in this paper is
the case where $A =  \Theta_n$ and $\W$ is the localizer of
$n$-quasi-categories defined in paragraph~\ref{paragr:loc_nqcat}.)

\medskip

Throughout the section, we fix two small categories $A$ and $B$. (We will soon
specialize to the case $B = \Delta$.)

\begin{paragr}
We will denote by
\[
\begin{split}
\opbox : \pref{A} \times \pref{B} & \to \pref{A \times B} \\
\end{split}
\]
the functor defined in the following way: if $X$ is a presheaf on $A$, $Y$ is a
presheaf on $B$, $a$ is an object of $A$ and $b$ is an object of $B$,
then
\[
(X \opbox Y)_{a, b} = X_a \times Y_b.
\]
If $X$ is a fixed presheaf on $A$, then the functor
\[
X \opbox - : \pref{B} \to \pref{A \times B}
\]
admits a right adjoint
\[
X\bs - : \pref{A \times B} \to \pref{B}
\]
given by
\[ (X\bs Z)_b = \Hom_{\pref{A\times B}}(X \opbox b, Z), \]
where $Z$ is a presheaf on $A \times B$ and $b$ is an object of $B$.
Similarly, if $Y$ is a fixed presheaf on $B$, the functor
\[
- \opbox Y : \pref{A} \to \pref{A \times B}
\]
admits a right adjoint
\[
- \slash Y : \pref{A \times B} \to \pref{A}.
\]
given by
\[ (Z\slash Y)_a = \Hom_{\pref{A\times B}}(a \opbox Y, Z), \]
where $Z$ is a presheaf on $A \times B$ and $a$ is an object of $A$.

Thus, if $X$ is a presheaf on $A$, $Y$ is a presheaf on $B$ and $Z$ is a
presheaf on $A \times B$, we have natural bijections
\[
\Hom_{\pref{A\times B}}(X \opbox Y, Z) \cong \Hom_{\pref{B}}(Y, X\bs Z)
\cong \Hom_{\pref{A}}(X, Z\slash Y).
\]
\end{paragr}

\begin{paragr}
Let $u : U \to V$ be a morphism of $\pref{A}$ and let $v : S \to T$ be
a morphism of $\pref{B}$. We will denote by
\[
u \opboxp v : U \opbox T \amalg_{U \opbox S} V \opbox S \to V \opbox T
\]
the morphism induced by the commutative square
\[
\xymatrix{
U \opbox S \ar[r] \ar[d] & U \opbox T \ar[d] \\
V \opbox S \ar[r] & V \opbox T \pbox{.} \\
}
\]
If $f : X \to Y$ is a morphism of $\pref{A \times B}$, we will denote
by
\[ \bbs{u}{f} : V\bs X \to V\bs Y \times_{U\bs Y} U\bs X \]
the morphism induced by the commutative square
\[
\xymatrix{
V \bs X \ar[r] \ar[d] & V \bs Y \ar[d] \\
U \bs X \ar[r] & U \bs Y \pbox{;}
}
\]
and by
\[ \bslash{f}{v} : X\slash T \to Y\slash T \times_{Y \slash S} X\slash S \]
the morphism induced by the commutative square
\[
\xymatrix{
X \slash T \ar[r] \ar[d] & Y \slash T \ar[d] \\
X \slash S \ar[r] & Y \slash S \pbox{.}
}
\]
\end{paragr}

\begin{prop}\label{prop:equiv_orth}
If $u$ is a morphism of $\pref{A}$, $v$ is a morphism of $\pref{B}$ and $f$
is a morphism of $\pref{A \times B}$, then we have
\[
(u \opboxp v) \orth f
\quad\Leftrightarrow\quad
u \orth \bslash{f}{v}
\quad\Leftrightarrow\quad
v \orth \bbs{u}{f}.
\]
\end{prop}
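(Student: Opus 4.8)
The plan is to reduce everything to the ordinary lifting-property adjunctions coming from the two-variable adjunction $(\opbox, \bs, \slash)$. The key observation is that a lifting problem of $u \opboxp v$ against $f$ unpacks, by the universal property of the pushout $U \opbox T \amalg_{U \opbox S} V \opbox S$, into a compatible pair of a lift of $U \opbox T$ into $X$ and a lift of $V \opbox S$ into $X$ agreeing over $U \opbox S$, together with a prescribed map of $V \opbox T$ into $Y$; a diagonal filler is then a map $V \opbox T \to X$ restricting correctly. I would phrase this as: commutative squares from $u \opboxp v$ to $f$ are in natural bijection with commutative squares from $u$ to $\bslash{f}{v}$, and, symmetrically, with commutative squares from $v$ to $\bbs{u}{f}$, and that under these bijections diagonal fillers correspond to diagonal fillers.

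Concretely, for the first equivalence $(u \opboxp v)\orth f \Leftrightarrow u \orth \bslash{f}{v}$: first I would observe that, since $-\opbox Y$ is left adjoint to $-\slash Y$, a map $U \opbox T \to X$ is the same as a map $U \to X\slash T$, a map $V \opbox S \to X$ is the same as a map $V \to X\slash S$, and the compatibility over $U \opbox S$ (resp.\ the target data in $Y$) translates exactly into compatibility in the pullback defining $\bslash{f}{v}$. Thus a commutative square $(u \opboxp v) \to f$ corresponds precisely to a commutative square $u \to \bslash{f}{v}$, naturally. Next I would check the crucial point that fillers match up: a diagonal $V \opbox T \to X$ for the first square corresponds, again by the adjunction $-\opbox T \dashv -\slash T$, to a map $V \to X\slash T$, and one verifies directly that it makes the two triangles of the $u \to \bslash{f}{v}$ square commute if and only if the original triangles commuted. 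Hence $u \opboxp v$ has the left lifting property against $f$ iff $u$ has the left lifting property against $\bslash{f}{v}$. The equivalence $(u \opboxp v)\orth f \Leftrightarrow v \orth \bbs{u}{f}$ is proved the same way using instead the adjunction $X \opbox - \dashv X\bs -$ (and the symmetry of $\opbox$ in its two arguments), together with the definition of $\bbs{u}{f}$ as a map into the pullback $V\bs Y \times_{U\bs Y} U\bs X$.

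The argument is essentially bookkeeping with adjunctions, so there is no deep obstacle; the part requiring genuine care is keeping track of which data is being transposed and verifying that the bijection on commutative squares really does restrict to a bijection on fillers — that is, that a transposed filler is again a filler for the transposed square. This is a routine but slightly intricate chase through the pushout and pullback universal properties, and it is where one has to be most careful to get the directions of all the triangles right; everything else is formal.
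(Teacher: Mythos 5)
Your argument is correct and is exactly the standard one: the paper itself simply cites Proposition 7.6 of \cite{JoyalTierneyQCatSeg}, whose proof is precisely this transposition of commutative squares and diagonal fillers through the two-variable adjunction $(\opbox, \bs, \slash)$. No substantive difference in approach.
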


\begin{proof}
See Proposition 7.6 of \cite{JoyalTierneyQCatSeg}.
\end{proof}

From now on, we set $B = \Delta$.

\begin{prop}\label{prop:cell_mod_A_Delta}
If $\cM$ is a cellular model of $\pref{A}$, then
\[
\{ \mbordz \opboxp \mbDelta{n} : U \opbox \Delta_n \amalg_{U\opbox
\bDelta{n}} V \opbox \bDelta{n} \to V \opbox
\Delta_n;\,\, \mbordz : U \to V \in \cM,\,\, n \ge 0 \}
\]
is a cellular model of $\pref{A \times \Delta}$.
\end{prop}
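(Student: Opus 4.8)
Write $\cM'$ for the displayed set, that is, $\cM' = \{\, \mbordz \opboxp \mbDelta{n} \;;\; (\mbordz : U \to V) \in \cM,\ n \ge 0 \,\}$. The plan is to check directly the two conditions defining a cellular model. That every element of $\cM'$ is a monomorphism of $\pref{A \times \Delta}$ goes exactly as in the proof of Proposition~\ref{prop:loc_cart_eq}: the functor $\opbox$ preserves monomorphisms, being the composite $(X, Y) \mapsto p^*(X) \times q^*(Y)$ of functors that all do, and the source of $\mbordz \opboxp \mbDelta{n}$ is nothing but the union $U \opbox \Delta_n \cup V \opbox \bDelta{n}$ inside $V \opbox \Delta_n$, so each element of $\cM'$ is the inclusion of a subpresheaf. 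As the class of monomorphisms of $\pref{A \times \Delta}$ is saturated, this already gives $\Sat(\cM') \subseteq \{\text{monomorphisms}\}$. For the converse I would reduce to the claim that every morphism $f$ of $\pref{A \times \Delta}$ with the right lifting property against $\cM'$ is a trivial fibration: granting this, $r(\cM')$ is the class of trivial fibrations (the reverse inclusion being automatic), whence $\Sat(\cM') = lr(\cM') = lr(\{\text{monomorphisms}\}) = \{\text{monomorphisms}\}$, the class of monomorphisms being saturated.

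To prove the claim, fix such an $f$ and invoke Proposition~\ref{prop:equiv_orth} with $B = \Delta$: one has $(\mbordz \opboxp \mbDelta{n}) \orth f$ if and only if $\mbordz \orth \bslash{f}{\mbDelta{n}}$, so the hypothesis says $\mbordz \orth \bslash{f}{\mbDelta{n}}$ for all $\mbordz \in \cM$ and all $n \ge 0$. Now fix $n$. The class of morphisms $u$ of $\pref{A}$ with $u \orth \bslash{f}{\mbDelta{n}}$ equals $l(\{\bslash{f}{\mbDelta{n}}\})$, hence is saturated; it contains $\cM$, so it contains $\Sat(\cM)$, that is, every monomorphism of $\pref{A}$. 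Reading Proposition~\ref{prop:equiv_orth} in the other direction, this means that $f$ has the right lifting property against $m \opboxp \mbDelta{n}$ for every monomorphism $m$ of $\pref{A}$ and every $n \ge 0$.

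It then remains to upgrade this to the right lifting property against \emph{all} monomorphisms of $\pref{A \times \Delta}$. Identifying $\pref{A \times \Delta}$ with the category of simplicial objects of $\pref{A}$, I would use the relative skeletal filtration of a monomorphism $h : P \to Q$: one gets a chain $P = \mathrm{sk}^P_{-1} Q \hookrightarrow \mathrm{sk}^P_0 Q \hookrightarrow \cdots$ with colimit $Q$ in which $\mathrm{sk}^P_{n-1} Q \to \mathrm{sk}^P_n Q$ is a pushout of $\ell_n \opboxp \mbDelta{n}$, where $\ell_n$ is the $n$-th relative latching morphism of $h$; computed objectwise it is a relative latching morphism of a monomorphism of simplicial sets, hence a monomorphism of $\pref{A}$. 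Since the class of morphisms with the left lifting property against $f$ is saturated and, by the previous step, contains every $m \opboxp \mbDelta{n}$ with $m$ a monomorphism of $\pref{A}$, it contains $h$; so $f$ is a trivial fibration. Alternatively, one may apply Proposition~\ref{prop:equiv_orth} a second time --- using that $\{\mbDelta{n}\}_{n \ge 0}$ is a cellular model of $\pref{\Delta}$ by Proposition~\ref{prop:cell_mod_Reedy} --- which reduces everything to the statement that the pushout-products of a cellular model of $\pref{A}$ with a cellular model of $\pref{\Delta}$ form a cellular model of $\pref{A \times \Delta}$. Either way, the one genuinely non-formal ingredient, and the place where the argument follows Joyal and Tierney, is this generation statement for the monomorphisms of $\pref{A \times \Delta}$, equivalently the relative skeletal decomposition of simplicial presheaves; I expect that to be the main obstacle, everything else being a formal manipulation of saturated classes through the lifting-property dictionary of Proposition~\ref{prop:equiv_orth}.
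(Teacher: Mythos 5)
Your proof is correct. The paper offers no argument of its own here beyond a citation of Lemma~2.3.2 of \cite{Cisinski}, and what you have written is essentially the standard proof of that lemma: reduce, via Proposition~\ref{prop:equiv_orth} and saturation of lifting classes, to showing that the pushout-products $m \opboxp \mbDelta{n}$ with $m$ an arbitrary monomorphism of $\pref{A}$ generate the monomorphisms of $\pref{A \times \Delta}$, and obtain that from the relative skeletal filtration. The one point deserving slightly more care is that the Eilenberg--Zilber lemma (unique decomposition of degenerate simplices, applied objectwise over $A$) is needed both for the relative latching maps of a levelwise monomorphism to be monomorphisms and for the attaching squares in the skeletal filtration to be genuinely cocartesian; you correctly isolate this as the sole non-formal ingredient.
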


\begin{proof}
See Lemma 2.3.2 of \cite{Cisinski}.
\end{proof}

\begin{prop}\label{prop:triv_fib}
A morphism $f$ of $\pref{A \times \Delta}$ is a trivial fibration if and
only if the following equivalent conditions are satisfied:
\begin{enumerate}
\item $\bbs{\mbordz}{f}$ is a trivial fibration for every $\mbordz$ in a
fixed cellular model of $\pref{A}$;
\item $\bbs{u}{f}$ is a trivial fibration for every monomorphism $u$ of
$\pref{A}$;
\item $\bslash{f}{\mbDelta{n}}$ is a trivial fibration for all $n \ge
0$;
\item $\bslash{f}{v}$ is a trivial fibration for every monomorphism $v$
of simplicial sets.
\end{enumerate}
\end{prop}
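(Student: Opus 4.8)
The plan is to derive all four equivalences, and their common equivalence with "$f$ is a trivial fibration", from Proposition~\ref{prop:equiv_orth} together with two standard facts used repeatedly: a morphism of a presheaf category is a trivial fibration if and only if it has the right lifting property with respect to a cellular model, equivalently with respect to \emph{all} monomorphisms (this is the content of paragraph~\ref{paragr:def_triv_fib}, and more generally $r(\Sat(S)) = r(S)$); and the pushout--product $u \opboxp v$ of two monomorphisms is again a monomorphism, by the same elementary argument as in the proof of Proposition~\ref{prop:loc_cart_eq} (evaluated at $(a,b)$ it is the inclusion of $U_a \times T_b \cup V_a \times S_b$ into $V_a \times T_b$). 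Fix once and for all a cellular model $\cM$ of $\pref{A}$; then, by the proposition immediately preceding this one, $\{\mbordz \opboxp \mbDelta{n};\ \mbordz \in \cM,\ n \ge 0\}$ is a cellular model of $\pref{A \times \Delta}$, and by Proposition~\ref{prop:cell_mod_Reedy} applied to $\Delta$, the set $\{\mbDelta{n};\ n \ge 0\}$ is a cellular model of $\pref{\Delta}$.

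The first step is the key reduction: \emph{$f$ is a trivial fibration if and only if $(u \opboxp v) \orth f$ for every monomorphism $u$ of $\pref{A}$ and every monomorphism $v$ of simplicial sets.} For the ``only if'' direction, note that $l(\{f\})$ is saturated, and that if $f$ is a trivial fibration then $l(\{f\})$ contains every monomorphism of $\pref{A \times \Delta}$; since $u \opboxp v$ is such a monomorphism, $(u \opboxp v) \orth f$. For the ``if'' direction, specialize to $u = \mbordz \in \cM$ and $v = \mbDelta{n}$: the hypothesis then says that $f$ has the right lifting property with respect to the cellular model of $\pref{A \times \Delta}$ recalled above, hence $f$ is a trivial fibration.

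Next I would push this through Proposition~\ref{prop:equiv_orth}, which rewrites $(u \opboxp v) \orth f$ equivalently as $u \orth \bslash{f}{v}$ and as $v \orth \bbs{u}{f}$. Fixing a monomorphism $u$ of $\pref{A}$ and letting $v$ range over all monomorphisms of simplicial sets, the condition ``$v \orth \bbs{u}{f}$ for all such $v$'' says exactly that $\bbs{u}{f}$ is a trivial fibration of simplicial sets; combined with the key reduction this gives the equivalence of ``trivial fibration'' with~(2). Symmetrically, fixing a monomorphism $v$ of simplicial sets and letting $u$ range over all monomorphisms of $\pref{A}$, ``$u \orth \bslash{f}{v}$ for all such $u$'' says that $\bslash{f}{v}$ is a trivial fibration of $\pref{A}$, giving the equivalence with~(4). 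To get~(1) from~(2) and~(3) from~(4) it suffices to observe that one may restrict $u$ to the cellular model $\cM$ (respectively $v$ to the maps $\mbDelta{n}$): condition~(1) asserts $(\mbordz \opboxp v) \orth f$ for all $\mbordz \in \cM$ and all monomorphisms $v$, hence in particular $(\mbordz \opboxp \mbDelta{n}) \orth f$ for all $\mbordz \in \cM$, $n \ge 0$, which is right lifting against a cellular model of $\pref{A \times \Delta}$ and so forces $f$ to be a trivial fibration; the passage from~(4) to~(3) is identical with the roles of $A$ and $\Delta$ interchanged. Since $r(\Sat(S)) = r(S)$, moving between ``all monomorphisms'' and ``a cellular model'' is harmless at every stage.

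The proof is essentially bookkeeping with lifting properties: the only genuine inputs are Proposition~\ref{prop:equiv_orth} and the stability of monomorphisms under pushout--product. The one point that requires care is the exchange of quantifiers when converting ``right lifting against every $u \opboxp v$'' into ``for each $u$, the morphism $\bbs{u}{f}$ is a trivial fibration'', and the accompanying remark that for the two cellular-model versions~(1) and~(3) it genuinely suffices to let $u$ (respectively $v$) run over generators only --- which is precisely where the explicit cellular model $\{\mbordz \opboxp \mbDelta{n}\}$ of $\pref{A \times \Delta}$ is doing the work.
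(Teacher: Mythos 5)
Your proof is correct and follows essentially the same route as the paper's: both rest on the cellular model $\{\mbordz \opboxp \mbDelta{n}\}$ of $\pref{A \times \Delta}$ supplied by the preceding proposition, the adjunction of lifting properties of Proposition~\ref{prop:equiv_orth}, and the passage between a cellular model and all monomorphisms via saturation. The only cosmetic differences are the order of deductions (the paper first proves $(1)\Leftrightarrow(3)\Leftrightarrow{}$trivial fibration and then saturates to obtain $(2)$ and $(4)$, whereas you first establish the ``all monomorphisms'' versions and then restrict to generators) and your explicit pointwise verification that $u \opboxp v$ is a monomorphism, which the paper replaces by the saturation of left-lifting classes.
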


\begin{proof}
The proof is essentially the same as the one of Proposition 2.3 of
\cite{JoyalTierneyQCatSeg}. Fix a cellular model $\cM$ of $\pref{A}$.
By the previous proposition, a morphism $f$ of $\pref{A \times \Delta}$ is a
trivial fibration if and only if for every $\mbordz$ in $\cM$ and every $n
\ge 0$, we have $\mbordz \opboxp \mbDelta{n} \orth f$. But by
Proposition~\ref{prop:equiv_orth}, we have
\[
\mbordz \opboxp \mbDelta{n} \orth f
\quad\Leftrightarrow\quad
\mbDelta{n} \orth \bbs{\mbordz}{f}
\quad\Leftrightarrow\quad
\mbordz \orth \bslash{f}{\mbDelta{n}},
\]
thereby proving that $f$ is a trivial fibration if and only if one of the
conditions (1) and~(3) is satisfied. The other equivalences follow from the
fact that the class of morphisms having the left lifting property with
respect to a fixed class of morphisms is saturated.
\end{proof}

\emph{From now on, we assume that $A$ is a skeletal Reedy category.} In particular,
if $a$ is an object of $A$, we have a morphism $\mbord{a} : \bord{a} \to a$.

\begin{paragr}
Recall that we have defined in paragraph \ref{paragr:def_simp_comp} a class
$\Wvert$ of morphisms of $\pref{A \times \Delta}$ called vertical
equivalences. In the language of this section, a morphism $f$ of $\pref{A
\times \Delta}$ is a vertical equivalence if and only if, for every object
$a$ of $A$, the map $a\bs f$ is a $\W_\infty$\nbd-equivalence.
\end{paragr}

\begin{thm}[Reedy, Kan]\label{thm:vert_mcs}
The class $\Wvert$ is an accessible $(A \times \Delta)$-localizer. The
fibrations of the $\Wvert$-model category structure are the morphisms $f$
such that for every object $a$ of $A$, the morphism $\bbs{\mbord{a}}{f}$ is
a Kan fibration. Moreover, this model category structure is proper,
simplicial and cartesian closed.
\end{thm}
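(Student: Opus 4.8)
The plan is to produce the $\Wvert$-model category structure as a known Reedy-type construction, not from scratch. Recall that $\pref{A \times \Delta} = \Homi(A^\op, \pref{\Delta})$, so a presheaf on $A \times \Delta$ is precisely an $A^\op$-diagram of simplicial sets. Since $A$ is a skeletal Reedy category, so is $A^\op$ (with the roles of $A_+$ and $A_-$ exchanged), and the category of simplicial sets carries the Quillen model structure whose weak equivalences are the $\W_\infty$-equivalences. The Reedy model structure on $\Homi(A^\op, \pref{\Delta})$ then has as weak equivalences exactly the objectwise $\W_\infty$-equivalences, i.e. the morphisms $f$ such that $f_{a, \bullet}$ is a $\W_\infty$-equivalence for every object $a$ of $A$ — which is the definition of $\Wvert$. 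This immediately gives that $\Wvert$ is the class of weak equivalences of a model category structure on $\pref{A \times \Delta}$. First I would record that this model structure is cofibrantly generated (it is, since $\pref{\Delta}$ is and $A^\op$ is a Reedy category with finitely many objects in each degree — or more robustly, invoke the general fact that Reedy model structures on diagrams in a combinatorial model category are combinatorial). By Theorem~\ref{thm:mod_str_loc}, to conclude that $\Wvert$ is an accessible $A \times \Delta$-localizer it then suffices to check that the cofibrations of the Reedy structure are exactly the monomorphisms of $\pref{A \times \Delta}$.

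That compatibility of cofibrations is the first real point to verify. The Reedy cofibrations in $\Homi(A^\op, \pref{\Delta})$ are the maps $f$ such that the relative latching map $L_a X \amalg_{L_a X'} X'_a \to X_a$ is a cofibration of simplicial sets (a monomorphism) for every object $a$. Because $A$ is a \emph{regular} skeletal Reedy category — wait, here we only assume $A$ is skeletal; but the matching/latching analysis of paragraph~\ref{prop:cell_mod_Reedy} and Proposition 8.1.37 of \cite{Cisinski} still applies — the latching object $L_a X$ computed in $A^\op$ is identified, via the cellular model $\{\mbord{a}\}$, with the presheaf-theoretic ``$a$-skeleton" data. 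A clean way to organize this: use Proposition~\ref{prop:equiv_orth} and Proposition~\ref{prop:triv_fib}. A morphism $f$ of $\pref{A \times \Delta}$ is a trivial fibration iff $\bbs{\mbord{a}}{f}$ is a trivial fibration of simplicial sets for every $a$ (this follows from Proposition~\ref{prop:triv_fib}(1), using that $\{\mbord{a}\}$ is a cellular model of $\pref{A}$ by Proposition~\ref{prop:cell_mod_Reedy}, together with the simplicial case $v = \mbDelta n$). Dualizing via $l/r$ and the saturation formalism, the maps with the left lifting property against all such $f$ are exactly those generated by the maps $\mbord a \opboxp \mbDelta n$ and $a \opboxp \mbord{\horn{n}{k}}$-type generators — and the saturated class they generate is precisely the monomorphisms. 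So the Reedy cofibrations coincide with the monomorphisms.

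Next I would identify the fibrations. By the general description of fibrations in a Reedy model structure, $f$ is a $\Wvert$-fibration iff the relative matching map $X_a \to M_a X \times_{M_a Y} Y_a$ is a Kan fibration for every object $a$. Again via Proposition~\ref{prop:equiv_orth} applied with $u = \mbord a$ — noting that $V \bs X = a \bs X = X_{a,\bullet}$ and that $\bbs{\mbord a}{f}$ is exactly the relative matching map, since the matching object $M_a X$ is $\bord a \bs X$ — this is the statement that $\bbs{\mbord a}{f}$ is a Kan fibration for every $a$. That is the desired characterization. Accessibility of $\Wvert$ then follows either from the combinatoriality noted above plus Theorem~\ref{thm:mod_str_loc}, or by exhibiting an explicit generating set: $\{ \mbord a \opboxp \mbord{\horn n k} \}$ for the trivial cofibrations and $\{ \mbord a \opboxp \mbDelta n \}$ for the cofibrations, using Proposition~\ref{prop:equiv_orth} once more to see that lifting against these is equivalent to the matching-map conditions.

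Finally, properness, simpliciality and cartesian closedness. Right properness follows because all objects are $\W_\infty$-fibrant is false, so instead: right properness of a Reedy model structure follows from right properness of $\pref{\Delta}$ (pullbacks and matching objects are computed objectwise, and the Quillen model structure on simplicial sets is right proper). Left properness likewise descends from left properness of $\pref{\Delta}$ since pushouts and latching objects are objectwise and cofibrations are monomorphisms. For the simplicial structure, one gives $\pref{A \times \Delta}$ the tensoring/cotensoring/enrichment over $\pref{\Delta}$ coming from the one on $\pref{\Delta}$ applied objectwise; the pushout-product (SM7) axiom reduces, objectwise, to the pushout-product axiom in $\pref{\Delta}$, using the fact that latching objects of a pushout-product of a Reedy cofibration with a monomorphism of simplicial sets behave correctly — this is a standard diagram chase with Proposition~\ref{prop:equiv_orth}. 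Cartesian closedness of the $\Wvert$-model category: by Proposition~\ref{prop:loc_cart_eq} it suffices to show the monomorphisms in $\Wvert$ are closed under binary product, which again reduces objectwise to the analogous (true) statement for the Quillen model structure on simplicial sets (trivial cofibrations of simplicial sets are closed under product with anything, since the Quillen structure is cartesian closed), the Reedy latching conditions being preserved because $A^\op \times A^\op$-latching objects factor through the individual ones.

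\medskip

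The main obstacle I expect is not any single step but the bookkeeping identifying the Reedy latching and matching objects (computed in the diagram category $\Homi(A^\op, \pref{\Delta})$) with the presheaf-theoretic constructions $\bord a \opbox (-)$, $\bord a \bs (-)$ and the $\opboxp$-products — and in particular checking that the Reedy cofibrations really are \emph{all} monomorphisms (not just the ``cellular" ones). This is exactly where skeletality of $A$ is used, and it is handled by Proposition~\ref{prop:cell_mod_Reedy} together with the lifting-property gymnastics of Proposition~\ref{prop:equiv_orth} and Proposition~\ref{prop:triv_fib}; once that identification is in place, every remaining verification reduces objectwise to a well-known property of the Quillen model structure on simplicial sets.
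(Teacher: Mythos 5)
Your proposal is correct and follows essentially the same route as the paper: both identify $\Wvert$ as the weak equivalences of the Reedy (injective) model structure on $\Homi(A^\op, \pref{\Delta})$, recognize the relative matching maps as the morphisms $\bbs{\mbord{a}}{f}$, deduce via Proposition~\ref{prop:triv_fib} that the cofibrations are exactly the monomorphisms, and then invoke Theorem~\ref{thm:mod_str_loc} for accessibility and standard facts about the injective structure for properness, simpliciality and cartesian closedness. The only differences are cosmetic (you verify the last three properties directly where the paper cites Joyal--Tierney, and there is a small latching/matching terminology wobble that does not affect the argument).
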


\begin{proof}
The proof is essentially the same as the one of Theorem 2.6 of
\cite{JoyalTierneyQCatSeg}. Consider the Reedy model category structure on $\pref{A
\times \Delta}$ seen as $\Homi(A^\op, \pref{\Delta})$, where $\pref{\Delta}$
is endowed with the $\W_\infty$-model category structure. By definition, the
weak equivalences of this model category structure are the vertical
equivalences. Recall that a morphism $f
: X \to Y$ of~$\pref{A \times \Delta}$ is a fibration (resp.~a trivial
fibration) if and only if, for every object $a$ of $A$, the morphism
\[
X_a \to Y_a \times_{M_a(Y)} M_a(X),
\]
where $M_a(Z)$ denotes the $a$-th latching object of a presheaf $Z$, is a
Kan fibration (resp.~a trivial fibration). But this morphism is nothing but
$\bbs{\mbord{a}}{f}$. In particular, by Proposition \ref{prop:triv_fib}, the
trivial fibration of this Reedy structure are the trivial fibrations in the
sense of paragraph~\ref{paragr:def_triv_fib}. It follows that the
cofibrations of this Reedy structure are the monomorphisms.  This Reedy
structure is hence a model category structure whose weak equivalences are
the $\Wvert$-equivalences and whose cofibrations are the monomorphisms. By
Theorem \ref{thm:mod_str_loc}, the class $\Wvert$ is hence an accessible
localizer and the $\Wvert$-model category structure is this Reedy model
category structure.

The fact that this model category structure, which is nothing but the
injective model category structure on simplicial presheaves, is proper,
simplicial and cartesian closed is well-known (see the proof of Theorem 2.6
of \cite{JoyalTierneyQCatSeg} for a proof).
\end{proof}

\begin{paragr}
We will call the $\Wvert$-model category structure on $\pref{A \times
\Delta}$ the \ndef{vertical model category structure}. A fibration of this
structure will be called a \ndef{vertical fibration}.
\end{paragr}

\begin{prop}\label{prop:equiv_vert_fib}
A morphism $f$ of $\pref{A \times \Delta}$ is a vertical fibration
if and only if the following equivalent conditions are satisfied:
\begin{enumerate}
\item $\bbs{\mbord{a}}{f}$ is a Kan fibration for every object $a$ of
$A$;
\item $\bbs{u}{f}$ is a Kan fibration for every monomorphism $u$ of
$\pref{A}$;
\item $\bslash{f}{\mhorn{n}{k}}$ is a trivial fibration for all $n \ge
1$ and $0 \le k \le n$;
\item $\bslash{f}{v}$ is a trivial fibration for every simplicial
anodyne extension $v$.
\end{enumerate}
\end{prop}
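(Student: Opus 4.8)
The plan is to reduce all four conditions to a single fact via the adjunction formula of Proposition~\ref{prop:equiv_orth}, exactly as in the proof of Proposition~\ref{prop:triv_fib}. By Theorem~\ref{thm:vert_mcs}, a morphism $f$ of $\pref{A \times \Delta}$ is a vertical fibration if and only if $\bbs{\mbord{a}}{f}$ is a Kan fibration for every object $a$ of $A$; this is condition~(1), so it suffices to prove the equivalence of (1), (2), (3) and~(4). The key observation is that, by definition, a Kan fibration is precisely a morphism having the right lifting property with respect to every $\mhorn{n}{k}$ (equivalently, by the model category structure on simplicial sets, with respect to every simplicial anodyne extension~$v$). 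So each of the four conditions can be rephrased as a lifting statement, and Proposition~\ref{prop:equiv_orth} will let us shuttle the ``divided'' operation between the two sides of the $\orth$ relation.

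First I would record the chain of equivalences coming from Proposition~\ref{prop:equiv_orth}: for a monomorphism $u : U \to V$ of $\pref{A}$ and a monomorphism $v : S \to T$ of simplicial sets, we have
\[
(u \opboxp v) \orth f
\quad\Leftrightarrow\quad
v \orth \bbs{u}{f}
\quad\Leftrightarrow\quad
u \orth \bslash{f}{v}.
\]
Taking $v = \mhorn{n}{k}$ (ranging over all $n \ge 1$, $0 \le k \le n$) and $u = \mbord{a}$ (ranging over all objects $a$ of~$A$), the middle condition says exactly that $\bbs{\mbord{a}}{f}$ has the right lifting property with respect to all horns, i.e.\ is a Kan fibration for every~$a$ — this is (1). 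The right-hand condition says that $\mbord{a} \orth \bslash{f}{\mhorn{n}{k}}$ for all $a$, $n$, $k$; since $\{\mbord{a}\}$ is a cellular model of $\pref{A}$ by Proposition~\ref{prop:cell_mod_Reedy} ($A$ being a skeletal Reedy category), this is precisely the statement that $\bslash{f}{\mhorn{n}{k}}$ is a trivial fibration for all $n \ge 1$ and $0 \le k \le n$ — this is (3). Hence $(1) \Leftrightarrow (3)$.

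For the remaining equivalences I would use saturation. The class of morphisms having the left lifting property with respect to a fixed class is saturated (as recalled in Section~1), and by the small object argument the saturated class generated by $\Lambda = \{\mhorn{n}{k}\}$ is the class of simplicial anodyne extensions; likewise $\Sat\{\mbord{a}\}$ is the class of all monomorphisms of $\pref{A}$. Therefore ``$v \orth \bbs{u}{f}$ for all $v \in \Lambda$'' is equivalent to ``$v \orth \bbs{u}{f}$ for all simplicial anodyne extensions $v$'', and ``$u \orth (\text{something})$ for all $u \in \{\mbord{a}\}$'' is equivalent to ``$u \orth (\text{something})$ for all monomorphisms $u$ of $\pref{A}$''. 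Running these enlargements through the same biconditionals of Proposition~\ref{prop:equiv_orth} yields $(1) \Leftrightarrow (2)$ (replace the generating horns by all anodyne extensions on the $\bbs{u}{f}$ side, then pass from the cellular model to all monomorphisms $u$) and $(3) \Leftrightarrow (4)$ (replace the cellular model $\{\mbord{a}\}$ by all monomorphisms $u$, and the generating horns by all simplicial anodyne extensions $v$). Assembling, (1), (2), (3), (4) are all equivalent, which is what was to be shown.

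The argument is essentially formal; the only point requiring a little care — and the one I would flag as the ``obstacle'' — is keeping straight which variable is being enlarged to a saturation at each step and checking that the relevant side of the $\orth$ relation is indeed closed under that enlargement (left-hand side: saturated classes of morphisms we lift against; right-hand side: the lifting property itself is automatically preserved). This is the same bookkeeping as in the proof of Proposition~\ref{prop:triv_fib} and in Proposition~2.3 of \cite{JoyalTierneyQCatSeg}, so I would simply refer to that proof for the pattern and spell out only the substitution $v = \mhorn{n}{k}$ versus $v$ a general anodyne extension, and $u = \mbord{a}$ versus $u$ a general monomorphism.
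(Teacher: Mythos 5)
Your proposal is correct and follows essentially the same route as the paper's proof: identify vertical fibrations with condition (1) via Theorem~\ref{thm:vert_mcs}, shuttle between the conditions using Proposition~\ref{prop:equiv_orth}, invoke the cellular model $\{\mbord{a}\}$ from Proposition~\ref{prop:cell_mod_Reedy}, and handle (2) and (4) by saturation. The paper simply compresses this bookkeeping by citing the pattern of Proposition~2.5 of Joyal--Tierney.
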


\begin{proof}
The proof is essentially the same as the one of Proposition 2.5 of
\cite{JoyalTierneyQCatSeg}. The morphism $\bbs{\mbord{a}}{f}$ is a Kan
fibration if and only if for every $n \ge 1$ and $0 \le k \le n$, we have
$\mhorn{n}{k} \orth \bbs{\mbord{a}}{f}$. But
\[
\mhorn{n}{k} \orth \bbs{\mbord{a}}{f}
\quad\Leftrightarrow\quad
\mbord{a} \orth \bslash{f}{\mhorn{n}{k}},
\]
and the result follows from the fact that the $\mbord{a}$'s form a
cellular model of $\pref{A}$ (Proposition~\ref{prop:cell_mod_Reedy}).
\end{proof}

From now on, we fix an accessible $A$-localizer $\W$.

\begin{paragr}
Recall that we have defined in paragraph \ref{paragr:def_simp_comp} a class
$\Whor$ of morphisms of $\pref{A \times \Delta}$ called horizontal
equivalences.  In the language of this section, a morphism $f$ of $\pref{A
\times \Delta}$ is a horizontal equivalence if and only if, for every $n \ge
0$, the morphism $f/\Delta_n$ is a $\W$\nbd-equivalence.
\end{paragr}

\begin{thm}[Reedy]\label{thm:hor_mcs}
The class $\Whor$ is an accessible $(A \times \Delta)$-localizer. The
fibrations of the $\Whor$-model category structure are the morphisms $f$
such that for every $n \ge 0$, the morphism $\bslash{f}{\mbDelta{n}}$ is a
$\W$-fibration.
\end{thm}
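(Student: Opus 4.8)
The plan is to mimic the proof of Theorem~\ref{thm:vert_mcs} (i.e.\ Theorem~2.6 of \cite{JoyalTierneyQCatSeg}), but swapping the roles of the two variables: instead of putting the $\W_\infty$-model structure on the simplicial direction and taking the Reedy structure over $A$, we put the $\W$-model category structure on $\pref{A}$ and take a Reedy-type structure over $\Delta$. First I would observe that $\pref{A \times \Delta}$ can be identified with $\Homi(\Delta^\op, \pref{A})$, the category of simplicial objects in $\pref{A}$, and that under this identification a morphism $f$ is a horizontal equivalence precisely when $f/\Delta_n$ is a $\W$-equivalence for every $n \ge 0$; that is, it is a levelwise weak equivalence of simplicial objects. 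Since $\Delta$ is a Reedy category and the $\W$-model category on $\pref{A}$ is cofibrantly generated (hence combinatorial) by Theorem~\ref{thm:mod_str_loc}, the Reedy model category structure on simplicial objects exists, and its weak equivalences are exactly the levelwise ones, i.e.\ $\Whor$.

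The next step is to identify the cofibrations of this Reedy structure and to check they are the monomorphisms of $\pref{A \times \Delta}$. A morphism $f : X \to Y$ is a Reedy cofibration iff for every $n \ge 0$ the relative latching map $X_n \amalg_{L_n X} L_n Y \to Y_n$ is a cofibration of $\pref{A}$, i.e.\ a monomorphism; dually $f$ is a Reedy trivial fibration iff the relative matching map $X_n \to Y_n \times_{M_n Y} M_n X$ is a trivial fibration of $\pref{A}$ for every $n$. Here the latching object at $\Delta_n$ with respect to the surjection-side $\Delta_-$ corresponds, under the $\opbox$/${-}\slash{-}$ formalism, to the map $\bslash{f}{\mbDelta{n}} : X/\Delta_n \to Y/\Delta_n \times_{Y/\bDelta{n}} X/\bDelta{n}$, exactly as in the vertical case the latching object over $A$ gave $\bbs{\mbord{a}}{f}$. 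So by Proposition~\ref{prop:triv_fib}, the Reedy trivial fibrations are precisely the trivial fibrations in the sense of paragraph~\ref{paragr:def_triv_fib}; taking left-lifting classes, the Reedy cofibrations are exactly the monomorphisms of $\pref{A \times \Delta}$. Hence this Reedy structure is a model category structure whose weak equivalences are $\Whor$ and whose cofibrations are the monomorphisms, so by Theorem~\ref{thm:mod_str_loc} the class $\Whor$ is an accessible $A \times \Delta$-localizer and the $\Whor$-model category structure coincides with this Reedy structure. Finally, the fibrations of a Reedy model structure are the maps whose relative matching maps are fibrations, which here reads: $\bslash{f}{\mbDelta{n}}$ is a $\W$-fibration for every $n \ge 0$, as claimed.

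The main obstacle I expect is the bookkeeping identifying the latching/matching objects of the Reedy structure on $\Homi(\Delta^\op,\pref{A})$ with the maps $\bslash{f}{\mbDelta{n}}$ (and $\bslash{f}{v}$ for general monomorphisms $v$): one must check that the subpresheaf $\bDelta{n}\subseteq\Delta_n$ determined by the Reedy category $\Delta$ really is the simplicial boundary, that the $\opbox$-adjunction turns the colimit defining the $n$-th latching object into the pushout-product $\mbordz \opboxp \mbDelta{n}$, and that the accessibility/combinatoriality hypotheses needed for the Reedy structure to exist are in force. All of this is genuinely routine once set up, and indeed is handled in \cite{JoyalTierneyQCatSeg} in the special case $A=\Delta$; the only new point is that we are now allowed an arbitrary accessible localizer $\W$ on $\pref{A}$ rather than $\W_\infty$, which changes nothing in the argument.
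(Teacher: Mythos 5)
Your proposal is correct and follows essentially the same route as the paper: the paper also views $\pref{A\times\Delta}$ as $\Homi(\Delta^\op,\pref{A})$ with the $\W$-model structure on $\pref{A}$, takes the Reedy structure, identifies the relative matching maps with $\bslash{f}{\mbDelta{n}}$, and concludes via Proposition~\ref{prop:triv_fib} and Theorem~\ref{thm:mod_str_loc}. The only difference is that you spell out the latching/matching bookkeeping that the paper delegates to the proof of Theorem~\ref{thm:vert_mcs} and to Proposition~2.10 of \cite{JoyalTierneyQCatSeg}.
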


\begin{proof}
The proof is essentially the same as the one of Proposition 2.10 of
\cite{JoyalTierneyQCatSeg}. Consider the Reedy model category structure
on $\pref{A \times \Delta}$ seen as $\Homi(\Delta^\op, \pref{A})$, where
$\pref{A}$ is endowed with the $\W$-model category structure. By definition,
the weak equivalences of this model category structure are the horizontal
equivalences.  For the same reasons as in the proof of
Theorem~\ref{thm:vert_mcs}, a morphism $f$ of $\pref{A \times \Delta}$ is a
fibration (resp.~a trivial fibration) for this Reedy structure if and only
if for every $n \ge 0$, the morphism $\bslash{f}{\mbDelta{n}}$ is a
$\W$-fibration (resp.~a trivial fibration). It follows as in the
proof of Theorem \ref{thm:vert_mcs} that the cofibrations of this Reedy
structure are the monomorphisms.  The class $\Whor$ is hence an accessible
localizer and the $\Whor$-model category structure is this Reedy model
category structure.
\end{proof}

\begin{paragr}
We will call the $\Whor$-model category structure on $\pref{A \times
\Delta}$ the \ndef{horizontal model category structure}. A fibration of this
structure will be called a \ndef{horizontal fibration}.
\end{paragr}

\begin{rem}
All the results of this section have appropriate generalizations when
$\Delta$ is replaced by a skeletal Reedy category $B$ endowed with an
accessible $B$-localizer. This is easily seen once
Proposition~\ref{prop:cell_mod_A_Delta} has been generalized. An inspection
of the proof of this proposition, that is of the proof of Lemma 2.3.2 of
\cite{Cisinski}, reveals that all the properties of~$\Delta$ used in
this proof are shared by skeletal Reedy categories (see Section~8.1
of \cite{Cisinski}).
\end{rem}

\section{The model category of formal Rezk spaces}\label{sec:4}

In this section, we associate to any accessible $A$-localizer $\W$, where
$A$ is a skeletal Reedy category, an $(A\times\Delta)$-localizer $\WfRezk$
of formal Rezk spaces. The purpose of the section is then to show that when
the skeletal Reedy category $A$ is regular, the localizer~$\WfRezk$ is the
simplicial completion of the localizer $\W$.

\medskip

Throughout the section, we fix a skeletal Reedy category $A$, a set $S$ of
monomorphisms of~$\pref{A}$ and an injective separating interval $J$ of
$\pref{A}$. We denote by $W$ the $A$-localizer generated by~$S$.

\begin{paragr}\label{paragr:loc_fRezk}
We will denote by $\WfRezk$ the $(A \times \Delta)$-localizer generated by
\[
\Wvert \cup p^*(S) \cup \{p^*(X \times J \to X);\,\, X \in
\Ob(\pref{A})\}.
\]
Note that by the 2-out-of-3 property, $\WfRezk$ is also generated by
\[
\Wvert \cup p^*(S) \cup 
\{p^*(\cansegd{\epsilon}{X} : X \to X \times J); \,\, X \in \Ob(\pref{A}),\,\,
\epsilon = 0,1\}.
\]
%The $\WfRezk$-equivalences will be called \ndef{formal Rezk equivalences}.
\end{paragr}

\begin{prop}
The $(A \times \Delta)$-localizer $\WfRezk$ is accessible.
\end{prop}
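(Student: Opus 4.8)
The plan is to invoke Theorem~\ref{thm:mod_str_loc} (equivalently, the characterization of accessible localizers as those generated by a set), so that it suffices to exhibit $\WfRezk$ as the $A \times \Delta$-localizer generated by a \emph{set} of morphisms of $\pref{A \times \Delta}$. By definition $\WfRezk$ is generated by $\Wvert \cup p^*(S) \cup \{p^*(X \times J \to X);\,\, X \in \Ob(\pref{A})\}$, so I need to replace each of these three pieces by a set, or rather show that the localizer they generate is already generated by a set.

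First I would handle $\Wvert$. By Theorem~\ref{thm:vert_mcs}, $\Wvert$ is itself an accessible $A \times \Delta$-localizer, hence by Theorem~\ref{thm:mod_str_loc} (or directly by the definition of accessibility) it is generated by some set $S_{\mathrm{vert}}$ of morphisms. Next, $p^*(S)$ is a set because $S$ is a set and $p^*$ is a functor. Finally, the collection $\{p^*(X \times J \to X);\,\, X \in \Ob(\pref{A})\}$ is a proper class indexed by all presheaves $X$, so this is the one piece that must be cut down. The standard move — exactly the one used in paragraph~\ref{paragr:an_ext} and in Cisinski's treatment of anodyne extensions — is to observe that it suffices to take $X$ ranging over a cellular model: if $\cM$ is a cellular model of $\pref{A}$ (which exists by paragraph~\ref{paragr:def_triv_fib}, indeed by Proposition~\ref{prop:cell_mod_Reedy} since $A$ is a skeletal Reedy category), then the localizer generated by $\Wvert \cup p^*(S) \cup \{p^*(U \times J \to U);\,\, (U \to V) \in \cM\}$ together with the maps $p^*(\cansegd{\epsilon}{X})$ for $X$ in the (co)domains of $\cM$ already contains all the maps $p^*(X \times J \to X)$. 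Concretely, one shows by a transfinite-induction/small-object argument along $\Sat(\cM) = $ monomorphisms that for an arbitrary monomorphism $\varnothing \to X$, the map $p^*(X \times J \to X)$ is in the localizer generated by the chosen set: this uses that $p^*$ preserves colimits and monomorphisms, that $J \to e_{\pref{A}}$ is a trivial fibration (so that $- \times J$ behaves well on pushouts), and the stability of monomorphism-$\W$-equivalences under pushout and transfinite composition. This is essentially the content of Cisinski's Proposition~1.3.13 / Remark~1.3.15, adapted across $p^*$, and it is the step I expect to require the most care.

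Assembling, I would set $\clL$ to be the union of $S_{\mathrm{vert}}$, $p^*(S)$, and $\{p^*(\cansegd{\epsilon}{U} : U \to U \times J);\,\, (U \to V) \in \cM,\,\, \epsilon = 0,1\}$, which is a set, and argue $\W(\clL) = \WfRezk$: the inclusion $\W(\clL) \subseteq \WfRezk$ is immediate since every generator of $\clL$ lies in $\WfRezk$ (using the second description of $\WfRezk$ from paragraph~\ref{paragr:loc_fRezk} for the interval maps), and the reverse inclusion follows from the reduction just described together with the 2-out-of-3 property to pass between $X \times J \to X$ and the sections $\cansegd{\epsilon}{X}$. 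Since $\WfRezk$ is then generated by a set, it is accessible. The main obstacle, as noted, is the class-to-set reduction for the interval maps; everything else is formal bookkeeping with the definitions already recorded in Sections~2 and~3.
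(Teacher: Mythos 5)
Your proposal is correct and follows essentially the same route as the paper: extract a generating set $T$ for $\Wvert$ from its accessibility, keep the set $p^*(S)$, cut the class of interval maps down to a set indexed by a cellular model of $\pref{A}$, and prove the two inclusions via the 2-out-of-3 property together with a saturation argument on the class of monomorphisms $U \to V$ whose associated map $p^*(U \times J \cup V \times \{\epsilon\} \to V \times J)$ lies in the candidate localizer. The only (immaterial) difference is that the paper takes as third piece of the generating set the corner maps $p^*(\bord{a} \times J \cup a \times \{\epsilon\} \to a \times J)$ for $a \in \Ob(A)$ rather than the sections $p^*(\cansegd{\epsilon}{U})$ for $U$ among the (co)domains of a cellular model; the two choices are interderivable by exactly the pushout-plus-2-out-of-3 manipulation both arguments already use.
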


\begin{proof}
By Theorem \ref{thm:vert_mcs}, the localizer $\Wvert$ is accessible. Let $T$
be a generating set of~$\Wvert$. We claim that the set
\[
T \cup p^*(S) \cup
\{ p^*\big(\bord{a} \times J \cup a \times \{\epsilon\} \to a
\times J\big); \,\, a \in \Ob{A},\,\, \epsilon = 0, 1\}
\]
generates the localizer $\WfRezk$.

Let $\W'$ be the localizer generated by this set. We first show that $\W'$
is included in~$\WfRezk$. Let $a$ be an object of $A$ and let $\epsilon =
0,1$. Consider the diagram
\[
\xymatrix{
p^*(\bord{a} \times \{\epsilon\}) \ar[r] \ar[d] &
  p^*(\bord{a} \times J) \ar[d] \ar@/^/[ddr] \\
p^*(a \times \{\epsilon\}) \ar[r]) \ar@/_/[drr] &
  p^*(\bord{a} \times J \cup a \times \{\epsilon\}) \ar[dr] \\
& & p^*(a \times J) \pbox{.}
}
\]
The morphisms
\[
p^*\big(\bord{a} \times \{\epsilon\} \to \bord{a} \times J\big)
\quad\text{and}\quad
p^*\big(a \times \{\epsilon\} \to a \times J\big)
\]
are $\WfRezk$-equivalences. The upper horizontal morphism is hence both a
$\WfRezk$-equiva\-lence and a monomorphism. Since the square of the diagram is
cocartesian, the lower horizontal morphism is also a $\WfRezk$-equivalence.
It follows from the 2-out-of-3 property that the morphism
\[ p^*\big(\bord{a} \times J \cup a \times \{\epsilon\} \to a
\times J\big) \]
is a $\WfRezk$-equivalence.

Let us now show that $\WfRezk$ is included in $\W'$. Let $\clC$ be the class of
monomorphisms~$U \to V$ of $\pref{A}$ such that
\[
p^*\big(U \times J \cup V \times \{\epsilon\} \to V
\times J\big)
\]
belongs to $\W'$ for $\epsilon = 0,1$. It is easy to check that this class
is saturated. But by definition of $\W'$, the class $\clC$ contains a cellular model of
$\pref{A}$. It hence contains every monomorphism of $\pref{A}$. If~$X$ is a
presheaf on $A$, the morphism $f : \varnothing_{\pref{A}} \to X$ thus
belongs to $\clC$. This means that
\[
p^*\big(X \times \{\epsilon\} \to X \times J\big)
\]
is in $\W'$, thereby proving the result.
\end{proof}

\begin{paragr}
We will call the $\WfRezk$-model category structure on $\pref{A \times
\Delta}$ the \ndef{model category structure of formal Rezk $A$-spaces}. A
$\WfRezk$-fibrant object will be called a \ndef{formal Rezk $A$-space}.
\end{paragr}

\begin{prop}
A presheaf $X$ on $A \times \Delta$ is a formal Rezk $A$-space if and
only if it satisfies the following conditions:
\begin{enumerate}
\item $X$ is vertically fibrant;
\item $s \bs X$ is a trivial fibration for every $s$ in $S$;
\item $\cansegd{\epsilon}{Y} \bs X$ is a trivial fibration for every presheaf
$Y$ on $A$ and $\epsilon = 0,1$.
\end{enumerate}
\end{prop}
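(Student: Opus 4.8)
The plan is to recognise the $\WfRezk$-model structure as a left Bousfield localisation of the vertical one and to compute explicitly the mapping spaces that control its fibrant objects. Since $\WfRezk$ contains $\Wvert$ and both localisers have the monomorphisms as cofibrations, the identity functor $(\pref{A\times\Delta})_{\Wvert}\to(\pref{A\times\Delta})_{\WfRezk}$ is left Quillen, so every $\WfRezk$-fibrant object is vertically fibrant; this is the necessity of~(1). By paragraph~\ref{paragr:loc_fRezk} and the previous proposition, $\WfRezk$ is the localiser generated by $\Wvert$ together with the set $p^*(S)\cup\{p^*(\mbord a\opboxp\canseg\epsilon);\ a\in\Ob(A),\ \epsilon=0,1\}$, hence also with the class $p^*(S)\cup\{p^*(\cansegd\epsilon Y);\ Y\in\Ob(\pref A),\ \epsilon=0,1\}$; so the $\WfRezk$-model structure is the left Bousfield localisation of the simplicial, proper vertical model structure at that set, and is in particular itself simplicial. (This is the comparison with Bousfield localisation recalled after Theorem~\ref{thm:mod_str_loc}, proved in the appendix; it can also be extracted from Theorem~\ref{thm:naive_fib}.)

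The computational heart is the identity
\[
\underline{\Hom}(p^*(Z),X)=Z\bs X\qquad(Z\in\pref A,\ X\in\pref{A\times\Delta}),
\]
where $\underline{\Hom}$ is the canonical simplicial enrichment of $\pref{A\times\Delta}$, whose tensor is $W\mapsto W\times q^*(-)$ and for which the vertical model structure is simplicial: indeed $p^*(Z)\times q^*(\Delta_n)=Z\opbox\Delta_n$, so $\underline{\Hom}(p^*(Z),X)_n=\Hom(Z\opbox\Delta_n,X)=(Z\bs X)_n$. Now if $X$ is $\WfRezk$-fibrant, then for $s\colon U_s\to V_s$ in $S$ and for every presheaf $Y$ on $A$ the morphisms $p^*(s)$ and $p^*(\cansegd\epsilon Y)$ are monomorphisms, hence cofibrations, and $\WfRezk$-equivalences, hence $\WfRezk$-trivial cofibrations; applying the pushout-product axiom of the simplicial $\WfRezk$-model structure to $X\to e_{\pref{A\times\Delta}}$ then forces $\underline{\Hom}(p^*(s),X)=s\bs X$ and $\underline{\Hom}(p^*(\cansegd\epsilon Y),X)=\cansegd\epsilon Y\bs X$ to be trivial fibrations of simplicial sets, which is exactly (2) and (3).

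Conversely, assume (1), (2), (3). By the Bousfield-localisation description it suffices to prove that $X$ is local with respect to the members of the economical generating set, i.e.\ that $\underline{\Hom}(-,X)$ carries each of them to a weak equivalence. Since $X$ is vertically fibrant, $\underline{\Hom}(p^*(u),X)=\bbs{u}{(X\to e_{\pref{A\times\Delta}})}=u\bs X$ is a Kan fibration for every monomorphism $u$ of $\pref A$ by Proposition~\ref{prop:equiv_vert_fib}(2), and a Kan fibration which is a weak equivalence is a trivial fibration. For $u=s\in S$ this is (2). For $u=\mbord a\opboxp\canseg\epsilon$, the functor $(-)\bs X$ turns the $\pref A$-pushout-product into the corresponding pullback-power (it sends colimits to limits), so $u\bs X$ is the canonical map
\[
(a\times J)\bs X\longrightarrow a\bs X\times_{\bord a\bs X}(\bord a\times J)\bs X;
\]
it is a Kan fibration, its composite with the first projection is the trivial fibration $\cansegd\epsilon a\bs X$ of (3), and the first projection is a base change of the trivial fibration $\cansegd\epsilon{\bord a}\bs X$ of (3); hence $u\bs X$ is a weak equivalence by two-out-of-three, and therefore a trivial fibration. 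Thus $X$ is local, and so $\WfRezk$-fibrant.

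The one step needing real care is the principle, used in both directions, that the fibrant objects of the localised structure are precisely the local fibrant objects. In the localiser language this is imported from the comparison with left Bousfield localisation; staying within this section it can instead be deduced from Theorem~\ref{thm:naive_fib}, by writing $\WfRezk$-fibrancy as the right lifting property against the anodyne $(S'_0,J')$-extensions for a generating set $S'_0$ of monomorphisms and an injective separating interval $J'$ of $\pref{A\times\Delta}$, and then checking with Lemma~\ref{lemma:an_ext} and the description of anodyne extensions in paragraph~\ref{paragr:an_ext} that the class of monomorphisms having the left lifting property against $X\to e_{\pref{A\times\Delta}}$ contains $S'_0$ — the $\Wvert$-generators being handled by (1), the factor $p^*(S)$ by (2), the latching data $p^*(\mbord a\opboxp\canseg\epsilon)$ by (3) together with vertical fibrancy — and is stable under the operations defining a class of anodyne $J'$-extensions. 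A routine saturation argument as in the previous proposition (applied to the class of monomorphisms $u$ of $\pref A$ for which $u\bs X$ is a trivial fibration, which is saturated and contains a cellular model) shows moreover that (3) for all presheaves $Y$ is equivalent to its instances at the $\mbord a$'s, so that it matches the chosen generating set.
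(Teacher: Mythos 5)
Your proof is correct and follows essentially the same route as the paper's: both identify the formal Rezk $A$-spaces as the vertically fibrant objects that are local with respect to the added generators (Proposition \ref{prop:loc_fib_obj}), compute $\Map(p^*(f_0),X)=f_0\bs X$ via the simplicial enrichment, and use Proposition \ref{prop:equiv_vert_fib} to upgrade a $\W_\infty$-equivalence that is a Kan fibration to a trivial fibration. The only deviation is in the converse direction, where you check locality against the economical generating set and hence need the extra pullback/2-out-of-3 computation for the pushout-product generators, whereas the paper checks locality directly against the full class $p^*(S)\cup\{p^*(\cansegd{\epsilon}{Y})\}$, for which conditions (2) and (3) are verbatim the locality conditions.
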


\begin{proof}
Let
\[
\clC = p^*(S) \cup 
\{p^*(\cansegd{\epsilon}{X} : X \to X \times J); \,\, X \in \Ob(\pref{A}),\,\,
\epsilon = 0,1\}.
\]
By definition, the localizer $\WfRezk$ is generated by $\Wvert$ and $C$. It
follows from Proposition \ref{prop:loc_fib_obj} that the formal Rezk
$A$-spaces are the fibrant $\clC$-local objects of the vertical model
category, i.e., the vertically fibrant objects~$X$ of~$\pref{A
\times \Delta}$ such that for every morphism $f : K \to L$ in $\clC$, the
morphism
\[
\Map(f, X) : \Map(L, X) \to \Map(K, X),
\]
where $\Map$ denotes the simplicial enrichment of $\pref{A \times \Delta}$,
is a $\W_\infty$-equivalence. (Recall that the vertical model category is
a simplicial model category.) But if a morphism~$f$ of $\pref{A \times
\Delta}$ is equal to
$p^*(f_0)$ for some morphism $f_0 : K_0 \to L_0$ of $\pref{A}$, then
$\Map(f, X)$ is nothing but the morphism
\[ f_0 \bs X : L_0 \bs X \to K_0 \bs X. \]
Thus, a presheaf $X$ on $A \times \Delta$ is a formal Rezk $A$-space if and
only if it satisfies the following conditions:
\begin{enumerate}
\item $X$ is vertically fibrant;
\item $s \bs X$ is a $\W_\infty$-equivalence for every $s$ in $S$;
\item $\cansegd{\epsilon}{Y} \bs X$ is a $\W_\infty$-equivalence for every
  presheaf $Y$ on $A$ and $\epsilon = 0,1$.
\end{enumerate}
But by Proposition \ref{prop:equiv_vert_fib}, under the assumption that $X$
is vertically fibrant, the morphisms~$s \bs X$ and~$\cansegd{\epsilon}{Y} \bs
X$, where $s$ is in $S$ and $Y$ is a presheaf on $A$, are Kan fibrations.
They are hence $\W_\infty$-equivalences if and only if they are trivial
fibrations, thereby proving the result.
\end{proof}

\begin{prop}\label{prop:equiv_fRezk}
Let $X$ be a vertically fibrant presheaf on $A \times \Delta$. Then the
following conditions are equivalent:
\begin{enumerate}
\item $X$ is a formal Rezk $A$-space;
\item $u\bs X$ is a trivial fibration for every anodyne
$(S, J)$-extension $u$;
\item $X\slash\delta_n$ is a naive $(S, J)$-fibration for all $n \ge
0$;
\item $X/v$ is a naive $(S, J)$-fibration for every monomorphism
$v$ of simplicial sets.
\end{enumerate}
\end{prop}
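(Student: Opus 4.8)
The plan is to prove the cycle of equivalences
$(1)\Leftrightarrow(2)$, $(2)\Leftrightarrow(3)\Leftrightarrow(4)$, using the previous proposition together with Proposition~\ref{prop:equiv_orth} and the characterization of anodyne $(S,J)$-extensions. The key observation is that, for $X$ vertically fibrant, the morphism $u\bs X$ is already a vertical fibration for every monomorphism $u$ of $\pref{A}$ (this is the content of Proposition~\ref{prop:equiv_vert_fib}, condition (2)); hence, by the remark at the end of the proof of the previous proposition, $u\bs X$ is a $\W_\infty$-equivalence if and only if it is a trivial fibration. So throughout we may freely trade ``trivial fibration'' for ``$\W_\infty$-equivalence plus Kan fibration'' among the morphisms $u\bs X$ with $u$ a monomorphism of $\pref{A}$.

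First I would prove $(1)\Rightarrow(2)$. Let $\clC$ be the class of monomorphisms $u$ of $\pref{A}$ such that $u\bs X$ is a trivial fibration. Since trivial fibrations are stable under pullback and (by Proposition~\ref{prop:triv_fib}) $\bbs{u}{f}$-type constructions interact well with saturation, one checks that $\clC$ is saturated and satisfies the two-out-of-three-type condition (3) of Lemma~\ref{lemma:an_ext}: indeed, if $vu$ and $u$ are monomorphisms with $(vu)\bs X$ and $u\bs X$ trivial fibrations, then $v\bs X$ factors through these and a short diagram chase using the natural identifications $\Hom_{\pref{A}}(X, Z\slash Y)\cong\Hom_{\pref{A\times\Delta}}(X\opbox Y, Z)$ shows $v\bs X$ is a trivial fibration. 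Condition (1) of Lemma~\ref{lemma:an_ext}, that $\clC$ contains $S$, is hypothesis (2) of the previous proposition. Condition (4), that $\cansegd{\epsilon}{Y}\bs X$ lies in $\clC$, is hypothesis (3) of the previous proposition. Lemma~\ref{lemma:an_ext} then gives that $\clC$ contains every anodyne $(S,J)$-extension, which is exactly (2). The implication $(2)\Rightarrow(1)$ is immediate from the previous proposition, since $S$ consists of anodyne $(S,J)$-extensions and so do the morphisms $\cansegd{\epsilon}{Y}:Y\to Y\times J$ (by axiom (2) of paragraph~\ref{paragr:an_ext} applied to $\varnothing_{\pref{A}}\to Y$, which is how $\An_J(S)$ is built).

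Next I would handle $(2)\Leftrightarrow(3)\Leftrightarrow(4)$ purely formally via Proposition~\ref{prop:equiv_orth}. A morphism $g$ of $\pref{A}$ is a trivial fibration iff $\mbordz\orth g$ for every $\mbordz$ in a cellular model of $\pref{A}$, iff $u\orth g$ for every monomorphism $u$ of $\pref{A}$; dually, $g:X\to e_{\pref{A}}$ being a naive $(S,J)$-fibration means $u\orth g$ for every anodyne $(S,J)$-extension $u$, and since the class of anodyne $(S,J)$-extensions is the saturation of $\Lambda^\infty_J(S)$ together with the pushout-product generators described in paragraph~\ref{paragr:an_ext}, a morphism into $e_{\pref{A}}$ is a naive $(S,J)$-fibration iff it lifts against that generating set. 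Now apply Proposition~\ref{prop:equiv_orth} in the form $v\orth\bbs{u}{f}\Leftrightarrow u\orth\bslash{f}{v}$ to $f:X\to e_{\pref{A\times\Delta}}$: since $\bbs{u}{f}=u\bs X\to e_{\pref{A}}$ and $\bslash{f}{v}=X\slash v$, we get that ``$u\bs X$ is a trivial fibration for all anodyne $(S,J)$-extensions $u$'' is equivalent to ``$X\slash v$ is a naive $(S,J)$-fibration for all monomorphisms $v$ of simplicial sets'', which is (4); restricting $v$ to the generators $\mbDelta{n}$ (which by Proposition~\ref{prop:triv_fib}-style saturation suffices, since the $\mbDelta{n}$ form a cellular model of $\pref{\Delta}$) gives (3). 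The routine verification that $(3)\Rightarrow(4)$, i.e.\ that lifting against all $\mbDelta{n}$ upgrades to lifting against all monomorphisms of simplicial sets, is the usual saturation argument already used in Proposition~\ref{prop:triv_fib}.

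The main obstacle is the stability argument in $(1)\Rightarrow(2)$: one must verify carefully that the class $\clC$ of monomorphisms $u$ with $u\bs X$ a trivial fibration satisfies \emph{all four} hypotheses of Lemma~\ref{lemma:an_ext}, and in particular the non-obvious cancellation condition (3). This requires knowing that, for $X$ vertically fibrant, $u\bs X$ is always a vertical (hence Kan) fibration, so that being a trivial fibration is detected by being a $\W_\infty$-equivalence, a property that \emph{does} enjoy two-out-of-three. Without the vertical fibrancy of $X$ this cancellation would fail, which is precisely why the proposition is stated only for vertically fibrant $X$. Everything else is a mechanical transcription of the adjunction identities and of Proposition~\ref{prop:equiv_orth}.
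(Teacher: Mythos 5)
Your proof follows the paper's argument essentially verbatim: $(2)\Leftrightarrow(3)\Leftrightarrow(4)$ via the adjunction $\delta_n \orth u\bs X \Leftrightarrow u \orth X/\delta_n$ together with the fact that the $\delta_n$ form a cellular model of $\pref{\Delta}$, and $(1)\Leftrightarrow(2)$ by applying Lemma~\ref{lemma:an_ext} to the class of monomorphisms $u$ with $u\bs X$ a trivial fibration, using the vertical fibrancy of $X$ to reduce the cancellation condition to 2-out-of-3 for $\W_\infty$-equivalences. The only quibble is that the ``short diagram chase'' you invoke for condition (3) of the lemma is not by itself a proof (trivial fibrations do not satisfy cancellation in general), but you correctly identify the actual mechanism --- $u\bs X$ is a Kan fibration for every monomorphism $u$, so trivial fibration amounts to $\W_\infty$-equivalence --- which is exactly the paper's argument.
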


\begin{proof}
The proof is similar to the one of Proposition 3.4 of
\cite{JoyalTierneyQCatSeg}.

$2 \Leftrightarrow 3 \Leftrightarrow 4)$ Let $u$ be a morphism of
$\pref{A}$. The morphism $u\bs X$ is a trivial fibration if and only if, for all
$n \ge 0$, we have $\delta_n \orth u\bs X$. But we have
\[
\delta_n \orth u \bs X
\quad\Leftrightarrow\quad
u \orth X/\delta_n.
\]

$1 \Rightarrow 2)$ Let $\clC$ be the class of monomorphisms $u$ of
$\pref{A}$ such that $u\bs X$ is a trivial fibration. We have just seen that
$u$ belongs to $\clC$ if and only if, for all $n \ge 0$, we have $u \orth
X/\delta_n$. The class $\clC$ is thus saturated. Moreover, by Proposition
\ref{prop:equiv_vert_fib}, if $u$ is a monomorphism of~$\pref{A}$, then $u \bs
X$ is a Kan fibration. In particular, $u$ belongs to $\clC$ if and only if $u
\bs X$ is a $\W_\infty$-equivalence. It follows that the class $\clC$
satisfies the 2-out-of-3 property. Since $X$ is a formal Rezk $A$-space,
the morphisms of $S$ and the $\cansegd{\epsilon}{Y}$'s belong to $\clC$ by the
previous proposition. It follows from Lemma \ref{lemma:an_ext} that the
class $\clC$ contains the class of anodyne $(S, J)$-extensions.

$2 \Rightarrow 1)$ By definition, the morphisms of $S$ and the
$\cansegd{\epsilon}{Y}$'s are anodyne $(S, J)$-extensions.
\end{proof}

\begin{coro}
If $X$ is a formal Rezk $A$-space, then for every simplicial set $U$, the
presheaf $X/U$ is $\W$-fibrant. In particular, $X_{\bullet, n}$ is
$\W$-fibrant for every $n \ge 0$.
\end{coro}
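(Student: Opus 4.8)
The plan is to read off the statement from condition~(4) of Proposition~\ref{prop:equiv_fRezk} combined with Theorem~\ref{thm:naive_fib}. A formal Rezk $A$-space is in particular vertically fibrant, so Proposition~\ref{prop:equiv_fRezk} applies to $X$; I would feed it the monomorphism $\varnothing_{\pref{\Delta}} \to U$ of simplicial sets. Condition~(4) then asserts that $X/v$ is a naive $(S, J)$-fibration, and the first task is to recognise which morphism $X/v$ is. Writing it out, $X/v = \bslash{(X \to e_{\pref{A \times \Delta}})}{v}$ is the morphism $X/U \to (e_{\pref{A \times \Delta}}/U) \times_{(e_{\pref{A \times \Delta}}/\varnothing_{\pref{\Delta}})} (X/\varnothing_{\pref{\Delta}})$. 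Since $a \opbox T$ maps uniquely to the terminal presheaf, $e_{\pref{A \times \Delta}}/T = e_{\pref{A}}$ for every simplicial set $T$, and since $a \opbox \varnothing_{\pref{\Delta}} = \varnothing_{\pref{A \times \Delta}}$ we get $X/\varnothing_{\pref{\Delta}} = e_{\pref{A}}$; hence both the target and the fibre product collapse to $e_{\pref{A}}$, and $X/v$ is nothing but the structural morphism $X/U \to e_{\pref{A}}$. In other words, $X/U$ is an $(S, J)$-fibrant presheaf on $A$.

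It then remains to upgrade $(S, J)$-fibrancy to $\W$-fibrancy, and this is immediate from Theorem~\ref{thm:naive_fib}: here $\W = \W(S)$ is the $A$-localizer generated by the set $S$ and $J$ is an injective separating interval of $\pref{A}$, so that theorem gives that the class of $\W$-fibrant objects of $\pref{A}$ coincides with the class of $(S, J)$-fibrant objects. Therefore $X/U$ is $\W$-fibrant, which is the main assertion.

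For the last sentence I would specialise to $U = \Delta_n$. The presheaf $a \opbox \Delta_n$ on $A \times \Delta$ is the representable presheaf at $(a, \Delta_n)$, so the Yoneda lemma yields $(X/\Delta_n)_a = \Hom_{\pref{A \times \Delta}}(a \opbox \Delta_n, X) = X_{a, n}$, i.e., $X/\Delta_n = X_{\bullet, n}$; applying the main assertion with $U = \Delta_n$ shows $X_{\bullet, n}$ is $\W$-fibrant. I do not expect any genuine obstacle here: the argument is bookkeeping with the adjunctions $-\opbox Y \dashv -\slash Y$ and $X \opbox - \dashv X\bs-$ of Section~3, and the only step requiring a moment's care is the degeneration of the fibre product defining $\bslash{f}{v}$ that identifies $X/v$ with the structural morphism $X/U \to e_{\pref{A}}$; once that is checked, Proposition~\ref{prop:equiv_fRezk}(4) and Theorem~\ref{thm:naive_fib} apply verbatim.
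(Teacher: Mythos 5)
Your proof is correct and is essentially identical to the paper's: the paper also applies Proposition~\ref{prop:equiv_fRezk}(4) to $v : \varnothing_{\pref{\Delta}} \to U$, identifies $X/v$ with $X/U \to e_{\pref{A}}$, and invokes Theorem~\ref{thm:naive_fib} to pass from $(S,J)$-fibrancy to $\W$-fibrancy, concluding with $X/\Delta_n = X_{\bullet,n}$. Your explicit verification that the fibre product in the definition of $X/v$ collapses is a detail the paper leaves implicit, but the argument is the same.
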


\begin{proof}
Consider the morphism of simplicial sets $v : \varnothing_{\pref{\Delta}}
\to U$. By the previous proposition, the morphism $X/v = X/U \to
X/\varnothing_{\pref{\Delta}} = e_{\pref{A}}$ is a naive $(S, J)$-fibration.
By Theorem~\ref{thm:naive_fib}, the object $X/U$ is hence $\W$-fibrant. The
second assertion follows from the fact that $X/\Delta_n = X_{\bullet, n}$.
\end{proof}

\begin{prop}
Let $f : X \to Y$ be a vertical fibration between formal Rezk $A$-spaces.
Then, for every monomorphism $v : S \to T$ of simplicial sets, the morphism
\[ \bslash{f}{v} : X/T \to Y/T \times_{Y/S} X/S \]
is a $W$-fibration between $W$-fibrant objects.
\end{prop}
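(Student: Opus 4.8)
The plan is to show that $\bslash{f}{v}$ is both a $W$-fibration and has $W$-fibrant target, treating the two assertions separately. For the target, note that $Y/T \times_{Y/S} X/S$ is a pullback of $W$-fibrant objects along a $W$-fibration: by the corollary above, $Y/T$, $Y/S$ and $X/S$ are all $W$-fibrant, and the morphism $Y/T \to Y/S$ is $\bslash{Y}{v}$ viewed appropriately — more precisely, since $Y$ is a formal Rezk $A$-space, Proposition \ref{prop:equiv_fRezk} gives that $Y/v$ is a naive $(S,J)$-fibration, hence a $W$-fibration between $W$-fibrant objects by Theorem \ref{thm:naive_fib}. Likewise $X/S \to e_{\pref{A}}$ is a $W$-fibration. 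A pullback of $W$-fibrations is a $W$-fibration, so $Y/T \times_{Y/S} X/S$ is $W$-fibrant.

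For the main assertion, I would first reduce to a lifting-property statement. A morphism with $W$-fibrant target is a $W$-fibration if and only if it is a naive $(S,J)$-fibration, by Theorem \ref{thm:naive_fib}; so once the target is known $W$-fibrant it suffices to check that $\bslash{f}{v}$ has the right lifting property against all anodyne $(S,J)$-extensions. Let $u : C \to D$ be such an extension. By Proposition \ref{prop:equiv_orth} applied in $\pref{A \times \Delta}$ (with the roles of the two variables as in Section 3), we have
\[
u \orth \bslash{f}{v}
\quad\Leftrightarrow\quad
(u \opboxp v) \orth f
\quad\Leftrightarrow\quad
v \orth \bbs{u}{f}.
\]
So the whole statement reduces to showing: for every anodyne $(S,J)$-extension $u$, the morphism $\bbs{u}{f}$ has the right lifting property against every monomorphism $v$ of simplicial sets, i.e.\ $\bbs{u}{f}$ is a trivial fibration.

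The heart of the argument is thus to prove that $\bbs{u}{f}$ is a trivial fibration for $u$ an anodyne $(S,J)$-extension, given that $f : X \to Y$ is a vertical fibration between formal Rezk $A$-spaces. I expect to do this by the standard pushout-product induction: let $\clC$ be the class of monomorphisms $u$ of $\pref{A}$ such that $\bbs{u}{f}$ is a trivial fibration; using Proposition \ref{prop:equiv_orth} again one checks $\clC$ is saturated and, because $f$ is a \emph{vertical} fibration, that $\bbs{u}{f}$ is always a vertical fibration, so an argument analogous to the proof of Proposition \ref{prop:equiv_fRezk} (using that $\bbs{u}{f}$ being a $\W_\infty$-fibration, it is a trivial fibration iff it is a $\W_\infty$-equivalence) shows $\clC$ satisfies the 2-out-of-3 condition of Lemma \ref{lemma:an_ext}(3). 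Then one verifies that $S \subset \clC$ and that the interval inclusions $\cansegd{\epsilon}{Y'}$ lie in $\clC$: for the generators of $S$ this is exactly condition (2) in the description of formal Rezk $A$-spaces applied to the fibration $f$ rather than to an object, and similarly for the interval maps — the point is that "$s \bs X$ is a trivial fibration and $s \bs Y$ is a trivial fibration" combined with $f$ a vertical fibration forces $\bbs{s}{f}$ to be a trivial fibration. Lemma \ref{lemma:an_ext} then gives $\An_J(S) \subseteq \clC$, which is what we need. The main obstacle, and the step requiring care, is precisely this relative version of the formal-Rezk-space characterization: one must upgrade the absolute statements "$s \bs X$ is a trivial fibration" into the relative statement "$\bbs{s}{f}$ is a trivial fibration" using that $f$ is a vertical fibration, and then feed the closure properties of $\clC$ through Lemma \ref{lemma:an_ext}; this is the analogue of Joyal–Tierney's Proposition 3.6, and I would model it closely on that argument.
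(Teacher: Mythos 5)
Your proof is correct, and its overall skeleton (reduce, via Theorem \ref{thm:naive_fib} and Proposition \ref{prop:equiv_orth}, to showing that $\bbs{u}{f}$ is a trivial fibration for every anodyne $(S,J)$-extension $u$) is the same as the paper's; the difference is in how that last statement is proved. You run an induction over the construction of anodyne extensions: you form the class $\clC$ of monomorphisms $u$ with $\bbs{u}{f}$ a trivial fibration, check the hypotheses of Lemma \ref{lemma:an_ext}, and verify the generators (the elements of $S$ and the interval inclusions) by hand using the ``absolute trivial fibrations on $X$ and $Y$ plus $f$ a vertical fibration forces the relative map to be a trivial fibration'' argument. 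This works --- the saturation of $\clC$ and the cancellation property both go through, the latter via the factorization of $\bbs{vu}{f}$ as $\bbs{v}{f}$ followed by a pullback of $\bbs{u}{f}$, together with the fact that all these maps are Kan fibrations by Proposition \ref{prop:equiv_vert_fib} --- but the induction is superfluous. Proposition \ref{prop:equiv_fRezk} already states that $u\bs X$ and $u\bs Y$ are trivial fibrations for \emph{every} anodyne $(S,J)$-extension $u$, not merely for the generators; so the key step you isolate (the square formed by $V\bs X$, $V\bs Y$, $U\bs X$, $U\bs Y$ has trivial fibrations as vertical maps, hence is homotopically cartesian, and $\bbs{u}{f}$, being a Kan fibration and a weak equivalence by 2-out-of-3, is a trivial fibration) can be applied in one shot to an arbitrary anodyne extension. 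That is exactly what the paper does, and it is shorter because the inductive bookkeeping has already been absorbed into Proposition \ref{prop:equiv_fRezk}. In effect your argument re-proves part of that proposition in relative form; nothing is wrong, but you should cite it to avoid the duplication.
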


\begin{proof}
The proof is similar to the one of Proposition 3.10 of
\cite{JoyalTierneyQCatSeg}. Let $v : S \to T$ be a monomorphism of
simplicial sets. By the above corollary, the presheaves $X/S$ and~$X/T$ are
$\W$-fibrant. By proposition \ref{prop:equiv_fRezk}, the morphism $Y/T \to
Y/S$ is a naive $(S, J)$-fibration. It follows that $Y/T \times_{Y/S} X/S
\to X/S$ is a naive $(S, J)$-fibration. The presheaf~$Y/T \times_{Y/S} X/S$
is thus $\W$-fibrant by Theorem \ref{thm:naive_fib}.

By the same theorem, it suffices to show that $\bslash{f}{v}$ is a naive
$(S, J)$-fibration, i.e., that for every anodyne $(S, J)$-extension
$u$, we have $u \orth \bslash{f}{v}$. But we have
\[
u \orth \bslash{f}{v}
\quad\Leftrightarrow\quad
v \orth \bbs{u}{f},
\]
and it thus suffices to show that for every anodyne $(S, J)$-extension $u :
U \to V$, the morphism
\[
\bbs{u}{f} : V\bs X \to V\bs Y \times_{U\bs Y} U\bs X
\]
is a trivial fibration. Let $u$ be such an anodyne $(S, J)$-extension.
Since $f$ is a vertical fibration, by Proposition~\ref{prop:equiv_vert_fib},
the morphism~$\bbs{u}{f}$ is a Kan fibration. It thus suffices to prove that
$\bbs{u}{f}$ is a $\W_\infty$-equivalence. Consider the commutative square
\[
\xymatrix{
V \bs X \ar[r] \ar[d] & V \bs Y \ar[d] \\
U \bs X \ar[r] & U \bs Y \pbox{.}
}
\]
Since $X$ and $Y$ are formal Rezk $A$-spaces, by Proposition
\ref{prop:equiv_fRezk}, the vertical morphisms are trivial fibrations. This
square is hence homotopically cartesian and the result follows.
\end{proof}

\begin{coro}
Let $f : X \to Y$ be a vertical fibration between formal Rezk $A$-spaces.
Then $f$ is a horizontal fibration.
\end{coro}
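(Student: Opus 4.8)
The plan is to reduce the statement immediately to the previous proposition via the Reedy description of horizontal fibrations. First I would recall from Theorem~\ref{thm:hor_mcs} that a morphism $f$ of $\pref{A \times \Delta}$ is a horizontal fibration if and only if, for every $n \ge 0$, the morphism
\[
\bslash{f}{\mbDelta{n}} : X/\Delta_n \to Y/\Delta_n \times_{Y/\bDelta{n}} X/\bDelta{n}
\]
is a $\W$-fibration. So it suffices to verify this condition for the given $f$.

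Next I would observe that $\mbDelta{n} = \delta_n : \bDelta{n} \to \Delta_n$ is a monomorphism of simplicial sets, so the previous proposition applies to the vertical fibration $f : X \to Y$ between formal Rezk $A$-spaces with $v = \mbDelta{n}$. It yields that $\bslash{f}{\mbDelta{n}}$ is a $\W$-fibration (in fact a $\W$-fibration between $\W$-fibrant objects). Since this holds for every $n \ge 0$, the criterion of Theorem~\ref{thm:hor_mcs} shows that $f$ is a horizontal fibration. I do not expect any obstacle here: the substantive work — the homotopy-cartesianness argument and the use of Proposition~\ref{prop:equiv_vert_fib} and Proposition~\ref{prop:equiv_fRezk} — was already done in the proof of the preceding proposition, and this corollary merely specializes that result to the generating monomorphisms $\mbDelta{n}$ and feeds it into the Reedy characterization of the horizontal model structure.
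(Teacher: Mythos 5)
Your proof is correct and is essentially identical to the paper's: apply the preceding proposition with $v = \mbDelta{n}$ to get that each $\bslash{f}{\mbDelta{n}}$ is a $\W$-fibration, then invoke the characterization of horizontal fibrations from Theorem~\ref{thm:hor_mcs}. No issues.
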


\begin{proof}
By the previous proposition, for every $n \ge 0$, the morphism
$\bslash{f}{\delta_n}$ is a $\W$\nbd-fibration. The results hence follows from
Theorem \ref{thm:hor_mcs}.
\end{proof}

\begin{thm}
The $(A \times \Delta)$-localizer $\WfRezk$ contains $\Whor$.
\end{thm}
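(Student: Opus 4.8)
The plan is to deduce the statement from the preceding corollary --- that a vertical fibration between formal Rezk $A$-spaces is a horizontal fibration --- by an abstract argument inside the $\WfRezk$-model category, rather than by comparing generators. First I would reduce to trivial cofibrations. Let $f$ be a $\Whor$-equivalence and factor it in the $\Whor$-model category structure of Theorem \ref{thm:hor_mcs} as $f = pu$, with $u$ a trivial cofibration and $p$ a fibration. By the 2-out-of-3 property, $p$ is a $\Whor$-equivalence, hence a trivial fibration of the $\Whor$-model category; since the cofibrations of this model category are the monomorphisms, $p$ has the right lifting property with respect to every monomorphism, so it is a trivial fibration in the sense of paragraph \ref{paragr:def_triv_fib} and therefore belongs to $\WfRezk$. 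Using the 2-out-of-3 property again, it thus suffices to show that every trivial cofibration $u : X \to Y$ of the $\Whor$-model category is a $\WfRezk$-equivalence.

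Such a $u$ is a monomorphism having the left lifting property with respect to every horizontal fibration, so by the preceding corollary we have $u \orth q$ for every vertical fibration $q$ between formal Rezk $A$-spaces. Now fix a formal Rezk $A$-space $E$, that is, a $\WfRezk$-fibrant object. The presheaves $e_{\pref{A \times \Delta}}$ and $E \times E$ are again formal Rezk $A$-spaces, and since $\WfRezk$ contains $\Wvert$, every $\WfRezk$-fibration is a vertical fibration. Hence, choosing a factorization $E \to E^P \to E \times E$ of the diagonal of $E$ in the $\WfRezk$-model structure with $E \to E^P$ a $\WfRezk$-equivalence and $q : E^P \to E \times E$ a $\WfRezk$-fibration, the object $E^P$ is a formal Rezk $A$-space and both $E \to e_{\pref{A \times \Delta}}$ and $q$ are vertical fibrations between formal Rezk $A$-spaces. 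Consequently $u \orth (E \to e_{\pref{A \times \Delta}})$ and $u \orth q$.

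A standard argument then finishes the proof. Lifting against $E \to e_{\pref{A \times \Delta}}$ shows that $u^* : \Hom(Y, E) \to \Hom(X, E)$ is surjective, and lifting against $q$ shows that two morphisms $Y \to E$ that become right-homotopic after composition with $u$ are already right-homotopic; hence $u$ induces a bijection on right-homotopy classes of morphisms into $E$, i.e.\ a bijection $\Hom_{\Ho}(Y, E) \to \Hom_{\Ho}(X, E)$ in the homotopy category of the $\WfRezk$-model structure. As every object of $\pref{A \times \Delta}$ is cofibrant for this structure and the fibrant objects are exactly the formal Rezk $A$-spaces, the Yoneda lemma (applied after fibrant replacement of $X$ and $Y$) shows that $u$ becomes an isomorphism in the homotopy category, hence is a $\WfRezk$-equivalence. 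The only genuinely geometric ingredient is the preceding corollary; the rest is a formal manipulation of lifting properties, the one point requiring a little care being that the $\WfRezk$-fibrations used above --- the map to the terminal object and the path-object fibration --- are indeed vertical fibrations between formal Rezk $A$-spaces, so that the preceding corollary applies.
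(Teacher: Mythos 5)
Your proof is correct, and it rests on exactly the same geometric input as the paper's --- the preceding corollary that a vertical fibration between formal Rezk $A$-spaces is a horizontal fibration --- but it packages the formal deduction differently. The paper considers the identity adjunction between the $\Whor$- and $\WfRezk$-model structures, notes that the left adjoint preserves cofibrations and that (by the corollary) the right adjoint preserves fibrations between fibrant objects, invokes Dugger's recognition criterion for Quillen pairs (Corollary A.2 of \cite{DuggerReplSimpl}), and concludes with Ken Brown's lemma and the cofibrancy of all objects. You instead unwind that black box: you reduce to $\Whor$-trivial cofibrations by factoring (correctly observing that the fibration part, being a trivial fibration of a model structure whose cofibrations are the monomorphisms, is an absolute trivial fibration and hence lies in every localizer), and then run the standard path-object argument showing that a map with the left lifting property with respect to $\WfRezk$-fibrations between $\WfRezk$-fibrant objects induces bijections on homotopy classes into every fibrant object, hence is a $\WfRezk$-equivalence. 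All the intermediate claims check out: $e_{\pref{A \times \Delta}}$, $E\times E$ and $E^P$ are indeed $\WfRezk$-fibrant, and $\WfRezk$-fibrations are vertical fibrations because $\Wvert\subseteq\WfRezk$ with the same cofibrations. In effect you have reproved the relevant special case of Dugger's lemma; the paper's route is shorter and modular, yours is self-contained. The only cosmetic quibble is the phrase ``Yoneda applied after fibrant replacement of $X$ and $Y$'': what is actually needed is that every object of the homotopy category is isomorphic to a fibrant one, so that bijectivity of $\Hom_{\Ho}(Y,E)\to\Hom_{\Ho}(X,E)$ for fibrant $E$ yields bijectivity for all targets and Yoneda then applies directly to $u$; no replacement of the sources is involved. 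This does not affect the validity of the argument.
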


\begin{proof}
The proof is similar to the one of Theorem 4.5 of
\cite{JoyalTierneyQCatSeg}. We have an adjunction
\[ 
F : \pref{A \times \Delta}_{\,\textrm{hor}} \rightleftarrows \pref{A \times
\Delta}_{\,\textrm{fRezk}} : G, 
\]
where $\pref{A \times \Delta}_{\,\textrm{hor}}$ and $\pref{A \times
\Delta}_{\,\textrm{fRezk}}$ denote the $\Whor$-model category
and the $\WfRezk$\nbd-model category, respectively,
and $F, G$ both denote the identity functor.  The functor $F$ clearly
preserves monomorphisms and hence cofibrations.  Moreover, by the above
corollary, the functor $G$ preserves fibrations between fibrant objects. It
follows from a lemma of Dugger (Corollary A.2 of \cite{DuggerReplSimpl})
that $(F, G)$ is a Quillen pair.  In particular, by Ken Brown's lemma,
$F$~preserves weak equivalences between cofibrant objects, and hence all the
weak equivalences since every object of $\pref{A \times
\Delta}_{\,\textrm{hor}}$ is cofibrant. This exactly means that $\WfRezk$
contains $\Whor$.
\end{proof}

\emph{From now on, we suppose that $A$ is a regular skeletal Reedy
category.}

\begin{thm}\label{thm:fRezk_simp_comp}
The $(A \times \Delta)$-localizer $\WfRezk$ is the simplicial completion
of $\W$.
\end{thm}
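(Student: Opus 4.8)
The goal is to show that $\WfRezk = \WDelta$, the simplicial completion of $\W$. Recall (Proposition~\ref{prop:reg_compl_simpl}) that since $A$ is a regular skeletal Reedy category, $\WDelta$ is the localizer generated by $\Whor \cup \Wvert$. So it suffices to prove the two inclusions $\WfRezk \subseteq \WDelta$ and $\WDelta \subseteq \WfRezk$. The second inclusion is the easy half: we already know from Theorem~\ref{thm:vert_mcs}-era discussion that $\Wvert \subseteq \WfRezk$ by the very definition of $\WfRezk$ (paragraph~\ref{paragr:loc_fRezk}), and the previous theorem gives $\Whor \subseteq \WfRezk$; since $\WDelta = \W(\Whor \cup \Wvert)$ and $\WfRezk$ is a localizer containing both, we get $\WDelta \subseteq \WfRezk$.

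The first inclusion $\WfRezk \subseteq \WDelta$ is where the work lies. By definition $\WfRezk$ is generated by $\Wvert \cup p^*(S) \cup \{p^*(X \times J \to X)\}$. Since $\Wvert \subseteq \WDelta$ automatically, it remains to check that each $p^*(s)$ for $s \in S$, and each $p^*(X \times J \to X)$, is a $\WDelta$-equivalence. For the interval maps: $p^*$ preserves products (up to the natural iso $p^*(X \times J) \cong p^*(X) \times p^*(J)$), so $p^*(X \times J \to X)$ is the map $p^*(X) \times p^*(J) \to p^*(X)$. I would like this to be $\WDelta$-local because $p^*(J)$ is $\W$-contractible in an appropriate sense and $p^*$ is homotopically well-behaved — but the cleanest route is to invoke Proposition~\ref{prop:Qeq_pA}: the functor $p^*$ preserves (and reflects) weak equivalences from the $\W$-model structure to the $\WDelta$-model structure. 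Since $X \times J \to X$ is a $\W$-equivalence (because $J$ is injective, hence $J \to e_{\pref A}$ is a trivial fibration, hence $X \times J \to X$ is a trivial fibration, hence in $\W$), its image $p^*(X \times J \to X)$ lies in $\WDelta$. The same argument handles $p^*(S)$: each $s \in S$ is a $\W$-equivalence by definition of $\W = \W(S)$, so $p^*(s) \in \WDelta$. Hence every generator of $\WfRezk$ lies in $\WDelta$, giving $\WfRezk \subseteq \WDelta$.

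Combining the two inclusions yields $\WfRezk = \WDelta$, which is the assertion. The main obstacle is conceptual rather than computational: one must correctly line up the two descriptions of $\WDelta$ — its definition in paragraph~\ref{paragr:def_simp_comp} as generated by $\Whor$ together with the maps $X \times q^*(\Delta_1) \to X$, versus its reformulation in Proposition~\ref{prop:reg_compl_simpl} (valid under the regularity hypothesis) as generated by $\Whor \cup \Wvert$ — and make sure the preservation statement of Proposition~\ref{prop:Qeq_pA} is applied with respect to the right model structure. A subtle point worth double-checking is that Proposition~\ref{prop:Qeq_pA} requires $\W$ accessible, which is part of our standing hypotheses in this section, and that the reflection-of-equivalences property there, while not strictly needed for the inclusion $\WfRezk \subseteq \WDelta$, confirms that no generators are lost. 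Once these bookkeeping matters are settled the proof is short.
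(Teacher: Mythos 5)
Your proposal is correct and follows essentially the same route as the paper: both directions use Proposition~\ref{prop:reg_compl_simpl} to rewrite $\WDelta$ as generated by $\Whor \cup \Wvert$, the inclusion $\WDelta \subseteq \WfRezk$ comes from the preceding theorem on $\Whor$, and the reverse inclusion reduces to checking the generators of $\WfRezk$ land in $\WDelta$. The only cosmetic difference is that where you invoke Proposition~\ref{prop:Qeq_pA} to see that $p^*$ carries $\W$-equivalences into $\WDelta$, the paper observes directly that $p^*(\W) \subseteq \Whor$ from the definition of horizontal equivalences; both are valid and non-circular.
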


\begin{proof}
Since $A$ is a regular skeletal Reedy category, by Proposition
\ref{prop:reg_compl_simpl}, the localizer $\WDelta$ is generated by $\Wvert$
and $\Whor$. By definition, $\Wvert$ is included in $\WfRezk$ and by the
above theorem, $\Whor$ is included in $\WfRezk$. The localizer $\WDelta$ is
thus included in $\WfRezk$.

Conversely, let us show that $\WfRezk$ is included in $\WDelta$.
By Proposition \ref{prop:reg_compl_simpl}, $\Wvert$~is included in
$\WDelta$. Moreover, the morphisms of $S$ are in $\W$ by definition and the
morphisms~$X \times J \to X$, where $X$ is a presheaf on $A$, are in $\W$
since $J$ is injective. But by definition, $p^*$ sends $\W$
into $\Whor$ and hence into $\WDelta$. The generators of~$\WfRezk$ are hence
included in $\WDelta$, thereby proving the result.
\end{proof}

\begin{thm}\label{Qeq_fRezk}
Let us endow $\pref{A}$ (resp.~$\pref{A \times \Delta}$) with the
$\W$-model category structure (resp.~with the $\WfRezk$-model category
structure).
\begin{enumerate}
\item 
Then the adjunction
\[
p^* : \pref{A} \rightleftarrows \pref{A \times \Delta} : i^*_0
\]
is a Quillen equivalence.
\item
Let $D : \Delta \to \pref{A}$ be a cosimplicial $\W$-resolution. Then the
adjunction
\[
\Real_D : \pref{A \times \Delta} \rightleftarrows \pref{A} : \Sing_D,
\]
is a Quillen equivalence. 
\end{enumerate}
\end{thm}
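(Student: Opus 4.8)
The plan is to deduce both Quillen equivalences from the general results of Section 2 together with the identification of $\WfRezk$ as the simplicial completion of $\W$, which is exactly Theorem \ref{thm:fRezk_simp_comp}. Indeed, once we know $\WfRezk = \WDelta$, the $\WfRezk$-model category structure on $\pref{A \times \Delta}$ coincides with the $\WDelta$-model category structure, so statements about the latter transfer verbatim to the former.

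For part (1), I would simply invoke Proposition \ref{prop:Qeq_pA}: since $\W$ is accessible (this is our standing assumption, $\W = \W(S)$ with $S$ a set), that proposition asserts that
\[
p^* : \pref{A} \rightleftarrows \pref{A \times \Delta} : i^*_0
\]
is a Quillen equivalence when the target carries the $\WDelta$-model structure, and that $p^*$ preserves and reflects weak equivalences. By Theorem \ref{thm:fRezk_simp_comp}, $\WDelta = \WfRezk$, so the same holds for the $\WfRezk$-model structure.

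For part (2), I would likewise invoke Proposition \ref{prop:Qeq_RealD}: given a cosimplicial $\W$-resolution $D : \Delta \to \pref{A}$, that proposition gives a Quillen equivalence
\[
\Real_D : \pref{A \times \Delta} \rightleftarrows \pref{A} : \Sing_D
\]
with $\pref{A \times \Delta}$ carrying the $\WDelta$-model structure, and asserts that $\Real_D$ preserves and reflects weak equivalences. Again by Theorem \ref{thm:fRezk_simp_comp} we may replace $\WDelta$ by $\WfRezk$, which finishes the proof.

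There is essentially no obstacle here beyond bookkeeping: the entire content has been placed in the earlier results, and in particular in the nontrivial Theorem \ref{thm:fRezk_simp_comp} whose proof used Proposition \ref{prop:reg_compl_simpl} (which is why the hypothesis that $A$ be a \emph{regular} skeletal Reedy category is in force for this theorem). The one point worth a sentence is to note explicitly that the hypotheses of Propositions \ref{prop:Qeq_pA} and \ref{prop:Qeq_RealD} — accessibility of $\W$, and, for (2), that $D$ is a cosimplicial $\W$-resolution — are exactly the hypotheses available to us.
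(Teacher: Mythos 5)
Your proof is correct and is exactly the paper's argument: identify $\WfRezk$ with the simplicial completion $\WDelta$ via Theorem \ref{thm:fRezk_simp_comp}, then apply Propositions \ref{prop:Qeq_pA} and \ref{prop:Qeq_RealD}. The hypothesis checks you mention (accessibility of $\W$, regularity of $A$) are accurate and appropriately placed.
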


\begin{proof}
This follows from Propositions \ref{prop:Qeq_pA} and \ref{prop:Qeq_RealD},
and from the above theorem.
\end{proof}

\section{$n$-quasi-categories}\label{sec:5}

The purpose of this section is to introduce our notion of
$n$-quasi-categories.

\medskip

Throughout the section, we fix an integer $n \ge 1$.

\begin{paragr}
We will denote by $\G_n$ the category generated by the graph
\[
\xymatrix{
\Dn{0} \ar@<.6ex>[r]^-{\Ths{1}} \ar@<-.6ex>[r]_-{\Tht{1}} &
\Dn{1} \ar@<.6ex>[r]^-{\Ths{2}} \ar@<-.6ex>[r]_-{\Tht{2}} &
\cdots \ar@<.6ex>[r]^-{\Ths{n-1}} \ar@<-.6ex>[r]_-{\Tht{n-1}} &
\Dn{n-1} \ar@<.6ex>[r]^-{\Ths{n}} \ar@<-.6ex>[r]_-{\Tht{n}} &
\Dn{n} 
}
\]
under the relations
\[\Ths{i+1}\Ths{i} = \Tht{i+1}\Ths{i}\quad\text{and}\quad\Ths{i+1}\Tht{i} =
\Tht{i+1}\Tht{i}, \qquad 1 \le i < n.\]
For $i, j$ such that $0 \le j \le i \le n$, we will denote by $\Ths[j]{i}$
and $\Tht[j]{i}$ the morphisms from $\Dn{j}$ to~$\Dn{i}$ defined by
\[
  \Ths[j]{i} = \Ths{i}\cdots\Ths{j+2}\Ths{j+1}\quad\text{and}\quad
  \Tht[j]{i} = \Tht{i}\cdots\Tht{j+2}\Tht{j+1}.
\]

By definition, the category of \ndef{$n$-graphs} is the category
$\pref{\G_n}$ of presheaves on $\G_n$. An $n$-graph $X$ thus consists of a
diagram of sets
\[
\xymatrix{
X_{n} \ar@<.6ex>[r]^-{\Gls{n}} \ar@<-.6ex>[r]_-{\Glt{n}} &
X_{n-1} \ar@<.6ex>[r]^-{\Gls{n-1}} \ar@<-.6ex>[r]_-{\Glt{n-1}} &
\cdots \ar@<.6ex>[r]^-{\Gls{2}} \ar@<-.6ex>[r]_-{\Glt{2}} &
X_1 \ar@<.6ex>[r]^-{\Gls{1}} \ar@<-.6ex>[r]_-{\Glt{1}} &
X_0
}
\]
satisfying the relations
\[\Gls{i}\Gls{i+1} = \Gls{i}\Glt{i+1}\quad\text{and}\quad\Glt{i}\Gls{i+1} =
\Glt{i}\Glt{i+1}, \qquad 1 \le i < n.\]

If $X$ is an $n$-graph, we will call $X_0$ the set of \ndef{objects} of $X$
and~$X_k$, for $0 \le k \le n$, the set of \ndef{$k$-arrows} of $X$. If $f$
is a $k$-arrow of $X$ for $k \ge 1$, the $(k-1)$\nbd-arrow~$\Gls{i}(f)$
(resp.~$\Glt{i}(f)$) will be called the \ndef{source} (resp.~the
\ndef{target}) of $f$. We will often denote an arrow $f$ of $X$ whose source
is $x$ and whose target is $y$ by $f : x \to y$.

We will say that two $k$-morphisms $f, g$ of an $n$-graph $X$ are
\ndef{parallel} if, either $k = 0$, or $k \ge 1$ and these morphisms satisfy
\[ \Gls{k}(f) = \Gls{k}(g) \quad\text{and}\quad \Glt{k}(f) = \Glt{k}(g). \]
If $f, g$ is a pair of parallel $k$-arrows for $k < n$, we will denote by
$\Hom_X(f, g)$ the set of $(k+1)$-arrows of $X$ from $f$ to $g$.
\end{paragr}

\begin{comment}
For $0 \le j \le i \le n$, we will denote by $\Gls[j]{i}$ and $\Glt[j]{i}$ the
maps from $X_i$ to $X_j$ defined by
\[\Gls[j]{i} = \Gls{j+1}\cdots\Gls{i-1}\Gls{i}\quad\text{and}\quad
  \Glt[j]{i} = \Glt{j+1}\cdots\Glt{i-1}\Glt{i}.\]
\end{comment}

\begin{paragr}\label{paragr:def_table}
Let $m$ be a positive integer. A \ndef{table of dimensions} of \ndef{width
$m$} consists of a table
\[
\tabdim
\]
filled with integers satisfying
\[ i_k > i'_k\quad\text{and}\quad i_{k+1} > i'_k, \qquad 1 \le k < m. \]
The \ndef{dimension} of a table of dimensions is the greatest integer
appearing in it.

Let $C$ be a category under $\G_n$, i.e., a category endowed with a functor
$F : \G_n \to C$. We will often denote in the same way the objects and
morphisms of $\G_n$ and their image by the functor $F$. Let
\[ T = \tabdim \]
be a table of dimensions of dimension at most $n$. The \ndef{globular sum}
in $C$ associated to $T$ (if it exists) is the iterated pushout
\[ (\Dn{i_1}, \Ths[i'_1]{i_1}) \amalgd{i'_1} (\Tht[i'_1]{i_2}, \Dn{i_2},
\Ths[i'_2]{i_2}) \amalgd{i'_2} \dots
\amalgd{i'_{m-1}} (\Tht[i'_{m-1}]{i_m}, \Dn{i_m}) \]
in $C$, i.e., the colimit of the diagram
\[
\xymatrix@R=.2pc@C=1pc{
\Dn{i_1} &  & \Dn{i_2} &  & \Dn{i_3} &        & \Dn{i_{m - 1}} & & \Dn{i_{m}} \\
  &  &   &  &   & \cdots &     & & \\
  & \Dn{i'_1}
  \ar[uul]^{\Ths[i'_1]{i_1}}
  \ar[uur]_{\negthickspace \Tht[i'_1]{i_2}}  &   &
  \Dn{i'_2}' 
  \ar[uul]^{\Ths[i'_2]{i_2}\negthickspace}
  \ar[uur]_{\Tht[i'_2]{i_3}}  &  &  & &
\Dn{i'_{m-1}} \ar[uul]^{\Ths[i'_{m-1}]{i_{m-1}}}  \ar[uur]_{\Tht[i'_{m-1}]{i_m}}
& &
}
\]
in $C$. We will denote it simply by
\[
\Dn{i_1} \amalgd{i'_1} \Dn{i_2} \amalgd{i'_2} \dots
\amalgd{i'_{m-1}} \Dn{i_m}.
\]

We will always see $\pref{\G_n}$ as a category under $\G_n$ by using the
Yoneda functor. If $T$ is a table of dimensions of dimension at most $n$, we
will denote by $G_T$ the globular sum associated to $T$ in
$\pref{\G_n}$.
\end{paragr}

\begin{ex}
If $T$ is the table of dimensions
\[
\setcounter{MaxMatrixCols}{20}
\left(
\begin{matrix}
2 && 2 && 2 && 3 && 2 && 1 \\
& 1 && 0 && 1 && 1 && 0
\end{matrix}
\right),
\]
then the associated $3$-graph $G_T$ is
\begin{comment}
\[
\Dn2 \amalg_{\Dn1} \Dn2 \amalg_{\Dn0} \Dn2 \amalg_{\Dn1} \Dn3 \amalg_{\Dn1}
\Dn2 \amalg_{\Dn0} \Dn1,
\]
\end{comment}
\[
\xymatrix@C=4pc@H=5pc{ 
\bullet
\ar[r]
&
\bullet
\ar@/^2.5pc/[r]|{\vphantom{X}}="g"
\ar@/^1pc/[r]|{\vphantom{X}}="h"
\ar@/_1pc/[r]|{\vphantom{X}}="i"
\ar@/_2.5pc/[r]|{\vphantom{X}}="j"
\ar@{=>}"g";"h"
\ar@<-2ex>@{=>}"h";"i"_{}="0"
\ar@<2ex>@{=>}"h";"i"_{}="1"
\ar@3"0";"1"
\ar@{=>}"i";"j"
&
\bullet
\ar@/^1.5pc/[r]|{\vphantom{X}}="k"
\ar[r]|{\vphantom{X}}="l"
\ar@/_1.5pc/[r]|{\vphantom{X}}="m"
\ar@{=>}"k";"l"
\ar@{=>}"l";"m"
&
\bullet \pbox{.}
&
}
\]
\end{ex}

\begin{rem}
The $G_T$'s are exactly the $n$-graphs associated to finite planar rooted
trees by Batanin in \cite{BataninWCat}.
\end{rem}

\begin{paragr}\label{paragr:n_cat}
We will denote by $\ncat$ the category of strict $n$-categories and strict
$n$-functors. This category can be defined by induction by saying that
$\ncat$ is the category of categories enriched in $\ncat[(n\,-1)]$. Thus, a
strict $n$-category $C$ is given by
\begin{itemize}
  \item a set of objects $\Ob(C)$;
  \item for every pair $x, y$ of objects of $C$, an $(n-1)$-category $\Map_C(x,
    y)$;
  \item for every triple $x, y, z$ of objects of $C$, a strict $(n-1)$-functor
    \[ \comp_0 : \Map_C(y, z) \times \Map_C(x, y) \to \Map_C(x, z); \]
  \item for every object $x$, a distinguished object $\id{x}$ of $\Map_C(x, x)$,
\end{itemize}
satisfying the associativity and unit axioms.  This definition can be
unpacked to a more explicit definition (see for instance Section 1.2 of
\cite{AraMetBrown}).
If $C$ and $D$ are two strict $n$\nbd-categories, a strict $n$\nbd-functor $u : C
\to D$ is given by
\begin{itemize}
  \item a map $u_0 : \Ob(C) \to \Ob(D)$;
  \item for every pair $x, y$ of objects of $C$, a strict $(n - 1)$-functor
  \[
    u_{x, y} : \Map_C(x, y) \to \Map_D(u_0(x), u_0(y)),
  \]
\end{itemize}
satisfying some obvious axioms.
In the sequel, by ``strict $n$-functor'' we will mean ``strict $n$-functor
between strict $n$-categories''.

We will denote by
\[ U_n : \ncat \to \pref{\G_n} \]
the forgetful functor sending a strict $n$-category to its underlying
$n$-graph. We will often implicitly apply this forgetful functor
to transfer notation and terminology from $n$-graphs to strict
$n$-categories. The functor $U_n$ admits a left adjoint
\[ L_n : \pref{\G_n} \to \ncat \]
sending an $n$-graph $G$ to the free strict $n$-category on $G$.

The category $\ncat$ will always be seen as a category under $\G_n$ by using
the functor~$L_n$. In particular, for $k \le n$ we have a strict $n$-category
$\Dn{k}$. Note that $\Dn{0}$ is the terminal object of $\ncat$.

Recall that the category $\ncat$ is cartesian closed. If $C$ and $D$ are two
$n$-categories, we will denote by $\Homi(C, D)$ the corresponding internal
$\Hom$. A $k$-arrow of $\Homi(C, D)$ is given by a strict $n$-functor $C
\times \Dn{k} \to D$. In particular, the set of objects of $\Homi(C, D)$ is
in canonical bijection with $\Hom_{\ncat}(C, D)$.
\end{paragr}

\begin{paragr}
We will denote by $\Theta_n$ the category defined in the following way:
\begin{itemize}
  \item the objects of $\Theta_n$ are the tables of dimensions of dimension
    at most $n$;
  \item if $S$ and $T$ are two objects of $\Theta_n$, then
    \[ \Hom_{\Theta_n}(S, T) = \Hom_{\ncat}(L_n(G_S), L_n(G_T));  \]
  \item the composition and the identities of $\Theta_n$ are induced by
    those of $\ncat$.
\end{itemize}
By definition of $\Theta_n$, we have a fully faithful functor $\Theta_n \to
\ncat$ sending an object $T$ of $\Theta_n$ to~$L_n(G_T)$. This functor is injective on
objects and thus identifies $\Theta_n$ to a full subcategory of $\ncat$.

The functor
\[ \G_n \to \pref{\G_n} \xrightarrow{L_n} \ncat \]
factors through $\Theta_n$ and the category $\Theta_n$ will always be seen as a
category under $\G_n$ by using this functor.
It follows from the fact that $L_n$ commutes with colimits that if
\[ T = \tabdim \]
is an object of $\Theta_n$, then 
\[
  T = 
 \Dn{i_1} \amalgd{i'_1} \Dn{i_2} \amalgd{i'_2} \dots
\amalgd{i'_{m-1}} \Dn{i_m},
\]
where the globular sum is taken in $\Theta_n$.

For $n = 1$, the category $\Theta_1$ is canonically isomorphic to the simplex category
$\Delta$. The object $\Delta_m$ corresponds to the table of dimensions of width
$m$
\[
\left(
\begin{matrix}
1 && 1 &&  \cdots && 1 \cr
& 0 && 0 & \cdots & 0
\end{matrix}
\right)
\]
and we indeed have
\[ \Delta_m = \Delta_1 \amalg_{\Delta^{}_0} \dots \amalg_{\Delta^{}_0} \Delta_1 \]
in $\Delta$. In the sequel, we will identify $\Theta_1$ and $\Delta$.
\end{paragr}

\begin{rem}
The category $\Theta_n$ is canonically isomorphic to (a truncation of) the cell
category introduced by Joyal in \cite{JoyalTheta}. This was proved
independently by Makkai and Zawadowski in \cite{MakZaw} and by Berger in
\cite{BergerNerve}. Alternative definitions of this category are
given in \cite{BergerNerve} and \cite{BergerWreath}. See also
Proposition~3.11 of \cite{AraThtld} for a definition by universal property.
\end{rem}

\begin{paragr}
Since the category $\ncat$ is cocomplete, the inclusion functor $\iota :
\Theta_n \to \ncat$ induces an adjunction
\[
\tau_n :  \pref{\Theta_n} \rightleftarrows \ncat : N_n,
\]
where $\tau_n$ is the unique extension of $\iota$ to $\pref{\Theta_n}$ preserving
colimits and $N_n$ is given by the formula 
\[ N_n(C)_T = \Hom_{\ncat}(\iota(T), C), \]
where $C$ is a strict $n$-category and $T$ is an object of~$\Theta_n$.
It follows formally from the fact that $\iota : \Theta_n \to \ncat$ is fully
faithful and that $\ncat$ is cartesian closed  that the functor~$\tau_n$
commutes with binary products (see the proof of Proposition~B.0.15 of
\cite{JoyalThQCat}). Moreover, abstract (but non-trivial)
considerations (see Example 4.24 of~\cite{WeberFam}) show that the
functor~$N_n$ is fully faithful.

For $n = 1$, the functor $N_1$ is the usual nerve functor $N : \Cat \to
\pref{\Delta}$.
\end{paragr}

\begin{rem}
The fact that $N_n$ is fully faithful was first proved by Berger
starting from a combinatorial definition of $\Theta_n$
(see Theorem~1.12 of~\cite{BergerNerve}).
\end{rem}

\begin{thm}[Berger]\label{thm:ThReedy}
The category $\Theta_n$ is a regular skeletal Reedy category.
\end{thm}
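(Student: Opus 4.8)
The plan is to install the Reedy structure on $\Theta_n$ directly, using that $\Theta_n$ is a full subcategory of $\ncat$. Assign to an object $T$ of $\Theta_n$ the degree equal to the number of odd-dimensional cells of the $n$-graph $G_T$; equivalently, writing $\Theta_n \cong \Delta \wr \Theta_{n-1}$ as in \cite{BergerWreath} (with $\Theta_0$ the terminal category), set $d([m];c_1,\dots,c_m) = m + \sum_i d(c_i)$, so that $d(\Dn{k}) = k$ and, for $n = 1$, $d(\Delta_m) = m$, recovering the usual degree on $\Delta$. For $(\Theta_n)_+$ take the monomorphisms of $\Theta_n$ — this is forced by the remark following the definition of regular skeletal Reedy categories — and for $(\Theta_n)_-$ the ``generalized degeneracies'', i.e.\ the morphisms of $\Theta_n$ which are epimorphisms in $\ncat$. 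The base case $n = 1$ is the classical Reedy structure on $\Delta$ recalled above, so one argues by induction on $n$.

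The heart of the matter, and the main obstacle, is the unique factorization axiom~(2). The category $\ncat$ carries an (epimorphism, monomorphism) factorization system, in which monomorphisms are the strict $n$-functors injective on cells, and the crucial \emph{geometric} input — which is Berger's theorem — is that $\Theta_n$ is stable under it: the image of a strict $n$-functor between objects of $\Theta_n$ is again an object of $\Theta_n$. Granting this, every morphism $F$ of $\Theta_n$ acquires a factorization $T \twoheadrightarrow \mathrm{Im}(F) \hookrightarrow T'$ inside $\Theta_n$, with first factor a generalized degeneracy and second factor a monomorphism, and its uniqueness comes from the uniqueness of the (epi, mono) factorization in $\ncat$ together with the fact that a Reedy category has no nontrivial automorphisms. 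I would prove Berger's stability statement by induction via $\Theta_n \cong \Delta \wr \Theta_{n-1}$, analyzing how the object map of a morphism of $\Delta \wr \mathcal{A}$ interacts with the factorizations of $\Delta$ and of $\mathcal{A}$; one should be warned that the naive ``levelwise'' guess for $(\Delta \wr \mathcal{A})_\pm$ (that the simplicial component be injective, resp.\ surjective, and every component morphism lie in $\mathcal{A}_+$, resp.\ $\mathcal{A}_-$) is \emph{too crude} for the $+$-class: when the simplicial component spreads an edge of the source over a path in the target, the relevant condition is on the combined map out of that edge, not on its individual components. This is precisely where the combinatorial content sits.

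Once the factorization is available, the remaining axioms are routine. That a nonidentity monomorphism strictly raises the degree and a nonidentity generalized degeneracy strictly lowers it is a counting argument with the odd-cell formula: a proper inclusion of globular sums must add a cell, hence — by connectedness of globular sums and the shape of their boundaries — an odd-dimensional one, and dually; in the borderline case one checks, using the inductive hypothesis on $\Theta_{n-1}$, that the morphism is an identity. Axiom~(4), regularity, is the very definition of the $+$-class. For the skeletal axiom~(3), a generalized degeneracy splits because surjections of $\Delta$ split and morphisms of $(\Theta_{n-1})_-$ split by induction, and two parallel generalized degeneracies with the same set of sections coincide by the analogous uniqueness statements for $\Delta$ and for $\Theta_{n-1}$. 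Finally, the absence of nontrivial automorphisms is automatic. Alternatively, one may simply invoke \cite{BergerNerve}, where $\Theta_n$ is established as a Reedy category, together with the treatment of regular skeletal categories in Chapter~8 of \cite{Cisinski}.
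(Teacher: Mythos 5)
Your closing sentence --- invoke \cite{BergerNerve} for the skeletal Reedy structure and Chapter~8 of \cite{Cisinski} for the regular skeletal formalism --- is exactly the paper's proof: it cites Lemma~2.4 and Remark~2.5 of \cite{BergerNerve} and then notes that regularity amounts to checking that Berger's level preserving cellular operators are monomorphisms. As a fallback, then, your proposal matches the paper.

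The direct construction you sketch is a genuinely different route, and its architecture is right: the degree (number of odd-dimensional cells of $G_T$, equivalently $d([m];c_1,\dots,c_m)=m+\sum_i d(c_i)$ under $\Theta_n\cong\Delta\wr\Theta_{n-1}$) is the standard one, $(\Theta_n)_+$ is forced to be the monomorphisms, and you correctly locate all of the difficulty in the closure of $\Theta_n$ under the (surjective on cells, injective on cells) factorization of $\ncat$, i.e., in the fact that the image of a map of globular sums is again a globular sum. But that lemma \emph{is} Berger's theorem, and you do not prove it: ``I would prove [it] by induction via $\Theta_n\cong\Delta\wr\Theta_{n-1}$, analyzing how\dots'' is a plan, not an argument, and the subtlety you yourself flag (the $+$-class of a wreath product is not levelwise) is exactly where that induction gets delicate. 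So the one step carrying all the combinatorial weight is left as a citation-shaped hole --- which, to be fair, is also how the paper treats it. Three smaller points. First, there is a circularity in deducing on-the-nose uniqueness of the factorization from ``a Reedy category has no nontrivial automorphisms'' while you are still establishing that $\Theta_n$ is a Reedy category; you must first prove rigidity of the objects $L_n(G_T)$ directly (true, but it is its own induction). Second, $(\Theta_n)_-$ should be defined as the maps surjective on cells (equivalently the split epimorphisms) rather than ``epimorphisms in $\ncat$'': already in $\Cat$ the localization $\Dn{1}\to J$ is an epimorphism that is not surjective on arrows, so the identification of the two classes for maps between globular sums is itself something to check, and the factorization you actually use is the (surjective, injective) one. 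Third, the degree axioms (a nonidentity monomorphism of globular sums strictly increases the number of odd cells, and dually) are asserted via an unexecuted ``counting argument''; this is believable but not free, since generators may be sent to composites.
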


\begin{proof}
By Lemma 2.4 and Remark 2.5 of \cite{BergerNerve}, there is a structure of
skeletal Reedy category on $\Theta_n$. It is not hard to show that this
structure is regular (i.e., that the level preserving cellular operators in
Berger's terminology are monomorphisms).
\end{proof}

\begin{rem}
Another point of view on the Reedy structure on $\Theta_n$ can be found
in~\cite{BergnerRezkReedy}.
\end{rem}

\begin{paragr}
Since $\Theta_n$ is a Reedy category, for every object $T$ we have
a presheaf $\bord{T}$ on $\Theta_n$ endowed with a monomorphism
$\mbord{T} : \bord{T} \to T$. The presheaf $\bord{T}$ is obtained by taking
the union of the images of all the monomorphisms $S \to T$ of $\Theta_n$
except the identity.

Note that a morphism of $\Theta_n$ is a monomorphism if and only if its
underlying $n$-graph morphism is a monomorphism, that is, if and only if it
induces injections on $k$-arrows for every $k$ such that $0 \le k
\le n$.
\end{paragr}
 
\begin{paragr}
The category $\pref{\Theta_n}$ will always be seen as the category under
$\G_n$ by using the functor
\[ \G_n \to \Theta_n \to \pref{\Theta_n}. \]
Let
\[ T = \tabdim \]
be an object of $\Theta_n$. We will denote by $I_T$ the globular sum
\[
  \Dn{i_1} \amalgd{i'_1} \Dn{i_2} \amalgd{i'_2} \dots
  \amalgd{i'_{m-1}} \Dn{i_m}
\]
taken in $\pref{\Theta_n}$. There is a canonical morphism
\[
  i_T : I_T \to T
\]
in $\pref{\Theta_n}$ coming from the universal property of $I_T$. One easily
checks that this morphism is a monomorphism.

When $n = 1$, the object $I_{\Delta_k}$ will be denoted by $I_k$. This is the
sub-simplicial set of~$\Delta_k$ obtained by taking the union of all the
$1$-simplices of $\Delta_k$ whose vertices are consecutive integers. This
object is called the \ndef{spine} of $\Delta_k$ by Joyal in
\cite{JoyalThQCat}. We will denote by~\hbox{$i_k : I_k \to \Delta_k$} the inclusion
morphism.
\end{paragr}

\begin{rem}
Let $T$ be an object of $\Theta_n$. The restriction of the morphism $I_T
\to T$ of~$\pref{\Theta_n}$ to~$\pref{\G_n}$ is nothing but the inclusion
morphism $G_T \to U_nL_n(G_T)$ given by the unit of the
adjunction $(L_n, U_n)$.
\end{rem}

\begin{paragr}
Let $k \ge 1$ and let $C$ be a strict $(k-1)$-category. We define a strict
$k$-category~$\Delta_1 \wr C$ as a category enriched in strict $(k
- 1)$-categories in the following way:
\begin{itemize}
  \item the objects of $\Delta_1 \wr C$ are $0$ and $1$;
  \item for every objects $\epsilon$ and $\epsilon'$ of $\Delta_1 \wr C$, we have
    \[ \Map_{\Delta_1 \wr C}(\epsilon, \epsilon') = 
      \begin{cases}
        C & \text{if $\epsilon = 0$ and $\epsilon' = 1$},\\
        \ast & \text{if $\epsilon = \epsilon'$},\\
        \varnothing & \text{if $\epsilon = 1$ and $\epsilon' = 0$}.\\
      \end{cases}
    \]
\end{itemize}
A priori, we have only defined a graph enriched in strict $(k-1)$-categories.  It
is obvious that there is a unique structure of enriched category on this
enriched graph and the strict $n$-category $\Delta_1 \wr C$ is thus
well-defined. The construction $\Delta_1 \wr C$ is clearly functorial in
$C$.

Let $J$ be the simply connected groupoid on two objects $0$ and
$1$. In other words, $J$ is defined in the following way:
\begin{itemize}
  \item the objects of $J$ are $0$ and $1$;
  \item for every objects $\epsilon$ and $\epsilon'$ of $J$, we have
    $\Hom_J(\epsilon, \epsilon') = \ast$.
\end{itemize}
We will denote by $\bordseg{J}$ the discrete subcategory of $J$ consisting
of the objects $0$ and $1$, and by $\mbordseg{J} : \partial{J} \to J$ the
inclusion functor.

We define by induction on $k \ge 1$ a strict $k$-category $J_k$ in the following
way:
\[ 
  J_1 = J \quad\text{and}\quad J_{k} = \Delta_1 \wr J_{k - 1},
  \quad k \ge 2.
\]
This $k$-category is equipped with a strict $k$-functor $j_k : J_k \to \Dn{k-1}$.
For $k = 1$, the functor~$j_1$ is the unique functor $J \to \Dn{0}$. 
We will also denote this functor simply by $j$. For $k \ge 2$, we have
\[ \Dn{k-1} = \Delta_1 \wr \Dn{k-2}. \]
This allows us to define $j_k$ by induction setting
\[
  j_k = \Delta_1 \wr j_{k-1} : \Delta_1 \wr J_{k-1} \to \Delta_1 \wr
  \Dn{k-2}.
\]
This $k$-functor admits two sections $s^0_k$ and $s^1_k$. For $k = 1$ and
$\epsilon = 0, 1$, the section $s^\epsilon_1$ corresponds to the object
$\epsilon$ of $J$. It will also be denoted by $\canseg{\epsilon}$. For $k
\ge 2$ and $\epsilon = 0, 1$, we define $s^\epsilon_k$ by induction in the
following way:
\[
  s^\epsilon_k = \Delta_1 \wr s^\epsilon_{k-1} : \Delta_1 \wr \Dn{k-2} \to
  \Delta_1 \wr J_{k-1}.
\]
\end{paragr}

\begin{rem}
Here are (the underlying graph without the identities of) $J_1$, $J_2$ and $J_3$:
\[ 
\UseAllTwocells
J_1 = \xymatrix{0 \ar@/^2ex/[r] & \ar@/^2ex/[l] 1}
\quad,\quad
    J_2 = \xymatrix@C=3pc@R=3pc{0 \ar@/^2.5ex/[r]_{}="0"
    \ar@/_2.5ex/[r]_{}="1"
    \ar@<-1ex>@2"0";"1" \ar@<-1ex>@2"1";"0"
      &  1}
\quad\text{and}\quad
    J_3 = \xymatrix@C=3pc@R=3pc{0 \ar@/^3ex/[r]_(.47){}="0"^(.53){}="10"
    \ar@/_3ex/[r]_(.47){}="1"^(.53){}="11"
    \ar@<2ex>@2"0";"1"_{}="2" \ar@<-2ex>@2"10";"11"^{}="3"
    \ar@<1ex>@3{->}"2";"3"_{} \ar@<1ex>@3"3";"2"_{}
    &  1 \pbox{.}}
  \]
The two arrows of maximal dimension are inverse of each other. The two
sections $s^\epsilon_k$ correspond to the two non-trivial $(k-1)$-arrows of
$J_k$. The $k$-functor $j_k$ is the unique strict $k$-functor from $J_k$ to $D_{k-1}$
which sends these two $(k-1)$-arrows to the unique non-trivial $(k-1)$-arrow
of $\Dn{k-1}$.
\end{rem}

\begin{paragr}\label{paragr:loc_nqcat}
Let
\[ \clI_n = \{i_T :\,\, I_T \to T;\,\, T \in \Ob(\Theta_n)\}  \]
and
\[
   \clJ_n = \{N_n(j_k) : N_n(J_k) \to N_n(\Dn{k-1});\,\,1 <  k \le n\}.
\]
The \ndef{localizer of $n$-quasi-categories} is the
$\Theta_n$-localizer generated by $\clI_n$ and $\clJ_n$. We will denote it
by $\WQCat$. By the 2-out-of-3 property, this localizer is also generated 
by~$\clI_n$ and $\clJ'_n$ where
\[
   \clJ'_n = \{N_n(s^\epsilon_k) : N_n(\Dn{k-1}) \to N_n(J_k);
   \,\,1 < k \le n, \,\,\epsilon = 0, 1\}.
\]
The $\WQCat$-model category structure on $\pref{\Theta_n}$ will be called the
\ndef{model category of $n$-quasi-categories}. By definition, an
\ndef{$n$-quasi-category} is a $\WQCat$-fibrant object.
\end{paragr}

\begin{rem}
We will show in the next section that $N_n(j) : N_n(J) \to N_n(\Dn{0})$ is a
trivial fibration and hence belongs to any $\Theta_n$-localizer, and that,
on the contrary, none of the morphisms of $\clJ_n$ belong to the
$\Theta_n$-localizer generated by $\clI_n$.
\end{rem}

\begin{paragr}
Recall that a simplicial set $X$ is a \ndef{quasi-category} if the unique
morphism from $X$ to $\Delta_0$ has the right lifting property with respect to
$\mhorn{n}{k} : \horn{n}{k} \to \Delta_n$ for every $n \ge 2$ and every~$0 <
k < n$. Joyal defined in \cite{JoyalThQCat} (see Theorem~6.12) a model
category structure on simplicial sets, the so-called \ndef{model category of
quasi-categories}. This model category is uniquely defined by the fact that
its cofibrations are the monomorphisms and its fibrant objects are the
quasi-categories.
\end{paragr}

\begin{thm}[Joyal]
The model category of $1$-quasi-categories coincide with the model category
of quasi-categories.
\end{thm}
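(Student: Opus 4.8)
The plan is to identify the two model structures via their defining properties. Both the model category of $1$-quasi-categories and Joyal's model category of quasi-categories are model structures on $\pref{\Theta_1} = \pref{\Delta}$ whose cofibrations are the monomorphisms, so by Theorem~\ref{thm:mod_str_loc} each is determined by its class of weak equivalences, which is in turn the accessible $\Delta$-localizer generated by the respective generating sets. It thus suffices to show that the $\Delta$-localizer $\WQCat[1]$ generated by $\clI_1 \cup \clJ_1$ coincides with the $\Delta$-localizer $\WJoyal$ whose fibrant objects are exactly the quasi-categories. Since $\Theta_1 \cong \Delta$ and $\ncat[1] = \Cat$, the set $\clJ_1$ is empty (the index range $1 < k \le 1$ is vacuous), so $\WQCat[1]$ is generated by $\clI_1 = \{i_k : I_k \to \Delta_k;\,\, k \ge 0\}$, the set of spine inclusions.

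First I would recall Joyal's characterization: a model structure on $\pref{\Delta}$ with monomorphisms as cofibrations is the model category of quasi-categories precisely when its fibrant objects are the quasi-categories, and this is how Joyal's structure is uniquely pinned down (cited just before the statement). So the task reduces to showing that the $\Delta$-localizer generated by the spine inclusions $\clI_1$ has as its fibrant objects exactly the quasi-categories. Using Theorem~\ref{thm:naive_fib} with the injective separating interval $J = N_1(J)$ of paragraph~\ref{paragr:J}, the $\WQCat[1]$-fibrant objects are exactly the $(\clI_1, J)$-fibrant objects, i.e.\ the presheaves having the right lifting property against every anodyne $(\clI_1, J)$-extension. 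The heart of the matter is therefore the combinatorial identification of the class of anodyne $(\clI_1, J)$-extensions with the class of inner anodyne extensions (the saturation of the inner horn inclusions $\mhorn{n}{k}$, $0 < k < n$), equivalently, showing that a simplicial set has the right lifting property against all anodyne $(\clI_1, J)$-extensions if and only if it has the right lifting property against all inner horn inclusions.

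The key steps, in order: (1) show every spine inclusion $i_k : I_k \to \Delta_k$ is an inner anodyne extension, and that the maps $\cansegd{\epsilon}{X} : X \to X \times J$ are inner anodyne (using that $J = N_1(J)$ is the nerve of the contractible groupoid, so these are J.~Tierney--Joyal-style ``mid anodyne''/inner anodyne maps); this, together with the closure axioms, gives that every anodyne $(\clI_1, J)$-extension is inner anodyne, hence every quasi-category is $\WQCat[1]$-fibrant. (2) Conversely, show that the inner horn inclusions $\mhorn{n}{k}$ lie in the class of anodyne $(\clI_1, J)$-extensions; this is where Lemma~\ref{lemma:an_ext} comes in, applied to the class $\clC$ of monomorphisms $u$ such that $u$ has the left lifting property against all $(\clI_1, J)$-fibrations, after checking $\clC$ contains $\clI_1$, is saturated, satisfies the cancellation property (3), and contains the $\cansegd{\epsilon}{X}$. (3) Conclude that the $(\clI_1, J)$-fibrant objects are exactly the quasi-categories, so by the uniqueness clause in Joyal's theorem the two model structures agree (their weak equivalences being the respective accessible localizers, which have the same fibrant objects, hence by Theorem~\ref{thm:mod_str_loc} and the fact that a model structure with fixed cofibrations is determined by its fibrant objects, coincide).

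The main obstacle is step~(1)--(2), the precise comparison between ``spine-plus-interval anodyne'' maps and inner anodyne maps. In the classical setting this is exactly Joyal's theorem identifying the localizer generated by the spine inclusions with the Joyal localizer, and it requires a genuine combinatorial argument (the filtration of $i_k$ by inner horns, and the analysis of products with $J$); I would cite Joyal's work (\cite{JoyalThQCat}, and the analogous arguments in \cite{JoyalTierneyQCatSeg}) for the spine inclusions being inner anodyne, and invoke Lemma~\ref{lemma:an_ext} for the reverse inclusion. Everything else---the passage from localizers to model structures, the role of the injective interval $N_1(J)$, and the uniqueness of Joyal's structure---is formal given the machinery already developed in the excerpt.
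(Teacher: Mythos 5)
Your overall outline --- same cofibrations, reduce to showing the two structures have the same fibrant objects, with the heart being a comparison between spine inclusions and inner horns --- matches the paper's, but two of your intermediate claims are wrong and the argument does not go through as stated. First, it is false that every anodyne $(\clI_1, N_1(J))$-extension is inner anodyne, and false that the maps $\cansegd{\epsilon}{X} : X \to X \times N_1(J)$ are inner anodyne: by condition (2) of paragraph \ref{paragr:an_ext}, the map $\{\epsilon\} = \Delta_0 \to N_1(J)$ is an anodyne $(\clI_1, N_1(J))$-extension (take $U \to V$ to be $\varnothing \to \Delta_0$), yet every inner anodyne map is bijective on $0$-simplices (this holds for the inner horn inclusions and is preserved by pushout, transfinite composition and retract), whereas $N_1(J)$ has two vertices. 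The correct statement is only that quasi-categories have the right lifting property with respect to these maps, and that is a genuinely hard theorem of Joyal (essentially that $X^{N_1(J)} \to X$ is a trivial fibration for $X$ a quasi-category), not a consequence of inner-anodyneness. The paper sidesteps this entirely: it quotes Joyal's result that the spine inclusions are weak categorical equivalences, deduces the containment of localizers $\WQCat[1] \subseteq \WJoyal$, and concludes formally that every quasi-category is $\WQCat[1]$-fibrant, without ever analysing the interval.

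Second, your step (2) does not prove that the inner horn inclusions are $\WQCat[1]$-trivial cofibrations. Lemma \ref{lemma:an_ext} concludes that a class $\clC$ with the stated properties \emph{contains} the class of anodyne $(S, I)$-extensions; applying it to the class of monomorphisms having the left lifting property against all naive $(\clI_1, N_1(J))$-fibrations --- which is precisely the class of anodyne $(\clI_1, N_1(J))$-extensions itself --- yields a tautology and says nothing about the inner horns. What is needed is a statement in the opposite direction: any saturated class of monomorphisms containing $\clI_1$ and satisfying right cancellation contains all inner anodyne maps. This is Lemma 3.5 of Joyal and Tierney, and the paper applies it to the class of trivial cofibrations of the $\WQCat[1]$-model structure (which satisfies right cancellation by 2-out-of-3; the class of anodyne extensions is not known to). With these two repairs your proof would essentially coincide with the paper's; as written, both halves rest on steps that are either false or vacuous.
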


\begin{proof}
Let us denote by $\WJoyal$ the $\Delta$-localizer associated to the model
category of quasi-categories. By Proposition 2.13 of \cite{JoyalThQCat}, the
morphisms of $\clI_1$ are $\WJoyal$-equivalences. The localizer $\WQCat[1]$
is thus contained in $\WJoyal$. It follows that the $\WJoyal$-fibrant
objects, i.e., the quasi-categories, are $\WQCat[1]$-fibrant objects.

Let us show the converse. It suffices to prove that for every $n \ge 2$ and
every $0 < k < n$, the morphism $\mhorn{n}{k} : \horn{n}{k} \to \Delta_n$
is a $\WQCat[1]$-equivalence. This follows from Lemma~3.5
of~\cite{JoyalTierneyQCatSeg} applied to the class of trivial cofibrations of
the $\WQCat[1]$-model category.

We have shown that the two model categories have the same fibrant objects.
Since they also have the same cofibrations, they coincide.
\end{proof}

\begin{rem}
If $C$ is a category, it is well-known that its nerve $N_1(C)$ is a
quasi-category. As we will see in Section~\ref{sec:7}, it is not true that
the nerve of a strict $n$-category is an $n$-quasi-category in general. For
instance, $N_n(J_k)$ is not an $n$-quasi-category for $1 < k \le n$ (see
Corollary~\ref{coro:J_k_not_fib}). This is the reason why we have chosen the
terminology ``$n$\nbd-quasi-category'' rather than ``quasi-$n$-category'' which
was used in a preliminary version of this paper: strict $n$\nbd-categories
should be quasi\nbd-$n$\nbd-categories.
\end{rem}

\section{On our generators of the localizer of
$n$-quasi-categories}\label{sec:6}

In this section, we study our generators of the localizer of
$n$-quasi-categories. We first show that the morphism $N_n(j) : N_n(J) \to
N_n(\Dn{0})$ is a trivial fibration and hence belongs to any
$\Theta_n$-localizer. This is the reason why in dimension $1$ (the case
$\Theta_1 = \Delta$) the spines are sufficient to generate the localizer of
quasi-categories. This is probably also the reason why Cisinski and Joyal
conjectured that the higher spines would generate a $\Theta_n$-localizer
which would model $(\infty, n)$-categories. We show in this section that it
is not the case: more precisely, we show that none of the morphisms of
$\clJ_n$ (which are equivalences of strict $n$-categories and hence should
be equivalences of $(\infty, n)$-categories) belong to the
$\Theta_n$-localizer generated by $\clI_n$.

\medbreak

Throughout the section, we fix an integer $n \ge 1$.

\begin{paragr}\label{paragr:trunc}
Consider the inclusion functor $i : \Cat \to \ncat$. This functor admits a
left adjoint $t : \ncat \to \Cat$ and a right adjoint $t_r : \ncat \to
\Cat$. The functor $t$ will be called the \ndef{truncation functor}.
It sends a strict $n$-category $C$ to the category whose objects are the
same as those of $C$ and whose arrows are the $1$-arrows of $C$ up to
$2$-arrows. The functor~$t_r$ will be called the \ndef{right truncation functor}. It
sends a strict $n$-category to the category whose objects and $1$-arrows are
the same as those of $C$.

The adjunction
\[
  t : \ncat \rightleftarrows \Cat : i
\]
restricts to an adjunction
\[
  t : \Theta_n \rightleftarrows \Delta : i.
\]
This new adjunction induces a third adjunction
\[
  t^* : \pref{\Delta} \rightleftarrows \pref{\Theta_n} : i^*.
\]
An immediate calculation shows that the square
\[
\xymatrix{
  \pref{\Delta} \ar[r]^{t^*} & \pref{\Theta_n} \\
  \Cat \ar[u]^{N_1} \ar[r]_i & \ncat \ar[u]_{N_n}
}
\]
commutes.

On the contrary, the functor $t_r : \ncat \to \Cat$ does not restrict to a
functor $\Theta_n \to \Delta$. Nevertheless, we can consider the functor
\[ \Theta_n \to \ncat \xrightarrow{t_r} \cat \to \pref{\Delta}. \]
Since $\pref{\Delta}$ is cocomplete, this functor induces an adjunction
\[ {t_r}_! : \pref{\Theta_n} \rightleftarrows \pref{\Delta} : t^*_r,\]
where ${t_r}_!$ is the unique extension of this functor $\Theta_n \to
\pref{\Delta}$ to $\pref{\Theta_n}$ preserving
colimits and $t^*_r$ is given by the formula 
\[ t^*_r(X)_T = \Hom_{\pref{\Delta}}(t_r(T), X), \]
where $X$ is a simplicial set and $T$ is an object of~$\Theta_n$.
The functor ${t_r}_!$ and $i^*$ both preserve colimits and coincide on
objects of $\Theta_n$. It follows that they are isomorphic and hence that their
right adjoints are isomorphic. In particular, if $X$ is a simplicial set and
$T$ is an object of $\Theta_n$, we have
\[ i_*(X)_T \cong t^*_r(X)_T = \Hom_{\pref{\Delta}}(t_r(T), X). \]
\end{paragr}

\begin{prop}
Let $f$ be a morphism of simplicial sets. Then $i_*(f)$ is a trivial
fibration of $\pref{\Theta_n}$ if and only if $f$ is a trivial fibration of
simplicial sets.
\end{prop}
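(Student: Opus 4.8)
The plan is to derive both implications directly from the definition of trivial fibration---the right lifting property against all monomorphisms---by transporting lifting problems across adjunctions, so that no model\nbd-categorical input is needed. Two elementary observations carry the whole argument. First, the restriction functors $i^*\colon\pref{\Theta_n}\to\pref\Delta$ and $t^*\colon\pref\Delta\to\pref{\Theta_n}$ both send monomorphisms to monomorphisms, since monomorphisms of presheaves are detected objectwise. Second, $i^*\circ t^*\cong\mathrm{id}_{\pref\Delta}$, because $t\circ i=\mathrm{id}_\Delta$: this holds since $i$ is the fully faithful inclusion $\Delta\cong\Theta_1\hookrightarrow\Theta_n$ and $t$ is its left adjoint (a left adjoint is a retraction of any fully faithful right adjoint), and concretely $t$ carries the $n$\nbd-category $i(\Delta_m)=L_n(G_{[m]})$ back to the category $\Delta_m$. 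Recall also that $i_*$ is, by definition, the right adjoint of $i^*$.

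For the ``if'' direction, first assume $f$ is a trivial fibration of simplicial sets and let $u$ be a monomorphism of $\pref{\Theta_n}$. A commutative square with left edge $u$ and right edge $i_*(f)$ corresponds, under the adjunction $i^*\dashv i_*$, to a commutative square with left edge $i^*(u)$ and right edge $f$; since $i^*(u)$ is a monomorphism of $\pref\Delta$ and $f$ is a trivial fibration, this square admits a lift, and transporting the lift back across the adjunction solves the original lifting problem. As $u$ was an arbitrary monomorphism, $i_*(f)$ is a trivial fibration of $\pref{\Theta_n}$.

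For the converse, assume $i_*(f)$ is a trivial fibration of $\pref{\Theta_n}$ and let $v$ be a monomorphism of $\pref\Delta$. Then $t^*(v)$ is a monomorphism of $\pref{\Theta_n}$, hence $t^*(v)\orth i_*(f)$; transporting this across $i^*\dashv i_*$ gives $i^*(t^*(v))\orth f$, i.e.\ $v\orth f$ by the isomorphism $i^*\circ t^*\cong\mathrm{id}_{\pref\Delta}$. Since $v$ was arbitrary, $f$ is a trivial fibration of simplicial sets.

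The argument is entirely formal, so there is no genuine obstacle; the one point worth care is the choice of the section of $i^*$ used in the converse. The naive candidate is the left Kan extension $i_!$, which does satisfy $i^*i_!\cong\mathrm{id}$ because $i$ is fully faithful, but it is not clear that $i_!$ preserves monomorphisms. Using $t^*$ instead circumvents this, since $t^*$ is a restriction functor and hence manifestly preserves monomorphisms while still being a section of $i^*$.
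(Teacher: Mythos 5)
Your proof is correct and follows essentially the same formal adjunction argument as the paper: both directions come down to $i^*$ and $t^*$ preserving monomorphisms and transporting lifting problems across $i^*\dashv i_*$. The only cosmetic difference is that for the converse the paper invokes $i^*i_*(f)\cong f$ (full faithfulness of $i$) where you use the equivalent identity $i^*t^*\cong\mathrm{id}$ coming from $t\circ i\cong\mathrm{id}_\Delta$.
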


\begin{proof}
The functor $i^*$ admits a left adjoint and hence preserves monomorphisms.
It follows that its right adjoint $i_*$ preserves trivial fibrations.

The same argument shows that $i^*$ preserves trivial fibrations (its left
adjoint $t^*$ admits a left adjoint). Suppose $i_*(f)$ is a trivial
fibration. We have just seen that $i^*i_*(f)$ is a trivial fibration.  But
since $i$ is fully faithful, we have $i^*i_*(f) \cong f$ and so $f$ is a
trivial fibration.
\end{proof}

\begin{paragr}
We will say that a category is a \ndef{preorder} if there is at most one
arrow between every pair of objects.
\end{paragr}

\begin{coro}\label{coro:i_n_m}
Let $u$ be a functor between preorders. Then $N_n(u)$ is a trivial fibration
of~$\pref{\Theta_n}$ if and only if $N_1(u)$ is a trivial fibration of
simplicial sets.
\end{coro}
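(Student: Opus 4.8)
The plan is to reduce the statement to the previous proposition by identifying, for a preorder $C$, the presheaf $N_n(i(C))$ on $\Theta_n$ with $i_*(N_1(C))$, where $i : \Cat \to \ncat$ is the inclusion (so that $N_n(u)$ is understood as $N_n(i(u))$).

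First I would construct, naturally in the preorder $C$, an isomorphism of presheaves $N_n(i(C)) \cong i_*(N_1(C))$. Evaluating at an object $T$ of $\Theta_n$: on one side, the adjunction $t : \ncat \rightleftarrows \Cat : i$ gives
\[ N_n(i(C))_T = \Hom_\ncat(\iota(T), i(C)) \cong \Hom_\Cat(t(\iota(T)), C); \]
on the other side, the description of $i_*$ obtained in paragraph \ref{paragr:trunc}, together with the full faithfulness of $N_1$, gives
\[ i_*(N_1(C))_T \cong \Hom_{\pref{\Delta}}(N_1(t_r(\iota(T))), N_1(C)) \cong \Hom_\Cat(t_r(\iota(T)), C). \]
So it suffices to check that the canonical comparison functor $\pi_T : t_r(\iota(T)) \to t(\iota(T))$ — the identity on objects and the quotient map on hom-sets — induces a bijection $\pi_T^* : \Hom_\Cat(t(\iota(T)), C) \to \Hom_\Cat(t_r(\iota(T)), C)$, naturally in $T$.

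This bijection is the heart of the argument and the only place the preorder hypothesis enters. A functor into a preorder $C$ is determined by its action on objects, and a map on objects extends (then uniquely) to such a functor exactly when it sends each morphism of the source to a pair of objects of $C$ joined by an arrow. Since $\pi_T$ is bijective on objects and full, this condition is literally the same for $t(\iota(T))$ and for $t_r(\iota(T))$, so $\pi_T^*$ is a bijection; naturality in $T$ follows because the $\pi_T$ are the components of a natural transformation $t_r \Rightarrow t$ (obtained from the unit of $t \dashv i$ and the counit of $i \dashv t_r$). I expect no real difficulty here beyond bookkeeping; the one point to pin down is that $\pi_T$ is full, which holds because for any strict $n$-category $X$ the category $t(X)$ has the same objects as $X$ and hom-sets that are quotients of those of $t_r(X)$.

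Finally, applying the previous proposition to the morphism of simplicial sets $N_1(u) : N_1(C) \to N_1(D)$ shows that $i_*(N_1(u))$ is a trivial fibration of $\pref{\Theta_n}$ if and only if $N_1(u)$ is a trivial fibration of simplicial sets. Transporting along the natural isomorphism of the first step identifies $i_*(N_1(u))$ with $N_n(i(u)) = N_n(u)$, which gives the corollary.
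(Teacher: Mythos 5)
Your proof is correct and follows essentially the same route as the paper: identify $N_n(u)$ with $i_*N_1(u)$ for functors between preorders via the chain $N_n(i(C))_T \cong \Hom_\Cat(t(\iota(T)), C) \cong \Hom_\Cat(t_r(\iota(T)), C) \cong i_*(N_1(C))_T$, then invoke the preceding proposition on $i_*$ and trivial fibrations. The only difference is that you spell out why the comparison $t_r(\iota(T)) \to t(\iota(T))$ induces a bijection on functors into a preorder (bijectivity on objects plus fullness), a step the paper leaves implicit.
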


\begin{proof}
If $u$ is any functor, by paragraph \ref{paragr:trunc}, we have
$t^*N_1(u) = N_n(u)$. On the other hand, if $C$ is a preorder, we have
\[ t^*N_1(C)_T = \Hom_\Cat(t(T), C) \cong \Hom_\Cat(t_r(T), C) \cong i_*N_1(C)_T \]
for every object $T$ of $\Theta_n$. If follows that if $u$ is a functor
between preorders, we have
\[ N_n(u) \cong i_*N_1(u) \]
and the result follows from the above proposition.
\end{proof}

\begin{prop}
Let $u$ be a functor. Then $N_1(u)$ is a trivial fibration of
simplicial sets if and only if $u$ is an equivalence of categories
surjective on objects.
\end{prop}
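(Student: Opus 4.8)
The statement to prove is that for a functor $u$, the nerve $N_1(u)$ is a trivial fibration of simplicial sets if and only if $u$ is an equivalence of categories that is surjective on objects. The natural strategy is to unpack the right lifting property of $N_1(u)$ with respect to the generating monomorphisms $\mbDelta{k} : \bDelta{k} \to \Delta_k$ of $\pref{\Delta}$, and then to translate each lifting condition, via the adjunction $(\tau_1, N_1) = (\tau, N)$ and fully faithfulness of $N_1$, into a concrete statement about the functor $u : \C \to \D$.

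\textbf{Proof proposal.} First I would recall that $N_1(u)$ is a trivial fibration if and only if $\mbDelta{k} \orth N_1(u)$ for all $k \ge 0$, i.e., every commutative square from $\mbDelta{k}$ to $N_1(u)$ admits a lift. Since $N_1$ is fully faithful and $\tau = \tau_1$ is left adjoint to $N_1$, and since $\tau(\Delta_k)$ is the linearly ordered category $[k]$ while $\tau(\bDelta{k})$ is the appropriate subcategory, each such lifting condition becomes a lifting condition in $\Cat$ against $u$. I would then treat the low-dimensional cases by hand: the case $k = 0$ (the square $\varnothing \to \Delta_0$) gives surjectivity of $u$ on objects; the case $k = 1$ (the square $\partial\Delta_1 = \Delta_0 \amalg \Delta_0 \to \Delta_1$) gives fullness of $u$ (every morphism in $\D$ between objects in the image lifts); the case $k = 2$ (square against $\partial\Delta_2 \to \Delta_2$) combined with the $k=1$ information gives that $u$ is faithful and also reflects the property of being an isomorphism in the relevant sense; and the cases $k \ge 3$ are then automatic because $\Delta_k$ for $k \ge 3$ is a pushout of copies of $\Delta_2$ along $\Delta_1$ (the nerve of $[k]$ is $2$-coskeletal), so no new conditions arise. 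Conversely, given that $u$ is an equivalence surjective on objects, I would verify directly that all these lifting problems are solvable: surjectivity on objects handles $k=0$ and supplies vertex lifts; full faithfulness handles $k=1$ and uniqueness-up-to-the-data handles $k=2$; and $2$-coskeletality of nerves handles $k \ge 3$.

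\textbf{Alternative cleaner route.} A slicker organization, which I would probably prefer, is to recall the standard fact that $N_1$ is fully faithful with image the $2$-coskeletal simplicial sets satisfying the Segal condition, so that trivial fibrations between nerves correspond exactly to functors with the right lifting property against $\{\, [0] \amalg [0] \hookrightarrow [1],\ \varnothing \hookrightarrow [0],\ \partial[2] \hookrightarrow [2]\,\}$ in $\Cat$; then one shows purely categorically that a functor has this RLP if and only if it is surjective on objects, full, and faithful with the isomorphism-lifting built in — which is precisely ``equivalence of categories surjective on objects.'' One should be slightly careful: ``equivalence surjective on objects'' is strictly stronger than ``equivalence of categories,'' and indeed a trivial fibration must be surjective on objects (since $\Delta_0$ is projective-like here via the $k=0$ lifting), so this stronger hypothesis is exactly what is needed and what is obtained.

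\textbf{Main obstacle.} The routine calculations (identifying $\tau$ of boundaries of simplices, checking $2$-coskeletality) are standard; the one point requiring genuine care is the faithfulness/iso-lifting direction — showing that the $k=2$ lifting condition, together with $k=1$, forces $u$ to be faithful \emph{and} to have the property that isomorphisms can be lifted, i.e., that $u$ is not merely fully faithful but that every object of $\D$ is isomorphic (not just mapped to) an object in the image in a way compatible with the lifting. Concretely, the subtlety is extracting ``surjective on objects'' rather than merely ``essentially surjective'' from the lifting problems, and making sure the converse uses the full strength of the hypothesis; once one observes that the $k = 0$ case is exactly an object-lifting problem, this is clean, so I expect the write-up to be short.
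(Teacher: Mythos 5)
Your proposal is correct and follows essentially the same route as the paper: reduce to the lifting conditions against $\mbDelta{k}$, transpose them along the adjunction $(\tau_1, N_1)$ to $\Cat$, read off surjectivity on objects, fullness and faithfulness from $k=0,1,2$, and dispose of $k\ge 3$ by the standard fact that $\tau_1(\mbDelta{k})$ is invertible there (equivalently, $2$\nbd-coskeletality of nerves). The ``iso-lifting'' worry in your last paragraph is a non-issue, as you yourself conclude: surjective on objects, full and faithful is exactly ``equivalence of categories surjective on objects.''
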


\begin{proof}
The morphism $N_1(u)$ is a trivial fibration if and only if
for every $n \ge 0$, we have~$\mbDelta{n} \orth N_1(u)$. By adjunction, we have
\[
\mbDelta{n} \orth N_1(u)
\quad\Leftrightarrow\quad
\tau_1(\mbDelta{n}) \orth u.
\]
But it is well-known that $\tau_1(\mbDelta{n})$ is an isomorphism for $n \ge 3$
(see for instance the lemma page 32 of \cite{GZ}). Thus, the morphism $N_1(u)$ is a
trivial fibration if and only if it satisfies
\[
\tau_1(\mbDelta{n}) \orth u, \quad n = 0, 1, 2.
\]
The condition for $n = 0$ (resp.~$n = 1$, resp.~$n = 2$) is equivalent to
the surjectivity on objects of $u$ (resp.~the fullness of $u$, resp.~the
faithfulness of $u$), thereby proving the result.
\end{proof}

\begin{coro}\label{coro:fib_triv_fun}
Let $u$ be a functor between preorders. Then $N_n(u)$ is a trivial fibration
of~$\pref{\Theta_n}$ if and only if $u$ is an equivalence of categories
surjective on objects.
\end{coro}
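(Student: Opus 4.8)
The plan is to chain together the two results immediately preceding this statement, both of which are already available. The key observation is that Corollary~\ref{coro:i_n_m} reduces the question about $\pref{\Theta_n}$ to a question about simplicial sets, and the Proposition just before the present corollary answers that simplicial question for arbitrary functors, in particular for functors between preorders.

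Concretely, first I would invoke Corollary~\ref{coro:i_n_m}: since $u$ is a functor between preorders, $N_n(u)$ is a trivial fibration of $\pref{\Theta_n}$ if and only if $N_1(u)$ is a trivial fibration of simplicial sets. Then I would apply the preceding Proposition, which states (for any functor $u$) that $N_1(u)$ is a trivial fibration of simplicial sets if and only if $u$ is an equivalence of categories surjective on objects. Composing these two equivalences yields exactly the claim: $N_n(u)$ is a trivial fibration of $\pref{\Theta_n}$ if and only if $u$ is an equivalence of categories surjective on objects.

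There is essentially no obstacle here; the corollary is a formal consequence of the two results cited, so the proof is a one-line combination. The only point worth checking is that the hypothesis ``$u$ is a functor between preorders'' is precisely what is needed to apply Corollary~\ref{coro:i_n_m} (the preceding Proposition itself requires no such hypothesis), so the restriction to preorders enters only through that step.

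\begin{proof}
Since $u$ is a functor between preorders, Corollary~\ref{coro:i_n_m} tells us that $N_n(u)$ is a trivial fibration of $\pref{\Theta_n}$ if and only if $N_1(u)$ is a trivial fibration of simplicial sets. By the previous proposition, this last condition holds if and only if $u$ is an equivalence of categories surjective on objects, thereby proving the result.
\end{proof}
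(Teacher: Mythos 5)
Your proof is correct and is exactly the paper's argument: the paper likewise deduces the corollary by combining Corollary~\ref{coro:i_n_m} with the preceding proposition on $N_1(u)$. Nothing further is needed.
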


\begin{proof}
This follows from Proposition~\ref{coro:i_n_m} and the above proposition.
\end{proof}

\begin{coro}\label{coro:J_inj}
The morphism $N_n(j) : N_n(J) \to N_n(\Dn{0})$ is a trivial fibration of
$\pref{\Theta_n}$.
\end{coro}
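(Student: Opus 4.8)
The plan is to reduce immediately to Corollary~\ref{coro:fib_triv_fun}. First I would observe that both $J$ and $\Dn{0}$ are preorders: by construction $\Hom_J(\epsilon, \epsilon')$ is a singleton for every pair of objects $\epsilon, \epsilon'$ of $J$, and $\Dn{0}$ is the terminal category, which is trivially a preorder. Thus $j : J \to \Dn{0}$ is a functor between preorders, and Corollary~\ref{coro:fib_triv_fun} tells us that $N_n(j)$ is a trivial fibration of $\pref{\Theta_n}$ if and only if $j$ is an equivalence of categories surjective on objects.

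It then remains to check this last condition, which is entirely elementary. Surjectivity on objects is automatic since $\Dn{0}$ has a single object. For the equivalence, I would note that $J$ is a contractible groupoid: any inclusion $\Dn{0} \to J$ of an object of $J$ (for instance $s^0_1$ or $s^1_1$) is a quasi-inverse to $j$, the required natural isomorphisms being supplied by the unique isomorphisms in $J$. Hence $j$ is an equivalence of categories, and Corollary~\ref{coro:fib_triv_fun} applies to give the result. There is no genuine obstacle here; the only point requiring (minimal) care is verifying that the hypothesis ``functor between preorders'' of Corollary~\ref{coro:fib_triv_fun} is met, so that it can legitimately be invoked.
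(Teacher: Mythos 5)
Your proof is correct and follows exactly the paper's argument: the paper likewise observes that $j : J \to \Dn{0}$ is an equivalence of categories surjective on objects between preorders and then invokes Corollary~\ref{coro:fib_triv_fun}. You have merely spelled out the routine verifications in more detail.
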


\begin{proof}
The functor $J \to \Dn{0}$ is an equivalence of categories surjective on
objects between preorders. The result thus follows from the above corollary.
\end{proof}

\begin{paragr}\label{paragr:J}
Consider $N_n(J)$ endowed with the two morphisms
\[
  \canseg{\epsilon} = N_n(\canseg{\epsilon}) : N_n(\Dn{0}) \to N_n(J),\quad 
   \epsilon = 0, 1.
\]
The presheaf $N_n(\Dn{0})$ is the terminal object of $\pref{\Theta_n}$ and
$N_n(J)$ is thus endowed with the structure of an interval. It is immediate
that this interval is separating. Moreover, by the above lemma, this
interval is injective.
\end{paragr}

\begin{prop}\label{prop:iso_tau_i_T}
For every object $T$ of $\Theta_n$, the $n$-functor
\[ \tau_n(i_T) : \tau_n(I_T) \to \tau_n(T) \]
is an isomorphism of $n$-categories.
\end{prop}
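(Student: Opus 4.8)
The plan is to show that $\tau_n(i_T)$ is an isomorphism by exhibiting an explicit inverse, or rather by identifying both sides. Recall that $I_T$ is by definition the globular sum
\[
  I_T = \Dn{i_1} \amalgd{i'_1} \Dn{i_2} \amalgd{i'_2} \dots
  \amalgd{i'_{m-1}} \Dn{i_m}
\]
taken in $\pref{\Theta_n}$, while we have observed in the paragraph defining $\Theta_n$ that the same globular sum taken in $\Theta_n$ (equivalently, in $\ncat$ via the fully faithful inclusion) \emph{is} $T$, because $L_n$ commutes with colimits. The functor $\tau_n : \pref{\Theta_n} \to \ncat$ also commutes with colimits, and it sends each representable $\Dn{k}$ (for $k \le n$) to the strict $n$-category $\Dn{k}$, compatibly with the structure maps $\Ths{k}$, $\Tht{k}$. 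So the strategy is: apply $\tau_n$ to the colimit diagram defining $I_T$, use cocontinuity of $\tau_n$ to pull it out, and recognize the resulting colimit in $\ncat$ as exactly the globular sum computing $T$.

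Concretely, I would proceed as follows. First, write $I_T$ as the colimit in $\pref{\Theta_n}$ of the zig-zag diagram of representables $\Dn{i_k}$ and $\Dn{i'_k}$ with the maps $\Ths[i'_k]{i_k}$ and $\Tht[i'_k]{i_{k+1}}$, as in paragraph \ref{paragr:def_table}. Second, since $\tau_n$ preserves colimits, $\tau_n(I_T)$ is the colimit in $\ncat$ of the diagram obtained by applying $\tau_n$ termwise; and since $\tau_n$ restricted along the Yoneda embedding is the inclusion $\iota : \Theta_n \to \ncat$ (which on the objects of $\G_n$ is the functor $L_n$ composed with the cell inclusion), each $\tau_n(\Dn{j})$ is the strict $n$-category $\Dn{j}$ and each $\tau_n(\Ths[i'_k]{i_k})$, $\tau_n(\Tht[i'_k]{i_{k+1}})$ is the corresponding globular structure map. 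Third, the colimit of this diagram in $\ncat$ is by definition the globular sum associated to the table $T$, which we have already identified with $L_n(G_T) = \iota(T) = \tau_n(T)$. Fourth, one checks that under this identification the comparison map is precisely $\tau_n(i_T)$: both are the canonical map out of the colimit induced by the cocone $(\Dn{i_k} \to T)$, so they agree by the universal property.

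The main point to be careful about is the passage ``the colimit in $\ncat$ of the diagram of cells is the globular sum $T$.'' This is not a formal triviality: it uses that the globular sums in question actually exist in $\ncat$ (they do, since $\ncat$ is cocomplete) \emph{and} that $L_n : \pref{\G_n} \to \ncat$ carries the colimit of cells computing $G_T$ in $\pref{\G_n}$ to the colimit of cells computing $\iota(T)$ in $\ncat$ — which holds because $L_n$ is a left adjoint and hence cocontinuous, combined with the fact (from the definition of $\Theta_n$) that $\iota(T) = L_n(G_T)$ with $G_T$ itself the relevant globular colimit in $\pref{\G_n}$. So the honest content is the chain
\[
  \tau_n(I_T) \;=\; \operatornamewithlimits{colim}\tau_n(\Dn{i_\bullet})
  \;=\; \operatornamewithlimits{colim}L_n(\Dn{i_\bullet})
  \;=\; L_n\bigl(\operatornamewithlimits{colim}\Dn{i_\bullet}\bigr)
  \;=\; L_n(G_T) \;=\; \iota(T) \;=\; \tau_n(T),
\]
with each equality justified by cocontinuity of $\tau_n$ or $L_n$, or by the definitions, and with the resulting isomorphism being the one induced by $i_T$. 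I expect the only mild subtlety to be bookkeeping the two layers of colimits (in $\pref{\G_n}$ versus in $\pref{\Theta_n}$) and checking that the various structure maps match up on the nose; there is no hard analytic or combinatorial obstacle here.
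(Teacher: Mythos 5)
Your proposal is correct and follows essentially the same route as the paper: both arguments reduce to the chain of identifications $\tau_n(I_T)\cong\operatorname{colim}\tau_n(\Dn{i_\bullet})\cong L_n(G_T)\cong\tau_n(T)$ using cocontinuity of $\tau_n$ and $L_n$ together with the identification of $\tau_n$ on representables (the paper phrases this last step via $T=N_nL_n(G_T)$ and full faithfulness of $N_n$, which is an equivalent bookkeeping). Your explicit remark that the composite isomorphism is induced by the cocone defining $i_T$, hence equals $\tau_n(i_T)$, is a point the paper leaves implicit.
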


\begin{proof}
Let
\[ T = \Dn{i_1} \amalgd{i'_1} \dots \amalgd{i'_{m-1}} \Dn{i_m}. \]
It is immediate that, with the notation of paragraphs
\ref{paragr:def_table} and \ref{paragr:n_cat}, we have
\[ T = N_nL_n(G_T). \]
Using the fact that the functor $N_n$ is fully faithful, we obtain
\allowdisplaybreaks
\begin{align*}
\tau_n(T) 
  & \cong \tau_nN_nL_n(G_T) \\
  & \cong L_n(G_T) \\
  & \cong L_n\big(\Dn{i_1} \amalgd{i'_1} \dots \amalgd{i'_{m-1}}
  \Dn{i_m}\big) \\
  & \cong L_n(\Dn{i_1}) \amalg_{L_n(\Dn{i'_1})} \dots \amalg_{L_n(\Dn{i'_{m-1}})} L_n(\Dn{i_m}) \\
  & \cong \Dn{i_1} \amalgd{i'_1} \dots \amalgd{i'_{m-1}} \Dn{i_m} \\
  & \cong \tau_n(\Dn{i_1}) \amalg_{\tau_n(\Dn{i'_1})} \dots \amalg_{\tau_n(\Dn{i'_{m-1}})} \tau_n(\Dn{i_m}) \\
  & \cong \tau_n\big(\Dn{i_1} \amalgd{i'_1} \dots \amalgd{i'_{m-1}}
  \Dn{i_m}\big) \\
  & \cong \tau_n(I_T),
\end{align*}
thereby proving the result.
\end{proof}

\begin{lemma}\label{lemma:bij_Dz}
Let $T$ be an object of $\Theta_n$ different from $\Dn{0}$. Then the
map $\mbord{T} : \bord{T} \to T$ induces a bijection $\mbord{T, \Dn{0}} :
\bord{T}_{\Dn{0}} \to T_{\Dn{0}}$.
\end{lemma}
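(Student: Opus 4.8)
The plan is to unwind what $\bord{T}$ looks like on the $0$-cell $\Dn{0}$. Recall that $\bord{T}$ is the union, inside $T$, of the images of all monomorphisms $S \to T$ of $\Theta_n$ other than the identity. Evaluating at $\Dn{0}$, we get that $\bord{T}_{\Dn{0}}$ is the union of the images of the maps $S_{\Dn{0}} \to T_{\Dn{0}}$ as $S \to T$ ranges over these proper monomorphisms. Since $\mbord{T}$ is itself a monomorphism (it is the inclusion of a subpresheaf), the map $\mbord{T, \Dn{0}}$ is automatically injective, so the only thing to prove is \emph{surjectivity}: every element of $T_{\Dn{0}}$, i.e.\ every object of the strict $n$-category $\iota(T) = L_n(G_T)$, lies in the image of some proper monomorphism $S \to T$.

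First I would reduce to finding, for each object $x$ of $T_{\Dn{0}}$, a monomorphism $\Dn{0} \to T$ hitting $x$ — this is just the map classifying $x \in T_{\Dn{0}} = \Hom_{\pref{\Theta_n}}(\Dn{0}, T)$, and since $\Dn{0}$ is the representable on the table of dimensions of width $1$ with the single entry $0$, this is a morphism \emph{in} $\Theta_n$ (using that $N_n$ is fully faithful, or directly via the description of $\Hom_{\Theta_n}$ in terms of $\ncat$). So it remains to check two things: (a) any such map $\Dn{0} \to T$ is a monomorphism of presheaves, and (b) it is not the identity. Point (b) is immediate from the hypothesis $T \neq \Dn{0}$: the identity of $T$ has source $T$, not $\Dn{0}$. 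For point (a): a morphism of $\Theta_n$ is a monomorphism of $\pref{\Theta_n}$ if and only if its underlying $n$-graph morphism is injective on $k$-arrows for all $k$, as noted in the paragraph preceding this lemma; and $\Dn{0}$ has a single $0$-arrow and no higher arrows, so \emph{any} map out of $\Dn{0}$ in $\Theta_n$ is trivially a monomorphism.

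Putting these together: for each $x \in T_{\Dn{0}}$ the classifying map $\Dn{0} \to T$ is a proper monomorphism of $\Theta_n$, hence factors through $\mbord{T}$, so $x$ is in the image of $\mbord{T, \Dn{0}}$; combined with injectivity, $\mbord{T, \Dn{0}}$ is a bijection. The one point that needs a little care — the part I expect to be the main (minor) obstacle — is justifying cleanly that an element of $T_{\Dn{0}}$ really does correspond to a genuine morphism $\Dn{0} \to T$ in the category $\Theta_n$ and not merely a map of presheaves: this is where one invokes that $\Dn{0}$ is representable and that $\Theta_n$ sits full-faithfully in $\pref{\Theta_n}$ via Yoneda, so $\Hom_{\pref{\Theta_n}}(\Dn{0}, T) = \Hom_{\Theta_n}(\Dn{0}, T)$ and every such map is in particular a morphism of $\Theta_n$ whose (co)source is $\Dn{0} \neq T$.
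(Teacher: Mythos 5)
Your proposal is correct and follows essentially the same route as the paper: injectivity because $\mbord{T}$ is a monomorphism, and surjectivity because every $x : \Dn{0} \to T$ is a non-identity monomorphism of $\Theta_n$ and hence factors through $\bord{T}$. The only (immaterial) difference is that the paper sees that $x$ is a monomorphism directly from the fact that $\Dn{0}$ is the terminal object of $\Theta_n$, whereas you use the $n$-graph criterion (note that the underlying $n$-graph of $\Dn{0}$ does have one $k$-arrow in each dimension, namely the iterated identity, but injectivity out of singletons is still automatic).
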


\begin{proof}
By definition, the morphism $\mbord{T}$ is a monomorphism. The map
$\mbord{T, \Dn{0}}$ is thus injective. Let us show that it is also surjective.
Let $x : \Dn{0} \to T$ be a morphism of~$\Theta_n$. Since $\Dn{0}$ is the
terminal object of $\Theta_n$, the morphism $x$ is a monomorphism.
The object~$T$ being different from $\Dn{0}$, the morphism $x$ is not an
identity and it thus factors through~$\bord{T}$ by definition of $\bord{T}$.
The map $\mbord{T, \Dn{0}}$ is thus surjective.
\end{proof}

\begin{prop}
Let $X$ be a presheaf on $\Theta_n$. Then the set of objects of
$\tau_n(X)$ is in canonical bijection with $X_{\Dn{0}}$.
\end{prop}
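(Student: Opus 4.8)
The plan is to check the statement for representable presheaves and then to bootstrap it to all presheaves by a density argument, the point being that every functor in play preserves colimits. Concretely, the functor $\Ob : \ncat \to \Set$ admits a right adjoint: it sends a set $E$ to the strict $n$-category with set of objects $E$ and all of whose $(n-1)$-categories of arrows $\Map(x, y)$, for $x, y$ in $E$, are equal to the terminal $(n-1)$-category $\Dn{0}$ (the composition and units being then uniquely determined, the adjunction isomorphism being immediate); hence $\Ob$ preserves colimits. The functor $\tau_n : \pref{\Theta_n} \to \ncat$ preserves colimits as a left adjoint, and the evaluation functor $X \mapsto X_{\Dn{0}}$ on $\pref{\Theta_n}$ preserves colimits since colimits of presheaves are computed objectwise. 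So both $X \mapsto X_{\Dn{0}}$ and $X \mapsto \Ob(\tau_n(X))$ are colimit-preserving functors $\pref{\Theta_n} \to \Set$.

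Next I would build the canonical comparison map. By the Yoneda lemma, $X_{\Dn{0}} \cong \Hom_{\pref{\Theta_n}}(\Dn{0}, X)$ naturally in $X$, where $\Dn{0}$ also denotes the presheaf on $\Theta_n$ represented by the object $\Dn{0}$. Since $\tau_n$ extends $\iota : \Theta_n \to \ncat$, we have $\tau_n(\Dn{0}) = \iota(\Dn{0}) = \Dn{0}$, the terminal object of $\ncat$; and since $\Dn{0}$ is the terminal $n$-category, a strict $n$-functor $\Dn{0} \to C$ is the same as an object of $C$, so that $\Hom_{\ncat}(\Dn{0}, C) \cong \Ob(C)$ naturally in $C$. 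Applying $\tau_n$ to morphisms therefore yields a natural map
\[
\phi_X : X_{\Dn{0}} \cong \Hom_{\pref{\Theta_n}}(\Dn{0}, X) \longrightarrow \Hom_{\ncat}(\tau_n(\Dn{0}), \tau_n(X)) \cong \Ob(\tau_n(X)).
\]

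It remains to see that $\phi_T$ is bijective when $T$ is representable. In that case $\tau_n(T) = \iota(T)$, and since $\iota$ is fully faithful, the target of $\phi_T$ identifies with $\Hom_{\ncat}(\iota(\Dn{0}), \iota(T)) \cong \Hom_{\Theta_n}(\Dn{0}, T)$; under this identification and the Yoneda identification of the source, $\phi_T$ becomes the identity. Since every presheaf on $\Theta_n$ is a colimit of representables and $\phi$ is a natural transformation between two colimit-preserving functors that is an isomorphism on representables, $\phi_X$ is a bijection for every presheaf $X$ on $\Theta_n$. The only step that is not purely formal is the construction of the right adjoint of $\Ob$, equivalently the verification that $\Ob$ preserves colimits; everything else is a routine density argument. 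For $n = 1$ this recovers the fact that the set of objects of $\tau_1(X)$ is the set of vertices of the simplicial set $X$.
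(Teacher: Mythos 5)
Your proof is correct, but it takes a genuinely different route from the one in the paper. The paper reduces the statement to the known case $n=1$: it takes left adjoints in the commutative square of paragraph on truncations to get $t\circ\tau_n \cong \tau_1\circ t_!$, deduces $\Ob(\tau_n(X)) \cong t_!(X)_0$, and then computes $t_!(X)_0$ via the pointwise Kan extension formula as a colimit over the comma category $\Dn{0}\bs\Theta_n$, which collapses to $X_{\Dn{0}}$ because that category has an initial object. You instead stay entirely in $\ncat$: you observe that $\Ob : \ncat \to \Set$ is a left adjoint (right adjoint given by the indiscrete $n$-category on a set, with all $\Map(x,y)$ terminal --- this verification is correct and is the only non-formal input), so that both $X \mapsto \Ob(\tau_n(X))$ and $X \mapsto X_{\Dn{0}}$ preserve colimits, and you then run a standard density argument off the representables, where the comparison map is the identity because $\iota$ is fully faithful and $\tau_n$ extends $\iota$. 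Both arguments are sound; yours is self-contained and uniform in $n$ and has the merit of exhibiting the canonical map explicitly, while the paper's leverages machinery it has already set up (the nerve/truncation square and the classical fact for simplicial sets) at the cost of an extra detour through $\Cat$ and an explicit comma-category computation. At bottom the two computations are cousins: the paper's identification of an initial object in $\Dn{0}\bs\Theta_n$ plays the role that the co-Yoneda collapse of $\varinjlim_{\Theta_n/X} T_{\Dn{0}} \cong X_{\Dn{0}}$ plays in your density argument.
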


\begin{proof}
We have seen in paragraph \ref{paragr:trunc} that
the square
\[
\xymatrix{
  \pref{\Delta} \ar[r]^{t^*} & \pref{\Theta_n} \\
  \Cat \ar[u]^{N_1} \ar[r]_i & \ncat \ar[u]_{N_n}
}
\]
is commutative. By taking left adjoints, we obtain that the square
\[
\xymatrix{
  \pref{\Delta} \ar[d]_{\tau_1} &
  \pref{\Theta_n} \ar[l]_{t_!} \ar[d]^{\tau_n} \\
  \Cat & \ncat \ar[l]^t
}
\]
is also commutative (up to isomorphism). We thus have
\[
\begin{split}
\Ob(\tau_n(X)) = \Ob(t\tau_n(X)) \cong \Ob(\tau_1t_!(X)) = t_!(X)_0,
\end{split}
\]
where the last equality comes from the case $n = 1$ of the proposition which
is well-known (see for instance the proposition page 33 of \cite{GZ}). 
By the theory of Kan extensions, we have
\[ 
  t_!(X)_0 \cong
  \varinjlim_{(S, \Delta_{0} \to t(S)) \in (\Delta_{0}\bs \Theta_n)^\op} X_S,
\]
where $\Delta_{0}\bs \Theta_n$ denotes the category whose objects are pairs $(S,
\Delta_{0} \to t(S))$ consisting of an object $S$ of $\Theta_n$ and
a morphism $\Delta_{0} \to t(S)$ of $\Delta$, and whose morphisms are the obvious ones.
It follows from the canonical bijection
\[ \Hom_{\Delta}(\Delta_{0}, t(S)) \cong \Hom_{\Theta_n}(\Dn{0}, S) \]
that the category $\Delta_{0}\bs\Theta_n$ admits $(\Dn{0}, \id{\Delta_{0}})$ as an
initial object. The above colimit is thus canonically isomorphic to
$X_{\Dn{0}}$, thereby proving the result.
\end{proof}

\begin{coro}\label{coro:bij_ob}
Let $T$ be an object of $\Theta_n$ different from $\Dn{0}$. Then
the $n$-functor
\[ \tau_n(\mbord{T}) : \tau_n(\bord{T}) \to \tau_n(T) \]
is bijective on objects.
\end{coro}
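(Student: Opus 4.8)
The plan is to deduce the statement directly from the Proposition just proved together with Lemma~\ref{lemma:bij_Dz}. Recall that the Proposition supplies, for every presheaf $X$ on $\Theta_n$, a bijection between $\Ob(\tau_n(X))$ and $X_{\Dn{0}}$. The first thing I would check is that this bijection is natural in $X$: each step in its construction is functorial in $X$, namely the identification $\Ob(\tau_n(X)) = \Ob(t\tau_n(X))$, the isomorphism $\Ob(t\tau_n(X)) \cong \Ob(\tau_1 t_!(X)) = t_!(X)_0$ coming from the commutativity of the square relating $t_!$, $\tau_1$, $\tau_n$ and $t$, and the computation of $t_!(X)_0$ as the colimit of $X$ over $(\Dn{0}\bs\Theta_n)^\op$ which is canonically $X_{\Dn{0}}$ because $(\Dn{0}, \id{\Dn{0}})$ is an initial object of $\Dn{0}\bs\Theta_n$. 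Hence $\Ob(\tau_n(-)) \cong (-)_{\Dn{0}}$ is a natural isomorphism of functors $\pref{\Theta_n} \to \Set$.

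Next I would apply this naturality to the morphism $\mbord{T} : \bord{T} \to T$ of $\pref{\Theta_n}$, obtaining a commutative square
\[
\xymatrix{
\Ob(\tau_n(\bord{T})) \ar[r]^-{\sim} \ar[d]_{\Ob(\tau_n(\mbord{T}))} &
  \bord{T}_{\Dn{0}} \ar[d]^{\mbord{T, \Dn{0}}} \\
\Ob(\tau_n(T)) \ar[r]^-{\sim} & T_{\Dn{0}} \pbox{,}
}
\]
in which the two horizontal arrows are bijections. Since $T$ is different from $\Dn{0}$, Lemma~\ref{lemma:bij_Dz} asserts that the right-hand vertical map $\mbord{T, \Dn{0}}$ is a bijection; by commutativity of the square, the left-hand vertical map $\Ob(\tau_n(\mbord{T}))$ is therefore a bijection as well, which is exactly the claim that $\tau_n(\mbord{T})$ is bijective on objects.

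The only point requiring any care is the naturality of the bijection from the Proposition, and even this is routine since it is built entirely out of adjunctions, a commuting square of functors, and a colimit with a functorially chosen initial object. I do not expect a genuine obstacle here: once naturality is recorded, the corollary is a formal consequence of the two preceding results.
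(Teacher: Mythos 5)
Your proof is correct and follows exactly the route the paper takes: the paper's proof of this corollary is the one-line deduction from Lemma~\ref{lemma:bij_Dz} and the preceding proposition, with the naturality of the bijection $\Ob(\tau_n(X)) \cong X_{\Dn{0}}$ (signalled by the word ``canonical'' in the proposition's statement) left implicit. Your explicit verification of that naturality is the only added content, and it is accurate.
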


\begin{proof}
This follows from Lemma \ref{lemma:bij_Dz} and the above proposition.
\end{proof}

\begin{paragr}
Let $u : C \to D$ be a strict $n$-functor. We will say that
$u$ is \ndef{fully faithful} if for every pair~$x, y$ of
objects of $C$, the $(n-1)$-functor 
\[ u_{x, y} : \Map_C(x, y) \to \Map_D(u(x), u(y)) \]
is an isomorphism of strict $(n - 1)$-categories. Explicitly, this means
that for every $k$ such that~$0 < k \le n$ and every pair $f, g$ of parallel
$(k - 1)$-arrows of~$C$, the $n$-functor $u$ induces a bijection
\[ \Hom_C(f, g) \to \Hom_D(u(f), u(g)). \]
\end{paragr}

\begin{lemma}\label{lemma:J_ff}
Let $C$ be a strict $n$-category and let $\epsilon = 0, 1$. Then the $n$-functor
\[ \Homi(\canseg{\epsilon}, C) :  \Homi(J, C) \to \Homi(\{\epsilon\}, C)
  \cong C \]
is fully faithful.
\end{lemma}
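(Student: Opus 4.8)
The plan is to rephrase full faithfulness as a unique lifting property and then transpose it through the cartesian closed structure of $\ncat$. Recall that a strict $n$-functor $u\colon A\to B$ is fully faithful exactly when, for every $k$ with $1\le k\le n$ and every pair of parallel $(k-1)$-arrows $f,g$ of $A$, the map $\Hom_A(f,g)\to\Hom_B(u(f),u(g))$ is a bijection; unwinding the definition (an isomorphism of the strict $(n-1)$-categories $\Map$ down to bijections of sets), this is precisely the assertion that $u$ has the \emph{unique} right lifting property with respect to the globular boundary inclusions $\eDn{k}\to\Dn{k}$ for $1\le k\le n$, where I write $\eDn{k}$ for the strict $n$-category defined as the iterated pushout $\Dn{k-1}\amalg_{\eDn{k-1}}\Dn{k-1}$, with $\eDn{0}$ the initial $n$-category. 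Indeed, a lifting square whose top edge $\eDn{k}\to\Homi(J,C)$ is a parallel pair of $(k-1)$-arrows and whose bottom edge $\Dn{k}\to C$ is a $k$-arrow with matching boundary is exactly such a situation, and a diagonal filler is a $k$-arrow of $\Homi(J,C)$ with prescribed boundary lying over it.

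Applying this to $u=\Homi(\canseg{\epsilon},C)\colon\Homi(J,C)\to C$ (identifying $\Homi(\{\epsilon\},C)\cong\Homi(\Dn{0},C)\cong C$) and using that $\ncat$ is cartesian closed, I would transpose by the usual adjunction: the lifting problems of $\eDn{k}\to\Dn{k}$ against $\Homi(\canseg{\epsilon},C)$ are in natural bijection, compatibly with uniqueness of fillers, with the lifting problems against $C\to\Dn{0}$ of the cartesian pushout-product
\[
q_k\colon\quad P_k:=\bigl(\eDn{k}\times J\bigr)\amalg_{\eDn{k}\times\{\epsilon\}}\bigl(\Dn{k}\times\{\epsilon\}\bigr)\;\longrightarrow\;\Dn{k}\times J .
\]
Since a lifting problem of $q_k$ against $C\to\Dn{0}$ has a unique solution \emph{for every} $C$ if and only if $q_k$ is an isomorphism (Yoneda), the lemma reduces to the following statement, in which $C$ no longer appears: for every $k$ with $1\le k\le n$, the morphism $q_k$ is an isomorphism of strict $n$-categories; equivalently, the evident square with corners $\eDn{k}\times\{\epsilon\}$, $\Dn{k}\times\{\epsilon\}$, $\eDn{k}\times J$ and $\Dn{k}\times J$ is cocartesian in $\ncat$.

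I would prove this last statement by induction on $k$. For $k=1$ every cell of $\Dn{1}\times J$ of dimension at least $2$ is degenerate, so one may compute inside $\Cat$ (the inclusion $\Cat\hookrightarrow\ncat$ has a right adjoint, hence preserves colimits): $\Dn{1}\times J$ is generated by the two ``$\Dn{1}$-edges'' $a,b$ and the two invertible ``$J$-edges'' $p,q$ subject only to the interchange relation $qa=bp$, so $b=qap^{-1}$ is redundant and $\Dn{1}\times J$ is obtained from two copies of $J$ by attaching a single edge joining their objects $\epsilon$ --- which is exactly the description of $P_1$. For the inductive step I would use that $\Dn{k}=\Delta_1\wr\Dn{k-1}$ and $\eDn{k}=\Delta_1\wr\eDn{k-1}$, so that $\eDn{k}\to\Dn{k}$ is $\Delta_1\wr(\eDn{k-1}\to\Dn{k-1})$; both $P_k$ and $\Dn{k}\times J$ then have the same four objects, and it suffices to compare their hom-$(n-1)$-categories. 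In both, the hom-$(n-1)$-categories in the ``backward'' $\Dn{k}$-direction are empty and the ``lateral'' ones are terminal, and these facts force that no new cells appear when forming the pushout in those directions; the four ``forward'' hom-$(n-1)$-categories of $\Dn{k}\times J$ are each $\Dn{k-1}=\Map_{\Dn{k}}(0,1)$, while those of $P_k$ are the pushout $\eDn{k-1}\amalg_{\eDn{k-1}}\Dn{k-1}\cong\Dn{k-1}$ conjugated around by the invertible $1$-cells of $J$, and since conjugation by an invertible $1$-cell is an isomorphism it contributes nothing new, so these are again $\Dn{k-1}$. Hence $q_k$ is bijective on objects and an isomorphism on every hom-$(n-1)$-category, so it is an isomorphism. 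The step I expect to be the main obstacle is exactly this last one: making the ``no new cells'' claims rigorous requires careful bookkeeping with a pushout of enriched categories, checking that --- precisely because the backward homs are empty and the lateral ones terminal --- every cell of a hom-$(n-1)$-category of $P_k$ is forced, via composition with invertible $1$-cells of $J$, to arise from the single attached copy of $\Dn{k}\times\{\epsilon\}$, with no extra relations. A more computational alternative, bypassing the explicit pushouts, would be to construct directly from a $k$-arrow $h$ of $C$ and a compatible parallel pair of $(k-1)$-arrows of $\Homi(J,C)$ the unique $n$-functor $J\times\Dn{k}\to C$ giving the lift, by transporting $h$ along the invertible structure carried by objects of $\Homi(J,C)$ and checking functoriality; this carries the same combinatorial content.
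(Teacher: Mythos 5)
Your reduction is correct as far as it goes: full faithfulness of $\Homi(\canseg{\epsilon},C)$ is indeed the unique right lifting property against the globular inclusions $\eDn{k}\to\Dn{k}$, and transposing through the cartesian closedness of $\ncat$ and Yoneda converts the lemma into the assertion that each $q_k$ is an isomorphism. But this reformulation is an exact paraphrase rather than a simplification: checking the universal property of the pushout $P_k$ against an arbitrary test $n$-category $C$ is, after transposing back, word for word the statement of the lemma for that $C$. All of the content therefore sits in the step you yourself flag as the main obstacle, and that step is not done. The assertions that ``no new cells appear'' in the lateral and backward directions and that conjugation by the invertible $1$-cells of $J$ ``contributes nothing new'' in the forward direction are precisely the claims that the free composites created by a pushout in $\ncat$ close up into $\Dn{k}\times J$ with no extra cells and no extra relations; a pushout of enriched categories is a colimit of algebras, its hom-objects are not pushouts of hom-objects, and neither leg of your square is a relatively free (polygraphic) inclusion, so there is no off-the-shelf computation to invoke. (Note also that ``$q_k$ is an isomorphism'' is a special case of Corollary \ref{coro:bij_ob_iso}, applied to the bijective-on-objects functor $\eDn{k}\to\Dn{k}$ --- and the paper derives that corollary \emph{from} this lemma, so it is not available to you here.)

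The honest way to close the gap is your ``computational alternative'', which is in fact the paper's proof and not an alternative at all: describe $\Homi(J,C)$ explicitly --- its objects are the invertible $1$-arrows of $C$, and its $k$-arrows are quadruples $(f,g,\alpha,\beta)$ with $f,g$ invertible $1$-arrows and $\beta\comp_0 f=g\comp_0\alpha$ --- and then exhibit the inverse of the map induced on hom-sets by $\Homi(\canseg{1},C)$ directly as $\Gamma\mapsto(f,g,g^{-1}\comp_0\Gamma\comp_0 f,\Gamma)$, the case $\epsilon=0$ following by duality. Once this is written down, the detour through $q_k$ and the induction on $k$ (whose inductive hypothesis your step never actually uses) become unnecessary.
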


\begin{proof}
We will prove the case $\epsilon = 1$. The case $\epsilon = 0$ will follow
by duality. Recall that we denote by $\comp_0$ the composition in dimension
$0$ of two $k$-arrows of an $n$-category.

By definition, objects of $\Homi(J, C)$ are invertible $1$-arrows of $C$.
Let $f : x \to y$ and~$f' : x' \to y'$ be two invertible $1$-arrows of $C$.
A morphism from $f$ to $f'$ in $\Homi(J, C)$ is given by a pair $h : x \to
x', k : y \to y'$ of morphisms of $C$ making the square
\[
\xymatrix{
  x \ar[d]_f \ar[r]^h & x' \ar[d]^{f'} \\
  y \ar[r]_{k} & y'
}
\]
commute. Since $f$ and $f'$ are invertible, the pair $(h, k)$ is uniquely
determined by~$k$. This exactly means that
the map
\[ \Hom_{\Homi(J, C)}(f, f') \to \Hom_C(y, y') \]
induced by $u$ is a bijection. 

Let us now describe the $k$-arrows of $\Homi(J, C)$ for $k$ such that $0 < k
\le n$. By definition, a $k$-arrow of $\Homi(J, C)$ is given by a strict
$k$-functor $J \times \Dn{k} \to C$. Such a $k$-functor is given by a
tuple $(f, g, \alpha, \beta)$ where $f, g$ are two invertible $1$-arrows of
$C$ and~$\alpha, \beta$ are two $k$-arrows of $C$ such that the compositions
$\beta \comp_0 f$ and $g \comp_0 \alpha$ make sense and are equal. Two
$k$-arrows $(f, g, \alpha, \beta)$ and $(f', g', \alpha', \beta')$ of
$\Homi(J, C)$ are parallel if and only if $f = f'$, $g = g'$,
$\alpha$~and~$\alpha'$ are parallel, and $\beta$ and $\beta'$ are parallel.

Suppose now $k$ is such that $1 < k \le n$ and let $(f, g, \alpha, \beta)$
and $(f, g, \alpha', \beta')$ be two parallel $k$-arrows of $\Homi(J, C)$.
We have to show that the map
\[
  \Hom_{\Homi(J, C)}((f, g, \alpha, \beta), (f, g, \alpha', \beta')) \to
  \Hom_C(\beta, \beta')
\]
induced by $u$ is a bijection. The same argument as in dimension $0$
applies. Formally, an inverse of this map is given by
\[ \Gamma : \beta \to \beta' \mapsto (f, g, g^{-1} \comp_0 \Gamma \comp_0 f
  ,\Gamma). \qedhere \]
\end{proof}

\begin{prop}
Let $v$ be a fully faithful strict $n$-functor. Then $v$ has the unique
right lifting property with respect to strict $n$-functors bijective on
objects.
\end{prop}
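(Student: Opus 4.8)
The plan is to construct the lift explicitly and to observe that it is forced at every stage. Consider a commutative square
\[
\xymatrix{
A \ar[d]_u \ar[r]^a & C \ar[d]^v \\
B \ar[r]_b & D
}
\]
of strict $n$-functors in which $u$ is bijective on objects and $v$ is the given fully faithful functor; we must produce a unique strict $n$-functor $w : B \to C$ with $wu = a$ and $vw = b$. On objects, since $u_0 : \Ob(A) \to \Ob(B)$ is a bijection, the only map compatible with $wu = a$ is $w_0 := a_0 \circ u_0^{-1}$; moreover $v_0 w_0 = b_0$ holds because $v_0 w_0 u_0 = v_0 a_0 = b_0 u_0$ and $u_0$ is surjective.

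For the action on hom-objects, fix objects $x, y$ of $B$. Since $v$ is fully faithful, the strict $(n-1)$-functor
\[
v_{w_0(x), w_0(y)} : \Map_C(w_0(x), w_0(y)) \longrightarrow
\Map_D(v_0 w_0(x), v_0 w_0(y)) = \Map_D(b_0(x), b_0(y))
\]
is an isomorphism of strict $(n-1)$-categories, hence admits an inverse in $\ncat[(n-1)]$. The relation $vw = b$ forces $v_{w_0(x),w_0(y)} \circ w_{x,y} = b_{x,y}$, so I would set
\[
w_{x,y} := \bigl(v_{w_0(x), w_0(y)}\bigr)^{-1} \circ b_{x,y},
\]
which is a strict $(n-1)$-functor because $b_{x,y}$ is one. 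Thus $w_0$ and each $w_{x,y}$ are the only possible values, and once we know that the pair $(w_0, (w_{x,y})_{x,y})$ is a strict $n$-functor satisfying $wu = a$, both existence and uniqueness of the lift follow.

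By the explicit description of strict $n$-functors recalled in paragraph~\ref{paragr:n_cat}, it then remains to check three things: that $(w_0, (w_{x,y})_{x,y})$ respects units, that it respects the composition functors $\comp_0$, and that $wu = a$. The equality $wu = a$ is immediate on objects, and on hom-objects it follows by cancelling the isomorphisms $v_{w_0(x),w_0(y)}$, using $vw = b$, $va = bu$, and the formula for the hom-components of a composite of strict $n$-functors. The unit axiom is equally formal: $v_{w_0(x),w_0(x)}$ carries $\id{w_0(x)}$ to $\id{b_0(x)}$ while $v$ and $b$ preserve units, so $w_{x,x}(\id{x}) = \id{w_0(x)}$.

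The one genuinely computational point — and what I expect to be the only real obstacle — is compatibility with composition: for objects $x, y, z$ of $B$ one must show that
\[
w_{x,z} \circ \comp_0 = \comp_0 \circ (w_{y,z} \times w_{x,y})
\]
as maps $\Map_B(y,z) \times \Map_B(x,y) \to \Map_C(w_0(x), w_0(z))$. I would establish this by postcomposing both sides with the isomorphism $v_{w_0(x),w_0(z)}$ and invoking that $v$ and $b$ are strict $n$-functors: using the defining relations $v_{w_0(y),w_0(z)} \circ w_{y,z} = b_{y,z}$ and $v_{w_0(x),w_0(y)} \circ w_{x,y} = b_{x,y}$, both composites reduce to $\comp_0 \circ (b_{y,z} \times b_{x,y})$, so they agree because $v_{w_0(x),w_0(z)}$ is a monomorphism. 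No induction on $n$ is needed: the enriched description collapses the entire verification to these top-level identities.
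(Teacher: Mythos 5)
Your proof is correct and follows the same strategy as the paper: the lift is forced on objects by bijectivity of $u_0$ and on hom-objects by inverting the isomorphisms $v_{-,-}$ supplied by full faithfulness, after which one checks the resulting data is a strict $n$-functor. The paper dismisses that last verification with ``one immediately checks''; you spell it out, but the content is the same.
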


\begin{proof}
The assertion is a special case of a standard result in enriched category
theory. Let us prove our particular case. We will use the notation of
paragraph \ref{paragr:n_cat}. Consider a commutative square
\[
\xymatrix{
  A \ar[d]_{u} \ar[r]^f & C \ar[d]^v \\
  B \ar[r]_g & D \pbox{,}
}
\]
where $u$ is a strict $n$-functor bijective on objects and $v$ is a fully
faithful strict $n$-functor. Let $h : B \to C$ be a lift. The
condition $hu = f$ imposes that if $x$ is an object of $B$, then~$h(x) =
f(u_0^{-1}(x))$. On the other hand, the condition $hv = g$ imposes that if
$x, y$ is a pair of objects of $B$, then $h_{x, y} = v_{fu_0^{-1}(x),
fu_0^{-1}(y)}^{-1}g$. The strict $n$-functor $h$ is thus unique. One
immediately checks that the formulas given above define a strict
$n$-functor, thereby proving the result.
\end{proof}

\begin{coro}\label{coro:bij_ob_iso}
Let $u : C \to D$ be a strict $n$-functor bijective on objects. Then the strict
$n$\nbd-functors
\[
u \times' \cansegd{\epsilon}{J} :  C \times J \amalg_{C \times
\{\epsilon\}} D \times \{\epsilon\} \to D \times J,\quad 
   \epsilon = 0, 1,
\]
are isomorphisms of strict $n$-categories.
\end{coro}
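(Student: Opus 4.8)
The plan is to deduce the corollary from the Yoneda lemma. Write $p$ for the $n$-functor in question and $P = C \times J \amalg_{C \times \{\epsilon\}} D \times \{\epsilon\}$ for its source. It suffices to show that for every strict $n$-category $E$ the map
\[ \Hom_{\ncat}(p, E) : \Hom_{\ncat}(D \times J, E) \to \Hom_{\ncat}(P, E) \]
is a bijection; the naturality of this family of maps in $E$ then forces $p$ to be an isomorphism.

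First I would unwind both sides using the cartesian closed structure of $\ncat$, the fact that $\{\epsilon\}$ is a terminal object (so that $\Homi(\{\epsilon\}, E) \cong E$ and the projection $C \times \{\epsilon\} \to C$ is an isomorphism), and the universal property of the pushout $P$. This identifies the left-hand side with $\Hom_{\ncat}(D, \Homi(J, E))$ and the right-hand side with the fibre product $\Hom_{\ncat}(C, \Homi(J, E)) \times_{\Hom_{\ncat}(C, E)} \Hom_{\ncat}(D, E)$, taken over the maps induced respectively by $\Homi(\canseg{\epsilon}, E) : \Homi(J, E) \to E$ and by $C \to D$. Tracking the two structural maps $C \times J \to P$ and $D \times \{\epsilon\} \to P$ through these identifications shows that $\Hom_{\ncat}(p, E)$ becomes the canonical comparison map
\[ \Hom_{\ncat}(D, \Homi(J, E)) \to \Hom_{\ncat}(C, \Homi(J, E)) \times_{\Hom_{\ncat}(C, E)} \Hom_{\ncat}(D, E). \]

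The key point is that this comparison map is a bijection if and only if the $n$-functor $\Homi(\canseg{\epsilon}, E) : \Homi(J, E) \to \Homi(\{\epsilon\}, E) \cong E$ has the \emph{unique} right lifting property with respect to $C \to D$ (a commutative square over these two maps is precisely an element of the fibre product, and the set of its fillers is precisely the corresponding fibre of the comparison map). Now $\Homi(\canseg{\epsilon}, E)$ is fully faithful by Lemma \ref{lemma:J_ff}, and $C \to D$ is bijective on objects by hypothesis, so the proposition preceding this corollary applies and yields exactly this unique right lifting property. This lifting step is the heart of the matter; everything else is formal manipulation of adjunctions and pushouts, and the only delicate bookkeeping is the verification that the comparison map is indeed the one induced by $p$.
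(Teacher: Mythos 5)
Your proposal is correct and follows essentially the same route as the paper's own proof: reduce via Yoneda and the cartesian closed structure to the statement that $\Homi(\canseg{\epsilon}, E)$ has the unique right lifting property with respect to $C \to D$, then invoke Lemma \ref{lemma:J_ff} together with the preceding proposition. Your explicit identification of $\Hom_{\ncat}(P, E)$ with the fibre product just spells out the adjunction step the paper states more tersely.
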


\begin{proof}
Let $A$ be a strict $n$-category. By the Yoneda lemma, it suffices to prove that  
\[
 \Hom_{\ncat}(D \times J, A) \to  \Hom_{\ncat}(C \times J \amalg_{C \times
   \{\epsilon\}} D \times \{\epsilon\}, A) 
\]
is a bijection, or, in other words, that the functor of the statement has
the unique left lifting property with respect to the unique strict
$n$-functor $A \to \Dn{0}$. By adjunction, this is equivalent to saying that
the functor 
\[ \Homi(\canseg{\epsilon}, A) :  \Homi(J, A) \to \Homi(\{\epsilon\}, A)
  \cong A \]
has the unique right lifting property with respect to the functor $C \to D$. This
follows from Lemma~\ref{lemma:J_ff} and the above proposition.
\end{proof}

\begin{paragr}
Let $u : C \to D$ be a functor. Recall that $u$ is said to be an
\ndef{iso-fibration} if for every invertible arrow $f' : x' \to y'$ of $D$
and every object $y$ of $C$ such that $u(y) = y'$, there exists an
invertible arrow $f : x \to y$ of $C$ such that $u(f) = f'$. In other words,
the functor~$u$ is an iso-fibration if it has the right lifting property
with respect to $\canseg{1} : \Dn{0} \to J$. Note that $u$ is an
iso-fibration if and only if $u^\op : C^\op \to D^\op$ is an iso-fibration,
that is, if and only if $u$ has the right lifting property with respect to
$\canseg{0} : \Dn{0} \to J$.
\end{paragr}

\begin{prop}\label{prop:W_fib}
Let $u$ be a strict $n$-functor and denote by~$\W$ the $\Theta_n$-localizer
generated by~$\clI_n$. Then the morphism $N_n(u)$ is a $\W$-fibration if and
only if the right truncation $t_r(u)$ of $u$ is an iso-fibration. In
particular, for any strict $n$-category $C$, its nerve $N_n(C)$ is
$\W$-fibrant.
\end{prop}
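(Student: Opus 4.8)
The plan is to work in the framework of Theorem~\ref{thm:naive_fib}, taking $S = \clI_n$ (so that $\W = \W(\clI_n) = \W(S)$) and the injective separating interval $N_n(J)$ of paragraph~\ref{paragr:J}. The key preliminary step — and the heart of the matter — is to show that $N_n(D)$ is $\W$-fibrant for \emph{every} strict $n$-category $D$; this is precisely what will allow us to invoke Theorem~\ref{thm:naive_fib} for a morphism whose target is $N_n(D)$, even though $N_n(D)$ is not assumed fibrant. By that theorem it suffices to check that $N_n(D) \to e_{\pref{\Theta_n}}$ has the right lifting property against the anodyne $(\clI_n, N_n(J))$-extensions, and by paragraph~\ref{paragr:an_ext} together with Proposition~\ref{prop:cell_mod_Reedy} (applicable since $\Theta_n$ is a skeletal Reedy category, Theorem~\ref{thm:ThReedy}) it is enough to check this against the maps in $\Lambda^\infty_{N_n(J)}(\clI_n)$ and against the maps $\bord{T} \times N_n(J) \cup T \times \{\epsilon\} \to T \times N_n(J)$ for $T \in \Ob(\Theta_n)$ and $\epsilon = 0, 1$.

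The point is that $\tau_n$ carries each of these generators, \emph{except those with $T = \Dn{0}$}, to an isomorphism of $n$-categories. For the maps in $\Lambda^\infty_{N_n(J)}(\clI_n)$ one argues by induction on the level: the level-$0$ maps are the $i_T$, for which $\tau_n(i_T)$ is an isomorphism by Proposition~\ref{prop:iso_tau_i_T}, and — using that $\tau_n$ preserves colimits and binary products and that a pushout of an isomorphism is again one — if $\tau_n(g)$ is an isomorphism for some $g : U \to V$ then $\tau_n$ of $U \times N_n(J) \cup V \times \bordseg{N_n(J)} \to V \times N_n(J)$ is isomorphic to $\tau_n(g) \times \id{J}$, hence an isomorphism. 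For the map $\bord{T} \times N_n(J) \cup T \times \{\epsilon\} \to T \times N_n(J)$ with $T \neq \Dn{0}$, Corollary~\ref{coro:bij_ob} gives that $\tau_n(\mbord{T})$ is bijective on objects, and applying $\tau_n$ turns this map into
\[ \tau_n(\bord{T}) \times J \amalg_{\tau_n(\bord{T}) \times \{\epsilon\}} \tau_n(T) \times \{\epsilon\} \longrightarrow \tau_n(T) \times J, \]
which is an isomorphism by Corollary~\ref{coro:bij_ob_iso}. Since the adjunction $\tau_n \dashv N_n$ turns a lifting problem of $N_n(D) \to e_{\pref{\Theta_n}}$ against a presheaf morphism $g$ into a lifting problem against $\tau_n(g)$, any such problem with $\tau_n(g)$ invertible is solvable; hence $N_n(D) \to e_{\pref{\Theta_n}}$ lifts against every generator with $T \neq \Dn{0}$. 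Finally, for $T = \Dn{0}$ we have $\bord{\Dn{0}} = \varnothing_{\pref{\Theta_n}}$ (as $\Dn{0}$ is terminal in $\Theta_n$ and every other object of $\Theta_n$ has at least two objects, so admits no non-identity monomorphism into $\Dn{0}$) and $\Dn{0} = e_{\pref{\Theta_n}} = N_n(\Dn{0})$, so the generator in question is precisely $\canseg{\epsilon} = N_n(\canseg{\epsilon}) : N_n(\Dn{0}) \to N_n(J)$; lifting $N_n(D) \to e_{\pref{\Theta_n}}$ against it amounts to extending an object $y$ of $D$ to an $n$-functor $J \to D$, which the constant $n$-functor at $y$ achieves. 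Therefore $N_n(D)$ is $\W$-fibrant.

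With this in hand, Theorem~\ref{thm:naive_fib} says that $N_n(u)$ is a $\W$-fibration if and only if it is a naive $(\clI_n, N_n(J))$-fibration, i.e.\ has the right lifting property against all the generators above. By the computation just made, for any $n$-functor $u$ the morphism $N_n(u)$ has this property automatically for every generator with $T \neq \Dn{0}$ (again by $\tau_n \dashv N_n$); so $N_n(u)$ is a $\W$-fibration if and only if $\canseg{\epsilon} \orth N_n(u)$ for $\epsilon = 0, 1$. It remains to translate this condition. Since $N_n$ is fully faithful and $\canseg{\epsilon} = N_n(\canseg{\epsilon})$, the relation $\canseg{\epsilon} \orth N_n(u)$ is equivalent to $(\canseg{\epsilon} : \Dn{0} \to J) \orth u$ in $\ncat$; and since $t_r$ is right adjoint to the inclusion $i : \Cat \to \ncat$, which fixes the morphism $\canseg{\epsilon} : \Dn{0} \to J$, this is equivalent to $(\canseg{\epsilon} : \Dn{0} \to J) \orth t_r(u)$ in $\Cat$. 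Since $t_r(u)$ is an iso-fibration precisely when $\canseg{1} \orth t_r(u)$ — equivalently, $\canseg{0} \orth t_r(u)$ — we conclude that $N_n(u)$ is a $\W$-fibration if and only if $t_r(u)$ is an iso-fibration. (Alternatively, the ``only if'' direction is immediate once one notes that $\canseg{\epsilon}$ is a trivial cofibration of the $\W$-model structure: it is a monomorphism lying in $\W$, since $N_n(j) \circ \canseg{\epsilon}$ is an identity and $N_n(j)$ is a trivial fibration by Corollary~\ref{coro:J_inj}.)

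The main obstacle is the first step, the $\W$-fibrancy of $N_n(D)$, and within it the structural observation that $\tau_n$ collapses every generating anodyne extension except the two endpoint inclusions $\canseg{0}, \canseg{1}$ of $N_n(J)$ (those indexed by $T = \Dn{0}$); this is where Proposition~\ref{prop:iso_tau_i_T}, Corollary~\ref{coro:bij_ob} and Corollary~\ref{coro:bij_ob_iso} come in, while everything afterward is a routine chase of the adjunctions $\tau_n \dashv N_n$ and $i \dashv t_r$.
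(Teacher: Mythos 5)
Your proof is correct and follows essentially the same route as the paper's: both reduce to naive $(\clI_n, N_n(J))$-fibrations via Theorem~\ref{thm:naive_fib} after observing that every $N_n(D)$ is $\W$-fibrant, and both rest on the observation that $\tau_n$ sends every generating anodyne extension except the endpoint inclusions $\canseg{0}, \canseg{1} : \Dn{0} \to J$ (the generators indexed by $T = \Dn{0}$) to an isomorphism, using Proposition~\ref{prop:iso_tau_i_T} and Corollaries~\ref{coro:bij_ob} and~\ref{coro:bij_ob_iso}. The only difference is one of ordering (you establish fibrancy of $N_n(D)$ first and then redo the computation for general $u$, whereas the paper performs the computation once for general $u$ and extracts fibrancy as the special case $D \to \Dn{0}$), which is immaterial.
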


\begin{proof}
In this proof, by ``naive fibration'' we will mean ``naive $(\clI_n,
N_n(J))$-fibration''. (By paragraph~\ref{paragr:J}, $N_n(J)$ is an
injective separating interval and we will thus be able to apply
Theorem~\ref{thm:naive_fib}.)

By paragraph \ref{paragr:an_ext}, the morphism $N_n(u)$ is a naive fibration
if and only if it has the right lifting property with respect to
\[
  \Lambda^\infty_{N_n(J)}(\clI_n) \cup 
  \{\bord{T} \times N_n(J) \cup T \times \{\epsilon\} \to T \times N_n(J);
    \,\, T \in \Ob(\Theta_n),\,\, \epsilon = 0,1\}.
\]
By adjunction and using the fact that $\tau_n$ commutes with colimits and
binary products, we obtain that $N_n(u)$ is a naive fibration if and only
if $u$ has the right lifting property with respect to
\[
  \Lambda^\infty_J(\tau_n(\clI_n)) \cup 
  \{\tau_n(\bord{T}) \times J \amalg_{\tau_n(\bord{T}) \times \{\epsilon\}}
  \tau_n(T) \times \{\epsilon\} \to \tau_n(T) \times J;
    \,\, T \in \Ob(\Theta_n),\,\, \epsilon = 0,1\}.
\]
(A priori, we have only defined the $\Lambda^\infty$ construction in
presheaf categories. Nevertheless, it is clear that the definition still makes
sense in any category admitting finite products and pushouts.) By
Proposition~\ref{prop:iso_tau_i_T}, the $n$-functors of $\tau_n(\clI_n)$ are
isomorphisms. Since isomorphisms are stable under pushout and binary
product, it follows that the $n$-functors of
$\Lambda^\infty_J(\tau_n(\clI_n))$ are also isomorphisms. Moreover, by
Corollaries \ref{coro:bij_ob} and \ref{coro:bij_ob_iso}, the
$n$\nbd-functors of
\[
  \{\tau_n(\bord{T}) \times J \amalg_{\tau_n(\bord{T}) \times \{\epsilon\}}
  \tau_n(T) \times \{\epsilon\} \to \tau_n(T) \times J; \,\, T \in
  \Ob(\Theta_n)\bs\{\Dn{0}\},\,\, \,\, \epsilon = 0,1\} 
\]
are again isomorphisms. It follows that $N_n(u)$ is a naive fibration
if and only if~$u$~has the right lifting property with respect to
\[
  \tau_n(\bord{\Dn{0}}) \times J \amalg_{\tau_n(\bord{\Dn{0}}) \times \{\epsilon\}}
  \tau_n(\Dn{0}) \times \{\epsilon\} \to \tau_n(\Dn{0}) \times J.
\]
But this $n$-functor is nothing but $\canseg{\epsilon} : \Dn{0} \to J$ and
$N_n(u)$ is thus a naive fibration if and only if $t_r(u)$ is an
iso-fibration. In particular, every strict $n$-category $C$ is $(\clI_n,
N_n(J))$\nbd-fibrant and hence $\W$-fibrant by Theorem~\ref{thm:naive_fib}.
The result follows from the same theorem since $N_n(u)$ is a morphism
between $\W$-fibrant objects.
\end{proof}

\begin{rem}
Let $\W$ be the $\Theta_n$-localizer generated by $\clI_n$. By the above
proposition, nerves of strict $n$-categories are $\W$-fibrant objects. We
will see in the introduction of the next section that this implies that
when $n > 1$ the functor $N_n$ cannot have the property that a strict
$n$-functor $u$ is an equivalence of strict $n$-categories if and only if
$N_n(u)$ is a $\W$-equivalence. This shows that we have to add other
generators to obtain a model for $(\infty, n)$-categories.
\end{rem}

\begin{prop}
Let $k$ be an integer such that $1 < k \le n$. Then the morphism
\[ N_n(j_k) : N_n(J_k) \to N_n(\Dn{k-1}) \]
is not a trivial fibration of $\pref{\Theta_n}$.
\end{prop}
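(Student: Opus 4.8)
The plan is to exhibit a monomorphism of $\pref{\Theta_n}$ together with a commutative square having $N_n(j_k)$ on the right which admits no lift; by paragraph~\ref{paragr:def_triv_fib} this shows that $N_n(j_k)$ is not a trivial fibration. The reduction I would use is the following: for a presheaf $X$ on $\Theta_n$ and a strict $n$\nbd-category $C$ there is a natural bijection $\Hom_{\pref{\Theta_n}}(X, N_n(C)) \cong \Hom_{\ncat}(\tau_n(X), C)$, and $N_n$ is fully faithful; hence a commutative square with $N_n(j_k)$ on the right and, on the bottom, a monomorphism $m : X \to T$ with $T$ an object of $\Theta_n$ (regarded as a representable presheaf) corresponds to a commutative square in $\ncat$ with $j_k$ on the right and $\tau_n(m) : \tau_n(X) \to \iota(T)$ on the bottom, with lifts in bijection. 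So it suffices to construct such $T$, $m$, and strict $n$\nbd-functors $a : \tau_n(X) \to J_k$ and $b : \iota(T) \to \Dn{k-1}$ with $j_k \circ a = b \circ \tau_n(m)$ admitting no common lift.

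For $T$ I would take the globular sum $T = \Dn{k-1} \amalgd{k-2} \Dn{k-1}$ in $\Theta_n$, associated to the width-$2$ table of dimensions with upper row $k-1,\ k-1$ and lower row $k-2$ (its dimension is $k-1 \le n$, so it is an object of $\Theta_n$; for $k = 2$ it is $\Delta_2$). Since $\tau_n$ commutes with colimits, $\iota(T) = \Dn{k-1} \amalg_{\Dn{k-2}} \Dn{k-1}$ in $\ncat$; write $A$ and $B$ for its two non-trivial $(k-1)$\nbd-arrows and $p, m_0, q$ for its non-trivial $(k-2)$\nbd-arrows, so that $A : p \to m_0$, $B : m_0 \to q$, and their composite is $B \comp_{k-2} A : p \to q$. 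Let $X$ be the union, inside $T$, of the images of the three monomorphisms $\Dn{k-1} \to T$ classifying $A$, $B$ and $B \comp_{k-2} A$ respectively, and let $m : X \to T$ be the inclusion. As $\tau_n$ commutes with colimits and $X$ is the colimit of these three representables glued along suitable $(k-2)$\nbd-globes, $\tau_n(X)$ is the free strict $n$\nbd-category on the $n$\nbd-graph whose non-trivial arrows are three $(k-1)$\nbd-arrows $A : p \to m_0$, $B : m_0 \to q$ and $C : p \to q$; in particular $C \neq B \comp_{k-2} A$ in $\tau_n(X)$, whereas $\tau_n(m)$ is the unique $n$\nbd-functor sending $A \mapsto A$, $B \mapsto B$ and $C \mapsto B \comp_{k-2} A$.

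To define the square, let $e$ be the non-trivial $(k-1)$\nbd-arrow of $\Dn{k-1}$, with source and target $(k-2)$\nbd-arrows $d_-$ and $d_+$, and let $\alpha = s^0_k$ and $\beta = s^1_k$ be the two non-trivial $(k-1)$\nbd-arrows of $J_k$, namely the images of the top cell of $\Dn{k-1}$ under the two sections of $j_k$; these are parallel, with common source and target $(k-2)$\nbd-arrows $u$ and $v$, they are distinct (the sections are), and $j_k$ carries $\alpha, \beta$ to $e$ and $u, v$ to $d_-, d_+$. I would define $b : \iota(T) \to \Dn{k-1}$ by $A \mapsto e$ and $B \mapsto \id{d_+}$ (which forces $B \comp_{k-2} A \mapsto e$), and $a : \tau_n(X) \to J_k$ by $A \mapsto \alpha$, $B \mapsto \id{v}$ and $C \mapsto \beta$; the latter is well defined precisely because $\alpha$ and $\beta$ are parallel, so that the prescribed images of the three glued globes agree on the shared source and target arrows. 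A direct computation gives $j_k \circ a = b \circ \tau_n(m)$.

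Finally, suppose $\ell : \iota(T) \to J_k$ were a lift, so that $j_k \circ \ell = b$ and $\ell \circ \tau_n(m) = a$. Restricting along $\tau_n(m)$ forces $\ell(A) = \alpha$, $\ell(B) = \id{v}$ and $\ell(B \comp_{k-2} A) = \beta$; but $\ell$ preserves composition, whence
\[ \beta = \ell(B \comp_{k-2} A) = \ell(B) \comp_{k-2} \ell(A) = \id{v} \comp_{k-2} \alpha = \alpha, \]
which is absurd. Therefore the square has no lift, and $N_n(j_k)$ is not a trivial fibration. The one genuinely delicate point is the assertion that $C$ and $B \comp_{k-2} A$ remain distinct in $\tau_n(X)$ --- that is, that passing to $\tau_n$ of the union $X$ adjoins a genuinely new $(k-1)$\nbd-arrow rather than identifying it with the composite --- together with the verification that $a$ is a well-defined strict $n$\nbd-functor; the remaining checks are routine manipulations with globular sums.
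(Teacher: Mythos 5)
Your proof is correct and is essentially the paper's own argument: the paper also tests $N_n(j_k)$ against $\mbord{T} : \bord{T} \to T$ for $T = \Dn{k-1} \amalg_{\Dn{k-2}} \Dn{k-1}$ (your $X$ is exactly $\bord{T}$, being the union of the images of the three proper $\Dn{k-1}$-subobjects), transposes along $\tau_n \dashv N_n$, and observes that a lift would force $j_k$ to be injective on parallel $(k-1)$-arrows, which fails since $j_k$ identifies $\alpha$ and $\beta$. The only difference is presentational --- you write out the explicit non-liftable square that the paper leaves implicit in its characterization of the right lifting property --- and your flagged ``delicate point'' ($C \neq B \comp_{k-2} A$ in $\tau_n(X)$) is the same claim the paper asserts with a comparable ``one easily obtains''.
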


\begin{proof}
Let $T$ be the object $\Dn{k-1} \amalg_{\Dn{k-2}} \Dn{k-1}$ of $\Theta_n$. We
claim that $N_n(j_k)$ does not have the right lifting property with respect
to $\mbord{T} : \bord{T} \to T$, or, by adjunction, that $j_k$~does not have
the right lifting property with respect to $\tau_n(\mbord{T}) :
\tau_n(\bord{T}) \to \tau_n(T)$. Since $\tau_n$~commutes with colimits, we
have
\[
\tau_n(\bord{T})
= \varinjlim_{S \stackrel{\neq}{\hookrightarrow} T} S,
\]
the colimit being taken over the category of non-trivial monomorphisms of
$\Theta_n$ whose target is $T$. Using this formula, one easily
obtains that a strict $n$-functor from $\tau_n(\bord{T})$ to a strict $n$-category $C$ is
given by a triangle of $(k-1)$-arrows composable in dimension~$k-2$. It
follows that a strict $n$-functor $u : C \to D$ has the right lifting property with
respect to~$\tau_n(\mbord{T}) : \tau_n(\bord{T}) \to \tau_n(T)$ if and only
if every such triangle in $C$ which is sent to a commutative triangle in $D$ is
already commutative in $C$, or equivalently, if and only if $u$~is injective
on parallel $(k-1)$-arrows, i.e., if two parallel $(k-1)$-arrows $f, g$ of $C$ are
sent to the same $(k-1)$-arrow in $D$, then $f = g$. This condition is not
fulfilled by $j_k$, thereby proving that $N_n(j_k)$ is not a trivial fibration.
\end{proof}

\begin{coro}
Let $k$ be an integer such that $1 < k \le n$. Then the morphism 
\[ N_n(j_k) : N_n(J_k) \to N_n(\Dn{k-1}) \]
is not in the $\Theta_n$-localizer generated by $\clI_n$.
\end{coro}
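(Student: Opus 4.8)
The plan is to argue by contradiction, recognizing $N_n(j_k)$ as a fibration in the model category attached to the $\Theta_n$-localizer $\W$ generated by $\clI_n$, and then using the preceding proposition. Recall that $\W$ is accessible (it is generated by a finite set), so by Theorem~\ref{thm:mod_str_loc} there is a model category structure on $\pref{\Theta_n}$ with weak equivalences $\W$ and cofibrations the monomorphisms; I will work inside it.

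First I would check that $N_n(j_k)$ is a $\W$-fibration by means of Proposition~\ref{prop:W_fib}, which reduces this to showing that the right truncation $t_r(j_k) : t_r(J_k) \to t_r(\Dn{k-1})$ is an iso-fibration. This is immediate: since $k \ge 2$, the strict $n$-category $\Dn{k-1}$ has no non-trivial invertible $1$-arrows (its $1$-arrows admit no non-trivial composites, so any inverse would be an identity), hence the category $t_r(\Dn{k-1})$ has no non-trivial isomorphisms, and a functor with such a target vacuously satisfies the lifting condition defining iso-fibrations. So by Proposition~\ref{prop:W_fib}, $N_n(j_k)$ is a $\W$-fibration.

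Then I would suppose, for a contradiction, that $N_n(j_k)$ belongs to $\W$. It is then at once a $\W$-equivalence and a $\W$-fibration, hence a trivial fibration of the $\W$-model category. Since the cofibrations of this model category are exactly the monomorphisms of $\pref{\Theta_n}$, a trivial fibration is precisely a morphism having the right lifting property with respect to monomorphisms, that is, a trivial fibration in the sense of paragraph~\ref{paragr:def_triv_fib}. This contradicts the previous proposition, which asserts that $N_n(j_k)$ is not a trivial fibration of $\pref{\Theta_n}$. Therefore $N_n(j_k)$ is not in $\W$.

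There is essentially no hard step: the entire content sits in Proposition~\ref{prop:W_fib} and in the preceding proposition, and the only thing to verify here is that $t_r(j_k)$ is an iso-fibration, which follows from the absence of non-trivial isomorphisms in $t_r(\Dn{k-1})$.
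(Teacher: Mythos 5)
Your proof is correct and follows the paper's own argument: establish via Proposition~\ref{prop:W_fib} that $N_n(j_k)$ is a $\W$-fibration (because $t_r(j_k)$ is an iso-fibration, which is indeed vacuous since $t_r(\Dn{k-1})$ has no arrows from $1$ to $0$ and hence no non-identity isomorphisms), and then conclude that a $\W$-equivalence which is a $\W$-fibration would be a trivial fibration, contradicting the preceding proposition. The only slip is cosmetic: $\clI_n$ is an infinite (but still small) set, indexed by the objects of $\Theta_n$, so accessibility of $\W$ holds because the generating class is a set, not because it is finite.
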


\begin{proof}
Denote by $\W$ the $\Theta_n$-localizer generated by $\clI_n$.
The functor $t_r(j_k)$ is an iso-fibration and the morphism $N_n(j_k)$ is
thus a $\W$-fibration by Proposition \ref{prop:W_fib}. By the above
proposition, this morphism is not a trivial fibration. It follows that it is
not a $\W$-equivalence.
\end{proof}

\section{Nerves of strict $n$-categories and $n$-quasi-categories}
\label{sec:7}

If $C$ is a category, it is well-known that its nerve $N_1(C)$ is a
quasi-category. In this section, we show that it is not true that nerves of
strict $n$-categories are $n$-quasi-categories as soon as $n > 1$. More
precisely, we will show that the nerve $N_n(C)$ of a strict
$n$\nbd-category~$C$ is an $n$-quasi-category if and only if $C$ has no
non-trivial invertible $k$-arrows for $k > 1$.

\medbreak

There is actually an abstract reason for which nerves of strict
$n$\nbd-categories cannot be $n$-quasi-categories. Indeed, if they were
$n$-quasi-categories, they would be cofibrant-fibrant objects in the model
category of $n$-quasi-categories. That would imply that any weak equivalence
between nerves of strict $n$\nbd-categories has an inverse up to homotopy.
Assuming that $N_n$ has the property that a strict $n$-functor $u$ is an
equivalence of strict $n$-categories if and only if $N_n(u)$ is a weak
equivalence of $n$\nbd-quasi-categories (which is expected but not proved in
this paper) and using the fully faithfulness of $N_n$, we would get that if
a strict $n$-functor $C \to D$ is an equivalence of strict $n$-categories,
then there exists a strict $n$-functor $D \to C$ which is an equivalence of
strict $n$-categories, which is false when $n > 1$.

\medbreak

Throughout the section, we fix an integer $n \ge 1$ and an integer $k$ such
that $1 < k \le n$.

\begin{paragr}
We will say that a strict $n$-category $C$ is \ndef{rigid in dimension $k$}
if any (strictly) invertible $k$-arrow of $C$ is the identity of a
$(k-1)$-arrow.
\end{paragr}

\begin{prop}\label{prop:rig_dim_k}
A strict $n$-category $C$ is rigid in dimension $k$ if and only if the unique
$n$-functor $C \to \Dn{0}$ has the right lifting property with respect to
the $n$-functor
\[
s^0_k \times' \mbordseg{J} :
\Dn{k-1} \times J \amalg_{\Dn{k-1} \times \bord{J}} J_k \times \bord{J} \to
J_k \times J
\]
induced by $s^0_k : \Dn{k-1} \to J_k$ and $\mbordseg{J} : \bordseg{J} \to
J$.
\end{prop}

\begin{proof}
Let us denote by $i$ the unique arrow of $J$ from $0$ to $1$, and by
$\gamma$ the unique $k$-arrow of $J_k$ whose source (resp.~whose target)
corresponds to the morphism $s^0_k : \Dn{k-1} \to J_k$ (resp.~to the
morphism $s^1_k : \Dn{k-1} \to J_k$).

To any strict $n$-functor
\[ u : J_k \times J \to C, \]
we can associate a tuple $(f, g, \alpha, \beta)$ given by
\[
f = u(0, i),\quad g = u(1, i),\quad \alpha = u(\gamma, 0)\quad\text{and}\quad
\beta = u(\gamma, 1).
\]
Conversely, if $(f, g, \alpha, \beta)$ is a tuple where $f, g$ are
invertible $1$-arrows of $C$ and $\alpha, \beta$ are invertible $k$-arrows of
$C$, then the above formulas define a strict $n$-functor if and only if we
have
\[ g \comp_0 \alpha = \beta \comp_0 f. \]
Similarly, to any strict $n$-functor
\[
u : \Dn{k-1} \times J \amalg_{\Dn{k-1} \times
\bord{J}} J_k \times \bord{J} \to C,
\]
we can associate a tuple $(f, g, \alpha, \beta)$ given by
\[
f = u(0, i),\quad g = u(1, i),\quad \alpha = u(\gamma, 0)
\quad\text{and}\quad \beta = u(\gamma,1).
\]
Conversely, if $(f, g, \alpha, \beta)$ is a tuple where $f, g$ are
invertible $1$-arrows of $C$ and $\alpha, \beta$ are invertible $k$-arrows of
$C$, then the above formulas define a strict $n$-functor if and only if we
have
\[ g \comp_0 \Gls{k}(\alpha) = \Gls{k}(\beta) \comp_0 f. \]
Moreover, if a tuple $(f, g, \alpha, \beta)$ defines a functor $J_k \times J \to C$,
then the induced functor $\Dn{k-1} \times J \amalg_{\Dn{k-1} \times
\bord{J}} J_k \times \bord{J} \to C$ is also defined by $(f, g, \alpha,
\beta)$.

The above analysis shows that $C \to \Dn{0}$ has the right lifting property
with respect to~$s_k^0 \times' \mbordseg{J}$ if and only if for every tuple
$(f, g, \alpha, \beta)$ where $f, g$ are invertible $1$-arrows of~$C$ and
$\alpha, \beta$ are invertible $k$-arrows of $C$, if 
\[ g \comp_0 \Gls{k}(\alpha) = \Gls{k}(\beta) \comp_0 f, \]
then
\[ g \comp_0 \alpha = \beta \comp_0 f. \]
This clearly holds if $\alpha$ and $\beta$ are identities of $(k-1)$-arrows
and it hence holds for any~$\alpha$ and~$\beta$ if $C$ is rigid in dimension
$k$. Conversely, suppose $C$ is not rigid in dimension $k$. By definition,
there exists an invertible $k$-arrow $\alpha : v \to w$ which is not an
identity. Denote by $x$ (resp.~by $y$) the source (resp.~the target) of $v$
in dimension $0$. Consider the tuple $(\id{x}, \id{y}, \alpha, \id{v})$. We
indeed have
\[ \id{y} \comp_0 \Gls{k}(\alpha) = v = \Gls{k}(\id{v}) \comp_0 \id{x}, \]
but
\[ \id{y} \comp_0 \alpha = \alpha \neq \id{v} = \id{v} \comp_0 \id{x}, \]
thereby proving the result.
\end{proof}

\begin{rem}
The above proposition is false for $k = 1$: for any category $C$, the
$n$-functor $C \to \Dn{0}$ has the right lifting property with respect to
$s^0_1 \times' \mbordseg{J}$.
\end{rem}

\begin{rem}\label{rem:equiv_s0_s1}
We could have replaced $s^0_k$ by $s^1_k$ in the statement of the
proposition. This immediately follows from the existence of an automorphism
$\sigma$ of $J_k \times J$ satisfying
\[ s^1_k \times' \mbordseg{J} = \sigma \circ (s^0_k \times' \mbordseg{J}).
\]
\end{rem}

\begin{paragr}
We will say that a strict $n$-functor $u : C \to D$ is \ndef{rigid in
dimension $k$} if it has the right lifting property with respect to the
$n$-functor
\[
s^0_k \times' \mbordseg{J} :
\Dn{k-1} \times J \amalg_{\Dn{k-1} \times \bord{J}} J_k \times \bord{J} \to
J_k \times J.
\]
(By the above remark, we could replace $s^0_k$ by $s^1_k$.)
The above proposition can then be rephrased by saying that a strict
$n$-category $C$ is rigid in dimension $k$ if and only if the
$n$-functor $C \to \Dn{0}$ is rigid in dimension $k$.
\end{paragr}

\begin{lemma}\label{lemma:box_epi}
The $n$-functor
\[
s^0_k \times' \mbordseg{J} :
\Dn{k-1} \times J \amalg_{\Dn{k-1} \times \bord{J}} J_k \times \bord{J} \to
J_k \times J
\]
is an epimorphism.
\end{lemma}

\begin{proof}
We have to show that for any strict $n$-category $C$, the map
$\Hom_{\ncat}(s^0_k \times' \mbordseg{J}, C)$ is injective. This is an
immediate consequence of the description of this map given in the proof
of Proposition~\ref{prop:rig_dim_k}.
\end{proof}

\begin{prop}\label{prop:rig_source}
Let $u : C \to D$ be a strict $n$-functor. If the $n$-category $C$ is rigid
in dimension $k$, then so is the $n$-functor $u$.
\end{prop}

\begin{proof}
If follows formally from the fact that the $n$-functor $s^0_k \times'
\mbordseg{J}$ is an epimorphism (see the above lemma) that if $C \to \Dn{0}$
has the right lifting property with respect to this $n$\nbd-functor, so does
any strict $n$-functor whose source is $C$. The result thus follows from
Proposition~\ref{prop:rig_dim_k}.
\end{proof}

\begin{prop}\label{prop:rig_stable}
Let $C$ be a strict $n$-category. If $C$ is rigid in dimension $k$, then so
is the $n$-category $\Homi(J, C)$.
\end{prop}

\begin{proof}
Invertible $k$-arrows of $\Homi(J, C)$ are in canonical bijection with
strict $n$-functors $J_k \times J \to C$. We saw in the proof of
Proposition~\ref{prop:rig_dim_k} that these $n$-functors are in canonical
bijection with tuples $(f, g, \alpha, \beta)$, where $f, g$ are invertible
$1$-arrows of $C$ and $\alpha, \beta$ are invertible $k$-arrows of $C$
satisfying $g \comp_0 \alpha = \beta \comp_0 f$. In this correspondence,
such a tuple is an identity in the $n$-category $\Homi(J, C)$ if and only if
$\alpha$ and $\beta$ are identities in~$C$. The result follows immediately.
\end{proof}

\begin{rem}
The previous proposition is easily seen to hold when $J$ is replaced by any
strict $n$-category $D$: all one has to do to adapt the proof is to
describe the strict $n$-functors $J_k \times D \to C$.
\end{rem}

\begin{prop}
Let $C$ be a strict $n$-category. Then $N_n(C)$ is an $n$-quasi-category if
and only if $C$ is rigid in dimension $k$ for $1 < k \le n$.
\end{prop}

\begin{proof}
Since $N_n(J)$ is an injective separating interval (see
paragraph~\ref{paragr:J}), by Theorem~\ref{thm:naive_fib}, 
$N_n(C)$ is an $n$-quasi-category if and only if it is $(\clI_n \cup
\clJ'_n, N_n(J))$\nbd-fibrant. By Proposition~\ref{prop:W_fib},  
$N_n(C)$ is always $(\clI_n, N_n(J))$-fibrant. We thus have to understand
when the unique map $N_n(C) \to \Dn{0}$ has the right lifting property with
respect to $\Lambda^\infty_{N_n(J)}(\clJ'_n)$. By using the same arguments
(and notation) than in the proof of Proposition~\ref{prop:W_fib}, this
condition is equivalent to the fact that the map $C \to \Dn{0}$ has
the right lifting property with respect to
\[ \Lambda^\infty_{J}(\tau_n(\clJ'_n)) = 
\bigcup_{1 < k \le n} \Lambda^\infty_{J}(\{s_k^\epsilon : \Dn{k-1} \to
J_k;\,\,\epsilon = 0, 1\}).
\]

We now fix $k$ such that $1 < k \le n$. We will show that $C \to \Dn{0}$ has
the right lifting property with respect to $\Lambda^\infty_{J}(\{s_k^0,
s_k^1\})$ if and only if $C$ is rigid in dimension $k$. 

Given a class $S$ of strict $n$-functors, define $V^\infty_J(S)$ to be the
smallest class of strict $n$-functors containing $S$ and closed under the
operation 
\[ f : A \to B \quad \longmapsto \quad \Homi'(\mbordseg{J}, f) : \Homi(J, A) \to
\Homi(\bordseg{J}, A) \times_{\Homi(\bordseg{J}, B)} \Homi(J, B).
\]
By adjunction (see paragraph~\ref{paragr:box_cart_clos}), if $S$ and $T$ are
two classes of strict $n$-functors, we have
\[
\Lambda_J^\infty(S) \orth T
\quad\Leftrightarrow\quad
S \orth V_J^\infty(T).
\]
In particular, we have
\[
\Lambda_J^\infty(\{s^0_k, s^1_k\}) \orth \{p\}
\quad\Leftrightarrow\quad
\{s^0_k, s^1_k\} \orth V_J^\infty(\{p\}),
\]
where $p$ denotes the unique $n$-functor $C \to \Dn{0}$. Since the
$n$-functors $s^0_k$ and $s^1_k$ both admit a retraction, they have the left
lifting property with respect to $p : C \to \Dn{0}$. By adjunction, we thus
have
\[
\{s^0_k, s^1_k\} \orth V_J^\infty(\{p\})
\quad\Leftrightarrow\quad
\{s^0_k \times' \mbordseg{J}, s^1_k \times' \mbordseg{J}\} \orth V_J^\infty(\{p\}).
\]
This last condition can be rephrased by saying that the $n$-functors of
$V_J^\infty(\{C \to D_0\})$ are rigid in dimension $k$. In particular, this
condition implies that $C$ is rigid in dimension~$k$. Conversely, if $C$ is
rigid in dimension $k$, then by Proposition~\ref{prop:rig_stable}, the
sources of the $n$-functors of $V_J^\infty(\{C \to D_0\})$ are rigid in
dimension $k$ and the result follows by Proposition~\ref{prop:rig_source}.
\end{proof}

\begin{coro}\label{coro:J_k_not_fib}
For every $k$ such that $1 < k \le n$, the presheaf $N_n(J_k)$ is not an
$n$-quasi-category.
\end{coro}

\begin{proof}
This follows immediately from the proposition since $J_k$ has 
non-trivial invertible $k$-arrows.
\end{proof}

\begin{coro}
If $C$ is a category, then $N_n(C)$ is an $n$-quasi-category. 
\end{coro}

\begin{proof}
This follows immediately from the proposition since categories have only 
trivial $k$-arrows for $k > 1$.
\end{proof}

\section{Comparison with Rezk
\texorpdfstring{$\Theta_n$}{Theta\_n}-spaces}\label{sec:8}

In \cite{RezkThSp}, Rezk introduced a notion of $\Theta_n$-spaces as fibrant
objects of a model category structure on simplicial presheaves on
$\Theta_n$. Since the cofibrations of this model category structure are the
monomorphisms, the weak equivalences define a $(\Theta_n \times
\Delta)$-localizer. The purpose of this section is to show that this
localizer is the simplicial completion of the localizer of
$n$-quasi-categories. It will then follow formally from the theory of
simplicial completion that we have two Quillen equivalences between the
corresponding model category structures.

\begin{paragr}
The \ndef{localizer of Rezk $\Theta_n$-spaces} is
the $(\Theta_n \times \Delta)$-localizer generated by 
\[
 \Wvert \cup
 p^*(\clI_n) \cup
 \{p^*(N_n(j) : N_n(J) \to N_n(\Dn{0}))\} \cup
 p^*(\clJ_n),
\]
where $\Wvert$ is the class of vertical equivalences defined in
paragraph \ref{paragr:def_simp_comp} (with $A = \Theta_n$).
We will denote it by $\WRezk$. By the 2-out-of-3 property, this localizer is
also generated by
\[
 \Wvert \cup
 p^*(\clI_n) \cup
 \{p^*(N_n(\canseg{\epsilon}) : N_n(\Dn{0}) \to N_n(J));
 \,\, \epsilon = 0, 1\}) \cup
 p^*(\clJ'_n).
\]
Since the localizer $\Wvert$ is accessible, so is the localizer $\WRezk$. It
follows from the connection between localizers and Bousfield localization
(see Proposition~\ref{prop:loc_Bous_loc}) that the $\WRezk$-model category
structure on $\pref{\Theta_n \times \Delta}$ is nothing but the model
category structure~$\Theta_n\mathrm{Sp}_\infty$ introduced by Rezk in
\cite{RezkThSp}. The fibrant objects of this model category are
called $(\infty, n)$-$\Theta$-spaces by Rezk. We will call them \ndef{Rezk
$\Theta_n$-spaces} and the $\WRezk$-model category will be called \ndef{the
model category of Rezk $\Theta_n$-spaces}.
\end{paragr}

\begin{thm}[Rezk]\label{thm:Rezk}
The model category of Rezk $\Theta_n$-spaces is simplicial, left proper and
cartesian.
\end{thm}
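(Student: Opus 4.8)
The plan is to read off the three properties from the description of the $\WRezk$-model category as a left Bousfield localization. Since $\WRezk$ is the accessible $\Theta_n\times\Delta$-localizer generated by $\Wvert$ together with a set of cofibrations, the remark relating localizers and left Bousfield localizations (made precise in Proposition~\ref{prop:loc_Bous_loc}) identifies the $\WRezk$-model category with the left Bousfield localization of the vertical model category at that set. By Theorem~\ref{thm:vert_mcs} the vertical model category is combinatorial, simplicial and proper, and a left Bousfield localization of a combinatorial simplicial left proper model category is again combinatorial, simplicial and left proper. This settles ``simplicial'' and ``left proper''.

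There remains ``cartesian'', which is the substantive point and is Rezk's theorem proper. By Proposition~\ref{prop:loc_cart_eq} it amounts to showing that $\WRezk$ is stable under binary products. I would first observe that this cannot be extracted from the present paper's machinery alone: combining Theorem~\ref{thm:fRezk_simp_comp} (which, for $A=\Theta_n$, identifies $\WfRezk$ with the simplicial completion $\WDelta$ of the localizer $\WQCat$ of $n$-quasi-categories) with Corollary~\ref{coro:loc_cart_simpl} shows that cartesianness of $\WRezk$ is \emph{equivalent} to cartesianness of $\WQCat$ --- but the latter is, in this paper, deduced \emph{from} the present theorem (this is how the model category of $n$-quasi-categories is shown to be cartesian closed in Section~7). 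So the argument must be carried out intrinsically, and here I would invoke Rezk: by the identification recalled before the statement, the $\WRezk$-model category is Rezk's model category of $(\infty,n)$-$\Theta$-spaces, proven cartesian in~\cite{RezkThSp}.

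For completeness I would indicate the shape of Rezk's proof, as it is the only non-formal ingredient. It runs by induction on $n$ via the wreath-product presentation $\Theta_n\simeq\Delta\wr\Theta_{n-1}$: the base case $n=1$ is the model category of complete Segal spaces, proven cartesian by Rezk in~\cite{RezkSegSp}; for the inductive step one checks that the Segal maps $p^*(\clI_n)$ and the completeness maps $p^*(\clJ_n)$ are obtained, through the intertwining/wreath construction, from the corresponding maps over $\Theta_{n-1}$ and over $\Delta$, and that the Segal and completeness conditions are each stable under binary products, whence so is the class of $\WRezk$-local equivalences. The main obstacle --- and the part I would not reproduce --- is exactly this last claim: controlling the localization, equivalently the $\WRezk$-fibrant replacement, with respect to binary products, which is where Rezk's detailed analysis of the intertwiner and of the Segal and completeness localizations does the real work.
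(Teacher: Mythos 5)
Your proposal is correct and follows essentially the same route as the paper: ``simplicial'' and ``left proper'' are read off from the left Bousfield localization description of the $\WRezk$-model category (the paper gets left properness even more directly from the fact that every object is cofibrant), and ``cartesian'' is, as in the paper, attributed to Rezk's Proposition~11.5 of~\cite{RezkThSp}. Your remark that cartesianness cannot be derived from the paper's own machinery without circularity is accurate and consistent with how the paper uses this theorem later.
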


\begin{proof}
The model category of Rezk $\Theta_n$-spaces is a left Bousfield
localization of the vertical model category. By Theorem~\ref{thm:vert_mcs},
the vertical model category is simplicial. It follows from Theorem~4.11
of~\cite{HirschMC} that the model category of Rezk $\Theta_n$-spaces is
simplicial.

By definition, every object is cofibrant in the model category of Rezk
$\Theta_n$-spaces. This structure is hence left proper.

The fact that the model category of Rezk $\Theta_n$-spaces is cartesian is
highly non-trivial. It is a special case of one of the main results of
\cite{RezkThSp} (Proposition 11.5).
\end{proof}

\begin{thm}\label{thm:Rezk_simpl_compl}
The $(\Theta_n \times \Delta)$-localizer of Rezk $\Theta_n$-spaces is the
simplicial completion of the $\Theta_n$-localizer of $n$-quasi-categories.
\end{thm}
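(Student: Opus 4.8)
The plan is to identify the localizer of Rezk $\Theta_n$-spaces, $\WRezk$, with the localizer $\WfRezk$ of formal Rezk $\Theta_n$-spaces constructed in Section~4, applied to the category $A = \Theta_n$, the generating set $S = \clI_n \cup \clJ_n$ of $\WQCat$, and the injective separating interval $J = N_n(J)$ from paragraph~\ref{paragr:J}. Once this identification is in place, the result follows immediately from Theorem~\ref{thm:fRezk_simp_comp}, which says that $\WfRezk$ is the simplicial completion of the $A$-localizer generated by $S$ — and by definition that $A$-localizer is $\WQCat$. Note that the hypotheses of Section~4 are all met: by Theorem~\ref{thm:ThReedy}, $\Theta_n$ is a regular skeletal Reedy category; $\clI_n \cup \clJ_n$ is a set of monomorphisms of $\pref{\Theta_n}$; and by Corollary~\ref{coro:J_inj} together with paragraph~\ref{paragr:J}, $N_n(J)$ is an injective separating interval.

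\smallskip

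So the real content is the equality $\WRezk = \WfRezk$, which I would prove by comparing generating sets using the $2$-out-of-$3$ property and the fact that both are $A \times \Delta$-localizers containing $\Wvert$. By paragraph~\ref{paragr:loc_fRezk}, the localizer $\WfRezk$ is generated by
\[
\Wvert \cup p^*(\clI_n \cup \clJ_n) \cup \{p^*(X \times N_n(J) \to X);\,\, X \in \Ob(\pref{\Theta_n})\}.
\]
By the definition in the first paragraph of Section~7, $\WRezk$ is generated by
\[
\Wvert \cup p^*(\clI_n) \cup \{p^*(N_n(j))\} \cup p^*(\clJ_n).
\]
First I would show $\WRezk \subseteq \WfRezk$: the generators $p^*(\clI_n)$ and $p^*(\clJ_n)$ of $\WRezk$ are literally generators of $\WfRezk$, so it only remains to check that $p^*(N_n(j)) \in \WfRezk$. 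But $N_n(j) : N_n(J) \to N_n(\Dn{0}) = e_{\pref{\Theta_n}}$ is the structural map of the interval $J$, which is the map $X \times N_n(J) \to X$ for $X = e_{\pref{\Theta_n}}$; hence $p^*(N_n(j))$ is one of the explicit generators of $\WfRezk$, and $\WRezk \subseteq \WfRezk$.

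\smallskip

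For the reverse inclusion $\WfRezk \subseteq \WRezk$, I need the maps $p^*(X \times N_n(J) \to X)$, for \emph{arbitrary} $X \in \Ob(\pref{\Theta_n})$, to lie in $\WRezk$; the other generators of $\WfRezk$ are already generators of $\WRezk$. This is the step that requires genuine work, and it is essentially the argument used in the proof that $\WfRezk$ is accessible (the proposition following paragraph~\ref{paragr:loc_fRezk}), adapted to $\WRezk$ in place of $\WfRezk$. Concretely: let $\clC$ be the class of monomorphisms $U \to V$ of $\pref{\Theta_n}$ such that $p^*(U \times N_n(J) \cup V \times \{\epsilon\} \to V \times N_n(J))$ belongs to $\WRezk$ for $\epsilon = 0,1$. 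One checks that $\clC$ is saturated (using that $\WRezk$ is an $A\times\Delta$-localizer, hence its monomorphisms are stable under pushout and transfinite composition, together with a retract argument), and that $\clC$ contains a cellular model of $\pref{\Theta_n}$ — for the boundary inclusions $\mbord{T} : \bord{T} \to T$ one argues as in that earlier proof, building the map $p^*(\bord{T} \times N_n(J) \cup T \times \{\epsilon\} \to T \times N_n(J))$ out of $p^*(N_n(\canseg{\epsilon}))$ (which is in $\WRezk$ by $2$-out-of-$3$ since $p^*(N_n(j)) \in \WRezk$) via a pushout and $2$-out-of-$3$. By Proposition~\ref{prop:cell_mod_Reedy}, the $\mbord{T}$ form a cellular model, so $\clC$ contains all monomorphisms of $\pref{\Theta_n}$. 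Applying this with the monomorphism $\varnothing_{\pref{\Theta_n}} \to X$ gives that $p^*(X \times \{\epsilon\} \to X \times N_n(J))$ is in $\WRezk$, hence by $2$-out-of-$3$ so is $p^*(X \times N_n(J) \to X)$. This yields $\WfRezk \subseteq \WRezk$, completing the proof that $\WRezk = \WfRezk$, and hence by Theorem~\ref{thm:fRezk_simp_comp} that $\WRezk$ is the simplicial completion of $\WQCat$.

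\smallskip

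\textbf{The main obstacle} will be the verification that $\clC$ contains the boundary inclusions $\mbord{T}$ — i.e., the bookkeeping that turns the single interval generator $p^*(N_n(j))$ (equivalently $p^*(N_n(\canseg{\epsilon}))$) into the family $p^*(\bord{T} \times N_n(J) \cup T \times \{\epsilon\} \to T \times N_n(J))$ for all $T$. This is exactly the pushout-and-$2$-out-of-$3$ diagram chase appearing in the accessibility proof in Section~4, so no new idea is needed, but it is the place where the argument is not purely formal. Everything else is a direct comparison of generating sets.
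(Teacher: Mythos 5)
Your overall strategy coincides with the paper's: apply Theorem~\ref{thm:fRezk_simp_comp} to $S = \clI_n \cup \clJ_n$ and the interval $N_n(J)$, then prove $\WRezk = \WfRezk$ by comparing generators. The inclusion $\WRezk \subseteq \WfRezk$ is handled exactly as in the paper and is fine. The gap is in the reverse inclusion. There you must show that $p^*(\cansegd{\epsilon}{X} : X \to X \times N_n(J))$ lies in $\WRezk$ for \emph{every} presheaf $X$ on $\Theta_n$, knowing only the case $X = \Dn{0}$, i.e., $p^*(\canseg{\epsilon})$. Your plan is to imitate the accessibility proposition of Section~4: let $\clC$ be the class of monomorphisms $U \to V$ with $p^*(U \times N_n(J) \cup V \times \{\epsilon\} \to V \times N_n(J))$ in $\WRezk$, and show it contains the cellular model $\{\mbord{T}\}$. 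But look at how that earlier proof puts a boundary inclusion into the analogous class: it pushes $p^*(\cansegd{\epsilon}{\bord{T}})$ out along $p^*(\bord{T} \times \{\epsilon\}) \to p^*(T \times \{\epsilon\})$ and then applies 2-out-of-3 against $p^*(\cansegd{\epsilon}{T})$. That diagram chase \emph{consumes} the statements that $p^*(\cansegd{\epsilon}{\bord{T}})$ and $p^*(\cansegd{\epsilon}{T})$ are already equivalences; in Section~4 these are available because $p^*(X \times J \to X)$ for \emph{all} $X$ is among the declared generators of $\WfRezk$. In your setting these two inputs are instances (for $X = \bord{T}$ and $X = T$) of precisely the statement you are trying to establish. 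The argument is circular.

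Moreover the gap is not repairable by purely formal means. The assertion that $p^*(\canseg{\epsilon}) \in \WRezk$ forces $p^*(X) \times p^*(\canseg{\epsilon}) \in \WRezk$ for all $X$ is an instance of closure of the localizer under binary products, i.e., of cartesianness of the $\WRezk$-model structure (Proposition~\ref{prop:loc_cart_eq}); a general accessible localizer has no such closure property, and nothing in the localizer axioms (pushouts and transfinite compositions of trivial cofibrations, 2-out-of-3) yields it. This is exactly the point where the paper invokes Theorem~\ref{thm:Rezk}, namely Rezk's theorem that the model category of Rezk $\Theta_n$-spaces is cartesian, which the paper explicitly flags as ``highly non-trivial'' and which rests on the main result of \cite{RezkThSp}. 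Your proof needs that external input (or an equivalent one) to close the reverse inclusion; with it inserted, the rest of your argument matches the paper's.
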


\begin{proof}
By Theorem \ref{thm:ThReedy}, the category $\Theta_n$ is a regular skeletal
Reedy category. We can thus apply Theorem \ref{thm:fRezk_simp_comp} to the
set $S =\clI_n \cup \clJ_n$ and to the injective separating
interval~$N_n(J)$ (see paragraph \ref{paragr:J}). We obtain that the
simplicial completion of $\WQCat$ is the $(\Theta_n \times \Delta)$-localizer
$\WfRezk$ defined in paragraph~\ref{paragr:loc_fRezk}. By definition, this
localizer is generated by
\[
\Wvert \cup 
p^*(\clI_n \cup \clJ_n) \cup 
\{p^*(\cansegd{\epsilon}{X} : X \to X \times N_n(J));
 \,\, X \in \Ob(\pref{\Theta_n}),\,\, \epsilon = 0,1\}.
\]
Since $\canseg{\epsilon} = \cansegd{\epsilon}{\Dn{0}}$, the localizer
$\WRezk$ is contained in $\WfRezk$. Conversely, we have to show that
$\cansegd{\epsilon}{X} = X \times \canseg{\epsilon}$ belongs to $\WRezk$. But by
Theorem~\ref{thm:Rezk}, the $\WRezk$-model category is cartesian and the
localizer $\WRezk$ is hence closed under binary product, thereby proving the
result.
\end{proof}

\begin{thm}\label{thm:Q_equiv_Rezk}
Let us endow $\pref{\Theta_n}$ (resp.~$\pref{\Theta_n \times \Delta}$) with
the model category structure of $n$-quasi-categories (resp.~with the model
category structure of Rezk $\Theta_n$-spaces).
\begin{enumerate}
\item 
Then the adjunction
\[
  p^* : \pref{\Theta_n} \rightleftarrows \pref{\Theta_n \times \Delta} : i^*_0
\]
is a Quillen equivalence. 
\item
Let $D : \Delta \to \pref{\Theta_n}$ be a cosimplicial $\WQCat$-resolution. Then the
adjunction
\[
\Real_D : \pref{\Theta_n \times \Delta} \rightleftarrows \pref{\Theta_n} : \Sing_D
\]
is a Quillen equivalence.
\end{enumerate}
\end{thm}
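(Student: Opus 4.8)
The plan is to deduce this theorem from the general machinery already set up, exactly as Theorem~\ref{Qeq_fRezk} was deduced from Propositions~\ref{prop:Qeq_pA} and~\ref{prop:Qeq_RealD}. The key observation is that, by Theorem~\ref{thm:Rezk_simpl_compl}, the localizer $\WRezk$ of Rezk $\Theta_n$-spaces \emph{is} the simplicial completion $(\WQCat)_\Delta$ of the localizer $\WQCat$ of $n$-quasi-categories. So once we know $\WQCat$ is an accessible $\Theta_n$-localizer, both statements are instances of results already in hand.

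First I would record that $\WQCat$ is accessible: it is by definition the $\Theta_n$-localizer generated by the \emph{set} $\clI_n \cup \clJ_n$, hence accessible by definition of accessibility. Therefore Proposition~\ref{prop:Qeq_pA} applies with $A = \Theta_n$ and $\W = \WQCat$: the adjunction $p^* : \pref{\Theta_n} \rightleftarrows \pref{\Theta_n \times \Delta} : i^*_0$, with $\pref{\Theta_n \times \Delta}$ carrying the $(\WQCat)_\Delta$-model structure, is a Quillen equivalence, and $p^*$ preserves and reflects weak equivalences. By Theorem~\ref{thm:Rezk_simpl_compl} the $(\WQCat)_\Delta$-model structure is precisely the model category of Rezk $\Theta_n$-spaces, which gives part~(1).

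For part~(2), I would similarly invoke Proposition~\ref{prop:Qeq_RealD}: if $D : \Delta \to \pref{\Theta_n}$ is a cosimplicial $\WQCat$-resolution, then the adjunction $\Real_D : \pref{\Theta_n \times \Delta} \rightleftarrows \pref{\Theta_n} : \Sing_D$, with $\pref{\Theta_n \times \Delta}$ carrying the $(\WQCat)_\Delta$-model structure, is a Quillen equivalence and $\Real_D$ preserves and reflects weak equivalences. Again by Theorem~\ref{thm:Rezk_simpl_compl} this is exactly the model category of Rezk $\Theta_n$-spaces, so part~(2) follows.

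In truth there is no real obstacle left at this stage; all the difficulty has been absorbed into Theorem~\ref{thm:Rezk_simpl_compl} (identifying the Rezk localizer with the simplicial completion, which in turn relies on Theorem~\ref{thm:fRezk_simp_comp} and the verification that $\Theta_n$ is a regular skeletal Reedy category, Theorem~\ref{thm:ThReedy}) and into Cisinski's Proposition~2.3.27 underlying Propositions~\ref{prop:Qeq_pA} and~\ref{prop:Qeq_RealD}. The only point that requires a word of care is that part~(2) presupposes the \emph{existence} of a cosimplicial $\WQCat$-resolution $D$; the statement is conditional on being given one, so nothing needs to be checked here, though one would want to exhibit such a $D$ (of the form $T \mapsto$ a suitable thickening of the simplex, analogous to Joyal--Tierney's) elsewhere in order for the theorem to have content. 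The proof is therefore simply: "This follows from Propositions~\ref{prop:Qeq_pA} and~\ref{prop:Qeq_RealD}, and from Theorem~\ref{thm:Rezk_simpl_compl}."
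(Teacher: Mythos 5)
Your proposal is correct and is essentially identical to the paper's own proof, which likewise just cites Propositions~\ref{prop:Qeq_pA} and~\ref{prop:Qeq_RealD} together with Theorem~\ref{thm:Rezk_simpl_compl}. Your side remark about exhibiting an actual cosimplicial $\WQCat$-resolution is also addressed in the paper immediately afterwards, via $N_n\Deltat{\bullet}$.
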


\begin{proof}
This follows from Propositions \ref{prop:Qeq_pA} and \ref{prop:Qeq_RealD},
and from the above theorem.
\end{proof}

\begin{coro}
The model category of $n$-quasi-categories is cartesian closed.
\end{coro}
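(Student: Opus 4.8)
The plan is to deduce this from the analogous---but considerably deeper---statement for Rezk $\Theta_n$-spaces, using the identification of localizers established just above. First I would note that the localizer $\WQCat$ is accessible, being by definition generated by the \emph{set} $\clI_n \cup \clJ_n$; in particular Proposition~\ref{prop:loc_cart_eq} and Corollary~\ref{coro:loc_cart_simpl} are applicable to it.

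Next, I would extract cartesianness of the localizer of Rezk $\Theta_n$-spaces. By Theorem~\ref{thm:Rezk}, the model category of Rezk $\Theta_n$-spaces is cartesian closed; since this model category is the $\WRezk$-model category structure on $\pref{\Theta_n \times \Delta}$ and $\WRezk$ is an accessible $\Theta_n \times \Delta$-localizer, the implication $(3) \Rightarrow (1)$ of Proposition~\ref{prop:loc_cart_eq} shows that $\WRezk$ is a cartesian localizer. Now by Theorem~\ref{thm:Rezk_simpl_compl}, $\WRezk$ is exactly the simplicial completion of $\WQCat$. Applying Corollary~\ref{coro:loc_cart_simpl} with $\W = \WQCat$, the fact that the simplicial completion of $\WQCat$ is cartesian forces $\WQCat$ itself to be cartesian. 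Finally, the implication $(1) \Rightarrow (3)$ of Proposition~\ref{prop:loc_cart_eq}, this time applied to $\WQCat$, yields that the $\WQCat$-model category---that is, the model category of $n$-quasi-categories---is cartesian closed.

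As for the main difficulty: at this stage the argument is entirely formal, a two-line chase through the results of Sections~2, 5 and~7. All the genuine content is already packaged into the two inputs being quoted, namely Rezk's theorem that his $\Theta_n$-spaces form a cartesian model category (Theorem~\ref{thm:Rezk}, which is a special case of the main result of \cite{RezkThSp}) and the identification of $\WRezk$ with the simplicial completion of $\WQCat$ (Theorem~\ref{thm:Rezk_simpl_compl}). The only point demanding any care is to make sure the accessibility hypotheses of Corollary~\ref{coro:loc_cart_simpl} and Proposition~\ref{prop:loc_cart_eq} are in force for $\WQCat$, which is immediate since it is generated by a set.
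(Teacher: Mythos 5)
Your argument is correct and is exactly the paper's proof: cartesianness of the Rezk $\Theta_n$-space model structure (Theorem~\ref{thm:Rezk}) plus the identification of $\WRezk$ with the simplicial completion of $\WQCat$ (Theorem~\ref{thm:Rezk_simpl_compl}) are fed through Corollary~\ref{coro:loc_cart_simpl} and Proposition~\ref{prop:loc_cart_eq}. Your extra remark checking accessibility of $\WQCat$ is a harmless and correct precaution.
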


\begin{proof}
By Theorems \ref{thm:Rezk} and \ref{thm:Rezk_simpl_compl}, the simplicial
completion of the localizer of $n$-quasi-categories is cartesian. It follows
from Corollary \ref{coro:loc_cart_simpl} that this localizer is cartesian,
and from Proposition \ref{prop:loc_cart_eq} that the associated model
category is cartesian closed.
\end{proof}

\begin{paragr}
Let $\Deltat{\bullet} : \Delta \to \Cat$ be the functor
\[ \Delta \hookrightarrow \Cat \xrightarrow{\Pi_1} \Gpd \hookrightarrow \Cat, \]
where $\Gpd$ denotes the category of groupoids and $\Pi_1$ is the
fundamental groupoid functor, i.e., the left adjoint to the inclusion
functor $\Gpd \to \Cat$. In other words, the
functor~$\Deltat{\bullet}$ sends $\Delta_k$ to $\Deltat{k}$, the simply
connected groupoid with objects $\{0, \dots, k\}$.
\end{paragr}

\begin{prop}
The functor $N_n\Deltat{\bullet} : \Delta \to \pref{\Theta_n}$ is a
simplicial $\WQCat$-resolution of $\pref{\Theta_n}$.
\end{prop}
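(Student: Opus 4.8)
The plan is to verify directly the two conditions defining a cosimplicial $\WQCat$-resolution. Write $D = N_n\Deltat{\bullet}$, so that $D_m = N_n(\Deltat{m})$ for $m \ge 0$; since $\Deltat{0} = \Dn{0}$ and $\Deltat{1} = J$, we have $D_0 = N_n(\Dn{0}) = e_{\pref{\Theta_n}}$ and $D_1 = N_n(J)$. For the first condition, the morphism $D_0 \amalg D_0 \to D_1$ induced by $\mbDelta{1}$ is the morphism $N_n(\Dn{0}) \amalg N_n(\Dn{0}) \to N_n(J)$ whose two components are the endpoint inclusions $\canseg{0}$ and $\canseg{1}$. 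By paragraph \ref{paragr:J}, the interval $N_n(J)$ is separating, i.e., the subpresheaves $\{0\}$ and $\{1\}$ of $N_n(J)$ have empty intersection; since moreover each $\canseg{\epsilon}$ is a monomorphism, being a morphism out of the terminal presheaf, the induced morphism on the coproduct is a monomorphism, which settles condition~(1).

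For the second condition, I would first observe that the functor $u_m : \Deltat{m} \to \Dn{0}$ induces a trivial fibration $N_n(u_m)$ of $\pref{\Theta_n}$: indeed, $u_m$ is a functor between preorders, $\Deltat{m}$ being the simply connected groupoid on $\{0, \dots, m\}$ and hence having at most one arrow between any two of its objects, and $u_m$ is an equivalence of categories surjective on objects since $\Deltat{m}$ is a nonempty contractible groupoid; Corollary \ref{coro:fib_triv_fun} then applies. Granting this, fix $m \ge 0$ and a presheaf $X$ on $\Theta_n$: the projection $X \times D_m \to X$ is the pullback of $N_n(u_m) : D_m \to N_n(\Dn{0}) = e_{\pref{\Theta_n}}$ along the terminal morphism $X \to e_{\pref{\Theta_n}}$, and, the class of trivial fibrations being stable under base change, the morphism $X \times D_m \to X$ is itself a trivial fibration, hence a $\WQCat$-equivalence by the second axiom of $A$-localizers. (Alternatively, one can invoke that the localizer $\WQCat$ is cartesian, by the corollary preceding this proposition together with Proposition \ref{prop:loc_cart_eq}, to pass from $D_m \to e_{\pref{\Theta_n}}$ being a $\WQCat$-equivalence to $X \times D_m \to X$ being one.)

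I do not expect a genuine obstacle here: once Corollary \ref{coro:fib_triv_fun} is in hand, the verification is routine. The only points requiring a little care are the identification, in condition~(1), of the morphism induced by $\mbDelta{1}$ with the pair of endpoint inclusions of the separating interval $N_n(J)$, together with the elementary remark that a coproduct of two monomorphisms out of the terminal presheaf with disjoint images is again a monomorphism.
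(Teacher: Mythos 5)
Your proof is correct and follows essentially the same route as the paper: condition (2) is reduced to showing $N_n(\Deltat{m}) \to N_n(\Dn{0})$ is a trivial fibration via Corollary \ref{coro:fib_triv_fun}, exactly as in the paper's proof, and your verification of condition (1) just fills in the detail the paper dismisses as clear. No gaps.
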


\begin{proof}
It is clear that $N_n\Deltat{0} \amalg N_n\Deltat{0} \to N_n\Deltat{1}$ is a
monomorphism. Let $k \ge 0$. We have to prove that for every presheaf $X$ on
$\Theta_n$, the projection $X \times N_n\Deltat{k} \to X$ is a
$\WQCat$-equivalence. It suffices to prove that the unique morphism
$N_n\Deltat{k} \to \Dn{0}$ is a trivial fibration. But this morphism can be
obtained by applying the functor $N_n$ to the unique functor $\Deltat{k} \to
\Delta_0$. The result thus follows from Corollary \ref{coro:fib_triv_fun}
since $\Deltat{k} \to \Delta_0$ is an equivalence of categories surjective
on objects between preorders.
\end{proof}

\begin{coro}
The simplicial $\WQCat$-resolution $N_n\Deltat{\bullet}$ induces a Quillen
equivalence
\[
  \Real_{N_n\Deltat{\bullet}} : \pref{\Theta_n \times \Delta}
  \rightleftarrows \pref{\Theta_n} : \Sing_{N_n\Deltat{\bullet}},
\]
where $\pref{\Theta_n \times \Delta}$ (resp.~$\pref{\Theta_n}$)
is endowed with the model category structure of Rezk $\Theta_n$-spaces
(resp.~with the model category structure of $n$-quasi-categories). 
\end{coro}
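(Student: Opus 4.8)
The plan is to obtain the corollary as a direct application of part~(2) of Theorem~\ref{thm:Q_equiv_Rezk}, which asserts that for any cosimplicial $\WQCat$-resolution $D : \Delta \to \pref{\Theta_n}$ the adjunction $\Real_D \dashv \Sing_D$ is a Quillen equivalence between the model category of Rezk $\Theta_n$-spaces on $\pref{\Theta_n \times \Delta}$ and the model category of $n$-quasi-categories on $\pref{\Theta_n}$, with $\Real_D$ preserving and reflecting weak equivalences; that theorem itself rests on Theorem~\ref{thm:Rezk_simpl_compl} (the Rezk localizer is the simplicial completion of $\WQCat$) together with Proposition~\ref{prop:Qeq_RealD}.

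First I would note that the resolution hypothesis has just been verified: by the preceding proposition, $N_n\Deltat{\bullet}$ is a cosimplicial $\WQCat$-resolution, since $N_n\Deltat{0} \amalg N_n\Deltat{0} \to N_n\Deltat{1}$ is a monomorphism and, for every $k \ge 0$ and every presheaf $X$ on $\Theta_n$, the projection $X \times N_n\Deltat{k} \to X$ is a $\WQCat$-equivalence (because $N_n\Deltat{k} \to \Dn{0}$ is a trivial fibration by Corollary~\ref{coro:fib_triv_fun}). Then I would specialize Theorem~\ref{thm:Q_equiv_Rezk}(2) to $D = N_n\Deltat{\bullet}$; this yields all the assertions of the corollary word for word.

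I do not expect any genuine obstacle, as the corollary is merely the instantiation of an already established theorem at a specific resolution. The only point deserving a moment's care is the identification of terminology: the ``simplicial $\WQCat$-resolution'' appearing in the proposition is precisely a cosimplicial object of $\pref{\Theta_n}$ satisfying the two conditions of the definition preceding Proposition~\ref{prop:Qeq_RealD}, so the hypothesis of Theorem~\ref{thm:Q_equiv_Rezk}(2) is indeed met.
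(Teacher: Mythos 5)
Your proposal is correct and matches the paper's own proof exactly: both simply invoke the preceding proposition to verify that $N_n\Deltat{\bullet}$ is a cosimplicial $\WQCat$-resolution and then apply Theorem~\ref{thm:Q_equiv_Rezk}(2) with $D = N_n\Deltat{\bullet}$.
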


\begin{proof}
By the above proposition, the functor $N_n\Deltat{\bullet}$ is a simplicial
$\WQCat$-resolution. The result thus follows from
Theorem~\ref{thm:Q_equiv_Rezk} applied to $D = N_n\Deltat{\bullet}$.
\end{proof}

\begin{rem}
When $n = 1$, we recover from Theorem \ref{thm:Q_equiv_Rezk} and the above
corollary the two Quillen equivalences between quasi-categories and complete
Segal spaces defined by Joyal and Tierney in \cite{JoyalTierneyQCatSeg}.
\end{rem}

\appendix

\section{Localizers and Bousfield localization}\label{sec:A}

In this appendix, we compare the language of localizers to the language of
\hbox{Bousfield} localization. This comparison is needed for instance
to see that the definition of Rezk $\Theta_n$\nbd-spaces we gave in terms of
$(\Theta_n\times\Delta)$-localizers coincide with the original definition of
Rezk.

\begin{paragr}
In this appendix, the category $\pref{\Delta}$ of simplicial sets will
always be endowed with its classical model category structure (i.e., the one
defined by Quillen in \cite{Quillen}).

If $\M$ is a model category, the category $\Homi(\Delta^\op, \M)$ of
simplicial objects in $\M$ will always be endowed with the Reedy model
category structure. Similarly, the category $\Homi(\Delta, \M)$ of
cosimplicial objects in $\M$ will always be endowed with the Reedy model
category structure. The weak equivalences of these structures are the
objectwise weak equivalences. All we will need about their cofibrations is
that they only depend on the cofibrations of $\M$. We refer the reader to
Chapters~15 and~16 of \cite{HirschMC} for details on these model categories.

Recall that the functors
\[
\M \to \Homi(\Delta^\op, \M)
\quad\text{and}\quad
\M \to \Homi(\Delta, \M)
\]
sending an object of $\M$ to the associated constant simplicial object
(resp.~to the associated constant cosimplicial object) are fully faithful.
Moreover, they induce fully faithful functors
\[
\Ho(\M) \to \Ho(\Homi(\Delta^\op, \M))
\quad\text{and}\quad
\Ho(\M) \to \Ho(\Homi(\Delta, \M))
\]
between homotopy categories. In the sequel, we will often consider these
four functors as inclusions.
\end{paragr}

\begin{paragr}
Let $\M$ be a model category. The functor
\[ \Hom_\M : \M^\op \times \M \to \Set \]
induces a functor
\[ \Homi(\Delta, \M)^\op \times \Homi(\Delta^\op, \M) \to \pref{\Delta} \]
that we will denote in the same way. Explicitly, if $X$ is a cosimplicial
object in $\M$ and $Y$ is a simplicial object in $\M$, the simplicial set
$\Hom_\M(X, Y)$ is given by 
\[ \Hom_\M(X, Y)_n = \Hom_\M(X_n, Y_n), \quad n \ge 0. \]
It is well-known that this functor preserves weak equivalences between
fibrant objects and hence admits a right derived functor
\[ \RHom_\M : \Ho(\Homi(\Delta, \M))^\op \times \Ho(\Homi(\Delta^\op, \M)) \to
\Ho(\pref{\Delta}). \]
In particular, we obtain a functor
\[ \RHom_\M : \Ho(\M)^\op \times \Ho(\M) \to \Ho(\pref{\Delta}). \]
\end{paragr}

\begin{paragr}\label{paragr:comp_RHom}
Let $X$ and $Y$ be two objects of a model category $\M$. Denote by $p :
\pref{\Delta} \to \Ho(\pref{\Delta})$ the localization functor. The theory
of derived functors gives the formula
\[ \RHom_\M(X, Y) \cong p(\Hom_\M(\tilde{X}, \tilde{Y})), \]
where $\tilde{X}$ is any cofibrant cosimplicial replacement of $X$ and
$\tilde{Y}$ is any fibrant simplicial replacement of $Y$. It follows
from Proposition 17.4.6 of \cite{HirschMC} that $\RHom_\M(X, Y)$ can be
computed using the simpler formula
\[ \RHom_\M(X, Y) \cong p(\Hom_\M(\tilde{X}, Y')), \]
where $\tilde{X}$ is any cofibrant cosimplicial replacement of $X$ and $Y'$
is any fibrant replacement of~$Y$ in $\M$; or using the dual formula
\[ \RHom_\M(X, Y) \cong p(\Hom_\M(X', \tilde{Y})), \]
where $X'$ is any cofibrant replacement of $X$ in $\M$ and $\tilde{Y}$
is any fibrant simplicial replacement of~$Y$. Moreover, if the model category
$\M$ is simplicial, it follows from Proposition~16.6.4 of~\cite{HirschMC}
that 
\[ \RHom_\M : \Ho(\M)^\op \times \Ho(\M) \to \Ho(\pref{\Delta}) \]
is the right derived functor of the simplicial enrichment
\[ \Map : \M^\op \times \M \to \pref{\Delta} \]
of $\M$. In other words, we have
\[ \RHom_\M(X, Y) \cong p(\Map(X', Y')), \]
where $X'$ is any cofibrant replacement of $X$ in $\M$ and $Y'$ is any
fibrant replacement of~$Y$ in~$\M$.
\end{paragr}

\begin{lemma}\label{lemma:cmp_RHom}
Let $\M$ and $\M'$ be two model category structures on the same category
such that
\begin{itemize}
\item the class of cofibrations of $\M$ and $\M'$ are the same;
\item the class of weak equivalences of $\M$ is included in the class of
weak equivalences of $\M'$.
\end{itemize}
Then for every object $X$ and every fibrant object $Y$ of $\M'$, we have a
natural isomorphism
\[ \RHom_\M(X, Y) \cong \RHom_{\M'}(X, Y). \]
\end{lemma}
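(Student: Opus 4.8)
The plan is to compute each side via the explicit formula of paragraph~\ref{paragr:comp_RHom} and to check that one and the same set of data computes both. The first step is to compare $\M$ and $\M'$ at the level of fibrations. Since the two model structures have the same class of cofibrations, their trivial fibrations coincide, being exactly the morphisms with the right lifting property with respect to that common class. On the other hand, the trivial cofibrations of $\M$ --- the cofibrations lying in the weak equivalences of $\M$ --- are contained in those of $\M'$, because the weak equivalences of $\M$ are contained in those of $\M'$; passing to right lifting properties, every fibration of $\M'$ is a fibration of $\M$, and in particular every fibrant object of $\M'$ is fibrant in $\M$. Passing to cosimplicial objects with their Reedy model structures --- whose cofibrations depend only on the cofibrations of the base (so are the same for $\M$ and $\M'$), whose trivial fibrations are accordingly the same, and whose weak equivalences are the objectwise ones --- it follows that $\Homi(\Delta,\M)$ and $\Homi(\Delta,\M')$ have the same cofibrations and the same trivial fibrations, while the Reedy weak equivalences of $\M$ are contained in those of $\M'$. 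In particular, a cofibrant cosimplicial replacement of an object relative to $\M$ is automatically a cofibrant cosimplicial replacement relative to $\M'$.

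Now let $X$ be an object and $Y$ a fibrant object of $\M'$. I would pick a cofibrant cosimplicial replacement $\tilde X \to X$ relative to $\M$; by the previous paragraph it is also one relative to $\M'$. Moreover $Y$, being fibrant in $\M'$, is fibrant in $\M$, so it is its own fibrant replacement in each of the two model categories. Applying the formula $\RHom(X,Y) \cong p\big(\Hom(\tilde X, Y')\big)$ of paragraph~\ref{paragr:comp_RHom} --- once to $\M$ with $Y' = Y$, and once to $\M'$ with the same $\tilde X$ and $Y' = Y$ --- and using that $\Hom_\M$ and $\Hom_{\M'}$ denote one and the same functor, namely the one induced by the hom-sets of the common underlying category, one gets
\[
  \RHom_\M(X,Y) \cong p\big(\Hom_\M(\tilde X, Y)\big) = p\big(\Hom_{\M'}(\tilde X, Y)\big) \cong \RHom_{\M'}(X,Y).
\]
Naturality in $X$ and in $Y$ follows from the naturality of the identifications in paragraph~\ref{paragr:comp_RHom}; equivalently, one checks that the displayed isomorphism is the canonical comparison induced by the identity functor of the underlying category.

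I do not expect a serious obstacle: the only point requiring genuine care is the inclusion of fibrations $\mathrm{Fib}(\M') \subseteq \mathrm{Fib}(\M)$, which is what makes it legitimate to use $Y$ itself as a fibrant replacement on the $\M$ side, together with the companion observation that a cosimplicial resolution relative to $\M$ is still one relative to $\M'$; granting these two facts, the rest is formal manipulation of the formulas of paragraph~\ref{paragr:comp_RHom}.
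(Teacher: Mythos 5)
Your proof is correct and follows essentially the same route as the paper: identify that fibrant objects of $\M'$ are fibrant in $\M$, that the Reedy structures on cosimplicial objects share cofibrations, and that a single cofibrant cosimplicial replacement together with $Y$ itself computes both derived homs via paragraph~\ref{paragr:comp_RHom}. The only (harmless) difference is that you choose the replacement relative to $\M$ and transport it to $\M'$, which is in fact the direction that follows immediately from the inclusion of weak equivalences, whereas the paper states the transport in the opposite direction.
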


\begin{proof}
Note that the hypothesis on $\M$ and $\M'$ implies that
\begin{itemize}
\item every fibrant object of $\M'$ is fibrant in $\M$;
\item the class of cofibrations of $\Homi(\Delta, \M)$ and $\Homi(\Delta,
\M')$ are the same;
\item the class of weak equivalences of $\Homi(\Delta, \M)$ is included in
the class of weak equivalences of $\Homi(\Delta, \M')$.
\end{itemize}
In particular, by the first point, the object $Y$ is fibrant in $\M$.  Now
let $\tilde{X}$ be a cofibrant cosimplicial replacement of $X$ in
$\Homi(\Delta, \M)$. By the second and third points, $\tilde{X}$ is also a
cofibrant cosimplicial replacement of $X$ in $\Homi(\Delta, \M')$. It follows
by paragraph \ref{paragr:comp_RHom} that
\[ \Hom_\M(\tilde{X}, Y) = \Hom_{\M'}(\tilde{X}, Y) \]
can be used to compute
both $\RHom_{\M}(X, Y)$ and $\RHom_{\M'}(X, Y)$, thereby proving the result.
\end{proof}

\begin{paragr}
Let $\M$ be a model category, let $\clC$ be a class of morphisms of $\M$ and
let $\clD$ be a class of objects of $\M$.
\begin{itemize}
\item 
An object $Z$ of $\M$ is said to be \ndef{$\clC$-local} if for every
morphism $f : X \to Y$ in $\clC$, the morphism
\[ \RHom_{\M}(f, Z) : \RHom_{\M}(Y, Z) \to \RHom_{\M}(X, Z) \]
is an isomorphism.

\item
A morphism $f : X \to Y$ of $\M$ is said to be a \ndef{$\clD$-local equivalence}
if for every object $Z$ in $\clD$, the morphism
\[ \RHom_{\M}(f, Z) : \RHom_{\M}(Y, Z) \to \RHom_{\M}(X, Z) \]
is an isomorphism. If the class $\clD$ consists of a single object $Z$, we
will say that $f$ is a $Z$-local equivalence.

\item
A morphism $f : X \to Y$ of $\M$ is said to be a \ndef{$\clC$-local
equivalence} if it is an $\clL$-local equivalence, where $\clL$ is the class of
$\clC$-local objects.
\end{itemize}
\end{paragr}

\begin{rem}\label{rem:loc_equiv}
By definition, if $\clC$ is a class of morphisms of a model category $\M$,
then the elements of $\clC$ are $\clC$-local equivalences. By functoriality of
$\RHom_\M$, the weak equivalences of $\M$ are also $\clC$-local
equivalences and even $\Ob(\M)$-local equivalences. This property actually
characterizes weak equivalences:
\end{rem}

\begin{prop}\label{prop:we_M_loc}
The weak equivalences of a model category $\M$ are precisely the
$\Ob(\M)$-local equivalences. More generally, they are the $\clD$-local
equivalences, where $\clD$ is any class of objects of $\M$ such that every
object of $\M$ is weakly equivalent to an object in $\clD$.
\end{prop}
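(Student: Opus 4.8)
The plan is to prove the two inclusions of classes of morphisms separately; one of them is essentially Remark~\ref{rem:loc_equiv}, and the other passes through the homotopy category and the Yoneda lemma.

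First, every weak equivalence of $\M$ is an $\Ob(\M)$-local equivalence: since $\RHom_\M$ is, by construction, a functor on $\Ho(\M)^\op \times \Ho(\M)$, it carries isomorphisms of $\Ho(\M)$ --- in particular the images of weak equivalences --- to isomorphisms, so $\RHom_\M(f, Z)$ is an isomorphism for every weak equivalence $f$ and every object $Z$. A fortiori every weak equivalence is a $\clD$-local equivalence for the given class $\clD$, since $\clD \subseteq \Ob(\M)$. Conversely, let $f : X \to Y$ be a $\clD$-local equivalence. I would first reduce to the case $\clD = \Ob(\M)$: because $\RHom_\M(f, -)$ factors through $\Ho(\M)$, the morphism $\RHom_\M(f, Z)$ depends only on the isomorphism class of $Z$ in $\Ho(\M)$, and by hypothesis every object of $\M$ is weakly equivalent --- hence isomorphic in $\Ho(\M)$ --- to an object of $\clD$; thus $f$ is in fact an $\Ob(\M)$-local equivalence.

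It then remains to show that an $\Ob(\M)$-local equivalence $f : X \to Y$ is a weak equivalence. I would apply the functor $\pi_0 : \Ho(\pref{\Delta}) \to \Set$, which is well defined since $\pi_0$ sends simplicial weak homotopy equivalences to bijections. For every object $Z$ of $\M$, combining the description of $\RHom_\M$ recalled in paragraph~\ref{paragr:comp_RHom} with the observation that the first two levels of a cofibrant cosimplicial resolution of $X$ assemble into a cylinder object on a cofibrant replacement of $X$ yields a natural isomorphism $\pi_0 \RHom_\M(X, Z) \cong \Hom_{\Ho(\M)}(X, Z)$, the right-hand side being the set of left-homotopy classes of maps from a cofibrant replacement of $X$ to a fibrant replacement of $Z$ (see \cite{HirschMC}). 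Applying $\pi_0$ to the isomorphisms $\RHom_\M(f, Z)$ then shows that $\Hom_{\Ho(\M)}(f, Z) : \Hom_{\Ho(\M)}(Y, Z) \to \Hom_{\Ho(\M)}(X, Z)$ is a bijection for every object $Z$, so by the Yoneda lemma in $\Ho(\M)$ the image of $f$ in $\Ho(\M)$ is an isomorphism. Since a morphism of a model category is a weak equivalence if and only if its image in the homotopy category is an isomorphism (\cite{HirschMC}), $f$ is a weak equivalence.

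The main obstacle is the identification $\pi_0 \RHom_\M(X, Z) \cong \Hom_{\Ho(\M)}(X, Z)$: one must unwind the cosimplicial-resolution formula for $\RHom_\M$ and check that the two coface maps of such a resolution exhibit its first level as a cylinder object, so that $\pi_0$ of the mapping simplicial set computes left-homotopy classes, which for a cofibrant source and a fibrant target are precisely the morphisms of $\Ho(\M)$. Everything else --- functoriality of $\RHom_\M$, the reduction to $\Ob(\M)$, the Yoneda argument, and the characterisation of weak equivalences as the morphisms inverted by the localisation functor --- is formal or citable from \cite{HirschMC}.
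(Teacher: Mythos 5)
Your proof is correct and follows essentially the same route as the paper's: the paper's entire argument is to invoke the formula $\pi_0(\RHom_\M(X,Y)) \cong \Hom_{\Ho(\M)}(X,Y)$ (Theorem 17.7.2 of \cite{HirschMC}) and leave the Yoneda argument, the reduction from $\clD$ to $\Ob(\M)$, and the saturation of $\M$ implicit. You have simply spelled out those implicit steps, and you need not reprove the $\pi_0$ identification by hand since it is exactly the cited theorem of Hirschhorn.
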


\begin{proof}
This is a direct consequence of the formula
\[ \pi_0(\RHom_\M(X, Y)) \cong \Hom_{\Ho(\M)}(X, Y). \]
(See Theorem 17.7.2 of \cite{HirschMC}.)
\end{proof}

\begin{lemma}\label{lemma:Loc_loc}
Let $A$ be a small category and let $\M$ be a model category structure on
$\pref{A}$ whose cofibrations are the monomorphisms. Let $\clD$ be a class
of objects of $\pref{A}$. Then the class of $\clD$-local equivalences is an
$A$-localizer. In particular, if $\clC$ is a class of morphisms
of~$\pref{A}$, then the class of $\clC$-local equivalences is an
$A$-localizer.
\end{lemma}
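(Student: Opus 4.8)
The plan is to verify the three axioms defining an $A$-localizer for the class $\W$ of $\clD$-local equivalences. Writing $\W_Z$ for the class of $Z$-local equivalences, we have $\W = \bigcap_{Z \in \clD} \W_Z$; since an intersection of $A$-localizers is an $A$-localizer, it suffices to prove that each $\W_Z$ is an $A$-localizer. Moreover, as $\RHom_\M(X, -)$ is a derived functor it carries a weak equivalence $Z \to Z'$ to a natural isomorphism, so $\W_Z = \W_{Z'}$ whenever $Z'$ is a fibrant replacement of $Z$; hence we may assume $Z$ fibrant. The ``in particular'' clause is then immediate, since the $\clC$-local equivalences are by definition the $\clL$-local equivalences for $\clL$ the class of $\clC$-local objects.

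The first two axioms for $\W_Z$ are formal. Since $f \mapsto \RHom_\M(f, Z)$ is functorial and contravariant in its first variable, $\W_Z$, being the preimage of the class of isomorphisms of $\Ho(\pref{\Delta})$, satisfies the 2-out-of-3 property. A trivial fibration of $\pref{A}$ has, by definition, the right lifting property with respect to all monomorphisms, hence with respect to all cofibrations of $\M$; it is therefore an acyclic fibration of $\M$, in particular a weak equivalence of $\M$, and every weak equivalence of $\M$ lies in $\W_Z$ by functoriality of $\RHom_\M$ (Remark~\ref{rem:loc_equiv}).

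It remains to show that the monomorphisms belonging to $\W_Z$ are stable under pushout and transfinite composition. The key input is paragraph~\ref{paragr:comp_RHom}: since $Z$ is fibrant, $\RHom_\M(X, Z)$ is computed by $\Hom_\M(\tilde X, Z)$ for any cofibrant cosimplicial replacement $\tilde X$ of $X$ in $\Homi(\Delta, \M)$, and for a Reedy cofibration $\tilde X \to \tilde Y$ between such objects the induced map $\Hom_\M(\tilde Y, Z) \to \Hom_\M(\tilde X, Z)$ is a Kan fibration between Kan complexes. Hence a monomorphism $i : X \to Y$ lies in $\W_Z$ exactly when, after lifting $i$ to a Reedy cofibration $\tilde X \to \tilde Y$ between cosimplicial replacements, the map $\Hom_\M(\tilde Y, Z) \to \Hom_\M(\tilde X, Z)$ is an acyclic Kan fibration. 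Given such an $i$ and an arbitrary map $X \to X'$, I would choose a replacement $\tilde X$, factor $\tilde X \to Y$ and $\tilde X \to X'$ in the Reedy structure on $\Homi(\Delta, \M)$ as Reedy cofibrations followed by objectwise weak equivalences to get replacements $\tilde X \to \tilde Y$ and $\tilde X \to \tilde X'$ by Reedy cofibrations, and set $\tilde Y' := \tilde Y \amalg_{\tilde X} \tilde X'$. Then $\tilde X' \to \tilde Y'$ is a Reedy cofibration between Reedy cofibrant objects, and $\tilde Y' \to Y' := Y \amalg_X X'$ is an objectwise weak equivalence, because every presheaf is cofibrant in $\M$, so in each cosimplicial degree both pushouts are along a cofibration between cofibrant objects and the gluing lemma applies; thus $\tilde X' \to \tilde Y'$ is a cosimplicial replacement of $X' \to Y'$. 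Now $\Hom_\M(\tilde Y', Z) = \Hom_\M(\tilde Y, Z) \times_{\Hom_\M(\tilde X, Z)} \Hom_\M(\tilde X', Z)$ is a pullback of the acyclic Kan fibration $\Hom_\M(\tilde Y, Z) \to \Hom_\M(\tilde X, Z)$, hence $\Hom_\M(\tilde Y', Z) \to \Hom_\M(\tilde X', Z)$ is again an acyclic Kan fibration and $X' \to Y'$ lies in $\W_Z$.

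Transfinite composition is handled the same way: one builds a cosimplicial replacement of the given $\lambda$-sequence of monomorphisms in $\W_Z$ by transfinite induction, factoring at successor stages and taking colimits at limit stages (using that a colimit of a tower of cofibrations between cofibrant objects of $\M$ is a homotopy colimit, hence still a replacement), then applies $\Hom_\M(-, Z)$ to turn this colimit into the inverse limit of a tower of acyclic Kan fibrations, and uses that such a limit is again an acyclic Kan fibration. The hard part will be precisely this bookkeeping with cosimplicial replacements: checking that Reedy cofibrancy and the replacement property are preserved under the relevant pushouts and transfinite colimits, which rests on latching objects commuting with these colimits and on the gluing and continuity lemmas in $\M$, and carrying the construction through the transfinite induction. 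Everything else reduces to formal properties of $\RHom_\M$ and of the Quillen model structure on $\pref{\Delta}$.
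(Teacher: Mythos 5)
Your argument is correct, but it runs the comparison in the dual direction from the paper, and this choice costs you the ``hard part'' you flag at the end. The paper fixes $Z$ in $\clD$, takes a \emph{fibrant simplicial replacement} $\tilde Z$ of $Z$, and exploits the hypothesis that the cofibrations of $\M$ are the monomorphisms in a second way besides identifying the trivial fibrations: every presheaf is cofibrant, so by paragraph~\ref{paragr:comp_RHom} one may compute $\RHom_\M(W,Z)$ as $\Hom_\M(W,\tilde Z)$ with $W$ left untouched. The functor $\Hom_\M(-,\tilde Z)$ then sends the given pushout square (resp.\ transfinite sequence) of presheaves \emph{on the nose} to a cartesian square (resp.\ inverse tower) of Kan complexes in which the relevant maps are Kan fibrations by Theorem~16.5.2 of \cite{HirschMC}, and one concludes by homotopy (co)cartesianness---no resolution of the sources and no compatibility checks are needed. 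You instead keep $Z$ fibrant and resolve the sources by cofibrant cosimplicial replacements, which forces you to build a Reedy-cofibrant cosimplicial replacement of the entire pushout square (via the gluing lemma degreewise) and of the entire $\lambda$-sequence (via a transfinite induction with the telescope lemma at limit ordinals). All of these steps do go through---your reduction to a single fibrant $Z$, the identification of $\Hom_\M(\tilde Y,Z)\to\Hom_\M(\tilde X,Z)$ as an acyclic Kan fibration, and the pullback/inverse-limit stability arguments are sound---but the bookkeeping you defer is genuine work that the paper's choice of which variable to resolve makes entirely unnecessary. The first two localizer axioms you verify exactly as the paper does.
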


\begin{proof}
The 2-out-of-3 property is immediate by functoriality of $\RHom_\M$.

By hypothesis, the trivial fibrations of $\pref{A}$ are the trivial
fibrations of $\M$. In particular, they are weak equivalences in $\M$ and
hence $\clD$-local equivalences.

Let us now show the stability conditions of the class of morphisms which are
both monomorphisms and $\clD$-local equivalences. Consider a cocartesian square
\[
\xymatrix{
X \ar[d]_f \ar[r] & X' \ar[d]^{f'} \\
Y \ar[r] & Y'
}
\]
in $\M$, where $f$ is both a monomorphism and a $\clD$-local equivalence. 
Let us show that $f'$ is a $\clD$-local equivalence. Let $Z$ be an object of
$\M$ and let $\tilde{Z}$ be a fibrant simplicial replacement of $Z$. By
applying the functor $\Hom_C(-, \tilde{Z})$, we obtain
a cartesian square
\[
\xymatrix@C=2pc@R=2pc{
\Hom_\M(Y', \tilde{Z}) \ar[d]_{\Hom_\M(f', \tilde{Z})\,} \ar[r] & \Hom_\M(Y, \tilde{Z})
\ar[d]^{\,\Hom_\M(f, \tilde{Z})} \\
\Hom_\M(X', \tilde{Z}) \ar[r] & \Hom_\M(X, \tilde{Z})
}
\]
of simplicial sets. Since every object of $\M$ is cofibrant, for every
object $W$ of $\M$, we have
\[ \RHom_\M(W, Z) \cong p(\Hom_\M(W, \tilde{Z})), \]
where $p : \pref{\Delta} \to \Ho(\pref{\Delta})$ is the localization
functor. Moreover, by Theorem~16.5.2 of~\cite{HirschMC}, the objects of this
square are fibrant and the morphism $\Hom_\M(f, \tilde{Z})$ is a fibration.
This shows that this square is homotopically cartesian. In particular, if
the morphism~$\Hom_\M(f, \tilde{Z})$ is a weak equivalence, then so is the
morphism $\Hom_\M(f', \tilde{Z})$. In other words, if $f$ is a $Z$-local
equivalence, then so is~$f'$. This shows that $f'$ is a $\clD$-local
equivalence.

The stability under transfinite composition follows from a similar argument,
the key point being that a limit defining a transfinite cocomposition in
which every object is fibrant and every morphism is a fibration is actually
a homotopy limit.
\end{proof}

\begin{prop}\label{prop:loc_fib_obj}
Let $A$ be a small category, let $\W$ be an accessible $A$-localizer and
let~$\Wp$ be an accessible $A$-localizer generated by $\W$ and a class of
morphisms $\clC$. Then the $\Wp$\nbd-fibrant objects are the $\W$-fibrant
$\clC$-local objects.
\end{prop}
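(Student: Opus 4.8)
The plan is to compare the two model category structures on $\pref{A}$ directly. Let $\M$ (resp.\ $\M'$) be the $\W$-model category (resp.\ the $\Wp$-model category), which exist by Theorem~\ref{thm:mod_str_loc}. Since $\Wp$ is the smallest $A$-localizer containing $\W \cup \clC$, the two structures have the same cofibrations (the monomorphisms) and $\W \subseteq \Wp$; hence the identity functor is left Quillen from $\M$ to $\M'$ and right Quillen from $\M'$ to $\M$, so in particular every $\Wp$-fibrant object is $\W$-fibrant. Moreover, by Lemma~\ref{lemma:Loc_loc} the class $\W_\clC$ of $\clC$-local equivalences of $\M$ is an $A$-localizer, and by Remark~\ref{rem:loc_equiv} it contains both $\W$ and $\clC$; therefore $\Wp \subseteq \W_\clC$, i.e.\ every $\Wp$-equivalence is a $\clC$-local equivalence of $\M$.

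First I would prove that a $\Wp$-fibrant object $Z$ is a $\clC$-local object of $\M$. Let $f\colon X \to Y$ be a morphism in $\clC$. As $Z$ is fibrant in $\M'$ and the pair $(\M, \M')$ satisfies the hypotheses of Lemma~\ref{lemma:cmp_RHom}, there is an isomorphism $\RHom_\M(-, Z) \cong \RHom_{\M'}(-, Z)$ natural in the first variable, whence $\RHom_\M(f, Z) \cong \RHom_{\M'}(f, Z)$. Since $f \in \clC \subseteq \Wp$ and $\RHom_{\M'}$ sends $\Wp$-equivalences to isomorphisms (Remark~\ref{rem:loc_equiv}), the right-hand map is an isomorphism, so $Z$ is $\clC$-local. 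Combined with the first paragraph, this shows every $\Wp$-fibrant object is a $\W$-fibrant $\clC$-local object.

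For the converse, let $Z$ be $\W$-fibrant and $\clC$-local. Factor the structure morphism $Z \to e_{\pref{A}}$ in $\M'$ as a trivial cofibration $j\colon Z \to \widehat{Z}$ followed by an $\M'$-fibration $\widehat{Z} \to e_{\pref{A}}$. Then $\widehat{Z}$ is $\Wp$-fibrant, hence $\clC$-local by what was just proved, and $j$ is a monomorphism which, being a $\Wp$-equivalence, is a $\clC$-local equivalence of $\M$ by the first paragraph. Applying $\pi_0$ to the isomorphisms $\RHom_\M(j, Z)$ and $\RHom_\M(j, \widehat{Z})$ and using $\pi_0 \RHom_\M(U, V) \cong \Hom_{\Ho(\M)}(U, V)$ (cf.\ the proof of Proposition~\ref{prop:we_M_loc}), one gets that $j$ becomes invertible in $\Ho(\M)$, i.e.\ $j \in \W$, so $j$ is a trivial cofibration of $\M$. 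Since $Z$ is $\W$-fibrant, a lift in the square with $j$ on the left and $Z \to e_{\pref{A}}$ on the right exhibits $Z \to e_{\pref{A}}$ as a retract of the $\M'$-fibration $\widehat{Z} \to e_{\pref{A}}$ in $\pref{A}/e_{\pref{A}}$; hence $Z \to e_{\pref{A}}$ is an $\M'$-fibration, that is, $Z$ is $\Wp$-fibrant.

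The only point requiring real care is the last paragraph: one must check that a $\clC$-local equivalence between $\clC$-local objects is a genuine $\W$-equivalence, which is exactly what makes the retract argument go through. Everything else reduces to formal properties of Quillen adjunctions and of $\RHom$ established above, and no hypothesis that $\clC$ be a set is needed.
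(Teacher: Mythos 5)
Your proof is correct, and while the easy direction ($\Wp$-fibrant $\Rightarrow$ $\W$-fibrant and $\clC$-local) is identical to the paper's, your converse takes a genuinely different route. The paper verifies the lifting property of $Z \to e_{\pref{A}}$ against an arbitrary trivial cofibration $f$ of $\M'$ head-on: it lifts $f$ to a cofibration of cosimplicial resolutions $\tilde{f}$ with $\tilde{f}_0 = f$ (Proposition 16.6.4 of \cite{HirschMC}), shows $\Hom(\tilde{V},Z) \to \Hom(\tilde{U},Z)$ is a fibration by Theorem 16.5.2 of op.~cit., and gets the weak equivalence from $\Wp$ being contained in the localizer of $Z$-local equivalences; surjectivity on $\pi_0$-level then gives the lift. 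You instead take a fibrant replacement $j : Z \to \widehat{Z}$ in $\M'$, prove the standard Bousfield-localization lemma that a $\clC$-local equivalence between $\clC$-local objects lies in $\W$ (via the Yoneda-style $\pi_0\RHom$ argument, which is sound --- the two isomorphisms do produce a two-sided inverse of $j$ in $\Ho(\M)$), and conclude by a retract argument. Your route trades the cosimplicial-resolution machinery for the saturation property of model categories (a morphism inverted in $\Ho(\M)$ is a weak equivalence), which you use implicitly in the step ``$j$ becomes invertible in $\Ho(\M)$, i.e.\ $j \in \W$''; that is a standard but nontrivial theorem and deserves an explicit citation. Both arguments are correct and of comparable length; yours has the mild advantage of isolating the reusable lemma on local equivalences between local objects, while the paper's stays closer to the definitions and to the references it already relies on elsewhere.
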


\begin{proof}
We will denote by $\M$ (resp.~by $\M'$) the $\W$-model category (resp.~the
$\Wp$-model category).

Let $Z$ be a $\Wp$-fibrant object. Since $\W$ is included in $\Wp$, $Z$ is
also $\W$-fibrant. Let us show that $Z$ is $\clC$-local.
Let $X \to Y$ be an element of $\clC$. We have to show that the morphism
\[ \RHom_{\M}(Y, Z) \to \RHom_{\M}(X, Z) \]
is an isomorphism. By Lemma \ref{lemma:cmp_RHom}, this is the case if and
only if the morphism
\[ \RHom_{\M'}(Y, Z) \to \RHom_{\M'}(X, Z) \]
is an isomorphism, which is true since $X \to Y$ is a $\W'$\nbd-equivalence.

Suppose now $Z$ is a $\clC$-local $\W$-fibrant object and let us prove that
$Z$ is $\Wp$-fibrant. Let $f : U \to V$ be a trivial cofibration of $\M'$. We
have to show that
\[ \Hom_{\pref{A}}(V, Z) \to \Hom_{\pref{A}}(U, Z) \]
is a surjection. By Proposition 16.6.4 of \cite{HirschMC}, there exist
in $\Homi(\Delta, \M)$ cofibrant cosimplicial replacements $\tilde{U}$ and
$\tilde{V}$ of $U$ and $V$, respectively, and a cofibration
$\tilde{f} : \tilde{U} \to \tilde{V}$ such that $\tilde{f}_0 = f$. It thus
suffices to show that the morphism of simplicial sets
\[ \Hom_{\pref{A}}(\tilde{V}, Z) \to \Hom_{\pref{A}}(\tilde{U}, Z) \]
is a trivial fibration. Since $\tilde{f}$ is a cofibration between
cosimplicial replacements of objects of $\M$ and $Z$ is $\W$-fibrant,
Theorem 16.5.2 of \cite{HirschMC} implies that this morphism is a fibration.

Let us show that it is a weak equivalence. By hypothesis, the class of
$Z$-local equivalences contains $\W$ and $\clC$. Moreover, by Lemma
\ref{lemma:Loc_loc}, this class is an $A$-localizer. It follows by
definition of $\Wp$ that every $\Wp$-equivalence is a $Z$-local equivalence.
In particular, this shows that $f$ is a $Z$-local equivalence and so, by 
paragraph \ref{paragr:comp_RHom}, that the morphism
\[ \Hom_{\pref{A}}(\tilde{V}, Z) \to \Hom_{\pref{A}}(\tilde{U}, Z) \]
is a weak equivalence, thereby proving the result.
\end{proof}

\begin{paragr}
Let $\M$ be a model category and let $\clC$ be a class of morphisms of $\M$.
The \ndef{left Bousfield localization of $\M$ with respect to $\clC$} (if it
exists) is the unique model category structure on the underlying category of
$\M$ whose weak equivalences are the $\clC$-local equivalences and whose
cofibrations are the cofibrations of $\M$.
\end{paragr}

\begin{prop}\label{prop:loc_Bous_loc}
Let $A$ be a small category, let $\W$ be an accessible $A$-localizer and
let~$\Wp$ be an accessible $A$-localizer generated by $\W$ and a class of
morphisms $\clC$. Then the $\Wp$\nbd-model category is the left Bousfield
localization of the $\W$-model category with respect to $\clC$.
\end{prop}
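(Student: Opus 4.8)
The plan is to verify that the $\Wp$-model category literally satisfies the defining properties of the left Bousfield localization of the $\W$-model category with respect to $\clC$. By the uniqueness clause in the definition of left Bousfield localization, it suffices to check two things: that the cofibrations of the $\Wp$-model category coincide with the cofibrations of the $\W$-model category, and that the weak equivalences of the $\Wp$-model category are exactly the $\clC$-local equivalences (computed in the $\W$-model category). The first point is immediate: by Theorem \ref{thm:mod_str_loc}, both model categories have the monomorphisms of $\pref{A}$ as their cofibrations. So the whole content is in the identification of weak equivalences, i.e.\ proving that $\Wp$ coincides with the class of $\clC$-local equivalences of the $\W$-model category $\M$.

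For this I would argue by double inclusion. For the inclusion $\Wp \subseteq \{\clC\text{-local equivalences of }\M\}$: by Lemma \ref{lemma:Loc_loc}, the class of $\clC$-local equivalences of $\M$ is an $A$-localizer; by Remark \ref{rem:loc_equiv} it contains $\W$ (weak equivalences are $\clC$-local equivalences) and it visibly contains $\clC$; since $\Wp = \W(\W \cup \clC)$ is the smallest $A$-localizer containing $\W$ and $\clC$, it is contained in the class of $\clC$-local equivalences. For the reverse inclusion, I would use Proposition \ref{prop:loc_fib_obj}: it tells us that the $\Wp$-fibrant objects are exactly the $\W$-fibrant $\clC$-local objects. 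Now let $f : X \to Y$ be a $\clC$-local equivalence of $\M$. By definition this means $\RHom_\M(f, Z)$ is an isomorphism for every $\clC$-local object $Z$, and in particular for every $\W$-fibrant $\clC$-local object $Z$, i.e.\ for every $\Wp$-fibrant object $Z$. By Lemma \ref{lemma:cmp_RHom} (applicable since $\M$ and $\M'$ share cofibrations and $\W \subseteq \Wp$), we have $\RHom_\M(f,Z) \cong \RHom_{\M'}(f,Z)$ for every object $X$ and $\Wp$-fibrant $Z$, so $\RHom_{\M'}(f, Z)$ is an isomorphism for every $\Wp$-fibrant object $Z$. By Proposition \ref{prop:we_M_loc} applied to $\M'$ with $\clD$ the class of $\Wp$-fibrant objects (every object of $\M'$ is weakly equivalent to a fibrant replacement), this means $f$ is a $\Wp$-equivalence. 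Hence the two classes coincide.

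Finally I would note that this shows the $\Wp$-model category has the required cofibrations and weak equivalences, so by the uniqueness in the definition of left Bousfield localization it \emph{is} that localization (and in particular the localization exists). The main obstacle is not any single step but making sure the $\RHom$ bookkeeping is airtight: one must be careful that Lemma \ref{lemma:cmp_RHom} is invoked with the correct hypotheses (shared cofibrations, $\W \subseteq \Wp$, and $Z$ fibrant in the \emph{finer} structure $\M'$), and that Proposition \ref{prop:we_M_loc} is applied to the right model category with a legitimate class $\clD$. Everything else is a formal consequence of the results already established in the body of the paper and in the appendix.
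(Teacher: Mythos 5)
Your proposal is correct and follows essentially the same route as the paper's proof: both inclusions are established with exactly the same ingredients (Lemma \ref{lemma:Loc_loc} for $\Wp\subseteq\{\clC\text{-local equivalences}\}$, then Proposition \ref{prop:we_M_loc}, Lemma \ref{lemma:cmp_RHom} and Proposition \ref{prop:loc_fib_obj} for the converse), merely arranged in a slightly different order. The extra observation about cofibrations and uniqueness is implicit in the paper and harmless.
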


\begin{proof}
We have to show that the $\Wp$-equivalences are exactly the $\clC$-local
equivalences. We will denote by $\M$ (resp.~by $\M'$) the $\W$-model
category (resp.~the $\Wp$-model category).

By Lemma \ref{lemma:Loc_loc}, the class of $\clC$-local equivalences is an
$A$-localizer. Since it contains $\W$ and $\clC$, the definition of $\Wp$
implies that every $\Wp$-equivalence is a $\clC$-local equivalence.

Conversely, let $f : X \to Y$ be a $\clC$-local equivalence. Let us show
that $f$ is a $\Wp$-weak equivalence. By Proposition \ref{prop:we_M_loc}, it
suffices to prove that for every $\Wp$-fibrant object~$Z$, the morphism
\[ \RHom_{\M'}(Y, Z) \to \RHom_{\M'}(X, Z) \]
is an isomorphism. By Lemma \ref{lemma:cmp_RHom}, these $\RHom$ can be
computed in $\M$. Moreover, by Proposition \ref{prop:loc_fib_obj}, the
object $Z$ is $\clC$-local. The above morphism is thus an isomorphism,
thereby proving the result.
\end{proof}

\begin{rem}
The proof of the above proposition only relies on the easy implication of
Proposition \ref{prop:loc_fib_obj}. Therefore, we could have proved it
directly. Proposition \ref{prop:loc_fib_obj} would then have been a
corollary of the theory of Bousfield localization (see Proposition 3.4.1 of
\cite{HirschMC}).
\end{rem}

\bibliographystyle{amsplain}
\bibliography{biblio}

\newcommand{\noopsort}[1]{}
\providecommand{\bysame}{\leavevmode\hbox to3em{\hrulefill}\thinspace}
\providecommand{\MR}{\relax\ifhmode\unskip\space\fi MR }
% \MRhref is called by the amsart/book/proc definition of \MR.
\providecommand{\MRhref}[2]{%
  \href{http://www.ams.org/mathscinet-getitem?mr=#1}{#2}
}
\providecommand{\href}[2]{#2}
\begin{thebibliography}{10}

\bibitem{AraThesis}
D.~Ara, \emph{Sur les {$\infty$}-groupoïdes de {G}rothendieck et une variante
  {$\infty$}-catégorique}, Ph.D. thesis, Université Paris Diderot -- Paris 7,
  Supervised by G.~Maltsiniotis, 2010.

\bibitem{AraThtld}
\bysame, \emph{The groupoidal analogue {$\widetilde{\Theta}$} to {J}oyal's
  category {$\Theta$} is a test category}, Appl. Categor. Structures, 2012.

\bibitem{AraHomGr}
\bysame, \emph{On the homotopy theory of {G}rothendieck {$\infty$}-groupoids},
  J. Pure Appl. Algebra \textbf{217} (2013), no.~7, 1237--1278.

\bibitem{AraStrWeak}
\bysame, \emph{Strict {$\infty$}-groupoids are {G}rothendieck
  {$\infty$}-groupoids}, J. Pure Appl. Algebra \textbf{217} (2013), no.~12,
  2298--2312.

\bibitem{AraMetBrown}
D.~Ara and F.~Métayer, \emph{The {B}rown-{G}olasi\'nski model structure on
  strict {$\infty$}-groupoids revisited}, Homology Homotopy Appl. \textbf{13}
  (2011), no.~1, 121--142.

\bibitem{BarKanRelCat}
C.~Barwick and D.~M. Kan, \emph{Relative categories: another model for the
  homotopy theory of homotopy theories}, Indag. Math. (N.S.) \textbf{23}
  (2012), no.~1-2, 42--68.

\bibitem{BarKanNRelCat}
\bysame, \emph{{$n$}-relative categories: a model for the homotopy theory of
  {$n$}-fold homotopy theories}, Homology Homotopy Appl. \textbf{15} (2013),
  no.~2, 281--300.

\bibitem{BarSchPrUnicity}
C.~Barwick and C.~Schommer-Pries, \emph{On the unicity of the homotopy theory
  of higher categories},
  \href{http://arxiv.org/abs/1112.0040}{arXiv:1112.0040v4 [math.AT]}, 2013.

\bibitem{BataninWCat}
M.~A. Batanin, \emph{Monoidal globular categories as a natural environment for
  the theory of weak {$n$\nobreakdash-}categories}, Adv. Math. \textbf{136}
  (1998), no.~1, 39--103.

\bibitem{Benabou}
J.~B{\'e}nabou, \emph{Introduction to bicategories}, Reports of the {M}idwest
  {C}ategory {S}eminar, Springer-Verlag, 1967, pp.~1--77.

\bibitem{BergerNerve}
C.~Berger, \emph{A cellular nerve for higher categories}, Adv. Math.
  \textbf{169} (2002), no.~1, 118--175.

\bibitem{BergerWreath}
\bysame, \emph{Iterated wreath product of the simplex category and iterated
  loop spaces}, Adv. Math. \textbf{213} (2007), no.~1, 230--270.

\bibitem{BergnerSimpCat}
J.~E. Bergner, \emph{A model category structure on the category of simplicial
  categories}, Trans. Amer. Math. Soc. \textbf{359} (2007), no.~5, 2043--2058.

\bibitem{BergnerThreeModels}
\bysame, \emph{Three models for the homotopy theory of homotopy theories},
  Topology \textbf{46} (2007), no.~4, 397--436.

\bibitem{BergnerSurvey}
\bysame, \emph{A survey of {$(\infty,1)$}-categories}, Towards higher
  categories, IMA Vol. Math. Appl., vol. 152, Springer-Verlag, 2010,
  pp.~69--83.

\bibitem{BergnerSurveyN}
\bysame, \emph{Models for {$(\infty,n)$}-categories and the cobordism
  hypothesis}, Mathematical foundations of quantum field theory and
  perturbative string theory, Proc. Sympos. Pure Math., vol.~83, Amer. Math.
  Soc., 2011, pp.~17--30.

\bibitem{BergnerRezkComp}
J.~E. Bergner and C.~Rezk, \emph{Comparison of models for
  {$(\infty,n)$}-categories, {I}}, Geom. Topol. \textbf{17} (2013), no.~4,
  2163--2202.

\bibitem{BergnerRezkReedy}
\bysame, \emph{Reedy categories and the {$\varTheta$}-construction}, Math. Z.
  \textbf{274} (2013), no.~1-2, 499--514.

\bibitem{BoardmanVogt}
J.~M. Boardman and R.~M. Vogt, \emph{Homotopy invariant algebraic structures on
  topological spaces}, Lecture Notes in Mathematics, Vol. 347, Springer-Verlag,
  1973.

\bibitem{Cisinski}
D.-C. Cisinski, \emph{Les pr\'efaisceaux comme mod\`eles des types
  d'homotopie}, Ast\'erisque, no. 308, Soc. Math. France, 2006.

\bibitem{DuggerReplSimpl}
D.~Dugger, \emph{Replacing model categories with simplicial ones}, Trans. Amer.
  Math. Soc. \textbf{353} (2001), no.~12, 5003--5027.

\bibitem{GZ}
P.~Gabriel and M.~Zisman, \emph{Calculus of fractions and homotopy theory},
  Ergebnisse der Mathematik und ihrer Grenzgebiete, vol.~35, Springer-Verlag,
  1967.

\bibitem{GindiWCat}
H.~Gindi, \emph{A homotopy theory of weak {$\omega$}-categories},
  \href{http://arxiv.org/abs/1207.0860}{arXiv:1207.0860v2 [math.CT]}, 2012.

\bibitem{GrothPS}
A.~Grothendieck, \emph{Pursuing stacks}, Manuscript, 1983, to be published in
  Documents Mathématiques.

\bibitem{HirschMC}
P.~Hirschhorn, \emph{Model categories and their localizations}, Mathematical
  Surveys and Monographs, vol.~99, American Mathematical Society, 2003.

\bibitem{SimpsonHirsch}
A.~Hirschowitz and C.~Simpson, \emph{Descente pour les {$n$}-champs},
  \href{http://arxiv.org/abs/math/9807049}{arXiv:math/9807049v3 [math.AG]},
  2001.

\bibitem{JoyalTheta}
A.~Joyal, \emph{Disks, duality and {$\Theta$}-categories}, Preprint, 1997.

\bibitem{JoyalQCatKan}
\bysame, \emph{Quasi-categories and {K}an complexes}, J. Pure Appl. Algebra
  \textbf{175} (2002), no.~1-3, 207--222.

\bibitem{JoyalSimpCat}
\bysame, \emph{Quasi-categories vs simplicial categories}, Preprint, 2007.

\bibitem{JoyalNQCat}
\bysame, \emph{Notes on quasi-categories}, Preprint, 2008.

\bibitem{JoyalThQCat}
\bysame, \emph{The theory of quasi-categories and its applications}, Preprint,
  2008.

\bibitem{JoyalTierneyQCatSeg}
A.~Joyal and M.~Tierney, \emph{Quasi-categories vs {S}egal spaces}, Categories
  in algebra, geometry and mathematical physics, Contemp. Math., vol. 431,
  Amer. Math. Soc., 2007, pp.~277--326.

\bibitem{LeinsterSurvey}
T.~Leinster, \emph{A survey of definitions of {$n$}-category}, Theory Appl.
  Categ. \textbf{10} (2002), 1--70.

\bibitem{Leinster}
\bysame, \emph{Higher operads, higher categories}, London Mathematical Society
  Lecture Note Series, vol. 298, Cambridge University Press, 2004.

\bibitem{LurieHTT}
J.~Lurie, \emph{Higher topos theory}, Annals of Mathematics Studies, vol. 170,
  Princeton University Press, 2009.

\bibitem{LurieHA}
\bysame, \emph{Higher algebra}, Preprint, 2013.

\bibitem{MakZaw}
M.~Makkai and M.~Zawadowski, \emph{Duality for simple {$\omega$}-categories and
  disks}, Theory Appl. Categ. \textbf{8} (2001), 114--243.

\bibitem{MaltsiGr}
G.~Maltsiniotis, \emph{Infini groupo\"ides non stricts, d'après
  {G}rothendieck}, Preprint, \noopsort{a}2007.

\bibitem{MaltsiCat}
\bysame, \emph{Infini cat\'egories non strictes, une nouvelle d\'efinition},
  Preprint, \noopsort{b}2007.

\bibitem{MaltsiGrCat}
\bysame, \emph{Grothendieck {$\infty$}-groupoids and still another definition
  of {$\infty$}-categories},
  \href{http://arxiv.org/abs/1009.2331}{arXiv:1009.2331v1 [math.CT]},
  \noopsort{c}2010.

\bibitem{Quillen}
D.~G. Quillen, \emph{Homotopical algebra}, Lecture Notes in Mathematics,
  vol.~43, Springer-Verlag, 1967.

\bibitem{RezkSegSp}
C.~Rezk, \emph{A model for the homotopy theory of homotopy theory}, Trans.
  Amer. Math. Soc. \textbf{353} (2001), no.~3, 973--1007.

\bibitem{RezkThSp}
\bysame, \emph{A {C}artesian presentation of weak {$n$}-categories}, Geom.
  Topol. \textbf{14} (2010), no.~1, 521--571.

\bibitem{RezkThSpCorr}
\bysame, \emph{Correction to ``{A} {C}artesian presentation of weak
  {$n$}-categories''}, Geom. Topol. \textbf{14} (2010), no.~4, 2301--2304.

\bibitem{SimpsonHTHC}
C.~Simpson, \emph{Homotopy theory of higher categories}, New mathematical
  monographs, vol.~19, Cambridge University Press, 2011.

\bibitem{WeberFam}
M.~Weber, \emph{Familial 2-functors and parametric right adjoints}, Theory
  Appl. Categ. \textbf{18} (2007), 665--732.

\end{thebibliography}

\end{document}